\def\uvuciautor{\hspace{-0.4cm}}
\definecolor{OxBlue}{HTML}{002147}
\newtheorem{thm}{Theorem}[section]
\newtheorem{cor}[thm]{Corollary}
\newtheorem{prop}[thm]{Proposition}
\newtheorem{lm}[thm]{Lemma}
\theoremstyle{definition}
\newtheorem{de}[thm]{Definition}
\newtheorem{ex}[thm]{Example}
\theoremstyle{remark}
\newtheorem{rmk}[thm]{Remark}
\def\gens{m}
\def\GIT{/\!\!/}
\def\PP{\mathscr{P}}
\def\z{\zeta}
\def\Q{\mathbb Q}
\def\R{\mathbb R}
\def\C{\mathbb C}
\def\N{\mathbb N}
\def\Z{\mathbb Z}
\def\KH{K\"{a}hler }
\def\Kh{K\"{a}hler}
\def\P{{\mathbb P}}
\def\a{\alpha}
\def\Fi{\varphi}
\def\fun{\rightarrow}
\def\M{\mathfrak{M}}
\def\CM0{\C[\M_0]}
\def\F{\mathfrak{F}}
\def\Con1{Con_1(\M)}
\def\FF{\mathscr{F}}
\def\EE{\mathscr{E}}
\def\T{\mathbb{T}}
\def\MM0{\mathfrak{M}_{\tiny{(\zeta_{\mathbb{R}},0})}(Q,{\normalfont\textbf{v}},{\normalfont\textbf{w}})}
\def\MG0{\mathcal{M}_{0,\z_\C}(Q,{\normalfont\textbf{v}},{\normalfont\textbf{w}})}
\def\iso{\cong}
\def\om{\omega}
\def\k{\mathbb{K}}
\def\ph{pseudoholomorphic}
\def\CC{$\C^*$}
\def\Fil{\mathscr{F}}
\def\Core{\mathrm{Core}}
\def\llambda{\Fi}
\def\ku{\k [\![u]\!]}
\def\kuu{\k(\!(u)\!)}
\def\karo{\diamondsuit}
\def\MM{\mathcal{M}}
\def\CP{\mathbb{C}P}
\def\MB{Morse--Bott}
\def\MBF{Morse--Bott--Floer}
\newcommand{\doublewidetilde}[1]{{%
  \mathpalette\double@widetilde{#1}%
}}
\newcommand{\double@widetilde}[2]{%
  \sbox\z@{$\m@th#1\widetilde{#2}$}%
  \ht\z@=.9\ht\z@
  \widetilde{\box\z@}%
}
\begin{document}

\title
[Semiprojective toric manifolds]
{Quantum cohomology and Floer invariants\\ of semiprojective toric manifolds}
\author{Alexander F. Ritter}
\address{\uvuciautor A. F. Ritter, 
Mathematical Institute, University of Oxford, 
OX2 6GG, U.K.}
\email{ritter@maths.ox.ac.uk}
\author{Filip Živanović}
\address{\uvuciautor F. T. Živanović, 
Simons Center for Geometry and Physics, 
Stony Brook, NY 11794-3636, U.S.A.}
\email{fzivanovic@scgp.stonybrook.edu}

\begin{abstract} 
We use Floer theory to describe invariants of 
symplectic $\C^*$-manifolds admitting several commuting $\C^*$-actions.
The $\C^*$-actions induce filtrations by ideals on quantum cohomology, as well as filtrations on Hamiltonian Floer cohomologies, and we prove relationships between these filtrations.
We also carry this out in the equivariant setting, in particular $\C^*$-actions then give rise to Hilbert-Poincar\'{e} polynomials on ordinary cohomology that depend on Floer theory.
For semiprojective toric manifolds, we obtain an explicit presentation for quantum and symplectic cohomology in the Fano and CY setting, both in the equivariant and non-equivariant setting.
\end{abstract}

\maketitle
\setcounter{secnumdepth}{3}
\setcounter{tocdepth}{1}

\tableofcontents  %

\section{Introduction}\label{Introduction}

\subsection{Motivation}\label{Introduction Motivation}
We use novel Floer-theoretic methods to describe quantum cohomology for non-compact symplectic manifolds $(Y,\omega)$ admitting several commuting Hamiltonian $S^1$-actions, in the non-equivariant and equivariant setting.
Toric manifolds are prototypical such spaces: they arise in algebraic geometry by compactifying an algebraic torus $(\C^*)^n$; any one-parameter subgroup $\C^*\to (\C^*)^n$ induces a $\C^*$-action on $Y$;
and these give commuting Hamiltonian $S^1$-actions for a suitable symplectic form $\omega$.\vspace{2mm}

We briefly recall the story for closed symplectic manifolds $M$. In this case, quantum cohomology and Hamiltonian Floer cohomology can be identified, irrespective of the choice of the Hamiltonian function $H: M \rightarrow \R$. Seidel's seminal paper \cite{Sei97} observed that a Hamiltonian $S^1$-action induces a natural chain isomorphism between Hamiltonian Floer complexes, and thus an automorphism of $QH^*(M)$. This automorphism is quantum multiplication by an invertible element (``\emph{Seidel element}''), and it defines
the \emph{Seidel representation}: group multiplication between such $S^1$-actions induces quantum product of their Seidel elements. McDuff--Tolman \cite{mcduff2006topological} used this to obtain a symplectic proof of the Batyrev--Givental presentation of quantum cohomology for closed Fano toric manifolds. Namely, the $S^1$-rotations around the toric divisors $D_i$ (corresponding to the facets of the moment polytope) give rise to generators $\mathrm{PD}[D_i]\in H^2(M)$ for quantum cohomology as a ring, and the group relations between these rotations give rise to all the \emph{quantum Stanley-Reisner relations} required to present quantum cohomology as a ring (in addition to the classical linear relations in $H^2(M)$ between the $\mathrm{PD}[D_i]$).

For non-compact symplectic manifolds, the story is more intricate: Hamiltonian Floer cohomology depends on the growth of the Hamiltonian at infinity. Taking a direct limit over these groups as the growth diverges gives rise to symplectic cohomology $SH^*(Y)$, which comes with a ring homomorphism %
\begin{equation}\label{Equation canonical hom}
c^*:QH^*(Y) \to SH^*(Y).
\end{equation}
The first author in \cite{R14} generalised the Seidel representation when $Y$ is {\bf convex}: outside of the interior of a compact subset, $Y$ can be identified with a positive symplectisation $Y^{\mathrm{out}}\cong \Sigma \times [R_0,\infty)$ of a closed contact manifold $(\Sigma,\alpha)$, using the symplectic form $d(R\alpha)$.
Most symplectic literature on non-compact symplectic manifolds in the past two decades assumed this convexity; unfortunately it is extremely restrictive in the context of toric manifolds. For non-compact Fano toric manifolds $Y$, \cite{R16} required another harsh condition: that all ``canonical rotations'' around toric divisors agree with the Reeb flow at infinity (the actual condition is mildly more general). Under those assumptions, \eqref{Equation canonical hom} is a quotient map, indeed it is localisation at the Seidel elements associated to the canonical rotations. Interestingly, these algebras differ: it is $SH^*(Y)$, not $QH^*(Y)$, which recovers what would normally be Batyrev's combinatorial presentation of quantum cohomology. We shall discuss these results in greater detail in \cref{Example intro toric varieties}. Our original motivation for this paper was to use our foundational work \cite{RZ1,RZ2} to non-compact toric manifolds in order to remove all those restrictive conditions. We were able to do this for essentially all toric manifolds, more precisely:
all {\bf semiprojective toric manifolds}. These were first introduced by 
Hausel--Sturmfels \cite{HS02}, and can be characterized as the smooth GIT/{\Kh} quotients of $\C^n$ under a torus action. Some authors consider this to be the definition of toric manifolds.

Our foundational work applies to many important classes of spaces\footnote{Nakajima quiver varieties;
moduli spaces of Higgs bundles;
{\Kh} quotients of $\C^n$; 
hypertoric varieties;
cotangent bundles of flag varieties;
negative complex vector bundles;
crepant resolutions of quotient singularities $\C^n/G$ for finite subgroups $G\subset SL(n,\C)$; all Conical Symplectic Resolutions; and most
equivariant resolutions of affine singularities.} from algebraic geometry and representation theory, described in \cite{RZ1}.
We defined {\bf symplectic $\C^*$-manifolds} to encompass all of these spaces: they are non-compact symplectic manifolds $(Y,\omega)$ with a Hamiltonian $S^1$-action that extends to a {\ph} $\C^*$-action $\Fi$ (for some $\omega$-compatible almost complex structure $I$). Additionally, they admit 
a proper {\ph}\footnote{On $\Sigma \times [R_0,\infty)$, a choice of $d(R\alpha)$-compatible almost complex structure of contact type is made.} map 
\begin{equation}\label{Equation intro Psi}
\Psi: Y^{\mathrm{out}}\to \Sigma \times [R_0,\infty),
\end{equation}
which sends the Hamiltonian $S^1$-vector field to the Reeb vector field (up to rescaling by a non-zero constant\footnote{The Reeb field is the Hamiltonian vector field $X_R$ of the radial coordinate $R\in [R_0,\infty).$ To define quantum and symplectic cohomology, and their filtrations, one can allow $\Psi_* X_{S^1}=X_{fR}$ for a function $f:\Sigma \to (0,\infty)$ invariant under the Reeb flow \cite[Remark 1.2]{RZ1}. Although this is a mild extension, it was crucial for example in toric examples \cite{R16}. However, to construct the {\MBF} spectral sequence discussed later, we needed $f$ to be constant.}).
We say $Y$ is {\bf globally defined over $X$} if a stronger condition holds: there is a proper {\ph} $\C^*$-equivariant map $\Psi: Y \to X$, where $X$ is a convex symplectic manifold, with a choice of compatible almost complex structure, admitting a $\C^*$-action, whose $S^1$-part is a Hamiltonian flow that at infinity agrees with the Reeb flow. Many examples arise in this way with $X \cong \C^m$, for some large $m$, with $\Sigma \cong S^{2m-1}$, and $R$ a constant multiple of $\|z\|^2$.

\begin{rmk}
We do not require \eqref{Equation intro Psi} to be symplectic: that would imply convexity of $Y.$ Indeed the target of $\Psi$ is often of much higher dimension than $Y$ with projective varieties as fibres, rich in closed holomorphic curves; so $\omega$ is often non-exact at infinity, in contrast to the convex case.

The map \eqref{Equation intro Psi} is not part of the datum of a symplectic $\C^*$-manifold $(Y,\omega,\Fi,I)$, it is only a technical tool that ensures quantum cohomology and Floer cohomology are well-defined despite the non-compactness of $Y$. For this reason, an {\bf isomorphism of symplectic $\C^*$-manifolds} means a {\ph } $\C^*$-equivariant symplectomorphism, without any conditions on the possible $\Psi$-maps.
\end{rmk}

In \cite{RZ1,RZ2} we used Floer theory to construct a $\Fi$-dependent filtration by ideals on quantum cohomology. We also developed the equivariant theory in \cite{RZ3} for quantum and Floer cohomology. The equivariant theory is arguably more interesting here, because the non-equivariant quantum product is often classical cup product (e.g.\;this occurs for all quiver varieties and indeed all Conical Symplectic Resolutions, but typically not for semiprojective toric manifolds). The Floer theory also displays dramatically different behaviour in the equivariant setting, as shown in explicit computations in \cite{RZ3}. We will briefly recall some of the foundational results in the next section, keeping the discussion as brief as possible: the interested reader can find more details in the introductions to our foundational papers \cite{RZ1,RZ2,RZ3}.
After the overview, we return to the goal of this paper: to consider situations where two or more commuting $\C^*$-actions are in play, and to relate their respective filtrations.

\subsection{A brief overview of the structural properties of invariants for symplectic $\C^*$-manifolds}
\label{Subsection Introduction A brief overview of the structural properties}

In this section, $(Y,\omega)$ is any symplectic $\C^*$-manifold with $\C^*$-action $\Fi$. 
Let $H$ denote the moment map of the corresponding Hamiltonian $S^1$-action by $\Fi$. We use Hamiltonians $H_{\lambda}=c(H): Y \to \R$ that are suitable functions of $H$, with slope $c'(H)=\lambda$ when $H$ is very large (i.e. outside of a compact set).
The ($\Fi$-dependent) symplectic cohomology $SH^*(Y,\Fi)=\varinjlim HF^*(H_{\lambda})$ is the direct limit over continuation maps of the Hamiltonian Floer cohomology groups. 
We call the periods of the $S^1$-action the ``\emph{non-generic}'' slopes: they form a discrete subset of $[0,\infty)$ that can be explicitly described in terms of the weights of the linearised $S^1$-action at the components $\F_\a$ of the fixed locus $\F$. 

Regarding conventions: we refer to \cite{RZ1} for the definition of the Novikov field $\k$ that we use (over any base field $\mathbb{B}$), in particular $T$ will denote the formal Novikov variable. There we also explain the additional technical assumptions on $Y$ needed to make sense of Floer theory by classical transversality methods \`{a} la Floer-Hofer-Salamon, as opposed to the recent global Kuranishi structure methods.

We now summarise some outcomes of our previous work. The {\bf rotation class} $Q_{\Fi}\in QH^*(Y)$, below, is the generalisation of the Seidel element for the $S^1$-action $\Fi$ to our non-compact setting. However, unlike Seidel elements, $Q_{\Fi}\in QH^*(Y)$ is \textit{not} invertible, whereas $c^*(Q_{\Fi})\in SH^*(Y,\Fi)$ is invertible.

\begin{thm}[{\cite{RZ1}}]\label{Theorem Intro SH as loc of QH}
The homomorphism $c^*:QH^*(Y)\to SH^*(Y,\Fi)$ is surjective. It equals localisation at 
$Q_{\Fi}\in QH^{2\mu}(Y),$
where $\mu$ is the Maslov index of the $S^1$-action $\Fi$.
$$
SH^*(Y,\Fi) \cong QH^*(Y)_{Q_{\Fi}} \cong QH^*(Y)/E_0(Q_{\Fi}),
$$
where
$
E_0(Q_{\Fi})=\ker c^* \subset QH^*(Y)$ 
is the generalised $0$-eigenspace of quantum product by $Q_{\Fi}$.
For $N^+\in \R$ just above $N\in \N$, the continuation map $c^*_{N^+}$ (whose direct limit is $c^*$ in \eqref{Equation canonical hom} as $N\to \infty$), 
\begin{equation}\label{Equation cNplus maps intro}
c^*_{N^+}: QH^*(Y) \to HF^*(H_{N^+})\cong QH^*(Y)[2N\mu],
\end{equation}
is\footnote{Above, $A[d]$ means we shift a graded group $A$ down by $d$, so $(A[d])_n=A_{n+d}$.} quantum product $N$ times by $Q_{\Fi}\in QH^*(Y)$.
\end{thm}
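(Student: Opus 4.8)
The plan is to exhibit $SH^*(Y,\Fi)$ as a direct limit of copies of $QH^*(Y)$ whose transition maps are quantum multiplication by a single class $Q_\Fi$, and then to identify this colimit with a localisation (equivalently a quotient) of $QH^*(Y)$ by purely algebraic means, using that $H^*(Y;\k)$ is finite-dimensional. The first ingredient is the construction of the \emph{rotation class} $Q_\Fi$, built à la Seidel from the $S^1$-action underlying $\Fi$: one counts $I$-holomorphic sections of the Hamiltonian fibration over $S^2$ twisted by $\Fi$ (equivalently, a PSS-type moduli space with one input on a small-slope Floer cylinder and the $S^1$-clutching at the other end). The only delicate point is compactness of this moduli space despite the non-compactness of $Y$: here the properness of the map $\Psi$ of \eqref{Equation intro Psi} is used to prove a maximum-principle/``no escape to infinity'' lemma for $I$-holomorphic curves — since $\Psi_*X_{S^1}$ is the (rescaled) Reeb field and $I$ is of contact type near infinity, every curve in the moduli space is confined to a fixed compact set — while monotonicity (or the Novikov filtration) controls bubbling and Gromov convergence. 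The degree of $Q_\Fi$, namely $2\mu$ with $\mu$ the Maslov index, is read off from the Conley--Zehnder shift of the linearised $S^1$-action along the fixed components $\F_\a$.

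The second ingredient is the \emph{Seidel isomorphism}. Between consecutive non-generic slopes continuation maps are isomorphisms, so the interesting phenomenon occurs precisely when a period is crossed. Using the $S^1$-action to reparametrise $1$-periodic orbits and Floer cylinders, one obtains for each $N$ a chain-level isomorphism $CF^*(H_{N^+})\cong CF^*(H_{(N-1)^+})[2\mu]$ (up to orientation signs and Novikov weights). Iterating this down to a slope $\eps^+$ just above $0$ and composing with the PSS isomorphism $HF^*(H_{\eps^+})\cong QH^*(Y)$ yields the stated identification $HF^*(H_{N^+})\cong QH^*(Y)[2N\mu]$.

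The heart of the argument — and the step I expect to be the main obstacle — is to show that, under these identifications, the continuation map crossing exactly one period is quantum product by $Q_\Fi$. The strategy is a gluing (one-parameter cobordism / neck-stretching) argument: the composite of ``continuation across one period'' with the relevant Seidel chain isomorphism is computed by a moduli space which, after degenerating the neck, splits as the Seidel section defining $Q_\Fi$ glued to a PSS/continuation cylinder, so that on cohomology it realises chain-level multiplication by $Q_\Fi$ on $QH^*(Y)$. Establishing this needs transversality for the interpolating moduli spaces, the $\Psi$-confinement lemma once more to forbid broken configurations from escaping to infinity, careful bookkeeping of the $2\mu$ grading shift and the Novikov exponents, and compatibility of the gluing with the pair-of-pants product (so that $c^*$ comes out a ring map). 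This is exactly where the older literature invoked convexity of $Y$; in the genuinely non-convex setting — where $\omega$ is non-exact at infinity and the fibres of $\Psi$ carry closed holomorphic curves — it must be replaced by the a priori estimates afforded by $\Psi$, and making those estimates interact correctly with the gluing is the delicate part.

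Granting this, \eqref{Equation cNplus maps intro} follows: $c^*_{N^+}$ is the composite of the $N$ period-crossing maps (the in-chamber continuations being isomorphisms), hence equals quantum product by $Q_\Fi^N$. Passing to the direct limit,
\[
SH^*(Y,\Fi)\;=\;\varinjlim\Big(QH^*(Y)\xrightarrow{\,\cdot Q_\Fi\,}QH^*(Y)\xrightarrow{\,\cdot Q_\Fi\,}\cdots\Big)\;=\;QH^*(Y)[Q_\Fi^{-1}]\;=\;QH^*(Y)_{Q_\Fi}.
\]
Finally, because $Y$ deformation retracts onto its compact core, $H^*(Y;\k)$ and hence $QH^*(Y)$ is finite-dimensional over $\k$; therefore the images $\im(Q_\Fi^{\,\cdot n})$ stabilise for $n\gg 0$, the localisation map $QH^*(Y)\to QH^*(Y)_{Q_\Fi}$ is surjective, its kernel is exactly $\{x:Q_\Fi^{\,\cdot n}x=0\text{ for some }n\}=E_0(Q_\Fi)$, and $QH^*(Y)_{Q_\Fi}\cong QH^*(Y)/E_0(Q_\Fi)\cong\im(Q_\Fi^{\,\cdot n})$ for $n\gg 0$. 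Since $c^*=\varinjlim c^*_{N^+}$ is the canonical map into this colimit, it is surjective with kernel $E_0(Q_\Fi)$, which is precisely the asserted description of $SH^*(Y,\Fi)$.
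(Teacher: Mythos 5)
Your proposal is correct and follows essentially the same architecture as the actual proof: this theorem is quoted from the foundational paper \cite{RZ1} rather than proved here, and the argument there (visible in outline in the diagram of the proof of \cref{Lemma borderline Cstar actions}, with the Seidel chain isomorphism, the period-crossing continuation map controlled by the maximum principle via $\Psi$, and \cite[Theorem 7.23]{RZ1}) is exactly the one you describe. The concluding algebra — finite-dimensionality of $QH^*(Y)$ over $\k$ from the compact core, Fitting decomposition identifying the localisation with the quotient by the generalised $0$-eigenspace — is likewise the intended argument.
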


\begin{ex}[{\cite{R16}}]\label{Introduction Example O-k over CPm}
Let $Y$ be the negative line bundle $\mathcal{O}(-k) \to \C P^m$, with the natural $\C^*$-action $\Fi$ on fibres. It is Fano for $1\leq k \leq m$ and Calabi-Yau (CY) for $k=1+m$. In the CY case, $SH^*(Y)=0$. In the Fano case: $QH^*(Y)\cong \k[x]/(x^{1+m}-(-kx)^k T^{1+m-k})$ and its localisation at $Q_{\Fi}=-kx$ is
$$QH^*(Y)_{Q_{\Fi}} \cong SH^*(Y) \cong \k[x]/(x^{1+m-k}-(-k)^kT^{1+m-k}).$$
\end{ex}

In \cite{RZ2}, we constructed a filtration on these Floer complexes, which gives rise to a {\MBF} spectral sequence converging to $SH^*(Y)$ (or, respectively, $HF^*(H_{\lambda})$ if one ignores the columns in the spectral sequence corresponding to slopes larger than $\lambda$). The contributions to the $E_1$-page of the {\MBF} spectral sequences is essentially the ordinary cohomology of the fixed locus $\F = Y^{S^1}=Y^{\C^*}$, and of the {\MB} manifolds $B_{p,\beta}$ of non-constant $1$-periodic Hamiltonian orbits of $H_{\lambda}$ corresponding to $S^1$-orbits of various periods $p=c'(H)$.

\begin{thm}[{\cite{RZ1,RZ2}}]\label{Theorem 1 filtration}
$QH^*(Y)$ is filtered by graded ideals $\FF^{p}$ ordered by  $p\in \R\cup\{\infty\}$,
\begin{equation}\label{Equation introduction filtration}
\FF^{p} :=\bigcap_{\mathrm{generic}\,\lambda\geq p} \left(\ker c_{\lambda}^*:QH^*(Y)\to HF^*(H_{\lambda})\right), \qquad \FF^{\infty}:=QH^*(Y),
\end{equation} 
where $c_\lambda^*$ is a Floer continuation map, a grading-preserving $QH^*(Y)$-module homomorphism.
Note:
$$
(\FF^{p}=QH^*(Y)\textrm{ for some }p<\infty) \Leftrightarrow (Q_{\Fi}\in QH^{2\mu}(Y)\textrm{ is nilpotent}) \Leftrightarrow SH^*(Y,\Fi)=0,
 $$
and this holds for example whenever $c_1(Y)=0$.

The filtration is an invariant of $Y$ up to isomorphism of symplectic $\C^*$-manifolds.
The real parameter $p$ is a part of that invariant, and has a geometric interpretation: $x\in \Fil^p$ means that $x$ can be represented as a Floer chain involving non-constant $S^1$-orbits of period $\leq p.$ In terms of the {\MBF} spectral sequence, it means that the class $x$ is eventually killed by the columns of slope at most $p$.

If we vary the choice of $\C^*$-action, we obtain a map
\begin{equation}
\label{filtration functor}
\{\text{contracting } \C^*\textrm{-actions on }Y\} \to \{\R_{\infty}\textrm{-ordered filtrations of }QH^*(Y)\}, \ \ \Fi \mapsto \Fil_\Fi^{p},
\end{equation}
such that powers cause a dilation in the ordering: $\Fil_{\Fi^k}^{p}=\Fil_{\Fi}^{k p}$. 

The filtration also determines a periodic persistence module with a graded periodic barcode, which encodes birth-death phenomena of Floer invariants. So in \eqref{filtration functor}, we could instead map to the periodic persistence modules, and declare the ``distance'' between two actions to be the interleaving distance between persistence modules (equivalently, the bottleneck distance between barcodes).
\end{thm}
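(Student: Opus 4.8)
The plan is to assemble the statement from the structural results already available, so the proof is really a matter of organizing existing ingredients rather than proving something from scratch. First I would establish that each $\FF^p$ is a graded ideal: since each $c_\lambda^*$ is a grading-preserving $QH^*(Y)$-module homomorphism, its kernel is a graded submodule, hence a graded ideal (using that $QH^*(Y)$ is commutative with unit, so submodule = ideal); an intersection of graded ideals over any index set is again a graded ideal, giving $\FF^p$. The nesting $\FF^p \subseteq \FF^{p'}$ for $p \geq p'$ is immediate from the definition as an intersection over $\{\text{generic }\lambda \geq p\} \subseteq \{\text{generic }\lambda \geq p'\}$, and $\FF^\infty = QH^*(Y)$ is by fiat. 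For the equivalence $(\FF^p = QH^*(Y)$ for some $p<\infty) \Leftrightarrow (Q_\Fi$ nilpotent$) \Leftrightarrow (SH^*(Y,\Fi)=0)$: by \cref{Theorem Intro SH as loc of QH}, $c^*$ is localisation at $Q_\Fi$ and factors through each $c_\lambda^*$, so $SH^*(Y,\Fi)=0$ iff every element of $QH^*(Y)$ dies under some $c_\lambda^*$ with $\lambda$ large; combined with \eqref{Equation cNplus maps intro} (that $c^*_{N^+}$ is quantum multiplication by $Q_\Fi^N$) this is equivalent to $Q_\Fi$ being nilpotent, which is exactly $\FF^p = QH^*(Y)$ for $p$ above the top relevant slope. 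The case $c_1(Y)=0$ follows since then $SH^*(Y,\Fi)=0$ is part of the foundational results (the vanishing cited in \cref{Introduction Example O-k over CPm} for the CY case is the prototype).

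Next I would address invariance under isomorphism of symplectic $\C^*$-manifolds. A {\ph} $\C^*$-equivariant symplectomorphism $\phi: Y_1 \to Y_2$ intertwines the $S^1$-actions, hence sends the moment map $H$ to $H$ (up to an additive constant, which is irrelevant), sends admissible Hamiltonians $H_\lambda$ to admissible Hamiltonians of the same slope, and therefore induces canonical isomorphisms on all Floer cohomology groups $HF^*(H_\lambda)$ commuting with continuation maps and compatible with the PSS isomorphisms to $QH^*$. Consequently $\phi^*$ carries $\ker c_\lambda^*$ to $\ker c_\lambda^*$ for the same $\lambda$, hence carries $\FF^p_{\Fi_1}$ to $\FF^p_{\Fi_2}$ for every $p$; in particular the real parameter $p$ is preserved, so it is genuinely part of the invariant. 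The geometric interpretation of $\FF^p$ — that $x \in \FF^p$ iff $x$ is represented by a Floer chain using only $S^1$-orbits of period $\leq p$, equivalently $x$ is eventually killed by columns of slope $\leq p$ in the {\MBF} spectral sequence — is a restatement of the filtration construction from \cite{RZ2}: the columns of the spectral sequence are indexed precisely by the periods $p = c'(H)$, and the kernel of $c_\lambda^*$ on the $E_\infty$-page corresponds to classes supported in columns of slope $\leq \lambda$.

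For the scaling property $\FF^p_{\Fi^k} = \FF^{kp}_{\Fi}$: replacing $\Fi$ by $\Fi^k$ multiplies the $S^1$-moment map $H$ by $k$, hence multiplies all periods of periodic orbits by $k$, so a Hamiltonian of slope $\lambda$ for $\Fi^k$ plays the role of a Hamiltonian of slope $k\lambda$ for $\Fi$; matching up the continuation maps gives the claimed reindexing of the filtration. Finally, the persistence-module / barcode statement: the assignment $p \mapsto QH^*(Y)/\FF^p$ (or $\FF^p$ itself) with the structure maps induced by the inclusions $\FF^{p'}\hookrightarrow \FF^p$ is by construction a persistence module over $\R_\infty$; its periodicity comes from the grading shift in \eqref{Equation cNplus maps intro} (quantum multiplication by $Q_\Fi$ shifts degree by $2\mu$ and shifts slope by the period of $\Fi$), so the module is periodic with the corresponding period, and one then invokes the structure theorem for (periodic) persistence modules of finite type to extract the graded periodic barcode; the interleaving/bottleneck distance statement is then just the standard dictionary. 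The main obstacle — and the only point requiring genuine care rather than bookkeeping — is verifying that the isomorphism-invariance genuinely uses \emph{no} compatibility condition on the auxiliary maps $\Psi$ from \eqref{Equation intro Psi}; one must check that two different choices of $\Psi$ for the same $(Y,\omega,\Fi,I)$ yield canonically identified Floer cohomologies and continuation maps, so that the filtration is intrinsic. This was presumably handled in \cite{RZ1}, and I would cite it, but it is the step where a careless argument could go wrong.
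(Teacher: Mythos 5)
This theorem is stated in the paper as a recollection of results from \cite{RZ1,RZ2}; the paper gives no proof of it, so there is no in-paper argument to compare yours against. Judged on its own, your assembly of the cited structural results is the right outline and is consistent with everything the paper does use downstream: graded ideals from kernels of grading-preserving module maps, the nilpotency/vanishing equivalence via \cref{Theorem Intro SH as loc of QH} and \eqref{Equation cNplus maps intro} (note the kernels $\ker c^*_\lambda$ are nested increasing in $\lambda$, so each $\FF^p$ is really just the kernel at the smallest generic slope above $p$, and surjectivity of $c^*$ closes the loop with $SH^*=0$), and the reindexing $\Fil^p_{\Fi^k}=\Fil^{kp}_{\Fi}$ via the identification of slope-$\lambda$ Hamiltonians for $\Fi^k$ with slope-$k\lambda$ Hamiltonians for $\Fi$. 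You also correctly isolate the one genuinely delicate point, namely that an isomorphism of symplectic $\C^*$-manifolds carries no compatibility of the auxiliary $\Psi$-maps, so invariance requires knowing that the Floer package is independent of the choice of $\Psi$; that is indeed established in \cite{RZ1} and is the right thing to cite rather than re-derive.

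One small slip: in the dilation argument you say that passing from $\Fi$ to $\Fi^k$ ``multiplies all periods of periodic orbits by $k$.'' It divides them by $k$ (the action $\Fi^k$ winds $k$ times as fast, so a $\Fi$-period $p$ becomes a $\Fi^k$-period $p/k$); equivalently the set of non-generic slopes gets rescaled by $1/k$. Your subsequent statement that a slope-$\lambda$ Hamiltonian for $\Fi^k$ plays the role of a slope-$k\lambda$ Hamiltonian for $\Fi$ is the correct one and is what actually yields $\Fil^p_{\Fi^k}=\Fil^{kp}_{\Fi}$, so the conclusion stands; just fix the intermediate sentence so the two halves of the argument point the same way.
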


In \cite{RZ3}, we constructed three versions of $S^1$-equivariant quantum and symplectic cohomology. We append the letter $E^-,E^{\infty},E^+$ to denote these: the version depends on the choice of $\k$-module $E^-H^*(\mathrm{point})=\ku$, $E^{\infty}H^*(\mathrm{point})=\kuu$, $E^+H^*(\mathrm{point})=\mathbb{F}=\kuu/u\ku$, where $\k$ is the Novikov field and $u$ is the equivariant parameter in degree $|u|=2$. The fact that we allow series in $u$, in the power series ring $\ku$ and Laurent series ring $\kuu$, is motivated by the possibility that arbitrarily high powers of $u$ may contribute to the Floer differential, whereas this does not occur for the differential for ordinary equivariant cohomology or for equivariant quantum products. Apart from this $u$-completion, $E^-H^*(Y)$ would be the usual Borel construction of $S^1$-equivariant cohomology, whereas $E^{\infty}H^*(Y)$ and $E^+H^*(Y)$ are motivated by ideas from cyclic homology. The $E^{\infty}$-theory is always the $u$-localisation of the $E^-$-theory, e.g.\;$E^{\infty}H^*(Y)\cong E^-H^*(Y)\otimes_{\ku}\kuu$.

In addition to the $S^1$-action $\Fi_{\theta}$ on $Y$, acting by $w=e^{2\pi i \theta}\in S^1\subset \C^*$, in Floer theory we also have an $S^1$-reparametrisation action on Hamiltonian $1$-orbits $x=x(t):S^1 \to Y$ (the generators of the Floer complex). Thus, after fixing a choice of weight $(a,b)\in \Z^2$, we make $w$ act by
$$
(w\cdot x)(t) = \Fi_{a\theta}(x(t-b\theta)).
$$

\begin{de}\label{Definition free weight}
We call $(a,b)$ a {\bf free weight} if the {\MB} manifolds $B_{p,\beta}$ of non-constant $1$-orbits in $Y$ are never fixed points of the above $S^1$-action.
More explicitly, one looks at the weights $m\in \Z$ of the linearised $S^1$-action at each fixed component $\F_\a\subset \F$; then $(a,b)$ is free precisely if $ma\notin b\Z_{\geq 1}$ for any positive weight $m\geq 1\in \N$ that occurs. For example: $(a,b)\neq (0,0)$ is free if $a\cdot b \leq 0$.
\end{de} 

Classical ordinary and equivariant cohomology can be described as $\k$-modules as follows.

 \begin{lm}[\cite{RZ1,RZ3}]\label{Lemma equivariant formality}
There are non-canonical $\ku$-module isomorphisms
     $$
     E^-QH^*(Y)\cong H^*(Y)[\![u]\!]
     ,\quad
     E^{\infty}QH^*(Y)\cong  H^*(Y)(\!(u)\!)
     ,
     \quad
     E^{+}QH^*(Y)\cong H^*(Y)\otimes_{\k} \mathbb{F},
     $$
and the connected components $\F_\a\subset\F$, viewed as {\MB} manifolds of the moment map $H$, together with their (even) 
{\MB } indices $\mu_\a\geq 0$, determine ordinary cohomology as a $\k$-vector space,
$$
H^*(Y)\cong \oplus H^*(\F_\a)[-\mu_\a].
$$
 \end{lm}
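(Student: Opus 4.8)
The plan is to prove the three $\ku$-module isomorphisms for equivariant quantum cohomology and the $\k$-vector space isomorphism $H^*(Y)\cong \oplus H^*(\F_\a)[-\mu_\a]$ by reducing everything to the latter, and to deduce the latter from the Morse--Bott theory of the moment map $H$ using the conical $\C^*$-action to force a perfect Morse--Bott situation.

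\medskip

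First I would establish the decomposition $H^*(Y)\cong \oplus_\a H^*(\F_\a)[-\mu_\a]$. The moment map $H: Y \to \R$ is a Morse--Bott function whose critical submanifolds are exactly the fixed components $\F_\a$, with even Morse--Bott indices $\mu_\a$ equal to twice the number of negative weights of the linearised $S^1$-action on the normal bundle of $\F_\a$. The key point is that, because the $S^1$-action extends to the {\ph} $\C^*$-action $\Fi$ and $Y$ is a symplectic $\C^*$-manifold, $H$ is bounded below and proper onto its image on sublevel sets, so standard Morse--Bott theory applies despite non-compactness (this is precisely the setting of \cite{RZ1}). Perfectness of this Morse--Bott function follows from the classical \BB\ argument: the attracting cells of the $\C^*$-action stratify $Y$, each stratum being a complex vector bundle over some $\F_\a$ of complex rank $\mu_\a/2$, and the closure-order filtration gives a spectral sequence whose $E_1$-page is $\oplus_\a H^{*-\mu_\a}(\F_\a)$; the parity of the $\mu_\a$ forces all differentials to vanish, so the spectral sequence degenerates and one gets the claimed additive splitting (non-canonically — one must choose splittings of the extension problems). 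I would cite \cite{RZ1} for the fact that the relevant hypotheses hold in our class of spaces, and note that the $\mu_\a$ are even by the {\ph} nature of $\Fi$.

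\medskip

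Next I would handle the equivariant statements. For $E^-QH^*(Y)$: by construction its underlying $\ku$-module is the $u$-completion of the Borel-type equivariant Floer complex, which as a complex computes (after the usual spectral-sequence collapse) $H^*(Y)\otimes_\k \ku \cong H^*(Y)[\![u]\!]$. The crucial input is that the equivariant Floer differential, unlike the general $u$-completed differential appearing at chain level, does not raise $u$-degree on classical (as opposed to quantum) cohomology — the moment-map Morse--Bott complex with its $S^1$-equivariant enhancement is formal in the sense that its differential is $u$-linear, so $E^-QH^*(Y)$ is the cohomology of $(C^*(Y)[\![u]\!], d\otimes \mathrm{id})$, giving $H^*(Y)[\![u]\!]$; again choosing a splitting makes the isomorphism explicit but non-canonical. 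The $E^\infty$ statement follows immediately since $E^\infty$ is by definition the $u$-localisation $E^- \otimes_{\ku}\kuu$, and localisation is exact, so $E^{\infty}QH^*(Y)\cong H^*(Y)[\![u]\!]\otimes_{\ku}\kuu \cong H^*(Y)(\!(u)\!)$. The $E^+$ statement follows from the short exact sequence $0\to u\ku \to \kuu \to \mathbb{F}\to 0$ of $\ku$-modules: tensoring with the flat (free) $\ku$-module $H^*(Y)[\![u]\!]$ and taking cohomology, together with the collapse already established, yields $E^{+}QH^*(Y)\cong H^*(Y)\otimes_\k \mathbb{F}$.

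\medskip

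The main obstacle is the formality/collapse input for the equivariant theory: one must verify that the $S^1$-equivariant Floer (equivalently, equivariant Morse--Bott) differential on the \emph{classical} part is $u$-linear with no higher-$u$ corrections, so that the $E^-$-complex really is $(C^*(Y)[\![u]\!], d)$ rather than something with a genuinely $u$-twisted differential. This is exactly where the hypotheses behind \cite{RZ3} — the even parity of the Morse--Bott indices and the contracting $\C^*$-action controlling the moduli of equivariant Floer trajectories — do the work; I would isolate this as a lemma and either cite the relevant statement from \cite{RZ3} or reprove it by the \BB\ degeneration argument above, noting that the same parity obstruction that kills the ordinary differentials kills the equivariant ones. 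The remaining steps (localisation exactness, the long exact sequence for $E^+$, and choosing splittings) are routine homological algebra over $\ku$, and I would not belabour them.
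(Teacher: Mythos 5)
This lemma is quoted from \cite{RZ1,RZ3}; the present paper contains no proof, so your proposal has to be measured against the argument those references must use. Your overall architecture is the expected one: show the moment map $H$ is a perfect {\MB} function to get $H^*(Y)\cong\oplus H^*(\F_\a)[-\mu_\a]$, establish equivariant formality to get $E^-QH^*(Y)\cong H^*(Y)[\![u]\!]$, then deduce the $E^{\infty}$ and $E^{+}$ statements by $u$-localisation and the short exact sequence $0\to E^-[-2]\stackrel{u}{\to}E^{\infty}\to E^{+}\to 0$; those last two steps are indeed routine once $E^-QH^*(Y)$ is known to be a free $\ku$-module.

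The genuine gap is in your justification of perfection. You claim that ``the parity of the $\mu_\a$ forces all differentials to vanish'' in the \Bb/{\MB} spectral sequence. This is false unless every $H^*(\F_\a)$ is concentrated in even degrees, which is not assumed and fails for several of the spaces the lemma covers (e.g.\ moduli of Higgs bundles, or any example with a fixed component carrying odd cohomology). The differential $d_r$ shifts total degree by $1$ and the column shifts $\mu_\a$ are even, so it maps $H^{k}(\F_\a)\to H^{k+1+\mu_\a-\mu_\b}(\F_\b)$: the degree shift on fixed-component cohomology is odd, but both groups can perfectly well be non-zero simultaneously, so parity kills nothing. The correct input is Frankel/{\AB}: the negative normal bundle of each $\F_\a$ is a complex $S^1$-subbundle with only non-zero weights, so its equivariant Euler class in $H^*_{S^1}(\F_\a)\cong H^*(\F_\a)[u]$ has invertible top $u$-coefficient and is not a zero divisor; hence the equivariant attaching maps are injective, the equivariant {\MB} long exact sequences split, and freeness of each $H^*_{S^1}(\F_\a)$ over $\k[u]$ yields simultaneously equivariant perfection, equivariant formality, and (by reducing mod $u$ or a dimension count) the non-equivariant splitting. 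Note that this one argument also supplies exactly the ``$u$-linearity/collapse'' input you correctly flag as the main obstacle for $E^-QH^*(Y)$; your proposed fix (``the same parity obstruction kills the equivariant differentials'') inherits the same flaw.
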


For each $\karo\in \{ -,\infty,+\}$, there is a canonical map 
$$E^{\karo}c^*:E^{\karo}QH^*(Y) \to E^{\karo}SH^*(Y), \textrm{ where }E^{\karo}c^*=\varinjlim\,\left(E^{\karo}c^*_{\lambda}: E^{\karo}QH^*(Y) \to E^{\karo}HF^*(H_{\lambda})\right),$$
for generic slopes $\lambda \to \infty$.
Just like the kernels of $c^*_{\lambda}$ in \cref{Theorem 1 filtration} typically grow with $\lambda$, the same usually occurs with the kernels of $E^+c_\lambda^*$ (e.g.\;when $Y$ is CY, those kernels are exhausting because $SH^*(Y)=0$ and $E^+SH^*(Y)=0$).
We refined the $E^+$-filtration by such kernels by also taking into account the $u$-action:
for $j\in \Z\cup \{\pm\infty\}$, $p\in [0,\infty]$, define $\EE_{j}^p :=\bigcap_{\mathrm{generic}\,\lambda\geq p} E_{j}^{\lambda}$ where
\begin{equation}\label{Equation E+ filtration}
E_{j\leq 0}^{\lambda}:=\ker (u^{|j|+1} \cdot E^+c_{\lambda}^*), \qquad 
E_{j> 0}^{\lambda}:=u^{j-1}\ker E^+c_{\lambda}^*, \qquad 
E_{-\infty}^{\lambda}:=E^+QH^*(Y), \;
E_{\infty}^{\lambda}:=\{0\}.
\end{equation}
The $\ku$-submodules $\EE_{j}^p\subset E^+QH^*(Y)$ satisfy
$\EE_{j}^p\subset \EE_{j}^{p'}$ for $p<p'$, and $u\EE_{j}^p =  \EE_{j+1}^{p} \subset \EE_{j}^p.$

The $E^-$,\,$E^{\infty}$-theories behave differently: continuation maps become injective, as follows. 
\begin{thm}[Localisation and Injectivity Theorems, {\cite{RZ3}}]\label{Thm intro injectivity thm}
Let $(a,b)$ be a free weight. The following diagram of $\ku$-module isomorphisms commutes,
    $$
\begin{tikzcd}[column sep=0.6in]
E^-QH^*(Y)_u 
 \arrow[r, "(E^-c^*)_u","\cong"'] 
 \arrow[d, "\cong","u\otimes 1"']
 &
 E^{-} SH^*(Y)_u  
\arrow[d, "\cong"',"u\otimes 1"]
\\
E^{\infty}QH^*(Y)
\arrow[r, "E^{\infty}c^*","\cong"'] 
& E^{\infty} SH^*(Y)
\end{tikzcd}
$$
That the $u$-localisation of $c^*\!:E^-QH^*(Y) \to E^-SH^*(Y)$ is an isomorphism (the top row, above), implies in general that $E^-QH^*(Y) \to E^-HF^*(H_{\lambda})$ and $E^-QH^*(Y) \to E^-SH^*(Y)$ are injective.
\end{thm}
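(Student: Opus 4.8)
The plan is to prove the three assertions in turn: first the commutativity of the square, then that the top arrow is an isomorphism, and finally that the injectivity of $E^-c^*_\lambda$ and $E^-c^*$ follows formally from the top arrow being an isomorphism. Commutativity of the square is the easy part: the vertical maps are the canonical inclusions $E^-QH^*(Y)\hookrightarrow E^{\infty}QH^*(Y)$ and $E^-SH^*(Y)\hookrightarrow E^{\infty}SH^*(Y)$ induced by $\ku\hookrightarrow\kuu$ (recall $E^{\infty}$ is by definition the $u$-localisation of $E^-$), and the horizontal maps are $E^{\karo}c^*$; since continuation maps are $\ku$-linear and compatible with the change of coefficient ring $\ku\to\kuu$, the square commutes by naturality of localisation. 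So the content is in the bottom row and the deduction.

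For the bottom row, $E^{\infty}c^*\colon E^{\infty}QH^*(Y)\to E^{\infty}SH^*(Y)$, the strategy I would use is the standard one for $S^1$-equivariant Floer theory with Laurent-series coefficients: a localisation argument. The point is that $E^{\infty}$-Floer cohomology localises to the fixed locus $\F$ of the $S^1$-reparametrisation-plus-$\Fi$ action with weight $(a,b)$. For a \emph{free} weight, \cref{Definition free weight} guarantees that the Morse--Bott manifolds $B_{p,\beta}$ of \emph{non-constant} $1$-orbits contain no such fixed points; hence after inverting $u$ all their contributions to $E^{\infty}HF^*(H_\lambda)$ vanish, and only the constant orbits — i.e.\ the fixed locus $\F\subset Y$ — survive. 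The continuation maps $E^{\infty}c^*_\lambda$ then restrict to isomorphisms on these localised complexes essentially because, after localisation, both source and target compute the same thing (the localised equivariant cohomology of $\F$, with the Floer differential becoming, mod $u$-adically-controlled corrections, the localised equivariant differential), and the continuation maps respect this. Passing to the direct limit over generic $\lambda\to\infty$ gives $E^{\infty}c^*$ is an isomorphism; chasing this back through the commuting square (the vertical maps are injective, being localisations of torsion-free modules, and the left one is an isomorphism) shows $(E^-c^*)_u$ is an isomorphism as claimed. I expect the main obstacle to be precisely the identification of the localised Floer complex with localised equivariant Morse--Bott cohomology of $\F$ uniformly in $\lambda$, i.e.\ controlling the higher-$u$ corrections to the differential and showing they do not obstruct the localisation isomorphism — this is where the $u$-completion in the definition of $E^-$ (allowing arbitrarily high powers of $u$ in the differential) has to be handled with care, and where one needs the free-weight hypothesis to be used in full force rather than just to kill the non-constant orbits at the level of generators.

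Finally, the deduction that $E^-QH^*(Y)\to E^-HF^*(H_\lambda)$ and $E^-QH^*(Y)\to E^-SH^*(Y)$ are injective is a formal consequence: if $x\in E^-QH^*(Y)$ maps to $0$ in $E^-SH^*(Y)$, then its image in $E^{\infty}QH^*(Y)$ maps to $0$ in $E^{\infty}SH^*(Y)$ under the (bottom row) isomorphism; since the vertical map $E^-QH^*(Y)\to E^{\infty}QH^*(Y)$ is the localisation $u\otimes 1$ and $E^-QH^*(Y)\cong H^*(Y)[\![u]\!]$ is $u$-torsion-free by \cref{Lemma equivariant formality}, that vertical map is injective, forcing $x=0$. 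The same argument with $E^-HF^*(H_\lambda)$ in place of $E^-SH^*(Y)$ works verbatim, using that $E^-c^*$ factors through $E^-c^*_\lambda$ and that the analogous $u$-localised statement $E^-HF^*(H_\lambda)_u\cong E^{\infty}HF^*(H_\lambda)$ holds with the target torsion-free, so injectivity of $E^-c^*_\lambda$ after localisation descends to injectivity before localisation. I would present the formal deduction as a short lemma on $u$-torsion-free modules so that it can be reused, and flag that the only nontrivial input is the bottom-row isomorphism established via the localisation argument above.
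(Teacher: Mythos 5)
This theorem is not proved in the present paper at all: it is quoted verbatim from the authors' foundational paper \cite{RZ3}, so there is no in-paper argument to compare yours against. Judged on its own terms, your proposal gets the architecture right. The commutativity of the square and, more importantly, the formal deduction of injectivity are correct and complete: since $E^-QH^*(Y)\cong H^*(Y)[\![u]\!]$ is $u$-torsion-free (\cref{Lemma equivariant formality}), the localisation map $E^-QH^*(Y)\to E^{\infty}QH^*(Y)$ is injective, and composing with the isomorphism $E^{\infty}c^*$ forces $E^-c^*$ to be injective; injectivity of $E^-c^*_{\lambda}$ then follows because $E^-c^*$ factors through it via the canonical map to the colimit. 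One small inaccuracy: you assert that \emph{both} vertical maps are ``localisations of torsion-free modules''; $E^-SH^*(Y)$ is in general \emph{not} $u$-torsion-free (its $u$-torsion $T$, $T_{\lambda}$ is precisely what drives the $E^-$-filtration later in the paper), so the unlocalised map $E^-SH^*(Y)\to E^{\infty}SH^*(Y)$ need not be injective. This does not damage your argument, which only uses injectivity on the quantum side, but the claim as written is false.

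The substantive gap is that the bottom-row isomorphism — the actual Localisation Theorem — is only a plan, not a proof. You correctly identify the mechanism (a free weight forces the $S^1$-action on the Morse--Bott manifolds $B_{p,\beta}$ of non-constant orbits to have no fixed points, so their contributions become $u$-torsion and die in the $E^{\infty}$-theory, leaving only the constant-orbit columns) and you correctly locate the danger point (the $u$-completion permits arbitrarily high powers of $u$ in the differential, so one must show the localised spectral sequence / filtration argument still degenerates onto the fixed-locus contribution uniformly in $\lambda$, compatibly with continuation maps). But locating the difficulty is not the same as resolving it: the passage from ``the $E_1$-page contributions of the non-constant columns are $u$-torsion'' to ``$E^{\infty}c^*_{\lambda}$ is an isomorphism'' requires a convergence/completeness argument for the localised filtered complex that you do not supply. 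That argument is the entire content of the cited theorem in \cite{RZ3}, and your proposal should be read as a correct reduction to it rather than as a proof.
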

\noindent In view of this, we constructed the filtration differently for $E^-,E^{\infty}$.
Abbreviate $E^-_{\lambda}:=E^-HF^*(H_{\lambda})$ with its $u$-torsion\footnote{submodule of elements $x$ such that $u^k \cdot x=0,$ for some $k\in\N$} $T_{\lambda},$ and $E^-:=\varinjlim E^-_{\lambda}=E^-SH^*(Y)$ with $u$-torsion $T$. Then:
\begin{de}
For any $p\in [0,\infty)$, we define the {\bf $\mathbf{E^-}$-filtration} on $E^-QH^*(Y)$ by
$$
\FF_{j}^p:=\!\!\!\!\!\!\!\!\bigcap_{\textrm{generic }\lambda\geq p} \!\!\!\!\!\! \ker \left(E^-QH^*(Y) \stackrel{E^-c_{\lambda}^*\;}{-\!\!\!\longrightarrow} E^-_{\lambda}/(T_{\lambda} + u^jE^-_{\lambda})\right), \quad\textrm{so }\FF_{0}^{\lambda}=E^-QH^*(Y),\; \FF_{\infty}^{\lambda}=\ker E^-c_{\lambda}^*,
$$
for $j\in \Z_{\geq 0}$;\, $\FF_j^{\infty}:=\ker (E^-c^*:E^-QH^*(Y)\to E^-/(T+u^j E^-))$; and $\FF_{j}^p:=E^-QH^*(Y)$ for $j\in \Z_{< 0}$.
\end{de}

The $\ku$-submodules $\FF_{j}^p\subset E^-QH^*(Y)$ satisfy
$\FF_{j}^p\subset \FF_{j}^{p'}$ for $p<p'$, $u\FF_j^{p}   \subset \FF_{j+1}^{p} \subset \FF_j^{p}$, and $u\FF_j^{p} =  \FF_{j+1}^{p} \cap u \,E^-QH^*(Y)$. The $E^-$-filtration induces a filtration on $E^{\infty}QH^*(Y)$ which, together with the above $\EE$-filtration on
$E^+QH^*(Y)$, is
consistent with the short exact sequence $0 \to E^-QH^*(Y)[-2]\stackrel{u}{\to}E^{\infty}QH^*(Y)\to E^+QH^*(Y) \to 0$; but we omit this discussion.

There is an equivariant quantum product on $E^-QH^*(Y)$ and $E^{\infty}QH^*(Y)$, but there is usually no product structure on equivariant symplectic cohomology, except for the (free) weights $(a,0)$. In the latter case, the $\ku$-submodules $\FF_j^{p} \subset E^-QH^*(Y)$ are ideals (w.r.t.\;equivariant quantum product).

\begin{de}
For $p\in [0,\infty]$, the {\bf slice dimensions}, {\bf slice series}, and  {\bf filtration polynomial} are
    $$
    d_{j}^{p}:=\dim_{\k} \FF_{j}^{p}/u\FF_{j-1}^{p},
    \qquad
    s_{p}(t) := \sum_{j\geq 0} d_j^{p} t^j \qquad \textrm{ and } \qquad 
    f_{p}(t) := \sum_{j\geq 0} f_j^{p} t^j = \sum_{j\geq 0} (d_j^{p}-d_{j+1}^{p}) t^j.
    $$
\end{de}
By considering the map $E^-c^*_{p^+}: E^-QH^*(Y)\to E^-_{p^+}/T_{p^+}$, where $p^+>p$ is a (generic) slope sufficiently close to $p$, and noting that its Smith normal form over the principal ideal domain $\ku$ will be a finite collection of powers of $u$, called {\bf invariant factors}, we showed that:
\begin{thm}
The $d_{j}^{p}$ and $f_j^p$  
are finite invariants, and they satisfy the following properties:
\begin{align*}
0\leq d_j^{p} \leq  \dim_{\k}H^*(Y),
\qquad\quad
d_j^p \leq d_{j}^{p'} \;&\textrm{ for }\; p\leq p', \qquad\quad
d_j^p \geq d_{j+1}^p,
\\
f_j^p = \#\{\textrm{invariant factors }u^j\textrm{ of } E^-c_{p^+}\}, 
\qquad
 &\textrm{ and } \qquad
d_j^p = \#\{\textrm{invariant factors }u^{\geq j}\textrm{ of } E^-c_{p^+}\}.
\end{align*}
In particular, $f_p$ is always a polynomial, and $s_p$ is a polynomial precisely when $E^-c_{p^+}$ is injective. 
\end{thm}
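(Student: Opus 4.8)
The plan is to reduce every assertion to one Smith normal form computation. Write $N:=\dim_\k H^*(Y)$, so that $E^-QH^*(Y)\cong H^*(Y)[\![u]\!]$ is a free $\ku$-module of rank $N$ by \cref{Lemma equivariant formality}. The first step is to observe that, for fixed $p$, the intersection defining $\FF_j^p$ stabilises at any generic slope $p^+$ just above $p$. Indeed, $E^-_\lambda$ and its continuation maps are locally constant on each interval free of non-generic slopes, and every equivariant continuation map $E^-_\lambda\to E^-_{\lambda'}$ ($\lambda\le\lambda'$) is $\ku$-linear, hence carries $T_\lambda$ into $T_{\lambda'}$ and $u^jE^-_\lambda$ into $u^jE^-_{\lambda'}$, so it induces a map $E^-_\lambda/(T_\lambda+u^jE^-_\lambda)\to E^-_{\lambda'}/(T_{\lambda'}+u^jE^-_{\lambda'})$ compatible with the maps out of $E^-QH^*(Y)$; consequently the kernels appearing in the definition of $\FF_j^p$ increase with $\lambda$ and are constant just above $p$. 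Since $E^-_{p^+}/(T_{p^+}+u^jE^-_{p^+})=(E^-_{p^+}/T_{p^+})/u^j(E^-_{p^+}/T_{p^+})$, this gives
$$
\FF_j^p \;=\; \ker\!\left(E^-QH^*(Y)\;\xrightarrow{\,E^-c_{p^+}\,}\;E^-_{p^+}/T_{p^+}\;\longrightarrow\;\bigl(E^-_{p^+}/T_{p^+}\bigr)/u^j\bigl(E^-_{p^+}/T_{p^+}\bigr)\right).
$$

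Next I would put $E^-c_{p^+}\colon E^-QH^*(Y)\to E^-_{p^+}/T_{p^+}$ into Smith normal form, producing $N$ invariant factors $u^{a_1},\dots,u^{a_N}$ with $0\le a_1\le\cdots\le a_N\le\infty$, where a zero column is recorded as $a_i=\infty$ (so the number of finite $a_i$ is the rank of the image, and some $a_i=\infty$ precisely when $E^-c_{p^+}$ is not injective). In an adapted basis $e_1,\dots,e_N$ of $E^-QH^*(Y)$, the condition $E^-c_{p^+}(x)\in u^j(E^-_{p^+}/T_{p^+})$ reads $u^{a_i}x_i\in u^j\ku$ for all $i$, so
$$
\FF_j^p=\bigoplus_{i=1}^N u^{\max(j-a_i,\,0)}\,\ku\,e_i,\qquad u\FF_{j-1}^p=\bigoplus_{i=1}^N u^{\max(j-a_i,\,1)}\,\ku\,e_i,
$$
whence $d_j^p=\dim_\k\FF_j^p/u\FF_{j-1}^p=\#\{i:a_i\ge j\}$ and $f_j^p=d_j^p-d_{j+1}^p=\#\{i:a_i=j\}$. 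These are exactly the two invariant-factor formulas in the statement, and the remaining assertions follow immediately: $0\le d_j^p\le N$ (so the $d_j^p$, hence the $f_j^p$, are finite); $d_j^p\ge d_{j+1}^p$ is monotonicity of the count; $f_p(t)=\sum_j\#\{i:a_i=j\}\,t^j$ is always a polynomial because only finitely many $a_i$ are finite; and $s_p(t)=\sum_j\#\{i:a_i\ge j\}\,t^j$ is a polynomial exactly when no $a_i$ equals $\infty$, i.e. exactly when $E^-c_{p^+}$ is injective. That the $d_j^p$ and $f_j^p$ are invariants of $(Y,\omega,\Fi,I)$, independent of the Floer data and preserved under isomorphisms of symplectic $\C^*$-manifolds, is inherited from the analogous property of the filtration $\{\FF_j^p\}$ (the equivariant counterpart of \cref{Theorem 1 filtration}).

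The one property not covered above is the monotonicity $d_j^p\le d_j^{p'}$ for $p\le p'$, which I would derive directly from the filtration. Since $\FF_j^p$ is the intersection over a larger family of slopes than $\FF_j^{p'}$, we have $\FF_j^p\subseteq\FF_j^{p'}$ and $u\FF_{j-1}^p\subseteq u\FF_{j-1}^{p'}$; combining the stated structural relations $u\FF_{j-1}^p=\FF_j^p\cap u\,E^-QH^*(Y)$ and $u\FF_{j-1}^p\subseteq\FF_j^p$ yields $\FF_j^p\cap u\FF_{j-1}^{p'}=u\FF_{j-1}^p$, so the inclusion $\FF_j^p\hookrightarrow\FF_j^{p'}$ descends to an injection $\FF_j^p/u\FF_{j-1}^p\hookrightarrow\FF_j^{p'}/u\FF_{j-1}^{p'}$, giving $d_j^p\le d_j^{p'}$. (One might instead compare the invariant factors of $E^-c_{p^+}$ and of $E^-c_{(p')^+}=(\text{induced continuation map})\circ E^-c_{p^+}$ via determinantal ideals; but Cauchy--Binet only yields the weak majorization $\sum_{i\le k}a_i^{(p')}\ge\sum_{i\le k}a_i^{(p)}$, which does not force $\#\{i:a_i^{(p')}\ge j\}\ge\#\{i:a_i^{(p)}\ge j\}$ — for instance $a^{(p)}=(0,2)$ versus $a^{(p')}=(1,1)$ — so the filtration argument is the efficient route.)

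The step I expect to be the real obstacle is justifying that $E^-c_{p^+}$ possesses a genuine, finite Smith normal form. A priori $E^-_{p^+}$, hence $E^-_{p^+}/T_{p^+}$, need not be finitely generated over $\ku$ — this is exactly the ``arbitrarily high powers of $u$ in the differential'' phenomenon emphasised in \PartIII — so one has to invoke the finiteness results there to know that $E^-c_{p^+}$ factors through, and is to be measured against, a finite free $\ku$-module, so that the list $u^{a_1},\dots,u^{a_N}$ is finite and the basis computation above is legitimate. One must also track the zero columns ($a_i=\infty$) carefully, as it is precisely their presence that ties failure of injectivity of $E^-c_{p^+}$ to failure of $s_p$ to be a polynomial.
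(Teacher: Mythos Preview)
Your proposal is correct and follows exactly the approach the paper indicates: the paragraph preceding the theorem explicitly says the result is obtained by putting $E^-c_{p^+}^*: E^-QH^*(Y)\to E^-_{p^+}/T_{p^+}$ into Smith normal form over the PID $\ku$ and reading off the invariant factors, which is precisely what you do. Your explicit basis computation $\FF_j^p=\bigoplus_i u^{\max(j-a_i,0)}\ku\,e_i$, the resulting formulas $d_j^p=\#\{i:a_i\ge j\}$ and $f_j^p=\#\{i:a_i=j\}$, and your filtration argument for $d_j^p\le d_j^{p'}$ using $u\FF_{j-1}^p=\FF_j^p\cap u\,E^-QH^*(Y)$ are all correct and in the spirit of \PartIII, which is where the detailed proof lives (this paper only summarises it).
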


\begin{de}
    Abbreviate $\PP := E^-QH^*(Y)/uE^-QH^*(Y)$. This is filtered, for $j\in \Z$, by 
    $$\PP_j^p:=\FF_{j}^{p}/u\FF_{j-1}^{p}
    =
    \FF_{j}^{p}/(\FF_{j}^{p} \cap u \,E^-QH^*(Y)), \qquad \PP_{\infty}^{p}:=0, \qquad (\textrm{so }\PP_j^p=\PP\textrm{ for }j\leq 0),$$ 
    satisfying $\PP_{j+1}^p\subset \PP_{j}^p$ and $\PP_j^p\subset \PP_j^{p'}$ for $p\leq p'.$
    Define $\mathrm{gr}^p_j\PP:=\PP_j^p/\PP_{j+1}^p$ and $\mathrm{gr}^p\,\PP:=\oplus_{j\geq 0} \mathrm{gr}^p_j\PP$.
\end{de}

\begin{cor}
There is a canonical identification
$\PP\cong QH^*(Y)$ with non-equivariant quantum cohomology. Thus quantum cohomology is filtered by $\PP_j^p$ and its Hilbert-Poincar\'{e} series equals $f_{p}(t)$: 
$$f_j^{p}=\dim_{\k} \PP_j^{p}/\PP_{j+1}^{p} = \dim_{\k}\mathrm{gr}_j^p \PP.$$
\end{cor}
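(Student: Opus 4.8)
The plan is to split the statement into two essentially independent points: a canonical \emph{ring} isomorphism $\PP\cong QH^*(Y)$, and then a one-line dimension count identifying the associated graded of the $\PP_j^p$-filtration with the coefficients of $f_p$. The second point is bookkeeping; the content is in the first.

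\textbf{Step 1: the canonical ring isomorphism $\PP\cong QH^*(Y)$.} I would realise the ``set $u=0$'' operation as a ring map. Recall from \cite{RZ3} that $E^-QH^*(Y)$ is computed by a complex of free $\ku$-modules, obtained by $u$-completing a $u$-deformation $d+ud_1+u^2d_2+\cdots$ of the non-equivariant quantum (PSS/Morse) complex of $Y$, carrying an equivariant quantum product that deforms the ordinary one; reducing this chain-level data modulo $u$ returns precisely the non-equivariant quantum complex with its ordinary differential and product. This yields a ring homomorphism $\pi\colon E^-QH^*(Y)\to QH^*(Y)$ fitting into the tautological (Gysin-type) long exact sequence
\[
\cdots\to E^-QH^{*-2}(Y)\xrightarrow{\;u\;}E^-QH^*(Y)\xrightarrow{\;\pi\;}QH^*(Y)\xrightarrow{\;\delta\;}E^-QH^{*-1}(Y)\to\cdots,
\]
which relates the $E^-$-theory to the non-equivariant one (and can alternatively be read off directly from the Borel chain model). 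By \cref{Lemma equivariant formality}, $E^-QH^*(Y)\cong H^*(Y)[\![u]\!]$ is a free, hence $u$-torsion-free, $\ku$-module, so every connecting map $\delta$ vanishes, the sequence breaks into short exact sequences $0\to E^-QH^{*-2}(Y)\xrightarrow{u}E^-QH^*(Y)\xrightarrow{\pi}QH^*(Y)\to0$, and $\pi$ descends to the claimed \emph{canonical ring isomorphism} $\PP=E^-QH^*(Y)/uE^-QH^*(Y)\xrightarrow{\ \cong\ }QH^*(Y)$. Equivalently one phrases this as base change: $\PP=E^-QH^*(Y)\otimes_{\ku}\k$, and freeness forces $\mathrm{Tor}_1^{\ku}(E^-QH^*(Y),\k)=0$, so this base change is computed by reducing the complex mod $u$, i.e.\ it is $QH^*(Y)$ with its quantum product.

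\textbf{Step 2: the filtration and the Hilbert--Poincar\'e series.} Since $\FF_0^p=E^-QH^*(Y)$ by definition, we get $\PP_0^p=\PP$; together with the inclusions $\PP_{j+1}^p\subseteq\PP_j^p$ recorded in the definition of $\PP_j^p$, this exhibits $(\PP_j^p)_{j\geq0}$ as a decreasing exhaustive filtration of $\PP\cong QH^*(Y)$. By the preceding Theorem each $d_j^p=\dim_{\k}\FF_j^p/u\FF_{j-1}^p=\dim_{\k}\PP_j^p$ is finite, so
\[
\dim_{\k}\mathrm{gr}_j^p\PP=\dim_{\k}\PP_j^p/\PP_{j+1}^p=d_j^p-d_{j+1}^p=f_j^p,
\]
and summing over $j$ identifies $f_p(t)$ with the Hilbert--Poincar\'e series of $\mathrm{gr}^p\PP=\oplus_{j\geq0}\mathrm{gr}_j^p\PP$ in the filtration index $j$ (this also re-explains why $f_p$ is a polynomial: the bounded decreasing integer sequence $d_j^p$ stabilises).

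\textbf{Main obstacle.} The only non-formal ingredient is Step~1's claim that the $S^1$-equivariant quantum complex of \cite{RZ3} reduces mod $u$ to the non-equivariant quantum complex \emph{with its product}, i.e.\ that $\pi$ is multiplicative --- a chain-level/operadic statement about the construction of the equivariant quantum product. In the non-compact setting one should also check that this is compatible with the presentation of $QH^*(Y)$ as a small-slope Floer group (so that ``reduce mod $u$'' commutes with ``take cohomology''), but freeness in \cref{Lemma equivariant formality} kills the potential $\mathrm{Tor}$ obstruction, and right-exactness of $\otimes_{\ku}\k$ handles any colimits that appear. Everything in Step~2 is then immediate from the definitions and the finiteness already established.
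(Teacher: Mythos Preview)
Your proof is correct and follows essentially the approach the paper has in mind: the corollary is stated without proof in the overview, as an immediate consequence of the definitions and the preceding Theorem, and your two steps make precisely that deduction explicit. Step~1 is the standard equivariant-formality argument (freeness of $E^-QH^*(Y)$ over $\ku$ from \cref{Lemma equivariant formality} makes multiplication by $u$ injective, so the Gysin sequence splits and $\PP\cong QH^*(Y)$), and Step~2 is the tautology $\dim_{\k}\PP_j^p-\dim_{\k}\PP_{j+1}^p=d_j^p-d_{j+1}^p=f_j^p$.

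One small remark on scope: you work harder than the statement requires by arguing that $\pi$ is a \emph{ring} homomorphism. The corollary only asserts a canonical identification (of graded $\k$-modules) and a computation of $\k$-dimensions; the paper notes just before the slice-dimension definition that the $\FF_j^p$ are ideals only for weights $(a,0)$, so in general the $\PP_j^p$ are merely $\k$-subspaces. Your multiplicativity claim is true (setting $u=0$ in the equivariant quantum product recovers the ordinary one, as built in \cite{RZ3}), but it is extra information rather than part of what is being proved here.
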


\subsection{Examples of algebraic torus actions on symplectic $\C^*$-manifolds}

We are now interested in situations where a symplectic manifold $(Y,\omega)$ admits two or more commuting $\C^*$-actions that yield the structure of a symplectic $\C^*$-manifold. We first show that there are many classes of such spaces.

\begin{ex}[Equivariant projective morphisms]\label{Example intro CSR have global map} 
Let $Y$ be a variety, with an action by an algebraic torus $\mathbb{T}\cong (\C^*)^d$, admitting a $\mathbb{T}$-equivariant projective morphism $\pi: Y \to X$ to an affine variety. By replacing $X$ with the image of $\pi$ we may assume that $\pi$ is surjective. We assume that $Y$ is non-compact, equivalently $X\neq \mathrm{point}.$

Suppose first that the $\mathbb{T}$-action on $X$ is contracting.\footnote{By this, we mean that there is a unique fixed point, denoted $0\in X$, and an isomorphism $\mathbb{T}\cong (\C^*)^d$ such that each $\C^*$-factor contracts $X$ to $0$, by acting with $t\in \C^*$ and letting $t\to 0$. Algebraically: the coordinate ring $\C[X]$ is $\N^d$-graded with $\C[X]_0=\C$.}
We then show that $Y$ is a quasi-projective variety admitting a K\"{a}hler form $\omega$, such that the maximal compact subgroup $\mathbb{T}_{\R}\subset \mathbb{T}$ yields 
an $\omega$-Hamiltonian action on $Y$.
The $\C^*$-actions $\Fi_v$ induced by $1$-parameter subgroups $v:\C^* \to \mathbb{T}$ are labelled by a lattice, $N:=\mathrm{Hom}(\C^*,\mathbb{T})\cong \Z^d$.
Those that are contracting define a semigroup $N_+\subset N$, containing\footnote{it may be larger, e.g. if $\mathbb{T}=(\C^*)^2=\C^* \times \C^*$, acting on $\C^2$ with weights 
$(1,2)$ and $(1,3)$ for the first and second $\C^*$-factor respectively, then $N_+=\{(m,n)\in \Z^2:m+n>0, \ 2m+3n>0\}\supsetneq \N_0^2\setminus {0}$.
} 
$\N_0^d\setminus \{0\}$. We show that $(Y,\omega,\Fi_v)$ is a symplectic $\C^*$-manifold for any $v\in N_+,$ in particular we construct an explicit $\Psi$-map as in \eqref{Equation intro Psi}.
Two such classes of examples, discussed in \cite{RZ1}, are: Conical Symplectic Resolutions (CSRs) and crepant resolutions of quotient singularities.

Now suppose the $\mathbb{T}$-action on $X$ is not contracting. Then $N_+\subset N\cong \Z^d$ may not contain $\N^d\setminus \{0\}$. Provided $N_+\neq \emptyset$, there will be a non-trivial algebraic subtorus $\mathbb{T}'\subset \mathbb{T}$ whose action on $X$ is contracting (one can make $\mathbb{T}'$ maximal with respect to inclusion, but it is typically not unique). In any case, the previous discussion applies to the $\mathbb{T}'$-equivariant projective morphism $\pi: Y \to X$.
\end{ex}

\begin{ex}
[\Kh/GIT quotients of $\C^n$]
\label{Example intro GIT}
An instance of \cref{Example intro CSR have global map} are
\KH reductions of unitary actions
$K \subset U(V)$ on a complex vector space $V$, or equivalently 
(by the Kempf--Ness theorem) GIT quotients of the complexified actions %
$K_\C=:G \subset GL(V)$.
By the GIT picture, they always come with a projective morphism
$\pi: Y= V\GIT_{\chi} G \fun V \GIT_0 G = X $ to the affine GIT quotient, where $\chi:G\fun \C^*$ is a character.
The scalar matrices $\C^* \iso S \leq GL(V)$ cannot be a subgroup of $G$, as otherwise
$X$ would be a single point,\footnote{Given any $z\in V,$ 
the closure of its orbit $G z \supset S z$ would contain the origin ($\lim_{s\fun 0} s \cdot z =0$), 
thus by definition
$[z]=[0]$ in the GIT quotient $X.$} which we disallow.
Hence, there is an induced $\pi$-equivariant action of $S$ on $Y$ and $X$, which is contracting.\footnote{$\lim_{s\fun 0} s \cdot [z]=
\lim_{s\fun 0} [s\cdot z]=[0].$}
When $G$ is a torus, this gives rise to one of several equivalent definitions of semiprojective toric manifolds.
\end{ex}

\begin{ex}[CSRs] We discussed Conical Symplectic Resolutions (CSRs) extensively in \cite{RZ1}. A CSR, say $Y$, typically contains many different \CC-actions, arising from compositions $\Fi:=\phi^k \circ G$ of a canonically given contracting \CC-action $\phi$ and any $1$-parameter \CC-subgroup $G$ %
of a maximal torus $T \leq \mathrm{Symp}_{\phi}(Y,\om_\C)$ of symplectomorphisms commuting with $\phi$. In most examples,\footnote{This is always the case for quiver varieties, hypertoric varieties, and resolutions of Slodowy varieties, which constitute the main examples of CSRs.}
this torus is non-trivial. 
Subgroups $G$ for which $\Fi$ is contracting, for fixed $k$, constitute a convex subset $K_k$ %
of the lattice $\Lambda=\mathrm{Hom}(\C^*,T).$ 
For CSRs, quantum product is ordinary cup product, so \eqref{filtration functor} determines a $K_k$-labelled family of filtrations on $H^*(Y)$ by cup-product ideals,
where $\cup_{k\in\N} K_k=\Lambda$ exhausts the whole lattice. 
We discussed this for $Y=T^*\CP^n$ explicitly in \cite{RZ2}.
\end{ex}

\begin{ex}[$A_2$-singularity]\label{Example running example of intro}
CSRs of the lowest dimension are ADE resolutions.\footnote{Minimal resolutions of quotient singularities $M\fun \C^2/\Gamma,$ where $\Gamma \leq SL(2,\C)$ is a finite subgroup.} Let us consider the $A_2$ case, %
the minimal resolution $\pi:M \fun \C^2/(\Z/3)$ of the quotient singularity for the action $(x,y)\mapsto (\zeta x,\zeta^{-1}y)$ of third roots of unity $\zeta$ on $\C^2$. Embedding $\C^2/(\Z/3)\hookrightarrow \C^3$, $[x,y]\mapsto (x^3,y^3,xy)$, the resolution $M $ arises from blowing up the image variety $V(XY-Z^3)\subset \C^3$ at $0$. 
The core $\mathfrak{L}=\pi^{-1}(0)=S^2_1 \cup S^2_2$ consists of two copies of $S^2$ intersecting transversely at a point $p$.
Two natural commuting $\C^*$-actions are obtained by lifting via $\pi$ the following actions\footnote{The respective $\Psi$-maps are
$\Psi=(X^2,Y,Z^2): M \to \C^3$ and $\Psi=(X,Y^2,Z^2)$, using a weight $2$ diagonal $\C^*$-action on $\C^3$. For the composite action (a), $\Psi=(X^2,Y^2,Z^3)$ for the weight $6$ diagonal $\C^*$-action on $\C^3$.} on $V(XY-Z^3)\subset \C^3$:
\\
(b) %
$(tX,t^2Y,tZ)$; $\F=S_1^2 \sqcup p_2$, where $S_1^2=\F_{\min}$.
\\
(c) %
$(t^2X,tY,tZ)$; $\F=p_1\sqcup S^2_2$, where $S_2^2=\F_{\min}$.
\\
This yields an action by a torus $T:=(\C^*)^2$. The diagonal $\C^*$-subgroup in $T$ yields the classical McKay $\C^*$-action on $M$ induced by $(x,y)\mapsto (tx,ty)$, so lifting the following action:\\
(a) %
$(t^3 X, t^3Y, t^2Z)$; $\F=p_1 \sqcup p \sqcup p_2$ (3 points), where $p=\F_{\min}$ and $p_i\in S^2_i$.
\\
Even when two actions have the same fixed loci with the same {\MB} indices, the $\R$-ordering by periods can distinguish the filtrations: let $\Fi_1$ correspond to $(-1,3) \in \Z^2 \cong \mathrm{Hom}(\C^*,T)$, and $\Fi_2$ to $(-1,4)$,
then we find that: $\F=p_1 \sqcup p \sqcup p_2$ with {\MB} indices $2,2,0$ respectively,
$\Fil_{1/5}^{\Fi_1}= \k p_1$, $\Fil_{2/5}^{\Fi_1}=\k p_1 \oplus \k p$, whereas $\Fil_{1/7}^{\Fi_2}= \k p_1$, and $\Fil_{2/7}^{\Fi_2}=\k p_1 \oplus \k p$ (see \cite[Example 1.49]{RZ1}). 
\end{ex}

\begin{ex}[Toric varieties]\label{Example intro toric varieties}
By definition, a toric variety $Y$ contains an open dense algebraic torus $T\cong (\C^*)^n$, whose action extends to the whole space. Any $1$-parameter subgroup $\C^*\to T$ defines a holomorphic $\C^*$-action. 
Recall that a toric manifold can be described by combinatorial data: a moment polytope $$\Delta=\{x\in \R^n:\langle x,e_i\rangle \geq \lambda_i \},$$ 
which is also the image of the moment map $\mu:Y \to \Delta$ for the $(S^1)^n$-action. Abbreviate $z^{e_i}:=z_1^{e_{i,1}}\cdots z_n^{e_{i,n}}$, where $e_i=(e_{i,1},...,e_{i,n})\in \Z^n$ are the minimal ray generators of the fan, i.e.\;the primitive inward normals for the facets $\{x:\langle x,e_i\rangle = \lambda_i\}$ of $\Delta$. The \emph{Landau--Ginzburg superpotential} is $W:=\sum T^{-\lambda_i} z^{e_i}$, where $T$ is the formal variable that defines the Novikov ring $\k$, and its Jacobian ring is
$$\mathrm{Jac}(W):=\k[z_i^{\pm 1}]/(\partial_{z_i} W).$$
A classical result due to Batyrev and Givental states that for closed Fano toric manifolds $M$, we have isomorphism $QH^*(M)\iso \mathrm{Jac}(W)$, via $\mathrm{PD}[D_i]\mapsto T^{-\lambda_i}z^{e_i}$. The derivatives $\partial_{z_i}W$ are the \emph{linear relations} mentioned in \cref{Introduction Motivation}, whereas the \emph{quantum Stanley-Reisner relations} are automatic in $\mathrm{Jac}(W)$ because they generate the $\Z$-linear relations between the edges $e_i\in \Z^n$.
For non-compact Fano toric manifolds $Y,$ the first author showed that this no longer holds, rather it is symplectic cohomology which recovers the Jacobian ring \cite{R16},
$$
SH^*(Y) \cong \mathrm{Jac}(W).
$$
The natural Hamiltonian $S^1$-rotations $\Fi_i$ around the toric divisors $D_i=\{y\in Y:\langle \mu(y),e_i\rangle = \lambda_i \}\subset Y$ yield well-defined classes $Q_{\Fi_i} \in QH^2(Y)$, which become invertible in $SH^*(Y,\Fi)$ via $c^*$. Explicitly,
$$c^*:QH^*(Y)\to SH^*(Y)\cong \mathrm{Jac}(W), \quad \mathrm{PD}[D_i]=Q_{\Fi_i}  \mapsto c^*Q_{\Fi_i} \mapsto T^{-\lambda_i} z^{e_i}.$$  
 Also $D_i:=\min H_i$ for the Hamiltonian $H_i$ generating the $S^1$-action $\Fi_i$, so it is part of $\mathrm{Fix}(\Fi_i)\subset Y$.
These results from \cite{R16} required harsh assumptions on the non-compact Fano toric manifold $Y$: \cite{R16} assumed that $Y$ is convex (so $\Psi$ in \eqref{Equation intro Psi} is a symplectic isomorphism) and that all $\Fi_i$ essentially agree with the Reeb flow at infinity (the actual condition is slightly milder). These conditions hold for all negative line bundles over closed toric manifolds whose total space is Fano (e.g.\,\cref{Introduction Example O-k over CPm}). 
However, it was unclear how many more interesting classes of examples satisfy those harsh conditions, and whether the results persist without those conditions. The main application of this paper is to lift all the restrictive conditions above, so that this holds for all Fano semiprojective toric manifolds.  
\end{ex}

\begin{ex}[Semiprojective toric manifolds]\label{Example def semiprojective toric mfds intro}
A {\bf semiprojective toric variety} is a quasi-projective toric variety defined by a fan $\Sigma\subset N\cong \Z^n$ whose support $|\Sigma|\subset N_{\R}\cong \R^n$ (the union of all cones) is convex and of dimension $n$. One can then build a $\mathbb{T}$-equivariant projective morphism $Y \to X$ as in \cref{Example intro CSR have global map}, where $X$ is a normal affine toric variety whose torus $T$ contracts $X$ to a point. The map $\pi$ is induced by a quotient map $\mathbb{T} \to \mathbb{T}/S \cong T$ between those tori (which are open dense subsets of $Y,X$). 
We wish to avoid the case $X=\mathrm{point}$, i.e.\;we assume $Y$ is non-compact: equivalently, $|\Sigma|\neq N_{\R}$. We also want $Y$ to be non-singular: equivalently, each cone of $\Sigma$ is generated by part of a $\Z$-basis of the lattice $N$. We then call $Y$ a {\bf semiprojective toric manifold}.

For a picture, see \cref{Example toric intro}: there, $\mathbb{T}=(\C^*)^2$, the first $\C^*$-factor is the usual $\C^*$-action on $\C$; and the second $\C^*$-factor $S\subset \mathbb{T}$ is the toric $\C^*$-action on $\P^1$ lifted to the blow-up $Y$, which is non-contracting and it acts trivially on $X=\C$. Thus $T=\mathbb{T}/S$ is the contracting $\C^*$-action for $X=\C$.
\end{ex}

\begin{ex}[New from old]
A symplectic $\C^*$-manifold $Y$ gives rise to new symplectic $\C^*$-manifolds by considering $\C^*$-invariant properly embedded $I$-{\ph} submanifolds, or by taking blow-ups of compact subvarieties for which the $\C^*$-action lifts to the blow-up. In the presence of commuting $\C^*$-actions, we require those conditions (invariance, resp. lifting) to hold for each $\C^*$-action.
\end{ex}

\subsection{Equivariant projective morphisms}\label{Subsection Equivariant projective morphisms}
Continuing \cref{Example intro CSR have global map}, let $\pi:Y\to X$ denote any $\mathbb{T}$-equivariant projective morphism, where $Y$ is a non-singular variety and $X$ is affine. Replacing $X$ with the (closed) image of $\pi$, we may assume that $\pi$ is surjective.
The lattice of $1$-parameter subgroups, 
$$N:=\mathrm{Hom}(\C^*,\mathbb{T})\cong \Z^d,$$
determines $\C^*$-actions $\Fi_v$ on $Y$, labelled by $v\in N$. 

We call {\bf complete actions}, those  $\Fi_v$ for which convergence points $y_0:=\lim_{\C^*\ni t \to 0} t\cdot_{\Fi_v} y$ exist for all $y\in Y$; they determine a semigroup $N_0(Y)\subset N$. We call {\bf contracting actions} those complete actions whose \CC-fixed locus is compact; they determine a semigroup $N_+(Y)\subset N_0(Y) \subset N$.

For affine varieties $X\subset \C^m$ (and using the Euclidean/analytic topology)
we obtain $N_+(X)\subset N_0(X) \subset N$. We will determine these in terms of specific conditions on the grading of the coordinate ring by characters, as shown below. Namely, the $\mathbb{T}$-action determines a grading $\C[X]=\oplus_{w\in M} \C[X]_w$ by characters $M=\mathrm{Hom}(\mathbb{T},\C^*)\cong \Z^d$, and the conditions are expressed in terms of the natural pairing 
$$\langle  \cdot,\cdot  \rangle:N\times M := \mathrm{Hom}(\C^*,\mathbb{T})\times \mathrm{Hom}(\mathbb{T},\C^*)\to \mathrm{Hom}(\C^*,\C^*)\cong \Z.$$
We obtain various combinatorial descriptions of which actions are complete and which are contracting. The simplest one to state is the following, although it should be noted that $N_0(Y),N_+(Y)$ do not depend on the choice of generators $f_{w^j}$ in the statement.

\begin{lm}\label{Lemma description of Nplus and Nzero}
For $\pi: Y \to X$ as above: 
$N_+(Y)=N_+(X)$, $N_0(Y) = N_0(X)$, so we just write $N_+,N_0$.

More explicitly, let $f_{w^1},\ldots,f_{w^{\gens}}$ be non-constant $\mathbb{T}$-homogeneous generators for the coordinate ring $\C[X]=\oplus_{w\in M} \C[X]_w$ as a unital $\C$-algebra, so $f_{w^j}\in \C[X]_{w^j}$. Then:
\begin{equation*}
    \begin{split}
N_0  &= 
\{v\in N: \langle v,w^j \rangle \geq 0 \textrm{ for }j=1,\ldots,{\gens} \},
\\
N_+  & =
\{v\in N: \langle v,w^j \rangle > 0 \textrm{ for }j=1,\ldots,{\gens}\}.
\end{split}
\end{equation*}
We remark that if $N_+\neq \emptyset$, then all $w^j\neq 0$ as $\C[X]_0=\C$ (the constants), and that $\Fi_v$ contracts $X$ to the unique $\mathbb{T}$-fixed point in $X$ for all $v\in N_+$.
\end{lm}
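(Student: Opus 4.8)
The plan is to reduce everything to the affine base $X\subset\C^m$ and then translate the condition ``$\lim_{t\to0}t\cdot_{\Fi_v}x$ exists for all $x$'' into a statement about the character grading of $\C[X]$. First I would prove the equality $N_0(Y)=N_0(X)$ and $N_+(Y)=N_+(X)$ using properness of $\pi$: since $\pi$ is proper and $\mathbb{T}$-equivariant, a one-parameter orbit $t\mapsto t\cdot_{\Fi_v}y$ in $Y$ converges as $t\to0$ if and only if its image $t\mapsto t\cdot_{\Fi_v}\pi(y)$ converges in $X$ (the valuative criterion of properness gives the lift of the limit point, and $\mathbb{T}$-equivariance makes the lift $\C^*$-fixed). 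This gives $N_0(Y)=N_0(X)$; for $N_+$ one additionally observes that the $\C^*$-fixed locus $Y^{\Fi_v}$ is proper over $X^{\Fi_v}$, so one is compact iff the other is, and — since $X$ is affine with a contracting-type action when $v\in N_0$ with all weights positive — $X^{\Fi_v}$ is then a (finite set of) point(s), hence compact; conversely if some weight is $\le 0$ the fixed locus of $X$ is non-compact and pulls back to a non-compact fixed locus in $Y$ (here one uses that $\pi$ is surjective with the fibre dimension controlled). This step is mostly formal once properness is invoked.

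Next I would compute $N_0(X)$ and $N_+(X)$ directly. Embed $X$ $\mathbb{T}$-equivariantly into affine space by the chosen homogeneous generators: the map $x\mapsto(f_{w^1}(x),\dots,f_{w^{\gens}}(x))$ is a closed $\mathbb{T}$-equivariant embedding $X\hookrightarrow\C^{\gens}$, where $\mathbb{T}$ acts on the $j$-th coordinate of $\C^{\gens}$ through the character $w^j$. For $v\in N$, the action $\Fi_v$ on this ambient $\C^{\gens}$ is diagonal with weights $\langle v,w^j\rangle$, so $t\cdot(z_1,\dots,z_{\gens})=(t^{\langle v,w^1\rangle}z_1,\dots,t^{\langle v,w^{\gens}\rangle}z_{\gens})$. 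The orbit of any point converges in $\C^{\gens}$ as $t\to0$ precisely when every exponent with nonzero coordinate is $\ge0$; since $X$ is cut out by $\mathbb{T}$-homogeneous equations and is closed, the limit in $\C^{\gens}$ automatically lies in $X$, so convergence in $X$ is equivalent to convergence in $\C^{\gens}$. Taking the worst case over all points of $X$ — and using that each $f_{w^j}$ is non-constant, hence not identically zero, so the $j$-th coordinate is nonzero somewhere on $X$ — this forces $\langle v,w^j\rangle\ge0$ for all $j$; conversely those inequalities clearly suffice. This gives the description of $N_0$. For $N_+$: when all $\langle v,w^j\rangle>0$, every orbit converges to the origin of $\C^{\gens}$, which corresponds to the point of $X$ where all non-constant functions vanish, i.e.\ the unique $\mathbb{T}$-fixed point; hence $X^{\Fi_v}$ is a single point and $v\in N_+$. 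Conversely, if some $\langle v,w^j\rangle=0$, then the locus in $X$ where that coordinate is a nonzero constant and the others (with positive weight) vanish is a nonempty closed $\Fi_v$-fixed subset disjoint from the vertex, so $X^{\Fi_v}$ is disconnected/non-compact and $v\notin N_+$; this also handles the ``$w^j\ne0$'' remark, since $\C[X]_0=\C$ means no non-constant generator can have weight $0$ once $N_+\ne\emptyset$.

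Finally I would record the independence statements: $N_0$ and $N_+$ are intrinsic (they are defined purely in terms of which orbits converge in $Y$), so although the computation uses a choice of generators $f_{w^j}$, the resulting half-space descriptions must agree for any two choices — concretely, any generating set's weights generate the same cone as any other, because a new generator is a polynomial in the old ones and hence its weight lies in the $\N_0$-span of the old weights, and vice versa, so the defining inequalities cut out the same $N_0$ and the same $N_+$. The main obstacle I anticipate is the first step — justifying $N_0(Y)=N_0(X)$, i.e.\ that convergence of orbits upstairs is equivalent to convergence downstairs — which is exactly where properness of $\pi$ does the real work (in the analytic topology one wants a sequential/valuative argument, and one must be a little careful that the limit point upstairs is genuinely $\C^*$-fixed and not merely a limit along a subsequence); the remaining computations on $\C^{\gens}$ are routine monomial bookkeeping.
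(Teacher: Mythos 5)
Your overall strategy matches the paper's: reduce to the affine base via properness of $\pi$, then read off $N_0(X)$, $N_+(X)$ from the weight grading via the equivariant embedding $X\hookrightarrow\C^{\gens}$. Your use of the valuative criterion to lift limit points from $X$ to $Y$ is a legitimate alternative to the paper's argument (the paper instead traps the orbit in a compact set $\pi^{-1}(C)$ and runs a gradient-flow convergence argument, explicitly noting that one could invoke properness in analytic geometry instead). The computation of $N_0(X)$ is fine.

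There is, however, a genuine gap in your characterisation of $N_+(X)$, in the direction ``some $\langle v,w^j\rangle=0$ $\Rightarrow$ $v\notin N_+$''. You exhibit a nonempty $\Fi_v$-fixed subset of $X$ disjoint from the vertex and conclude that $X^{\Fi_v}$ is ``disconnected/non-compact''. Neither conclusion follows: $N_+$ is defined by \emph{compactness} of the fixed locus, and a fixed locus consisting of finitely many points, some away from the vertex, would be perfectly compact (so $v$ would still lie in $N_+$); disconnectedness is irrelevant and in any case not established by producing one extra closed fixed subset. What is actually needed — and what the paper proves in \cref{Cor description of complete and contracting actions} — is the dichotomy: for $\psi$ complete, $\C[\mathrm{Fix}(\psi)]\cong\C[X]_0$ (the weight-zero part), and if $X$ is connected then $\mathrm{Fix}(\psi)$ is connected (no nontrivial idempotents in $\C[X]_0$), so $\mathrm{Fix}(\psi)$ is either a single point or a positive-dimensional affine variety, and the latter is non-compact in the Euclidean topology (Noether normalisation). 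A zero weight gives a non-constant function in $\C[X]_0$, forcing the second alternative. Your argument never uses connectedness of $X$, which is exactly the hypothesis that rules out the finite-but-not-a-point scenario. A secondary imprecision: your ``$Y^{\Fi_v}$ compact iff $X^{\Fi_v}$ compact'' needs $\pi(\mathrm{Fix}_Y(\Fi_v))=\mathrm{Fix}_X(\Fi_v)$; the reason is not ``fibre dimension controlled'' but that each fibre over a fixed point is a non-empty projective variety, on which a $\C^*$-action always has a fixed point (cf.\ \eqref{Equation fixed loci relation}).
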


\begin{rmk}[Contracting actions]\label{Remark contracting actions}
Let $Y$ be any non-compact symplectic manifold, with a choice of compatible almost complex structure. Let $\psi$ be any $\C^*$-action on $Y$, whose $S^1$-part is Hamiltonian with moment map $H_{\psi}$ say. As explained in \cite[Sec.3.1]{RZ1}, $\psi_{e^{2\pi t}}$ is the time $t\in \R$ flow of $\nabla H_{\psi}$; the fixed locus satisfies $\mathrm{Fix}(\psi)=\mathrm{Crit}(H_{\psi})$; and the convergence point $y_0$ above exists precisely if the $-\nabla H_{\psi}$ flow converges to $y_0$. An equivalent definition for $\psi$ to be {\bf contracting} is the existence of some compact subdomain $Y^{\mathrm{in}}\subset Y$ such that the $-\nabla H_{\psi}$ flow of any point $y\in Y$ will eventually land in $Y^{\mathrm{in}}$. Another equivalent condition, is that $H_{\psi}$ is bounded below and $\mathrm{Fix}(\psi)$ is compact.

The existence of the $\Psi$-map \eqref{Equation intro Psi} forces the action $\Fi$ of a symplectic $\C^*$-manifold to automatically be contracting. Thus we are especially interested in $N_+$, because contracting actions might yield a symplectic $\C^*$-structure on $Y$, with a typically new $\Psi$-map.
\end{rmk}

\begin{cor}
Any $v\in N_+$ induces on $(Y,\omega,\Fi_v)$ the structure of a symplectic $\C^*$-manifold. In particular, by \cref{Theorem Intro SH as loc of QH}, the canonical homomorphism is a surjection,
$$
c^*:QH^*(Y) \to SH^*(Y,\Fi_v) \cong QH^*(Y)_{Q_{\Fi_v}},
$$
which equals localisation at $Q_{\Fi_v}\in QH^{2\mu_v}(Y)$ where $\mu_v$ is the Maslov index of the $S^1$-action by $\Fi_v$.
\end{cor}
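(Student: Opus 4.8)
The plan is to reduce everything to the first sentence: the displayed ``in particular'' clause is a pure consequence of \cref{Theorem Intro SH as loc of QH}, so once we know that $(Y,\om,\Fi_v)$ is a symplectic $\C^*$-manifold for $v\in N_+$, that theorem applies verbatim and identifies $c^*$ with localisation at the rotation class $Q_{\Fi_v}\in QH^{2\mu_v}(Y)$. Thus the whole task is to verify the four ingredients in the definition of a symplectic $\C^*$-manifold for the action $\Fi_v$: (i) a symplectic form $\om$ on $Y$; (ii) that the $S^1$-part of $\Fi_v$ is Hamiltonian for $\om$; (iii) an $\om$-compatible almost complex structure for which $\Fi_v$ is pseudoholomorphic; and (iv) a proper pseudoholomorphic map $\Psi$ as in \eqref{Equation intro Psi} carrying the Hamiltonian $S^1$-vector field to the Reeb vector field up to rescaling by a nonzero constant.

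Ingredients (i)--(iii) I would obtain directly from \cref{Example intro CSR have global map} (see also \cref{Example intro GIT}): $Y$ is quasi-projective and carries a $\mathbb{T}$-invariant \KH form $\om$, with its integrable complex structure $I$, for which the maximal compact subtorus $\mathbb{T}_{\R}\subset\mathbb{T}$ acts in a Hamiltonian fashion. Restricting to the circle subgroup determined by $v$ makes the $S^1$-part of $\Fi_v$ Hamiltonian with moment map $H_v:=\langle\mu_{\mathbb{T}},v\rangle$, while $\Fi_v$ itself is algebraic, hence holomorphic, hence $I$-pseudoholomorphic. The auxiliary technical hypotheses on $Y$ needed to run Floer theory by classical transversality, recalled in \cite{RZ1}, are part of the standing assumptions and hold for such $Y$.

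The step that requires an actual construction is (iv). By \cref{Lemma description of Nplus and Nzero}, $v\in N_+$ means $\langle v,w^j\rangle>0$ for the non-constant $\mathbb{T}$-homogeneous generators $f_{w^1},\dots,f_{w^{\gens}}$ of $\C[X]$, so $\Fi_v$ contracts $X$ to its unique $\mathbb{T}$-fixed point. I would first replace each $f_{w^j}$ by the power $f_{w^j}^{\,k_j}$ with $k_j:=a/\langle v,w^j\rangle$ and $a:=\mathrm{lcm}_j\langle v,w^j\rangle$ --- equivalently, postcompose the closed embedding $X\hookrightarrow\C^{\gens}$, $x\mapsto(f_{w^j}(x))$, with the finite (hence proper) map $(z_j)\mapsto(z_j^{k_j})$ --- obtaining a proper holomorphic $\iota:X\to\C^{\gens}$ that is $\C^*$-equivariant for the \emph{uniform} weight-$a$ scaling action of $\Fi_v$ on $\C^{\gens}$. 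With $R:=\tfrac1{2a}\|z\|^2$, the level set $\Sigma:=\{R=R_0\}\iso S^{2\gens-1}$ (for small $R_0>0$) is a contact-type hypersurface, and the standard complex structure on the end $\{R\ge R_0\}\iso\Sigma\times[R_0,\infty)$, trivialised by the standard Liouville flow, is of contact type precisely because the scaling weight is now uniform. Since $\pi:Y\to X$ is projective, hence proper, $Y^{\mathrm{in}}:=(\iota\circ\pi)^{-1}(\{R\le R_0\})$ is a compact subdomain (taking $R_0$ a regular value of $R\circ\iota\circ\pi$), and on $Y^{\mathrm{out}}:=Y\setminus\mathrm{int}\,Y^{\mathrm{in}}$ the composite $\Psi:=\iota\circ\pi$ takes values in $\Sigma\times[R_0,\infty)$, is proper (a proper map restricted to a closed subdomain), holomorphic hence pseudoholomorphic, and $\C^*$-equivariant; being equivariant, it carries $X_{S^1}=X_{H_v}$ to the infinitesimal weight-$a$ rotation on $\C^{\gens}$, which is a nonzero constant multiple of $X_R$, i.e.\ of the Reeb field of $\Sigma$, the sign being forced correct because $\Fi_v$ is contracting. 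This supplies all four ingredients, and \cref{Theorem Intro SH as loc of QH} then finishes the proof.

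The only genuinely non-formal input is the $\mathbb{T}_{\R}$-Hamiltonian \KH form of \cref{Example intro CSR have global map}; the rest is bookkeeping. The one delicate point is the weight-equalisation in (iv): with the raw generators $f_{w^j}$ the radial hypersurface would be an ellipsoid on which the standard complex structure fails to be of contact type, so one either equalises weights by powers as above --- at no cost, since finite morphisms are proper, hence properness and equivariance of $\Psi$ are unaffected --- or one keeps the raw generators with $R=\tfrac12\|z\|^2$, at the price of the mild rescaling $\Psi_*X_{S^1}=X_{fR}$ allowed by \cite[Remark 1.2]{RZ1}, where $f=(\sum_j\langle v,w^j\rangle|z_j|^2)/\|z\|^2$ is Reeb-invariant; this is harmless here because \cref{Theorem Intro SH as loc of QH} does not require $f$ to be constant.
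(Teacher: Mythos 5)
Your proposal is correct and follows essentially the same route as the paper: the K\"{a}hler form and Hamiltonian $\mathbb{T}_{\R}$-action come from the quasi-projective embedding (\cref{Lemma equiv proj morph sympl form}), and your weight-equalisation trick --- replacing $f_{w^j}$ by $f_{w^j}^{\,\mathrm{lcm}_v/\langle v,w^j\rangle}$ so that the target $\C^{\gens}$ carries the diagonal weight-$\mathrm{lcm}_v$ action --- is exactly the paper's construction $\Psi_v=z_v\circ\Upsilon$ in \cref{Theorem equiv proj morph gives sympl Cstar mfd}, with the same properness and equivariance checks. Your closing remark about the alternative $\Psi_*X_{S^1}=X_{fR}$ with non-constant Reeb-invariant $f$ is a valid observation but is not needed, since the paper opts for the same power-raising normalisation you do.
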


As the construction of the unital $\k$-algebra $SH^*(Y,\Fi)$ depends on $\Fi$ 
(cf.\;the first paragraph of \cref{Subsection Introduction A brief overview of the structural properties}),
a natural question is to determine how $SH^*(Y,\Fi_v)$ depends on $v\in N_+.$

\begin{thm}[Invariance]
For any $v,v'\in N_+$, there is a commutative diagram, 
$$
\begin{tikzcd}[column sep=0.6in]
& QH^*(Y) 
 \arrow[dl, "","c^*"']
 \arrow[dr, "","c^*"]
 &
\\
SH^*(Y,\Fi_v) 
\arrow[rr,"\cong"] 
& & SH^*(Y,\Fi_{v'})
\end{tikzcd}
$$
   where the lower isomorphism is induced by continuation maps.
\end{thm}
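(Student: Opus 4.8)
The plan is to construct the lower isomorphism directly from Floer continuation maps, so that commutativity of the triangle is automatic by functoriality of those maps; the case $v'=kv$ already follows from the dilation property $\Fil_{\Fi^k}^{p}=\Fil_{\Fi}^{kp}$ of \cref{Theorem 1 filtration} (since $\Fil_\Fi^{\infty}=\ker c^*$), so the new content is to move transversally across the cone $N_+$. Recall, from \cref{Theorem Intro SH as loc of QH} and its proof, that $SH^*(Y,\Fi_v)=\varinjlim_\lambda HF^*(H^v_\lambda)$, where $H^v_\lambda=c(H_v)$ is an admissible profile function of the moment map $H_v$ of $\Fi_v$ with slope $c'=\lambda$ at infinity and the limit is over generic $\lambda\to\infty$; that $c^*_v=\varinjlim_\lambda c^*_\lambda$ for the continuation maps $c^*_\lambda:QH^*(Y)\to HF^*(H^v_\lambda)$; and that at small slopes one has canonical ring identifications $HF^*(H^v_\lambda)\cong QH^*(Y)$, the continuation maps between small-slope groups being the identity — and these small-slope groups and maps do not depend on $v$. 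It therefore suffices to intertwine the two directed systems $\{HF^*(H^v_\lambda)\}_\lambda$ and $\{HF^*(H^{v'}_\mu)\}_\mu$ by continuation maps that commute with the structure maps and that, on the small-slope part, reduce to the identity of $QH^*(Y)$: a routine cofinality argument for direct limits then yields an isomorphism $SH^*(Y,\Fi_v)\cong SH^*(Y,\Fi_{v'})$, and commutativity of the triangle with $c^*_v$ and $c^*_{v'}$ is immediate, as these are built from the same continuation maps.

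The key lemma I would prove is a commensurability of the two moment maps at infinity: for $v,v'\in N_+$ there are constants $0<\lambda_-\le\lambda_+$ and $C\ge 0$ with $\lambda_-H_{v'}-C\le H_v\le\lambda_+H_{v'}+C$ outside a compact subset of $Y$. To see this, write $\rho_j:=|f_{w^j}|^2\circ\pi\ge 0$ for the pulled-back squared norms of the $\mathbb T$-homogeneous generators of $\C[X]$ from \cref{Lemma description of Nplus and Nzero}; then the moment map of $\Fi_v$ agrees at infinity with $\sum_j\langle v,w^j\rangle\,\rho_j$ up to a bounded term (the relatively ample correction to $\omega$ is pulled back along the compact fibres of $\pi$, hence bounded), and by \cref{Lemma description of Nplus and Nzero} every pairing $\langle v,w^j\rangle$ and $\langle v',w^j\rangle$ is strictly positive, so one may take $\lambda_\pm$ to be the maximum and minimum of $\langle v,w^j\rangle/\langle v',w^j\rangle$ over $j$. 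Granting this, each admissible $H^v_\lambda$ is, up to a compactly supported perturbation and a reparametrisation of its profile, pointwise sandwiched between admissible Hamiltonians $H^{v'}_{\lambda\lambda_-}$ and $H^{v'}_{\lambda\lambda_+}$, and symmetrically; the associated monotone continuation maps then run in the directions required for the intertwining and compose to the structure maps of both systems, which completes the construction. As a by-product one gets $E_0(Q_{\Fi_v})=E_0(Q_{\Fi_{v'}})$ as ideals in $QH^*(Y)$, with the caveat that the two sides a priori carry different internal gradings, since the Maslov indices $\mu_v,\mu_{v'}$ differ in general, so the conclusion is an isomorphism of $\k$-algebras.

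I expect the main obstacle to lie in the analytic content of the second paragraph. The admissible Floer data for $\Fi_v$ and for $\Fi_{v'}$ are modelled on different maps $\Psi$ as in \eqref{Equation intro Psi}, to a priori different contact boundaries $\Sigma\times[R_0,\infty)$; converting the abstract commensurability estimate into honest continuation maps requires exhibiting explicit interpolating families of Hamiltonians, together with compatible almost complex structures of contact type for whichever $\Psi$ is in play, realising the sandwich, and then verifying the integrated maximum principle and the no-escape-to-infinity estimates of \cite{RZ1} along those families, so that the continuation maps are defined and independent of the auxiliary choices. A secondary bookkeeping point is to keep all slopes occurring in the sandwich generic, which is possible since generic slopes are dense, so that every intermediate Floer group remains transversally cut out.
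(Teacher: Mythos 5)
Your overall strategy is the paper's: build the lower isomorphism directly out of Floer continuation maps between the two directed systems $\{HF^*(\lambda H_v)\}$ and $\{HF^*(\lambda' H_{v'})\}$, check compatibility with the structure maps, and conclude by a ladder/cofinality argument, with commutativity of the triangle then automatic. Your commensurability constants $\lambda_\pm=\max_j/\min_j\,\langle v,w^j\rangle/\langle v',w^j\rangle$ are exactly the paper's period-rescaling constant $k(v,v')$ (the paper only needs the $\min$, building one-directional maps $HF^*(k\lambda H_{v'})\to HF^*(\lambda H_v)$ for $k\le k(v,v')$ and then reversing roles). So far, so good.

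The gap is at the crux, and you have correctly located but not closed it. In this non-convex setting a pointwise sandwich $H^{v'}_{\lambda\lambda_-}\le H^{v}_{\lambda}\le H^{v'}_{\lambda\lambda_+}$ outside a compact set does \emph{not} by itself produce continuation maps: the interpolating Hamiltonians $sk\lambda H_{v'}+(1-s)\lambda H_v$ are not admissible functions of any single moment map, and the admissibility/maximum-principle framework of \cite{RZ1} — which is tied to the $\Psi$-map of the \emph{fixed} action — does not apply to them. Your proposed remedy (contact-type almost complex structures for ``whichever $\Psi$ is in play'' plus the integrated maximum principle) is precisely the route the paper says fails when the action varies; the introduction stresses that one usually \emph{cannot} relate Floer theories of two different $\C^*$-actions this way. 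The paper's key idea, which is absent from your proposal, is to abandon $\Psi$ and instead project continuation solutions by the holomorphic map $\Upsilon=(f_{w^1},\dots,f_{w^m})\circ\pi:Y\to\C^m$. Because $\Upsilon_*X_{H_v}=\bigoplus_j\langle v,w^j\rangle\,\mathcal{R}_j$ \emph{exactly} (not just up to bounded error), each coordinate $\Upsilon_j\circ u$ of a continuation solution solves the continuation equation for radial Hamiltonians on $\C$ with $s$-dependent slope $sk\lambda\langle v',w^j\rangle+(1-s)\lambda\langle v,w^j\rangle$; the single condition $k\le k(v,v')$ makes this monotone in every coordinate $j$ simultaneously, and the elementary maximum principle on $\C$ then confines all solutions. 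Without this (or an equivalent) mechanism, the existence of your intertwining continuation maps — which is the entire content of the theorem — remains unproved. A secondary inaccuracy: your estimate ``$H_v$ agrees at infinity with $\sum_j\langle v,w^j\rangle\rho_j$ up to a bounded term'' is plausible but is the wrong currency; what the argument needs is the exact intertwining of Hamiltonian vector fields under $\Upsilon$, not an approximate comparison of Hamiltonian values.
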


It follows that $c^*(Q_{\Fi_{v'}}) \in SH^{2\mu_v}(Y,\Fi_v)$ is an invertible element for all $v,v'\in N_+.$ Indeed, that also holds for $v'\in N_0$ (we are able to construct $Q_{v'}\in QH^{2\mu_{v'}}(Y)$ for all $v'\in N_0$). In $QH^*(Y)$, the $Q_v$ are often not invertible; however: if $v,-v\in N_0$, then $Q_{v},Q_{-v}\in QH^*(Y)$ are mutual inverses (essentially because the rotations $\Fi_v,\Fi_{-v}$ are mutual inverses). More is true: 

\begin{thm}\label{Prop Rw rotation elements}
In the unital ring $SH^*(Y,\Fi_v)$, there is an invertible {\bf rotation class} 
$$R_{w}\in SH^{2\mu_w}(Y,\Fi_v) \; \textrm{ for all }w\in N,\textrm{ such that }R_w=c^*(Q_w)\textrm{ for all }w\in N_0.$$
Moreover, $R_w^{-1}=R_{-w}$, and the product satisfies
$$
R_{w_1}\star \cdots \star R_{w_k} = T^{a(w_1,\cdots,w_k)} R_{w_1+\cdots + w_k}
$$
for an explicit function $a(w_1,\cdots,w_k)\in \Z$:
$$
a(w_1,\cdots,w_k) :=\min H_{w_1+\cdots+w_k}-\min H_{w_1}-\cdots-\min H_{w_k},
$$
where $H_w(y):=(\mu(y),w):Y \to \R$ is the Hamiltonian for $\Fi_w$ determined by the moment map $\mu$ of the Hamiltonian action by the compact subtorus $\mathbb{T}_{\R}\cong (S^1)^d$ of $\mathbb{T}\cong (\C^*)^d$. Similarly, also 
$$Q_{v_1}\star \cdots \star Q_{v_k} = T^{a(v_1,\cdots,v_k)} Q_{v_1+\cdots + v_k} \;\textrm{ for all }\; v_1,\ldots,v_k\in N_0.$$

Thus, compatibly on $N_0$ via $c^*:QH^*(Y)\to SH^*(Y,\Fi_v)$, we obtain group homomorphisms
$$
N_0 \to QH^*(Y), \; w \mapsto T^{\min H_w} Q_w \qquad \textrm{ and } \qquad
N \to SH^*(Y,\Fi_v), \; w \mapsto T^{\min H_w} R_w,
$$
for all $v\in N_+,$
which are generalisations of the Seidel representation to our setting.
\end{thm}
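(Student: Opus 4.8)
The plan is to realise the classes $Q_w$ (for $w\in N_0$) and $R_w$ (for $w\in N$) as \emph{Seidel elements}, to obtain the product formula from the functoriality of the Seidel construction, and to derive invertibility and the extension to all of $N$ from the Localisation and Invariance theorems above. \textbf{Set-up and construction.} For $w\in N_0$ the action $\Fi_w$ is complete, and $Q_w\in QH^{2\mu_w}(Y)$ is defined as the Seidel element of its Hamiltonian $S^1$-part, normalised by the Hamiltonian $H_w-\min H_w$; here $\mu_w$, the Maslov index of $\Fi_w$, is additive in $w$. For $w\in N_+$ this is the rotation class of \cref{Theorem Intro SH as loc of QH} constructed in \cite{RZ1}; for $w\in N_0\setminus N_+$ one checks that completeness of $\Fi_w$ (every $-\nabla H_w$-trajectory converges) still forces the energy bounds needed to keep the relevant moduli compact degree-by-degree, so the construction extends. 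Put $R_w:=c^*(Q_w)\in SH^{2\mu_w}(Y,\Fi_v)$.

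\textbf{Multiplicativity.} Work with a fixed Hamiltonian $H_\lambda$, a function of the moment map of $\Fi_v$. Since each $\Fi_w$ commutes with $\Fi_v$, it induces an $S^1$-reparametrisation isomorphism of the corresponding Floer complex, and for commuting $\Fi_{w_1},\Fi_{w_2}$ these reparametrisations compose to the one for $\Fi_{w_1+w_2}$. By \cref{Theorem Intro SH as loc of QH} and \eqref{Equation cNplus maps intro}, passing such a reparametrisation through the continuation maps realises it as quantum product by $Q_w$ (in the $H_w-\min H_w$ normalisation). Composing therefore identifies $Q_{w_1}\star Q_{w_2}$ with $Q_{w_1+w_2}$ up to the mismatch of normalisations: $H_w=(\mu,w)$ is \emph{linear} in $w$, so $(H_{w_1}-\min H_{w_1})+(H_{w_2}-\min H_{w_2})$ differs from $H_{w_1+w_2}-\min H_{w_1+w_2}$ by the constant $a(w_1,w_2)=\min H_{w_1+w_2}-\min H_{w_1}-\min H_{w_2}$, and shifting a Hamiltonian by a constant $c$ rescales the Seidel element by $T^{c}$. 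Hence $Q_{w_1}\star Q_{w_2}=T^{a(w_1,w_2)}Q_{w_1+w_2}$ in $QH^*(Y)$ for $w_1,w_2\in N_0$, and the same holds in $SH^*(Y,\Fi_v)$ after applying the ring map $c^*$. Associativity of $\star$ together with the telescoping identity $a(w_1,\dots,w_k)=a(w_1,\dots,w_{k-1})+a(w_1+\cdots+w_{k-1},w_k)$ promote this to the $k$-fold formula by induction on $k$.

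\textbf{Invertibility and extension to $N$.} For $v'\in N_+$, $c^*(Q_{v'})$ is a unit in $SH^*(Y,\Fi_v)$: in $SH^*(Y,\Fi_{v'})$ it is the localisation generator of \cref{Theorem Intro SH as loc of QH}, hence invertible, and the Invariance theorem above gives $SH^*(Y,\Fi_{v'})\cong SH^*(Y,\Fi_v)$ compatibly with $c^*$. For $w\in N_0$, picking any $v_0\in N_+$ yields $w+v_0\in N_+$, and then $R_w\star R_{v_0}=T^{a(w,v_0)}R_{w+v_0}$ exhibits $R_w$ as a unit. For arbitrary $w\in N$, write $w=v-v'$ with $v,v'\in N_+$ (possible since $\langle v',w^j\rangle>0$ for all $j$ forces $w+kv'\in N_+$ for $k\gg 0$) and set $\widehat R_w:=\widehat R_v\star\widehat R_{v'}^{-1}$, where $\widehat R_u:=T^{\min H_u}R_u$ for $u\in N_+$; using graded-commutativity of $SH^*$ and multiplicativity on $N_+$, this is independent of the chosen decomposition, $w\mapsto\widehat R_w$ is a group homomorphism $N\to SH^*(Y,\Fi_v)^\times$, and it restricts on $N_0$ to $w\mapsto T^{\min H_w}c^*(Q_w)$, so that $R_w=c^*(Q_w)$ there. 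Unwinding this homomorphism delivers the $k$-fold product formula over all of $N$, the inversion $R_{-w}=R_w^{-1}$, and — by restriction along $c^*$ — the homomorphism $N_0\to QH^*(Y)$, $w\mapsto T^{\min H_w}Q_w$.

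\textbf{The main obstacle.} The crux lies in the Multiplicativity step: setting up the continuation and reparametrisation systems for the commuting actions $\Fi_w$, $w\in N_0$, \emph{simultaneously} on a common Floer complex, taming the non-compactness of $Y$ uniformly over the whole torus $\mathbb{T}$ — which is precisely where the global structure $Y\to X$ over the affine base, rather than a single $\Psi$-map for $\Fi_v$, is needed — and checking that the composition law of reparametrisation isomorphisms holds on the level at which it translates into quantum products. A related secondary difficulty is the construction of $Q_w$ for merely complete, non-contracting $w\in N_0$: the foundational framework of \cite{RZ1} is phrased for contracting actions, so one must verify that completeness of $\Fi_w$ still supplies the energy bounds and transversality making the Seidel moduli compact and $Q_w\in QH^*(Y)$ well-defined.
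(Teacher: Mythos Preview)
Your strategy is correct and close to the paper's, but two points of execution differ.

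For extending $R_w$ to all $w\in N$, you proceed algebraically: write $w=v-v'$ with $v,v'\in N_+$ and formally set $\widehat R_w=\widehat R_v\star\widehat R_{v'}^{-1}$. The paper instead constructs $R_w=\mathcal{R}_{\Fi_w}(1)$ \emph{directly} at the Floer chain level for every $w\in N$, via the rotation map of \cite{R14,RZ1}: the upper square of the diagram in the proof of \cref{Lemma borderline Cstar actions} works equally well for $\psi$ and $\psi^{-1}$ on $SH^*$ (one simply continues to a larger slope $\lambda'\gg\lambda$, which is always available in the direct limit), so invertibility of $R_w$ is built into the construction rather than imposed afterwards. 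Your algebraic extension does agree with the geometric one, since both satisfy the same multiplicative relations on $N_+$; the paper's route is more intrinsic and is what the Remark immediately following the Theorem is pointing to.

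For $w\in N_0\setminus N_+$, you assert that completeness of $\Fi_w$ ``forces the energy bounds''; the paper's argument (\cref{Lemma borderline Cstar actions}) does not attempt this and is more concrete. It keeps $(Y,\omega,\Fi_v)$ with $v\in N_+$ as the ambient symplectic $\C^*$-structure, applies the Seidel isomorphism $\mathcal{S}_\psi:CF^*(\lambda H_v)\to CF^*(\lambda H_v-H_{v_0})$ for $\psi=\Fi_{v_0}$, and then needs the continuation $HF^*(\lambda H_v-H_{v_0})\to HF^*(\lambda H_v)$. This continuation is justified via the maximum principle for each coordinate $\Upsilon_j$ of the map $\Upsilon$ from \eqref{Definition of Upsilon}: the $j$-th slope changes by $-\langle v_0,w^j\rangle\le 0$, which is \emph{exactly} the defining condition for $v_0\in N_0$. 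So it is the $\Upsilon$-map over the affine base---not compactness properties of $\Fi_{v_0}$ alone---that controls the moduli uniformly over the torus, precisely the obstacle you correctly flag in your final paragraph.

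Your multiplicativity argument (linearity of $H_w=(\mu,w)$ in $w$, plus the effect on the Seidel element of shifting a Hamiltonian by a constant) is the same mechanism the paper uses; the paper packages it as tracking lifts of the $S^1$-actions to the cover of free loop space relative to a fixed $\mathbb{T}$-fixed point $c\in Y$ (\cref{Cor group hom with can lifts}, \cref{Lemma correction caused by constant in moment map}, \cref{Corollary a function computed}), which makes the ``shift by a constant gives $T^c$'' step rigorous.
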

\begin{rmk}
That rotation class $R_{w}$ can be directly constructed at the Floer chain level, without first building a $Q$-element ($R_w:=\mathcal{R}_{\Fi_w}(1)$ in the notation from \cite[Thereom 7.23]{RZ1}). This idea goes back to the discussion in the original paper by the first author \cite[Sec.4.1 and 4.2]{R14} where rotation elements were first constructed, and where it was observed that ``Seidel $Q$-elements'' $Q_{w}\in QH^*(Y)$ are usually not invertible in the non-compact setting. Above, this occurs because $-w$ may not be $N_+$. The interested reader can also find a brief discussion of this phenomenon within the proof of \cref{Lemma borderline Cstar actions}. 
\end{rmk}

\begin{rmk}
In the {\bf Fano} setting (i.e.\;the monotone setting, meaning $c_1(Y)\in \R_{>0}[\omega]$ in $H^2(Y,\R)$), the function $a$ above is also determined by the $\Z$-grading, placing $T$ in grading $|T|=2$, namely 
$$
a(w_1,\cdots,w_k) :=\mu_{w_1+\cdots+w_k}- \mu_{w_1}-\cdots- \mu_{w_k},
$$
where $\mu_w$ is the Maslov index of the $S^1$-action $\Fi_w$ (which is easy to compute, see \cite[Sec.5.1]{RZ1}).
\end{rmk}

The next natural question is whether the filtrations \eqref{Equation introduction filtration} for $(Y,\omega,\Fi_v)$ are related, as we vary $v\in N_+$. In general, it is very difficult to relate filtrations associated to two different $\C^*$-actions, because the Hamiltonian Floer cohomologies are constructed using two completely different classes of Hamiltonian functions, with different growth conditions at infinity. In the non-compact setting, it is usually not possible to build Floer continuation maps that relate the two resulting classes of Floer cohomologies.   

Using the notation from \cref{Lemma description of Nplus and Nzero}, we introduce a {\bf period-rescaling constant}: for $v,v'\in N_+$,%
\footnote{We will show in the main text that for any $v,v'\in N_+$, $k(v,v')$ is a finite positive rational number.
Although $k(v,v')$ depends on the weights $w^j$ of the choice of generators $f_{w^j}$ in \cref{Lemma description of Nplus and Nzero},
\cref{Theorem intro commuting actions result} does not depend on the choice of generators (the generators are only used as a technical tool to define a certain map $\Upsilon$ that controls Floer theory at infinity).
So, in practice, the strength of that Proposition may improve for a judicious choice of generators.}
\begin{equation}\label{Equation constant kvvprime}
k(v,v'):=\min_{j=1,\ldots,{\gens}} \frac{\langle  v,w^j  \rangle}{\langle  v',w^j  \rangle} \in \Q_{>0}.
\end{equation}

\begin{thm}%
\label{Theorem intro commuting actions result}
Each $v\in N_+$ defines a filtration $\Fil^{p}_{v}$ of $QH^*(Y)$ by ideals, labelled by $p\in \R \cup \{\infty\}$, by applying \eqref{Equation introduction filtration} to $(Y,\omega,\Fi_v)$.
For all periods $p\in \R \cup \{\infty\},$ and all $v\in N_+$, these filtrations satisfy
\begin{equation}\label{Equation filtrations are included}
\sum_{v'\in N_+} \Fil^{k(v,v')\cdot p}_{v'} = \Fil^{p}_{v}. 
\end{equation}
\end{thm}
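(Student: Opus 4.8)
The plan is to prove the two inclusions in \eqref{Equation filtrations are included} separately, using the geometric interpretation of the filtration $\Fil^p_v$ from \cref{Theorem 1 filtration}: a class $x\in QH^*(Y)$ lies in $\Fil^p_v$ precisely if $x=c^*_\lambda$-annihilated for all generic $\lambda\ge p$, equivalently (via the MBF spectral sequence) $x$ is eventually killed by columns of $\Fi_v$-period $\le p$. The key structural input is the map $\pi:Y\to X$ and the fact (from \cref{Lemma description of Nplus and Nzero}) that \emph{all} actions $\Fi_v$, $v\in N_+$, are pulled back from the single affine target $X\subset\C^m$, where their Hamiltonians are controlled by the finitely many homogeneous generators $f_{w^j}$. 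Concretely, I expect that at infinity on $Y^{\mathrm{out}}$ one can compare the radial coordinate/Hamiltonian of $\Fi_v$ with that of $\Fi_{v'}$ via a uniform estimate: on the locus where the $j$-th generator dominates, $H_v \approx \langle v,w^j\rangle\cdot(\text{common scale})$, so that $k(v,v')=\min_j \langle v,w^j\rangle/\langle v',w^j\rangle$ is exactly the optimal constant with $H_v \ge k(v,v')\cdot H_{v'}$ at infinity, up to bounded error. This is what allows Floer continuation maps to be built between $H^{v}_\lambda$ and $H^{v'}_{k(v,v')\lambda}$, which do not normally exist between Floer theories of different $\C^*$-actions.

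First I would establish the inclusion $\Fil^{k(v,v')p}_{v'}\subset \Fil^p_v$ for each fixed $v'\in N_+$. The idea: because $H_v$ grows at least as fast as $k(v,v')\cdot H_{v'}$ at infinity (and both are proper functions of a common ``$\Upsilon$'' coordinate controlled by the $f_{w^j}$), for a generic slope $\lambda\ge p$ one can construct a monotone homotopy of Hamiltonians from $H^{v}_{\lambda}$-type to $H^{v'}_{\mu}$-type for a suitable $\mu\le k(v,v')\lambda$ (or rather sandwich $H^v_\lambda$ between multiples of $H^{v'}$), yielding a commuting triangle of continuation maps $QH^*(Y)\to HF^*(H^{v'}_{k(v,v')\lambda})\to HF^*(H^v_\lambda)$. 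Hence $\ker(QH^*\to HF^*(H^{v'}_{k(v,v')\lambda}))\subset \ker(QH^*\to HF^*(H^v_\lambda))$; intersecting over generic $\lambda\ge p$ gives $\Fil^{k(v,v')p}_{v'}\subset \Fil^p_v$. Summing over $v'\in N_+$ gives ``$\subseteq$'' in \eqref{Equation filtrations are included}; note $v'=v$ already gives $\Fil^p_v\subseteq$ the sum (since $k(v,v)=1$), so the reverse inclusion ``$\supseteq$'' is immediate and only the ``$\subseteq$'' direction has content. I would package the comparison of Hamiltonians as a separate lemma (the existence of the map $\Upsilon$ alluded to in the footnote to \eqref{Equation constant kvvprime}), isolating the analytic content from the formal filtration argument.

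The main obstacle, and the step I expect to require the most care, is constructing the continuation maps at infinity rigorously: the Hamiltonians $H^v_\lambda$ and $H^{v'}_{\mu}$ have genuinely different level sets and their Hamiltonian vector fields point in different directions, so the usual ``monotone homotopy'' trick for convex manifolds does not apply verbatim. The resolution is to interpolate not through Hamiltonians of the \emph{same} $\C^*$-action but through the family $\{H_{v_t}\}$ where $v_t$ traces a path in the rational cone $N_+\otimes\Q$ from $v'$ to (a multiple of) $v$; along this path one must check (i) that the relevant slopes stay generic off a measure-zero set, (ii) that the no-escape / maximum-principle estimates hold uniformly using the properness of $\pi$ and the $\Psi$-map compatibility, so that Floer trajectories stay in a compact region, and (iii) that the period bookkeeping matches the factor $k(v,v')$ exactly rather than just up to a constant — this is where the explicit minimum over the finitely many weights $w^j$ enters, and where one must verify the infimum is attained (giving $k(v,v')\in\Q_{>0}$ as claimed). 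Once (i)--(iii) are in place, the spectral-sequence reformulation of \cref{Theorem 1 filtration} makes the passage to the $p$-labelled filtration, and the compatibility $\Fil^{k(v,v')p}_{v'}\subset\Fil^p_v$ with the stated rescaling, essentially formal.
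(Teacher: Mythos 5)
Your proposal is correct and follows essentially the same route as the paper's proof (\cref{Theorem commuting Cstar actions theorem from toric section}): the whole content is the single inclusion $\Fil^{k(v,v')p}_{v'}\subset \Fil^{p}_{v}$, obtained from a monotone-homotopy continuation map $HF^*(k\lambda H_{v'})\to HF^*(\lambda H_v)$ (with $k\le k(v,v')$) whose maximum principle is verified coordinate-by-coordinate on the projections $\Upsilon_j=f_{w^j}\circ\pi$, where the monotonicity condition $k\langle v',w^j\rangle\le\langle v,w^j\rangle$ for every $j$ is exactly the definition of $k(v,v')$, and then the kernel inclusion $\ker c^*_{v',k\lambda}\subset\ker c^*_{v,\lambda}$ follows from compatibility of continuation maps. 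The only cosmetic difference is that the paper needs neither your ``dominant generator'' estimate nor a sandwich of Hamiltonians: it interpolates linearly between $\lambda H_v$ and $k\lambda H_{v'}$ (which, by linearity of $v\mapsto H_v$, is precisely your path $H_{v_t}$ in the cone) and checks that the slope of each induced radial Hamiltonian on $\C$ is non-increasing in $s$.
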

\begin{ex}[$A_2$-singularity]\label{Example running example of intro 4}
In \cref{Example running example of intro} call $A,B,C$ the three actions in $(a),(b),(c)$. Let $\mathbb{T}:=\C^*\times \C^*$, with $(v_1,v_2)\in N=\Z^2$ acting by $B^{v_1}\circ C^{v_2}$. 
So $B=\Fi_{(1,0)}$, $C=\Fi_{(0,1)}$,
$A=B\circ C=\Fi_{v}$ where $v:=(1,1)$.
We choose the generators\footnote{for example $w^1=(1,2)\in M=\mathrm{Hom}(\Z^2,\Z)$ means $B_t^*X=t^1X$, $C_t^*X=t^2X$.} $f_{(1,2)}=X$, $f_{(2,1)}=Y$, $f_{(1,1)}=Z$. 
Then $\langle v, w^j  \rangle$ yields the weights $3,3,2$ of the $A$-action, and $\langle (1,0), w^j  \rangle$ are the weights $1,2,1$ for $B$. So $k(v,(1,0))=\min \{\tfrac{3}{1},\tfrac{3}{2},\tfrac{2}{1}\}=\tfrac{3}{2}$. Similarly $k(v,(0,1))=\min \{\tfrac{3}{2},\tfrac{3}{1},\tfrac{2}{1}\}=\tfrac{3}{2}$.
The non-trivial claim in \eqref{Equation filtrations are included} is the $\subset $ inclusion, since the summand for $v'=v$ gives the right hand side as $k(v',v')=1$; the summands for $v'\neq v$ typically do not equal the right hand side. For example, abbreviating 
$$\FF_A^p:=\FF_{(1,1)}^p, \quad \FF_B^p:=\FF_{(1,0)}^p, \quad \FF_C^p:=\FF_{(0,1)}^p,\quad
\textrm{ thus }\;
\FF_B^{3p/2}+\FF_C^{3p/2}\subset \FF_A^p
\;\textrm{ by \eqref{Equation filtrations are included},}
$$
then by \cite[Examples 1.44 and 1.51]{RZ1},
we have: 
$$
\FF_A^{1/3}=\k p_1 \oplus \k p_2 = H^2(M),
\qquad
\FF_B^{1/2}=\k p_2,
\qquad
\FF_C^{1/2}=\k p_1.
$$ 
In that case, the latter two are enough to generate $\FF_A^{1/3}$. Similarly 
$\Fil_{B}^{1} + \Fil_{C}^{1} = H^{\geq 2}(M) = \Fil_{A}^{2/3}$. They are not enough to generate for $p=1:$
$$\Fil_{B}^{3/2} + \Fil_{C}^{3/2} = H^{\geq 2}(M)\subsetneq H^*(M) = \Fil_{A}^{1}.$$
\end{ex}

Although the filtration is quite computable in practice, particularly by the {\MBF} methods we developed in \cite{RZ2}, general statements are harder to come by. An exception holds for integer periods $p\in \N$ in view of \eqref{Equation cNplus maps intro} and the fact that $SH^*(Y,\Fi)\cong QH^*(Y)_{Q_{\Fi}}$ in \cref{Theorem Intro SH as loc of QH}:

\begin{cor}
Recall from \cref{Theorem Intro SH as loc of QH} the generalised $0$-eigenspace of $Q_{\llambda_v}$:
$$E_v:=E_0(Q_{\llambda_v})= \ker Q_{\Fi_v}^{n_v} = \ker (c^*: QH^*(Y) \to SH^*(Y,\Fi_v)),$$
where $n_v=\min\{n\in \N: Q_{\llambda_v}^{n}\star E_v=0\}=\min\{n\in \N: Q_{\llambda_v}^{n}\star QH^*(Y)=E_v\}$ 
is the order of nilpotency of the $Q_{\llambda_v}$-action on $E_v$, for $v\in N_+$. 
For any $v\in N_+$, the filtration $\FF_{v}:=\FF_{\Fi_v}^p$ of $QH^*(Y)$ from \eqref{Equation introduction filtration} determined by the symplectic $\C^*$-structure $(Y,\omega,\Fi_v)$ at integer periods is
$$
\Fil_{v}^p=Q_{v}^{p}\star E_v \qquad \textrm{ for }p\in \N.
$$
Thus: $
0=\Fil_{v}^0\subsetneq 
\
\Fil_{v}^1
\
\subsetneq 
\
\Fil_{v}^2
\
\subsetneq \cdots 
\subsetneq 
\
 \Fil_{v}^{k_v-1}
 \
 \subsetneq
 \Fil_{v}^{k_v}\subseteq \Fil^{\infty}_{v}=QH^*(Y)
$
is the sequence of ideals:
$$
0\subsetneq 
\
Q_{v}^{k_v-1}E_v
\
\subsetneq 
\
Q_{v}^{k_v-2}E_v
\
\subsetneq \cdots 
\subsetneq 
\
Q_{v}E_v
 \
 \subsetneq
 \Fil_{v}^{k_v}=E_v \subseteq \Fil^{\infty}_{v}=QH^*(Y).
$$
\end{cor}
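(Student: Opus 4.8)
The plan is to reduce the statement to a computation with the $\k$-linear endomorphism $L:=Q_{\Fi_v}\star-$ of the finite-dimensional space $QH^*(Y)$, combining the description of continuation maps in \cref{Theorem Intro SH as loc of QH} with the definition of the filtration in \cref{Theorem 1 filtration}. Recall $\Fil^p_v=\bigcap_{\mathrm{generic}\,\lambda\ge p}\ker c^*_\lambda$. The first observation is that the groups $\ker c^*_\lambda$ are nested increasingly in $\lambda$, because Floer continuation maps compose: $c^*_{\lambda'}=(\text{continuation }\lambda\!\to\!\lambda')\circ c^*_\lambda$ for $\lambda\le\lambda'$, so $\ker c^*_\lambda\subseteq\ker c^*_{\lambda'}$. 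Hence the intersection over generic $\lambda\ge p$ is realised at the least such slope; for $p\in\N$ (always a non-generic slope, since $\Fi$ has period one) this is $\Fil^p_v=\ker c^*_{p^+}$ for $p^+$ just above $p$. By the last part of \cref{Theorem Intro SH as loc of QH}, $c^*_{p^+}:QH^*(Y)\to HF^*(H_{p^+})\cong QH^*(Y)[2p\mu_v]$ is $p$-fold quantum product by $Q_{\Fi_v}$, so
$$\Fil^p_v=\ker\bigl(L^p:QH^*(Y)\to QH^*(Y)\bigr)\qquad(p\in\N),$$
an ideal since $L$ is a $QH^*(Y)$-module map (consistent with \cref{Theorem 1 filtration}).

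Next I would pass to linear algebra. By \cref{Lemma equivariant formality}, $\dim_\k QH^*(Y)=\dim H^*(Y)<\infty$, so Fitting's lemma gives an $L$-invariant decomposition $QH^*(Y)=E_v\oplus C_v$ with $E_v=\bigcup_n\ker L^n$ the generalised $0$-eigenspace and $L|_{C_v}$ invertible; by \cref{Theorem Intro SH as loc of QH} this is exactly $E_v=\ker c^*$, $C_v\cong SH^*(Y,\Fi_v)$. Since $L^p$ is injective on $C_v$, $\Fil^p_v=\ker L^p=\ker(L^p|_{E_v})$. With $k_v:=n_v$ the nilpotency index of the nilpotent operator $L|_{E_v}$, the chain $0\subsetneq\ker(L|_{E_v})\subsetneq\cdots\subsetneq\ker(L^{k_v}|_{E_v})=E_v$ is strictly increasing and stabilises at $E_v$ (so the $n_v$ in the statement is indeed this nilpotency index, and $Q_{\Fi_v}$ is nilpotent on $QH^*(Y)$ exactly when $C_v=0$, i.e.\ $SH^*(Y,\Fi_v)=0$). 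Together with $\Fil^\infty_v=QH^*(Y)$ from the definition, and with $E_v=\ker c^*\subsetneq QH^*(Y)$ precisely when $SH^*(Y,\Fi_v)\ne 0$, this produces the displayed ascending sequence of ideals $0\subsetneq\cdots\subsetneq E_v\subseteq QH^*(Y)$.

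The step I expect to be the real obstacle is identifying $\ker(L^p|_{E_v})$ with the explicit expression $Q_{\Fi_v}^{\,k_v-p}\star E_v$ written in the statement, i.e.\ proving $\ker(L^p|_{E_v})=L^{k_v-p}(E_v)$. One always has $L^{k_v-p}(E_v)\subseteq\ker(L^p|_{E_v})$, but equality for a nilpotent operator is equivalent to $E_v$ being free as a module over $\k[L]/(L^{k_v})$ (all Jordan blocks of size $k_v$), which is not formal. I would try to deduce this from the additional graded/geometric structure available here: in the Fano case $L$ has definite degree $2\mu_v$, and the grading should pin down the Jordan type; alternatively, the {\MBF} spectral sequence of \cref{Theorem 1 filtration}, whose column of slope $p$ is exactly what kills the step $\Fil^p_v$, may force the right dimension counts. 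For $QH^*(Y)$ that is $Q_{\Fi_v}$-cyclic — e.g.\ the negative line bundles of \cref{Introduction Example O-k over CPm}, cyclic already as a ring, so $E_v\cong\k[x]/(x^{k_v})$ — the identity is immediate. If no general argument materialises, the corollary still holds verbatim in the unconditional form $\Fil^p_v=\ker(Q_{\Fi_v}^p\star-)$. The remaining points — that passing to $p^+$ is legitimate, and that every subspace involved is a $QH^*(Y)$-submodule hence an ideal — are routine.
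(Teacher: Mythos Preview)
Your approach is exactly what the paper has in mind: the sentence preceding the corollary is the entirety of the paper's justification, namely that \eqref{Equation cNplus maps intro} identifies $c^*_{N^+}$ with $N$-fold quantum product by $Q_{\Fi_v}$, so $\Fil_v^p=\ker(Q_{\Fi_v}^p\star-)$ for $p\in\N$, and the rest is Fitting's lemma on the finite-dimensional space $QH^*(Y)$. There is no further argument in the paper.

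You are also right to be suspicious about the last step. The printed formula $\Fil_v^p=Q_v^{p}\star E_v$ cannot be correct as written (test $p=0$: the left side is $0$, the right side is $E_v$; or $p=k_v$: left side $E_v$, right side $0$); the intended statement is presumably $\Fil_v^p=\ker(Q_{\Fi_v}^{p}\star-)$, and the second displayed chain then asserts $\ker(L^p|_{E_v})=L^{k_v-p}(E_v)$. As you observe, this equality is equivalent to $E_v$ being free over $\k[L]/(L^{k_v})$, i.e.\ all Jordan blocks of $L|_{E_v}$ having the maximal size $k_v$, which does not follow from the ingredients cited. Neither the grading argument in the Fano case nor the spectral-sequence heuristic forces this in general (one can write down graded nilpotent operators of fixed positive degree with mixed block sizes). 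So the unconditional content of the corollary is the version you isolate, $\Fil_v^p=\ker(Q_{\Fi_v}^{p}\star-)$ with the strictly increasing chain $0\subsetneq\ker L\subsetneq\cdots\subsetneq\ker L^{k_v}=E_v$; the image description in the second display should be read either as a typo for this, or as an additional hypothesis on the Jordan type that the paper does not verify.
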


\subsection{Semiprojective toric manifolds}\label{Subsection intro Semiprojective toric varieties}
\strut\\ \indent
We already encountered defined semiprojective toric manifolds in \cref{Example intro GIT} and \cref{Example def semiprojective toric mfds intro}. 

Another equivalent definition of those is: a non-compact\footnote{The compact case corresponds precisely to projective toric manifolds $Y$, equivalently $X=(\mathrm{point})$. We exclude these.
} toric manifold $Y$ for which the affinisation map $$\pi:Y\to \mathrm{Spec}(H^0(Y,\mathcal{O}_Y))$$ is projective, and $Y$ has at least one torus-fixed point. 

These non-singular toric varieties are quasiprojective, and admit a K\"{a}hler form $\omega$ which is invariant under the compact subtorus $\mathbb{T}_{\R}\cong (S^1)^n$ of the algebriac torus $\mathbb{T}\cong (\C^*)^n$ of the toric variety $Y$ (where $n=\dim_{\C}Y$, since $Y$ is the closure of an embedded dense open $\mathbb{T}$-orbit).
Such $Y$ have a well-defined non-compact convex moment polytope 
\begin{equation}
\label{Equation intro delta equation}
\Delta=\mathrm{image}(\mu)=\{x\in M_{\R}:\langle x,e_i\rangle \geq \lambda_i \}\subset \R^n,
\end{equation}
where $e_i$ are the minimal ray generators for the fan $\Sigma$, and $\Delta$ is the image of the moment map $\mu$ for $\mathbb{T}_{\mathbb{R}}$. The vertices of $\Delta$ correspond precisely to the torus fixed points via $\mu$. One can also define semiprojective toric manifolds starting from a precise class of such polyhedra $\Delta$.

\begin{ex}\label{Example toric intro}
In \cref{Subsection An explicit example}, we describe an explicit (non-compact) Fano semiprojective toric surface, $$\pi: \mathrm{Bl}_p(\C \times \P^1) \to \C,$$ namely: $Y$ is the blow up at $p=\{0\}\times \{[1:0]\}\in \C \times \P^1$, and $\pi$ is the projection. 
Holomorphic curves appear at infinity: $\pi^{-1}(c)\cong \P^1$ for each $c\neq 0\in \C$.
We illustrate the moment polytope $\Delta$ and fan $\Sigma$:
\begin{center}
\begin{picture}(0,0)%
\includegraphics{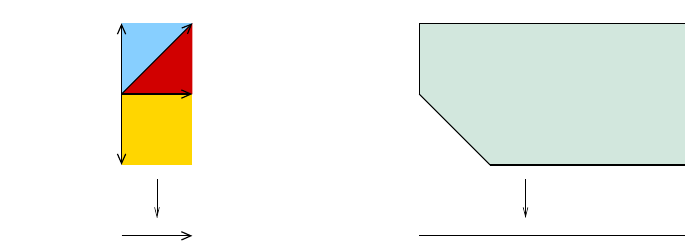}%
\end{picture}%
\setlength{\unitlength}{2486sp}%
\begingroup\makeatletter\ifx\SetFigFont\undefined%
\gdef\SetFigFont#1#2#3#4#5{%
  \reset@font\fontsize{#1}{#2pt}%
  \fontfamily{#3}\fontseries{#4}\fontshape{#5}%
  \selectfont}%
\fi\endgroup%
\begin{picture}(8722,3060)(-4424,-3286)
\put(2341,-2761){\makebox(0,0)[lb]{\smash{{\SetFigFont{10}{12.0}{\familydefault}{\mddefault}{\updefault}$\pi$}}}}
\put(2296,-1411){\makebox(0,0)[lb]{\smash{{\SetFigFont{10}{12.0}{\familydefault}{\mddefault}{\updefault}$\Delta$}}}}
\put(-2339,-2761){\makebox(0,0)[lb]{\smash{{\SetFigFont{10}{12.0}{\familydefault}{\mddefault}{\updefault}$\pi$}}}}
\put(-1889,-1456){\makebox(0,0)[lb]{\smash{{\SetFigFont{10}{12.0}{\familydefault}{\mddefault}{\updefault}$e_1=(1,0)$}}}}
\put(-3554,-1411){\makebox(0,0)[lb]{\smash{{\SetFigFont{10}{12.0}{\familydefault}{\mddefault}{\updefault}$\Sigma$}}}}
\put(-4409,-2266){\makebox(0,0)[lb]{\smash{{\SetFigFont{10}{12.0}{\familydefault}{\mddefault}{\updefault}$e_4=(0,-1)$}}}}
\put(-4229,-691){\makebox(0,0)[lb]{\smash{{\SetFigFont{10}{12.0}{\familydefault}{\mddefault}{\updefault}$e_2=(0,1)$}}}}
\put(-1889,-511){\makebox(0,0)[lb]{\smash{{\SetFigFont{10}{12.0}{\familydefault}{\mddefault}{\updefault}$e_3=(1,1)$}}}}
\put(3601,-421){\makebox(0,0)[lb]{\smash{{\SetFigFont{10}{12.0}{\familydefault}{\mddefault}{\updefault}$D_4$}}}}
\put(3601,-2581){\makebox(0,0)[lb]{\smash{{\SetFigFont{10}{12.0}{\familydefault}{\mddefault}{\updefault}$D_2$}}}}
\put( 46,-511){\makebox(0,0)[lb]{\smash{{\SetFigFont{10}{12.0}{\familydefault}{\mddefault}{\updefault}$(-1,1)$}}}}
\put( 46,-1636){\makebox(0,0)[lb]{\smash{{\SetFigFont{10}{12.0}{\familydefault}{\mddefault}{\updefault}$(-1,0)$}}}}
\put(946,-2536){\makebox(0,0)[lb]{\smash{{\SetFigFont{10}{12.0}{\familydefault}{\mddefault}{\updefault}$(0,-1)$}}}}
\put(991,-2041){\makebox(0,0)[lb]{\smash{{\SetFigFont{10}{12.0}{\familydefault}{\mddefault}{\updefault}$D_3$}}}}
\put(496,-1006){\makebox(0,0)[lb]{\smash{{\SetFigFont{10}{12.0}{\familydefault}{\mddefault}{\updefault}$D_1$}}}}
\end{picture}%

\end{center}
\end{ex}

For a toric variety with torus $\mathbb{T}\cong (\C^*)^n$, $1$-parameter subgroups of $\mathbb{T}$ are labelled by lattice points $v\in N\cong \mathrm{Hom}(\C^*,\mathbb{T})\cong \Z^n$.
We denote the union of the cones of the fan $\Sigma$ of $Y$ by
$$
|\Sigma|\subset N\otimes_{\Z}\R\cong \R^n.
$$

\begin{prop}
For semiprojective toric manifolds, the complete and the contracting actions are
$$
N_0:=N \cap |\Sigma|
\qquad
\textrm{ and }
\qquad
N_+:=N \cap \mathrm{Int}|\Sigma|.
$$
In particular, $N_+\neq \emptyset$, since $\dim |\Sigma| = n$. Therefore they are 
also symplectic $\C^*$-manifolds globally defined over $\C^m$, for any contracting $\C^*$-action $\Fi_v$, i.e.\;all $v\in N_+.$ 
\end{prop}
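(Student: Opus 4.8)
The plan is to read off $N_0$ and $N_+$ from the combinatorial description in \cref{Lemma description of Nplus and Nzero}, and then to deduce the last sentence from the construction already used in \cref{Example intro CSR have global map}. First I would pin down the affinisation $X=\Spec H^0(Y,\mathcal O_Y)$ explicitly: for the toric variety $Y$ with fan $\Sigma\subset N$ and dual lattice $M$ one has $H^0(Y,\mathcal O_Y)=\bigcap_{\sigma\in\Sigma}\C[\sigma^\vee\cap M]$, and since these are all monomial subalgebras of $\C[M]$ and $|\Sigma|$ is convex, this intersection equals $\C[(\bigcap_\sigma\sigma^\vee)\cap M]=\C[\,|\Sigma|^\vee\cap M\,]$, using $\bigcap_\sigma\sigma^\vee=(\bigcup_\sigma\sigma)^\vee=|\Sigma|^\vee$. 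Thus $X=U_{|\Sigma|}$ is the affine toric variety of the full-dimensional rational polyhedral cone $|\Sigma|$, and $\pi:Y\to X$ is the refinement morphism, which is projective by semiprojectivity. Note $X=\mathrm{point}\iff|\Sigma|=N_\R\iff Y$ compact, which is excluded, so the semigroup $S_X:=|\Sigma|^\vee\cap M$ is $\neq\{0\}$.

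Next I would pick (finitely many, by Gordan's lemma) nonzero semigroup generators $w^1,\dots,w^m$ of $S_X$; the monomials $\chi^{w^j}$ are non-constant $\T$-homogeneous algebra generators of $\C[X]$ of weight $w^j$, on which $v\in N$ acts with weight $\langle v,w^j\rangle$. Then \cref{Lemma description of Nplus and Nzero} yields $N_0=\{v:\langle v,w^j\rangle\geq0\ \forall j\}$ and $N_+=\{v:\langle v,w^j\rangle>0\ \forall j\}$. Since the $w^j$ generate $S_X$ and $S_X$ spans the rational cone $|\Sigma|^\vee$ over $\R_{\geq0}$, the first condition is equivalent to $\langle v,\cdot\rangle\geq0$ on all of $|\Sigma|^\vee$, i.e.\ $v\in(|\Sigma|^\vee)^\vee\cap N=|\Sigma|\cap N$ by cone biduality for the closed convex cone $|\Sigma|$. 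As $|\Sigma|$ is full-dimensional, $|\Sigma|^\vee$ is strongly convex (pointed), so likewise the strict condition is equivalent to $\langle v,\cdot\rangle>0$ on $|\Sigma|^\vee\setminus\{0\}$, which is exactly the statement that $v$ lies in the (topological) interior of $|\Sigma|$; hence $N_+=\mathrm{Int}|\Sigma|\cap N$, and this is nonempty since $\mathrm{Int}|\Sigma|$ is a nonempty open cone in $N_\R\cong\R^n$ (scale any interior point into the lattice).

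For the final assertion, I would fix $v\in N_+$, so $\ell_j:=\langle v,w^j\rangle\geq1$ for all $j$. The $\chi^{w^j}$ define a closed embedding $\iota:X\hookrightarrow\C^m$ intertwining $\Fi_v$ on $X$ with the linear $\C^*$-action $t\cdot(z_j)=(t^{\ell_j}z_j)$ on $\C^m$, which is contracting with all weights positive; equipping $\C^m$ with the convex Kähler form $\tfrac{i}{2}\sum_j\ell_j^{-1}\,dz_j\wedge d\bar z_j$, its $S^1$-part is generated by the radial Hamiltonian, so that at infinity the moment-map flow is the Reeb flow. Then $\Psi:=\iota\circ\pi:Y\to\C^m$ is proper (a projective morphism followed by a closed immersion), $\C^*$-equivariant, and $I$-holomorphic, and sends the Hamiltonian $S^1$-vector field of $\Fi_v$ to the Reeb vector field at infinity — precisely the datum exhibiting $(Y,\omega,\Fi_v)$ as a symplectic $\C^*$-manifold globally defined over $\C^m$, as in \cref{Example intro CSR have global map}.

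I expect the cone/semigroup bookkeeping and Gordan's lemma to be routine. The two points that deserve care are the identification $X=U_{|\Sigma|}$ (standard, but it is what lets \cref{Lemma description of Nplus and Nzero} apply) and the verification that the map $\Psi:Y\to\C^m$ above really satisfies the ``at infinity'' compatibility required in the definition of \emph{globally defined over} $\C^m$; here one uses that $\Psi$ need not be symplectic, and, if one insists on a constant rescaling function $f$, the particular weighting chosen for the Kähler form on $\C^m$.
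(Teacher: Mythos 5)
Your proof is correct, but it takes a genuinely different route from the paper's on both halves of the statement. For the identification of $N_0$ and $N_+$, the paper (\cref{Cor complete actions semiproj toric case}) works intrinsically on $Y$: via the orbit--cone correspondence (\cref{Lemma fixed point torus action}) it shows that limits $\lim_{t\to 0}\Fi_v(t)y$ exist on a chart $U_\gamma$ precisely when $v\in\gamma$, giving $N_0=N\cap|\Sigma|$, and then describes $\mathrm{Fix}(\Fi_v)=\bigcup_{N_\tau\ni v}V(\tau)$ explicitly, testing compactness of each $V(\tau)$ through completeness of the toric variety with fan $\mathrm{Star}(\tau)$, which holds exactly for interior cones. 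You instead push everything to the affine base: you identify $X=\mathrm{Spec}\,H^0(Y,\mathcal{O}_Y)$ with $\mathrm{Spec}\,\C[\,|\Sigma|^{\vee}\cap M\,]$, apply the weight criterion of \cref{Lemma description of Nplus and Nzero} to semigroup generators $w^j$ of $|\Sigma|^{\vee}\cap M$, and conclude by cone biduality together with the standard characterisation of $\mathrm{Int}|\Sigma|$ as the set of $v$ with $\langle v,\cdot\rangle>0$ on $|\Sigma|^{\vee}\setminus\{0\}$ --- the two places where semiprojectivity (convexity and full dimension of $|\Sigma|$, hence pointedness of $|\Sigma|^{\vee}$ and $w^j\neq 0$) enters. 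Both arguments are sound; the paper's buys the explicit description of $\mathrm{Fix}(\Fi_v)$ as a union of orbit closures, which is reused later (e.g.\;for \cref{Cor core for torus action}), while yours is shorter and makes the reduction to the general equivariant-projective-morphism machinery transparent. For the final assertion, the paper invokes \cref{Theorem equiv proj morph gives sympl Cstar mfd}, whose $\Psi$-map is $z_v\circ\Upsilon$ with the coordinate power map $z_j\mapsto z_j^{\mathrm{lcm}_v/\langle v,w^j\rangle}$ homogenising the weights so that the target $\C^{\gens}$ carries the standard form and a diagonal action; you keep $\Psi=\iota\circ\pi=\Upsilon$ and instead rescale the K\"{a}hler form on the target so that the moment map of the weighted linear action becomes the radial function $\tfrac12\|z\|^2$, whose Hamiltonian flow is the Reeb flow on round spheres --- this also yields a \emph{constant} rescaling function, so it meets the paper's requirement. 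The one point worth spelling out is that $u\in|\Sigma|^{\vee}\setminus\{0\}$ really is a nonnegative combination of the $w^j$ with at least one strictly positive coefficient (pointedness again), which is what upgrades $\langle v,w^j\rangle>0$ for all $j$ to strict positivity on all of $|\Sigma|^{\vee}\setminus\{0\}$; you note this, so there is no gap.
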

    
From now on, $(Y,\Fi_v)$ is any semiprojective toric manifold with a choice of $v\in N_+$.

Recall from \cite{RZ1} that the {\bf core} of a $\C^*$-action consists of all the points of $Y$ which \emph{converge} under the action of $\R_+\subset \C^*$ (if the $S^1$-part of the action is Hamiltonian with moment map $f$, then the core equals the union of the unstable manifolds of the $-\nabla f$ flow). The $\C^*$-action of a symplectic $\C^*$-manifold is always a path-connected compact subset, but it can be quite singular (unlike the smooth but often disconnected $\C^*$-fixed locus $\F=\mathrm{Crit}(f)\subset \mathrm{Core}(Y)$). In our current setup, and generally for projective equivariant morphisms, the core is the preimage of $0$ under the map $\Psi: Y \to \C^m$ that we construct, in particular the core is cut out by analytic equations. It follows that $Y$ deformation retracts to the core, in particular $H^*(Y)\cong H^*(\mathrm{Core}(Y))$.

\begin{ex}\label{Example non contracting action}
    In \cref{Example toric intro}, $N_0=\N\times \Z$ and $N_+=\N_{>0}\times \Z$, and $\mathrm{Core}(Y,\Fi_v)=D_1\cup D_3$ for $v\in N_+$ (which is compact and independent of $v$), whereas $\mathrm{Core}(Y,\Fi_{v})=D_1\cup D_2\cup D_3 \cup D_4$ is non-compact for $v=(0,\pm 1)\in N_0\setminus N_+$.
\end{ex}
\cref{Theorem intro commuting actions result} applies for all $v\in N_+=N \cap \mathrm{Int}|\Sigma|,$ so we deduce: 

\begin{cor}
    $SH^*(Y,\Fi_v)$ is independent of $v\in N_+$ up to continuation isomorphism, and it arises from $QH^*(Y)$ by localising at $Q_{\Fi_v}$, or equivalently by localising at all $Q_{\Fi_{v'}}$ for $v'\in N_+$ ($c^*Q_{\Fi_{v'}}=R_{v'}$ is automatically invertible in $SH^*(Y,\Fi_v)$).
\end{cor}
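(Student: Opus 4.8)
The plan is to assemble three results already established in the excerpt: the Proposition above identifying $N_+=N\cap\mathrm{Int}|\Sigma|$ and guaranteeing the symplectic $\C^*$-structure; the Invariance Theorem of \cref{Subsection Equivariant projective morphisms}; and \cref{Theorem Intro SH as loc of QH} together with \cref{Prop Rw rotation elements}. First, since $Y$ is a semiprojective toric manifold, the Proposition gives $N_+=N\cap\mathrm{Int}|\Sigma|\neq\emptyset$ and shows that $(Y,\omega,\Fi_v)$ is a symplectic $\C^*$-manifold for every $v\in N_+$, indeed globally defined over some $\C^m$. Moreover $Y$ carries an equivariant projective morphism $\pi\colon Y\to X$ (\cref{Example intro GIT}, \cref{Example def semiprojective toric mfds intro}), so the entire structural package of \cref{Subsection Introduction A brief overview of the structural properties}, the Invariance Theorem, and \cref{Prop Rw rotation elements} all apply to each $(Y,\omega,\Fi_v)$ with $v\in N_+$.

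The independence of $SH^*(Y,\Fi_v)$ from $v\in N_+$ up to continuation isomorphism is precisely the Invariance Theorem applied to an arbitrary pair $v,v'\in N_+$: the continuation maps induce an isomorphism $SH^*(Y,\Fi_v)\cong SH^*(Y,\Fi_{v'})$ commuting with the two copies of $c^*$ emanating from $QH^*(Y)$ (this is also compatible with the filtration-level statement \cref{Theorem intro commuting actions result}, whose $p=\infty$ instance just records $\Fil^{\infty}_{v'}=QH^*(Y)$ for all $v'$). Then \cref{Theorem Intro SH as loc of QH}, applied to the symplectic $\C^*$-manifold $(Y,\omega,\Fi_v)$, identifies $c^*$ with localisation at $Q_{\Fi_v}\in QH^{2\mu_v}(Y)$, i.e. $SH^*(Y,\Fi_v)\cong QH^*(Y)_{Q_{\Fi_v}}$.

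For the ``equivalently'' clause, note $N_+\subset N_0$ since $N\cap\mathrm{Int}|\Sigma|\subset N\cap|\Sigma|$; hence \cref{Prop Rw rotation elements} gives $c^*(Q_{\Fi_{v'}})=R_{v'}$ for every $v'\in N_+$, and $R_{v'}$ is invertible in $SH^*(Y,\Fi_v)$. Since $Q_{\Fi_v}$ is an even-degree (hence central) element and is itself one of the elements $Q_{\Fi_{v'}}$, localising $QH^*(Y)$ at the whole set $\{Q_{\Fi_{v'}}:v'\in N_+\}$ factors, by transitivity of localisation, as localisation first at the single element $Q_{\Fi_v}$, which yields $SH^*(Y,\Fi_v)$, followed by inverting the images $R_{v'}$ of the remaining generators; but those are already units, so the second step is trivial and the joint localisation coincides with $QH^*(Y)_{Q_{\Fi_v}}\cong SH^*(Y,\Fi_v)$.

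This corollary is genuinely a consequence of the earlier theorems, so there is no hard step; the one point that is not purely formal is the simultaneous invertibility of all the $c^*(Q_{\Fi_{v'}})$ in $SH^*(Y,\Fi_v)$, which cannot be read off from $QH^*(Y)$ alone because $Q_{\Fi_{v'}}$ is typically a non-unit there. This is exactly what the inclusion $N_+\subset N_0$ and the rotation-class formalism of \cref{Prop Rw rotation elements} supply (equivalently, it already follows from the Invariance Theorem, which forces $c^*(Q_{\Fi_{v'}})$ to be invertible in $SH^*(Y,\Fi_v)$ for all $v,v'\in N_+$).
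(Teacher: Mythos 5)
Your proposal is correct and follows essentially the same route as the paper: the continuation isomorphism of the Invariance Theorem (equivalently \cref{Theorem commuting Cstar actions theorem from toric section}(1)) gives $v$-independence, \cref{Theorem Intro SH as loc of QH} identifies $c^*$ with localisation at $Q_{\Fi_v}$, and the invertibility of all $c^*(Q_{\Fi_{v'}})=R_{v'}$ from \cref{Prop Rw rotation elements} (via $N_+\subset N_0$, as established in \cref{Lemma borderline Cstar actions}) makes the joint localisation collapse to the single one. The transitivity-of-localisation remark tying up the ``equivalently'' clause is the same formal observation the paper relies on.
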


Recall the notation arising in the definition of $\Delta$ in \eqref{Equation intro delta equation}: the ray generators
$$e_1,\ldots,e_r\in N_0=N\cap |\Sigma| \subset N \cong \Z^n$$ of the fan $\Sigma$, and the parameters $\lambda_i\in \Z$ (which determine a toric K\"{a}hler form). 
Denote the toric divisors by $D_i = \mu^{-1}(\Delta_i)$ (the preimage of the $i$-th facet $\Delta_i :=  \{x\in \R^n:\langle x,e_i\rangle = \lambda_i \}$ of $\Delta$ via the moment map).
From now on, also abbreviate
$$x^v:=x_{j_1}^{v_1}x_{j_2}^{v_2}\cdots x_{j_n}^{v_n},$$ whenever $v=v_1e_{j_1}+\cdots + v_n e_{j_n} \in N\cap |\Sigma|$ in a basis $e_{j_1},\ldots,e_{j_n}$ of a maximal cone of $\Sigma$.

\begin{thm}\label{Prop intro semiproj toric}
If $Y$ is Fano or CY, we have the following explicit presentation, such that $Q_{\Fi_v}=x^v$:
$$
\k[x_1,\ldots,x_r]/\mathcal{J} \cong QH^*(Y),\; x_i \mapsto Q_{\Fi_{e_i}}.
$$
Given any $v\in N_+$, localising that isomorphism  at $Q_v=x^v$ yields
$$
\begin{array}{cccccccc}
QH^*(Y)[x^{\pm v}] & \;  \; 
\cong \;  \; & \k[x_1^{\pm 1},\ldots,x_r^{\pm 1}]/\mathcal{J} & 
\;  \; 
\cong \; \;   & SH^*(Y,\llambda_v) & 
\;  \; 
\cong \;  \;  & \mathrm{Jac}(W),\\
 Q_{\Fi_{e_i}} & 
\mapsto & x_i
 & 
\mapsto & R_{\Fi_{e_i}}= c^*Q_{\Fi_{e_i}} &  
\mapsto  & T^{-\lambda_i}z^{e_i},
\end{array} 
$$
where 
$\mathcal{J}$ is the ideal generated by the linear relations and quantum Stanley--Reisner relations (\cref{Subsection Presentation of quantum and symplectic cohomology of Fano semiprojective toric manifolds}).

When $Y$ is Fano, we have
$$
Q_{\Fi_{e_i}}=\mathrm{PD}[D_i]\in QH^2(Y),
$$
whereas if $Y$ is CY, there may be higher order $T$ correction terms:
$$
Q_{\Fi_{e_i}}=\mathrm{PD}[D_i]+T^{>0}\textrm{-terms} \in QH^2(Y).
$$
In particular, $\mathcal{O}(-1)\oplus \mathcal{O}(-1) \to \C P^1$ (a crepant resolution of the conifold $\{z_1 z_2 -z_3z_4=0\}\subset \C^4)$ is a CY semiprojective toric manifold for which such higher order corrections must arise.
\end{thm}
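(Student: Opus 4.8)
The plan is to reduce the theorem to two inputs: a presentation of $QH^*(Y)$ as a $\k$-algebra, and a localisation step, using \cref{Theorem Intro SH as loc of QH} ($SH^*(Y,\Fi_v)$ is the localisation of $QH^*(Y)$ at $Q_{\Fi_v}$) and \cref{Prop Rw rotation elements} (the Seidel homomorphisms $N_0\to QH^*(Y)$, $w\mapsto T^{\min H_w}Q_w$, and $N\to SH^*(Y,\Fi_v)$). The ray generators $e_i$ lie in $N_0$, so \cref{Prop Rw rotation elements} produces rotation classes $Q_{\Fi_{e_i}}\in QH^{2\mu_{e_i}}(Y)$, and $D_i=\min H_{e_i}$ is a component of $\Fix(\Fi_{e_i})$, as in \cref{Example intro toric varieties}. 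The key preliminary is to identify the lowest-order (in $T$) term of $Q_{\Fi_{e_i}}$ with $\mathrm{PD}[D_i]$: this is the non-compact analogue of the McDuff--Tolman computation \cite{mcduff2006topological} of the leading term of a Seidel element as the Poincar\'e dual of the minimal fixed component, obtained from the chain-level construction of the rotation class in \cite{R14} and \cite{RZ1}. In the Fano case the same argument, adapted to our non-compact setting, also kills the higher-order terms, giving $Q_{\Fi_{e_i}}=\mathrm{PD}[D_i]\in QH^2(Y)$ and hence $\mu_{e_i}=1$; in the CY case the $\Z$-grading gives no such leverage, and only $Q_{\Fi_{e_i}}=\mathrm{PD}[D_i]+T^{>0}\textrm{-terms}$ survives.

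For the presentation, \cref{Lemma equivariant formality} gives that $QH^*(Y)$ is a free $\k$-module of rank $\dim_\k H^*(Y)$, while for semiprojective toric manifolds $H^*(Y)$ is generated as a ring in degree $2$ by the classes $\mathrm{PD}[D_i]$, with the linear relations and the classical Stanley--Reisner relations (going back to Hausel--Sturmfels \cite{HS02}). Since $Q_{\Fi_{e_i}}\equiv\mathrm{PD}[D_i]$ modulo positive powers of $T$, completeness of the Novikov ring gives a surjection $\k[x_1,\ldots,x_r]\twoheadrightarrow QH^*(Y)$, $x_i\mapsto Q_{\Fi_{e_i}}$. To identify its kernel $\mathcal{J}$ one produces the expected relations: the quantum Stanley--Reisner relations $\prod_l Q_{\Fi_{e_{i_l}}}=T^{\sum_m c_m\lambda_m-\sum_l\lambda_{i_l}}\prod_m Q_{\Fi_{e_m}}^{c_m}$ follow from applying the Seidel homomorphism of \cref{Prop Rw rotation elements} to the integral relation $\sum_l e_{i_l}=\sum_m c_m e_m$ of a primitive collection (using $\min H_{e_i}=\lambda_i$, since the facet $\Delta_i$ realises $\min_\Delta\langle\cdot,e_i\rangle=\lambda_i$); the linear relations come from the degree-$2$ relations among the $\mathrm{PD}[D_i]$, uncorrected in the Fano case for degree reasons, with their precise $T$-corrected form in the CY case recorded in \cref{Subsection Presentation of quantum and symplectic cohomology of Fano semiprojective toric manifolds}; and the identity $Q_{\Fi_v}=x^v$ holds because $H_v$ attains its minimum at the vertex of $\Delta$ dual to the chosen maximal cone, so the constant in \cref{Prop Rw rotation elements} vanishes. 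Finally a Nakayama/flatness argument over the Novikov ring --- each quantum Stanley--Reisner relation reduces modulo $T^{>0}$ to the classical one since its curve class has positive symplectic area, and likewise for the linear relations --- shows that $\k[x_i]/\mathcal{J}$ has the same $\k$-rank as $H^*(Y)$, forcing the surjection onto $QH^*(Y)$ to be an isomorphism.

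The localisation step is then short. As $v\in N_+=N\cap\mathrm{Int}|\Sigma|$ lies in the interior of a maximal cone, $Q_{\Fi_v}=x^v$ is a monomial (up to a power of $T$) in the $Q_{\Fi_{e_i}}$, while all $c^*Q_{\Fi_{e_i}}=R_{e_i}$ are already invertible in $SH^*(Y,\Fi_v)$ by \cref{Prop Rw rotation elements} and the invariance statement; so by \cref{Theorem Intro SH as loc of QH}, $SH^*(Y,\Fi_v)$ is $QH^*(Y)$ localised at all the $x_i$, that is $\k[x_1^{\pm1},\ldots,x_r^{\pm1}]/\mathcal{J}$. To match this with $\mathrm{Jac}(W)$, put $u_i:=T^{-\lambda_i}z^{e_i}$; then $\k[z_i^{\pm1}]/(z_j\partial_{z_j}W)$ is $\k[u_1^{\pm1},\ldots,u_r^{\pm1}]$ modulo the multiplicative relations $\prod u_i^{a_i}=T^{-\sum a_i\lambda_i}$ (one for each $\sum a_i e_i=0$) and the linear relations $\sum_i e_{i,j}u_i=0=z_j\partial_{z_j}W$, and under $x_i\leftrightarrow u_i$ these are exactly the localised relations of $\mathcal{J}$; this yields $\k[x_i^{\pm1}]/\mathcal{J}\cong\mathrm{Jac}(W)$ with $R_{e_i}=c^*Q_{\Fi_{e_i}}\mapsto T^{-\lambda_i}z^{e_i}$, generalising the convex Fano case of \cite{R16}. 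The Fano/CY dichotomy for the generators was settled in the first step; for $Y=\mathcal{O}(-1)\oplus\mathcal{O}(-1)\to\CP^1$, which is CY with $SH^*(Y)=0$, one computes the rotation class $Q_{\Fi_{e_i}}$ directly (via its Floer-chain model, equivalently via the genus-$0$ Gromov--Witten invariants of the resolved conifold) and checks that its $T^{>0}$-correction is nonzero.

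The steps I expect to be hardest are: (i) the $\k$-rank count for $\k[x_i]/\mathcal{J}$ in the CY case, where the $\Z$-grading no longer bounds the quantum corrections and the flatness of the $T$-deformation of the classical toric relations must be argued by hand; (ii) the lowest-order computation of $Q_{\Fi_{e_i}}$ in the non-compact setting, including the transversality of the moduli of sections over the possibly non-compact divisor $D_i$; and (iii) pinning down the exact, $T$-corrected linear relations in the CY case, which is what makes the $\mathcal{O}(-1)\oplus\mathcal{O}(-1)\to\CP^1$ example delicate.
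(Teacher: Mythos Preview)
Your proposal follows essentially the same route as the paper. The paper's argument is assembled in \cref{Subsection Presentation of quantum and symplectic cohomology of Fano semiprojective toric manifolds}: the quantum Stanley--Reisner relations among the $Q_{\Fi_{e_i}}$ come from the Seidel-type homomorphism (\cref{Theorem Quantum Stanley Reisner}, via \cref{Corollary a function computed}), the leading-term identification $Q_{\Fi_{e_i}}=\mathrm{PD}[D_i]+T^{>0}$-terms is quoted from \cite[Prop.~1.35]{RZ1} (\cref{Prop Fano case get divisor}), and your ``Nakayama/flatness'' step is exactly the McDuff--Tolman algebraic lemma \cite[Lemma~5.1]{mcduff2006topological} that the paper invokes in \cref{Prop QH and SH of semiproj var}. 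The localisation and the identification with $\mathrm{Jac}(W)$ are also as you describe.

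Two minor divergences are worth flagging. First, for the conifold the paper does \emph{not} compute $Q_{\Fi_{e_i}}$ or any Gromov--Witten invariants directly; it argues a posteriori that if there were no $T^{>0}$-corrections then the single quantum Stanley--Reisner relation $x_1x_3=T^{\omega[\beta]}x_2x_4$ would replace the two classical relations $x_1x_3=0=x_2x_4$, and the resulting quotient would have the wrong $\k$-rank to equal $QH^*(Y)\cong H^*(Y;\k)$. This is considerably lighter than a direct computation. Second, your anticipated difficulty (iii) --- pinning down $T$-corrected linear relations in the CY case --- is not the route taken: $\mathcal{J}$ is defined with the \emph{uncorrected} linear relations $\sum_i\langle\xi,e_i\rangle x_i$, and the CY extension (\cref{remark conifold}) simply appeals to the McDuff--Tolman trick on the grounds that the relations reduce modulo $T^{>0}$ to the classical ones. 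The paper does not separately verify that the uncorrected linear relations hold for the $Q_{\Fi_{e_i}}$ in the CY case, so your instinct that this point is delicate is not unreasonable, but it is not pursued.
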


\begin{ex}\label{Example cohomology computation introduction for tori example}
    In \cref{Example toric intro}, we find that
\begin{align*}
H^*(Y;\Z) &\cong
\Z[x_1,x_2]/(x_1x_2,\ x_1^2,\ x_2^2),\\ 
QH^*(Y) &\cong 
\k[x_1,x_2]/(x_1x_2+Tx_1,\ x_1^2,\ x_2^2+Tx_1-T^2),\\ SH^*(Y,\Fi_v) &\cong QH^*(Y)[x^{\pm v}] 
\cong 
 QH^*(Y)[x_1^{\pm 1},x_2^{\pm 1}] = 0,
\end{align*}
for all $v\in N_+=\{(a,b)\in \Z^2:a>0\}\subset N=\Z^2$. 
An interesting feature of this example, is that the vanishing of symplectic cohomology occurs even though $Y$ is Fano (the vanishing occurs in general for CY symplectic $\C^*$-manifolds by \cref{Theorem 1 filtration}, but it was not previously observed in Fano examples).

The exterior edges $v_0=(0,\pm 1)\in N_0\setminus N_+$ give invertible classes $R_{v_0}=x^{v_0}=x_2^{\pm 1}$ in $SH^*(Y,\Fi_v)$. 

We do not yet know if there is a reasonable notion of symplectic cohomology $SH^*(Y,\Fi_{v_0})$ when $v_0\in N\cap \partial |\Sigma|=N_0\setminus N_+$, i.e.\;for complete but non-contracting actions.
We saw in general that $SH^*(Y,\Fi_v)\cong QH^*(Y)[Q_v^{-1}]$ is $v$-invariant for all $v\in N_+$. In the above example, at least one of those properties must fail because we show that the localisation of quantum cohomology no longer vanishes:
$$
QH^*(Y)[x^{\pm v_0}]\neq 0.
$$
\end{ex}

\begin{rmk} The interesting feature that localising at $x^v$ inverts all $x_i$ is due to \cref{Prop Rw rotation elements}. Outside of the Fano setup, Smith
\cite{smith2020quantum} gave a description of quantum cohomology for semiprojective toric manifolds, and it is in general a difficult question to determine which monomials of $QH^*(Y)$ are being inverted when passing to symplectic cohomology \cite[Rmk.1.8]{smith2020quantum}.
\end{rmk}

We can also compute the $S^1$-equivariant quantum cohomology $E^-QH^*(Y,\Fi_v)$ for $v\in N_+$. We recall our convention that we $u$-complete the usual $S^1$-equivariant quantum cohomology for the $S^1$-action $\Fi_v$, so $$E^-QH^*(Y,\Fi_v)=QH^*_{S^1}(Y,\Fi_v)\otimes_{\ku}\kuu$$ where $u$ is the formal equivariant parameter in degree $2$ generating $H^*(BS^1)=H^*(\C P^{\infty}) =\k[u]$. Thus $E^-QH^*(Y,\Fi_v)$ is a $\ku$-module.

Instead of the ideal $\mathcal{J}$, we now use the ideal 
$$
\mathcal{S} \subset \k[x_1,\ldots,x_r] 
$$
generated by the quantum Stanley-Reisner relations, but not all the linear relations: we only include the following amongst the generators of $\mathcal{S}$:
$$
\{ \sum \langle e_i,\xi\rangle x_i: \langle \xi,v\rangle =0\} \subset \Z[x_1,\ldots,x_r].
$$
We briefly motivate this: the linear relations would let $\xi$ run over a basis for $M=\mathrm{Hom}(\mathbb{T},\C^*)\cong \Z^n$; in the presentation of classical equivariant cohomology for $Y$ in \cite[Theorem 12.4.14 p.601]{cox2011toric}, $M\cong H^2(B\mathbb{T})\cong \Z^n$ acts on $H^*_{\mathbb{T}}(Y)$ by sending $y\in M$ to $-\sum \langle e_i,y\rangle \, x_i$ (and $x_i\mapsto \mathrm{PD}[D_i]$). We are not using the full torus $\mathbb{T}$, but just the $\C^*$-subgroup generated by $\Fi_v$, corresponding to $\Z v \subset N$, so we ``set to zero'' all the equivariant parameters normal to $\Z v$.

Our convention below is that $E^-SH^*(Y,\Fi_v)$, for $v\in N_+$, refers to the action using the free weight $(1,0)$ (see \cref{Definition free weight}), so in particular $E^-SH^*(Y,\Fi_v)$ is a unital $\k$-algebra. We constructed in \cite{RZ3} the equivariant rotation classes $EQ_{\Fi}$ analogous to the rotation classes $Q_{\Fi}$.

We remark that just like $QH^*(Y)=H^*(Y;\k)$ as a $\k$-vector space, $E^-QH^*(Y,\Fi_v)\cong H^*_{S^1}(Y,\Fi_v)[\![u]\!]$ as a $\ku$-module (cf.\;also \cref{Lemma equivariant formality}). Let $\k_{\geq 0},\k_{>0}$ inside $\k$ denote the $T^{\geq 0}$-terms and $T^{>0}$-terms in $\k$, respectively.

\begin{thm}\label{Prop intro semiproj toric}
If $Y$ is Fano or CY, we have the following explicit presentation for the equivariant quantum cohomology $E^-QH^*(Y,\Fi_v)$, such that $Q_{\Fi_v}=x^v$.
$$
\ku [x_1,\ldots,x_r]/\mathcal{S} \cong QH^*(Y),\; x_i \mapsto EQ_{\Fi_{e_i}}.
$$
Given any $v\in N_+$, localising that isomorphism at $EQ_v=x^v$ yields
$$
\begin{array}{cccccccc}
E^-QH^*(Y)[x^{\pm v}] & \;  \; 
\cong \;  \; & \k[x_1^{\pm 1},\ldots,x_r^{\pm 1}]/\mathcal{S} & 
\;  \; 
\cong \; \;   & E^-SH^*(Y,\llambda_v) & 
\;  \; 
\cong \;  \;  & \mathrm{Jac}(W),\\
 EQ_{\Fi_{e_i}} & 
\mapsto & x_i
 & 
\mapsto & ER_{\Fi_{e_i}}= c^*(EQ_{\Fi_{e_i}}) &  
\mapsto  & T^{-\lambda_i}z^{e_i}.
\end{array} 
$$
When $Y$ is Fano, we have
$$
EQ_{\Fi_{e_i}}\in \mathrm{PD}[D_i]+u\k_{\geq 0} \subset QH^2(Y),
$$
whereas if $Y$ is CY, there may be higher order $T$ correction terms:
$$
Q_{\Fi_{e_i}}\in \mathrm{PD}[D_i]+u \k_{\geq 0}+u^0\k_{>0}H^2(Y) \in  QH^2(Y),
$$
recalling that $H^2(Y)$ is spanned by $\mathrm{PD}[D_1],\ldots,\mathrm{PD}[D_r]$.

By \cref{Thm intro injectivity thm}, $u\k$-correction terms are to be expected in both cases, since the injective continuation maps for integer slopes correspond to iteratively multiplying by $EQ_{\Fi_v}$ like in \eqref{Equation cNplus maps intro}, cf.\;\cite{RZ3}.
\end{thm}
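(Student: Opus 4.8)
The strategy is to run the $S^1$-equivariant version of the argument proving the non-equivariant presentation established above, and then to read off the localized statements from the equivariant localization package together with the already-proved non-equivariant isomorphism $QH^*(Y)[x^{\pm v}]\cong\mathrm{Jac}(W)$. One starts by defining the $\ku$-algebra homomorphism $\Phi\colon\ku[x_1,\ldots,x_r]\to E^-QH^*(Y,\Fi_v)$, $x_i\mapsto EQ_{\Fi_{e_i}}$, built from the equivariant rotation classes of \cite{RZ3}, and reduces the presentation to: (i) $\Phi$ is surjective; (ii) $\Phi$ annihilates the ideal $\mathcal{S}$; (iii) the induced surjection $\ku[x_1,\ldots,x_r]/\mathcal{S}\twoheadrightarrow E^-QH^*(Y,\Fi_v)$ is injective.

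Claims (i) and (ii) import the McDuff--Tolman mechanism \cite{mcduff2006topological} in equivariant form. By \cref{Lemma equivariant formality}, $E^-QH^*(Y,\Fi_v)\cong H^*(Y)[\![u]\!]$ as a $\ku$-module, so it is complete both $u$-adically and (over the Novikov ring $\k$) $T$-adically. The classical equivariant toric presentation \cite[Thm.~12.4.14]{cox2011toric}, applied to the $\C^*$ generated by $v$, shows that the equivariant divisor classes $\mathrm{PD}[D_i]$ generate $H^*_{S^1}(Y,\Fi_v)$ over $\ku$; since the Floer-chain construction in \cite{RZ3} presents $EQ_{\Fi_{e_i}}$ as $\mathrm{PD}[D_i]$ modified by lower-order $T$- and $u$-corrections, a Nakayama-type lifting over the complete coefficient ring yields surjectivity. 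The equivariant quantum Stanley--Reisner relations among the $EQ_{\Fi_{e_i}}$ then follow from multiplicativity of the equivariant rotation representation --- the equivariant analogue of \cref{Prop Rw rotation elements}, namely $EQ_{v_1}\star\cdots\star EQ_{v_k}=T^{a(v_1,\ldots,v_k)}EQ_{v_1+\cdots+v_k}$ for $v_i\in N_0$ \cite{RZ3} --- applied to the primitive-collection relations among the rays $e_i$, as in \cite{mcduff2006topological,R16}. The restricted linear relations $\sum_i\langle e_i,\xi\rangle x_i$ with $\langle\xi,v\rangle=0$ are degree-$2$ identities, hence follow from the classical ones in $H^2_{S^1}(Y,\Fi_v)$ of \cite[Thm.~12.4.14]{cox2011toric} (divisors of characters $\chi^\xi$ with $\langle\xi,v\rangle=0$ remain trivial equivariantly) once one controls the corrections by grading: in the Fano case $|T|=2$ forces $EQ_{\Fi_{e_i}}\in\mathrm{PD}[D_i]+u\,\k_{\geq0}$, and in the CY case one additionally allows the $u^0\k_{>0}H^2(Y)$-terms, exactly as in the non-equivariant statement; the inevitability of the $u\,\k$-terms is a consequence of the Injectivity Theorem \cref{Thm intro injectivity thm}, since the injective continuation maps at integer slopes are iterated equivariant products by $EQ_{\Fi_v}$.

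Claim (iii) is the technical core. The plan is to compare sizes along the $T$-adic Novikov filtration: the associated graded of $E^-QH^*(Y,\Fi_v)$ is $H^*_{S^1}(Y,\Fi_v)[\![u]\!]$ with its classical presentation, so it suffices to identify the associated graded of $\ku[x_1,\ldots,x_r]/\mathcal{S}$ with classical equivariant cohomology and then invoke completeness. The main obstacle is that $\Phi$ is not strictly filtered --- the $T$- and $u$-corrections mix filtration levels --- so one must choose the filtration carefully, tracking the weight filtration on the Floer complexes from \cite{RZ2,RZ3}, so that the leading terms of the generators of $\mathcal{S}$ recover precisely the classical linear and Stanley--Reisner relations; with that in hand, $T$-adic completeness upgrades the surjection in (iii) to an isomorphism. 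I expect this associated-graded bookkeeping, rather than any new geometric input, to be where the difficulty concentrates.

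The localized statements then follow formally. By the equivariant analogue of \cref{Prop Rw rotation elements}, each $ER_{\Fi_{e_i}}=c^*(EQ_{\Fi_{e_i}})$ is invertible in $E^-SH^*(Y,\Fi_v)$ (for the free weight $(1,0)$, so that $E^-SH^*$ is a unital ring), hence inverting $EQ_v=x^v$ inverts every $x_i$; and by the equivariant refinement of \cref{Theorem Intro SH as loc of QH} in \cite{RZ3}, this localization is $E^-SH^*(Y,\Fi_v)$. Matching the localized ring with $\k[x_1^{\pm1},\ldots,x_r^{\pm1}]/\mathcal{S}$ and with $\mathrm{Jac}(W)$ under $x_i\mapsto T^{-\lambda_i}z^{e_i}$ then reduces to the already-established non-equivariant isomorphism $QH^*(Y)[x^{\pm v}]\cong\mathrm{Jac}(W)$, after observing that once all $x_i$ are inverted the equivariant quantum relations express $u$ as a Laurent polynomial in the $x_i$, so the localized $\ku$-algebra is in fact a $\k$-algebra. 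Finally, the conifold resolution $\mathcal{O}(-1)\oplus\mathcal{O}(-1)\to\C P^1$ exhibits, as in the non-equivariant case, genuine higher-order $T$-corrections in the CY setting.
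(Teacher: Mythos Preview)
Your proposal follows essentially the same route as the paper. The paper's proof of this equivariant statement is in fact a one-sentence remark (Remark~\ref{remark conifold}): it says the equivariant versions are ``proved in the same way'' as the non-equivariant Proposition~\ref{Prop QH and SH of semiproj var}, using the equivariant rotation classes $EQ_{\Fi_v}$ from \cite{RZ3}, the classical equivariant presentation from \cite[Thm.~12.4.14]{cox2011toric} (cf.\ Remark~\ref{Remark equiv classical presentation}), and the McDuff--Tolman algebraic lemma \cite[Lemma~5.1]{mcduff2006topological} to conclude that the relations generate the full kernel. Your steps (i)--(iii) unpack exactly this; in particular, your associated-graded/completeness argument for (iii) is precisely the content of the McDuff--Tolman lemma, just stated in slightly different language. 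Your observation that after inverting all $x_i$ the variable $u$ becomes a Laurent polynomial in the $x_i$ (explaining the passage from $\ku$ to $\k$ in the localized ring) is a useful point that the paper leaves implicit.
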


\begin{rmk}\label{Remark equiv classical presentation}
By replacing the quantum Stanley-Reisner relations by the classical Stanley-Reisner relations (by setting $T=0$), and replacing the coefficients $\k,\ku$ above by $\Q$ coefficients, those presentations of $QH^*(Y)$ and $E^-QH^*(Y,\Fi_v)$ are presentations of the ordinary cohomologies $H^*(Y)$ and $H^*_{S^1}(Y)$. If we do not put any of the linear relations in $\mathcal{S}$ above, then this is the presentation of the ordinary $\mathbb{T}$-equivariant cohomology. The presentation for $H^*(Y)$ is due to Hausel-Sturmfels \cite{HS02}, whereas the equivariant versions are proved in \cite[Theorem 12.4.14 p.601]{cox2011toric}. These do not use the Fano/CY assumption.
\end{rmk}

\noindent \textbf{Acknowledgements.} 
We thank Paul Seidel and Jack Smith 
for helpful conversations.

\section{Commuting, complete, and contracting $\C^*$-actions on affine varieties}\label{Section commuting complete and contracing actions}

\subsection{Complete and contracting $\C^*$-actions on affine varieties}

\begin{de}\label{Definition complete and contracting actions}
For a $\C^*$-action $\psi$ on any space $Y$, $\psi$ is a {\bf complete action} if convergence points $\lim_{\C^*\ni t\to 0}\psi_v(y)$ exist for all $y\in Y,$ and a {\bf contracting action} if $\psi$ is complete and the fixed locus $\mathrm{Fix}(\psi)$ is compact. For affine varieties $Y\subset \C^m$, this is meant in the Euclidean subspace topology.
\end{de}

Let $\psi$ be a $\C^*$-action on an affine variety $X$. Then the coordinate ring $\C[X]$ is\footnote{The action $\C^*\times X \to X$ defines a $\C$-algebra homomorphism $\C[X]\to \C[X\times \C^*]=\C[X][t,t^{-1}]$, $f\mapsto \sum_{n\in \Z} f_n t^n$ (finite sum), then $f=\oplus f_n\in \oplus_{n\in \Z}\C[X]_n=\C[X].$} $\Z$-graded, where $\C[X]_n$ are the functions $f$ of weight $n$: $\psi_t^*f = t^n f$ for all $t\in \C^*$. So the homogenous elements for this grading are ``$\C^*$-eigenvectors'', and $\C[X]_0$ contains the constant functions $\C$. Choose\footnote{Pick finitely many generators for the unital $\C$-algebra $\C[X]$, and then use their homogeneous parts.} homogeneous non-constant generators $f_{w^1},\ldots,f_{w^{\gens}}$ for the unital $\C$-algebra $\C[X]$, with weights $w^j \in \Z$ (there is a slight abuse of notation: the $w^j$ may not be distinct, but we consider the corresponding $f_{w^j}$ as distinct).

The generators $f_{w^j}$ determine an embedding 
$$X\hookrightarrow \C^{\gens}, \; x\mapsto (x_1,\ldots,x_m), \textrm{ where }x_j=f_{w^j}(x)\in \C,$$ 
and we call $x_j$ the {\bf coordinates} of $x\in X$, so $x$ corresponds to the maximal ideal $\mathfrak{m}_x = (f_{w^j}-x_j)$ generated by all $f_{w^j}-x_j \in \C[X]$. 

\begin{lm}\label{Lemma how you act on the coordinate ring}
A convergence point $\lim_{\C^*\ni t \to 0} \psi_t(x)$ exists precisely if $w^j \geq 0$ for all $j$ with $x_j\neq 0$, in which case the limit point has coordinates $\delta_j x_j$ where $\delta_j = 0$ for $w^j >0$, and $\delta_j=1$ for $w^j=0$.
In particular, $\lim_{\C^*\ni t \to 0} \psi_t(x)$ is the point $0\in \C^m$ precisely if $w^j > 0$ for all $j$ with $x_j\neq 0$. 
\end{lm}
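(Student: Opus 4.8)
The plan is to work directly with the coordinate description of the action on $X \hookrightarrow \C^m$. Recall that the $\C^*$-action on $X$ corresponds to the $\Z$-grading $\C[X] = \oplus_n \C[X]_n$, and that $f_{w^j} \in \C[X]_{w^j}$, meaning $\psi_t^* f_{w^j} = t^{w^j} f_{w^j}$. Unwinding what this means for a point: if $x \in X$ has coordinates $x_j = f_{w^j}(x)$, then the point $\psi_t(x)$ has $j$-th coordinate $f_{w^j}(\psi_t(x)) = (\psi_t^* f_{w^j})(x) = t^{w^j} f_{w^j}(x) = t^{w^j} x_j$. So concretely, $\psi_t$ acts on the ambient $\C^m$ by the diagonal scaling $(x_1,\dots,x_m) \mapsto (t^{w^1} x_1, \dots, t^{w^m} x_m)$, and $X$ is invariant under this.

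First I would establish the ``if'' direction. Suppose $w^j \geq 0$ for every $j$ with $x_j \neq 0$. Then as $t \to 0$ in $\C^*$, the $j$-th coordinate $t^{w^j} x_j$ converges: to $x_j$ if $w^j = 0$, to $0$ if $w^j > 0$ (and trivially to $0 = x_j$ if $x_j = 0$, regardless of the sign of $w^j$). Hence $\psi_t(x)$ converges in $\C^m$ to the point $y$ with coordinates $\delta_j x_j$ where $\delta_j = 1$ if $w^j = 0$ and $\delta_j = 0$ if $w^j > 0$. Since $X \subset \C^m$ is closed in the Euclidean topology and each $\psi_t(x) \in X$, the limit $y$ lies in $X$; this is exactly the asserted convergence point. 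The final sentence of the lemma is the special case where \emph{all} $j$ with $x_j \neq 0$ have $w^j > 0$: then every $\delta_j x_j = 0$, so $y = 0 \in \C^m$.

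Next, the ``only if'' direction: suppose some index $j_0$ has $x_{j_0} \neq 0$ but $w^{j_0} < 0$. Then the $j_0$-th coordinate of $\psi_t(x)$ is $t^{w^{j_0}} x_{j_0}$, whose modulus is $|t|^{w^{j_0}} |x_{j_0}| \to \infty$ as $t \to 0$ (since $w^{j_0} < 0$ and $x_{j_0} \neq 0$). So $\psi_t(x)$ leaves every compact subset of $\C^m$ as $t \to 0$ and cannot converge in $X$. This proves the stated equivalence, and also pins down the coordinates of the limit when it exists, completing the lemma.

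I do not expect a serious obstacle here — the only point requiring a little care is the bookkeeping of the three cases ($x_j = 0$; $x_j \neq 0, w^j = 0$; $x_j \neq 0, w^j > 0$) and the observation that closedness of $X$ in the Euclidean topology is what guarantees the limit point actually lies in $X$ rather than merely in $\C^m$. The translation from the grading on $\C[X]$ to the diagonal scaling on coordinates is routine once one evaluates $\psi_t^* f_{w^j}$ at $x$, as sketched above.
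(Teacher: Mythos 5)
Your proof is correct and follows essentially the same route as the paper's: both reduce to the observation that, in the coordinates $x_j=f_{w^j}(x)$, the action is the diagonal scaling whose $j$-th coordinate is $t^{w^j}x_j$, from which the convergence criterion and the coordinates $\delta_j x_j$ of the limit are read off directly. Your direct evaluation $f_{w^j}(\psi_t(x))=(\psi_t^*f_{w^j})(x)=t^{w^j}x_j$ is in fact cleaner than the paper's one-line maximal-ideal computation (which, as written, outputs the opposite sign $t^{-w^j}x_j$ — a pullback-versus-pushforward slip; your sign is the one consistent with the stated grading convention $\psi_t^*f=t^nf$ and with the lemma's conclusion), and your remark that closedness of $X\subset\C^m$ in the Euclidean topology is what guarantees the limit actually lies in $X$ is a worthwhile detail the paper leaves implicit.
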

\begin{proof}
$\mathfrak{m}_{\psi_t(x)}=\psi_t^*\mathfrak{m}_x = (t^{\langle v,w^j \rangle}f_{w^j}-x_j) = (f_{w^j}-t^{-\langle v,w^j \rangle}x_j)$, so $\psi_t(x)$ has coordinates $t^{-\langle v,w^j \rangle}x_j$.
\end{proof}

\begin{cor}\label{Cor description of complete and contracting actions}
$(\psi$ is complete$)\Leftrightarrow (w^j\geq 0$ for all $j)\Leftrightarrow (\C[X]_n=0$ for $n<0)$.
\\
For $\psi$ complete, $\mathrm{Fix}(\psi)\subset X$ is the affine subvariety for $\mathcal{I}(\mathrm{Fix}(\psi))=\oplus_{n>0}\C[X]_n$ so $\C[\mathrm{Fix}(\psi)]\cong \C[X]_0.$
\\
Assuming $X$ is connected (e.g.\;irreducible), all contracting actions contract $X$ to $0\in X\subset \C^m$, and
$$(\psi\textrm{ is contracting})\Leftrightarrow (\textrm{all }w^j> 0) \Leftrightarrow 
(\C[X]_n=0\textrm{ for }n<0,\textrm{ and }\C[X]_0=\C).$$
\end{cor}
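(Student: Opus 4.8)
The plan is to reduce everything to elementary bookkeeping with the $\Z$-grading of $\C[X]$, using \cref{Lemma how you act on the coordinate ring} as the only input about the dynamics of $\psi$. The three assertions amount to the dictionary: completeness $\leftrightarrow$ all weights $w^j\geq 0$ $\leftrightarrow$ the grading is concentrated in non-negative degrees; for complete $\psi$, the fixed locus is cut out by the ``positive part'' $\bigoplus_{n>0}\C[X]_n$; and being contracting forces in addition that this zero-set is a single point, equivalently $\C[X]_0=\C$.

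For the first chain of equivalences: given any $x\in X$, \cref{Lemma how you act on the coordinate ring} says the convergence point exists iff $w^j\geq 0$ for every $j$ with $x_j\neq 0$, so if all $w^j\geq 0$ then $\psi$ is complete. Conversely, if $w^{j_0}<0$ for some $j_0$, then $f_{w^{j_0}}$ is a non-constant generator, in particular a nonzero element of $\C[X]$, so $f_{w^{j_0}}(x)\neq 0$ for some $x\in X$, and at that point the limit fails to exist. For the remaining equivalence: a nonzero homogeneous element of negative degree would in particular make some $f_{w^j}$ of negative weight the zero function, contradicting non-constancy, so $\C[X]_n=0$ for $n<0$ implies all $w^j\geq 0$; conversely, since $\C[X]=\C[f_{w^1},\dots,f_{w^{\gens}}]$ and each $\C[X]_n$ is spanned by the monomials $\prod_j f_{w^j}^{\alpha_j}$ of total weight $\sum_j\alpha_j w^j=n$, if all $w^j\geq 0$ then no negative $n$ can occur.

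For the fixed locus, assume $\psi$ complete. By the coordinate formula in \cref{Lemma how you act on the coordinate ring}, $\psi_t$ rescales the $j$-th coordinate by $t^{-w^j}$, so $x$ is fixed iff $x_j=0$ whenever $w^j>0$; that is, $\mathrm{Fix}(\psi)=V(I)$ for the ideal $I=(f_{w^j}:w^j>0)$. Since every spanning monomial of $\C[X]_n$ with $n>0$ involves some $f_{w^j}$ with $w^j>0$, in fact $I=\bigoplus_{n>0}\C[X]_n$. This ideal is radical: because $\C[X]_n=0$ for $n<0$, the projection $f\mapsto f_0$ onto degree zero is a $\C$-algebra homomorphism $\C[X]\to\C[X]_0$ with kernel $\bigoplus_{n>0}\C[X]_n$, and $\C[X]_0$, being a subring of the reduced ring $\C[X]$, is reduced. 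By the Nullstellensatz, $\mathcal{I}(\mathrm{Fix}(\psi))=\sqrt I=I=\bigoplus_{n>0}\C[X]_n$, whence $\C[\mathrm{Fix}(\psi)]=\C[X]/I\cong\C[X]_0$.

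For the contracting case, now also assume $X$ connected. The limit map $\pi\colon X\to\C^{\gens}$, $x\mapsto(\delta_j x_j)_j$ of \cref{Lemma how you act on the coordinate ring} (with $\delta_j\in\{0,1\}$ independent of $x$) is the restriction of a linear map, hence continuous, and its image is exactly $\mathrm{Fix}(\psi)$ (it lands in fixed points and restricts to the identity on them); so $\mathrm{Fix}(\psi)$ is a continuous image of the connected set $X$, hence connected and non-empty. If $\psi$ is contracting, then $\mathrm{Fix}(\psi)$ is a compact affine variety, hence a finite set of points, hence --- being connected and non-empty --- a single point, so $\C[X]_0\cong\C[\mathrm{Fix}(\psi)]=\C$; conversely $\C[X]_0=\C$ makes $\mathrm{Fix}(\psi)$ a single point, which is compact, so $\psi$ is contracting. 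Combining with the first part, $\psi$ is contracting iff all $w^j\geq 0$ and $\C[X]_0=\C$, and this is equivalent to all $w^j>0$ (given $w^j\geq 0$, non-constancy puts $f_{w^j}$ outside $\C=\C[X]_0$, forcing $w^j\neq 0$; conversely, all $w^j>0$ makes every positive-weight monomial contribute nothing to degree zero). Finally, when all $w^j>0$ the last clause of \cref{Lemma how you act on the coordinate ring} applies to every $x$, so $\psi$ contracts $X$ to $0$, and $0=\lim_{t\to 0}\psi_t(x)\in X$ since $X$ is closed in $\C^{\gens}$. The only points that are not purely formal --- radicality of $\bigoplus_{n>0}\C[X]_n$, needed to identify the coordinate ring of $\mathrm{Fix}(\psi)$, and connectedness of $\mathrm{Fix}(\psi)$ --- are both handled by the degree-zero projection and the retraction $\pi$ just used, so I do not anticipate a genuine obstacle.
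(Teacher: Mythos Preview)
Your proof is correct and follows essentially the same overall strategy as the paper: reduce everything to \cref{Lemma how you act on the coordinate ring} and elementary bookkeeping with the $\Z$-grading. Two points of comparison are worth noting.

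First, you are more careful than the paper on one issue: the statement claims $\mathcal{I}(\mathrm{Fix}(\psi))=\bigoplus_{n>0}\C[X]_n$, which requires this ideal to be radical. The paper glosses over this, while you give a clean argument via the ring homomorphism $\C[X]\to\C[X]_0$ landing in a reduced ring.

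Second, for the connectedness of $\mathrm{Fix}(\psi)$ you take a different route. The paper argues algebraically: a disconnection of $\mathrm{Fix}(\psi)$ would give orthogonal idempotents in $\C[\mathrm{Fix}(\psi)]\cong\C[X]_0\subset\C[X]$, disconnecting $X$. You instead observe that the limit map $x\mapsto\lim_{t\to 0}\psi_t(x)$ is a continuous surjection $X\twoheadrightarrow\mathrm{Fix}(\psi)$ (being the restriction of a linear projection in $\C^{\gens}$), so connectedness is inherited topologically. Your argument is arguably more direct; the paper's has the virtue of working purely in terms of the coordinate ring. Both are fine.

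One wording quibble: your sentence ``a nonzero homogeneous element of negative degree would in particular make some $f_{w^j}$ of negative weight the zero function'' is garbled. You mean: if $\C[X]_n=0$ for all $n<0$ and some $w^{j_0}<0$, then $f_{w^{j_0}}\in\C[X]_{w^{j_0}}=0$, contradicting non-constancy. The logic is correct once decoded.
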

\begin{proof}
As $f_{w^j}\neq 0$, some $x\in X$ has $x_j=f_{w^j}(x)\neq 0$. By \cref{Lemma how you act on the coordinate ring}, $\psi$ is complete if and only if all $w^j \geq 0$. Note that all monomials (except $1$) generated by the $f_{w^j}$ have weights which are $\N_{>0}$-linear combinations of the $w^j$, which justifies the second claim about complete actions.

By the last claim in \cref{Lemma how you act on the coordinate ring}, $\psi$ contracts $X$ to $0$ precisely if $w^j>0$ for all $j$.

Note that 
$0\in \mathrm{Fix}(\psi)$, and that points in $\mathrm{Fix}(\psi)$ can have non-zero coordinates $x_j$ only for $w^j=0$, by \cref{Lemma how you act on the coordinate ring}. For $\psi$ complete, it follows that $\mathrm{Fix}(\psi)\subset X$ is an affine subvariety determined by the ideal $D:=\oplus_{n> 0}\C[X]_n\subset \C[X]=\oplus_{n\geq 0}\C[X]_n$.
Thus, $\mathrm{Fix}(\psi)\subset X\subset \C^m$ is either a finite set of points ($\Leftrightarrow \C[X]_0$ is a finite dimensional complex vector space) or it is non-compact in the Euclidean topology (e.g.\;by Noether's normalisation theorem).

If $X$ is irreducible, then 
$\C[X]$ is an integral domain, so $\C[\mathrm{Fix}(\psi)]=\C[X]/D\cong \C[X]_0$ is an integral domain, so $\mathrm{Fix}(\psi)$ is irreducible and hence connected.
More generally ``$X$ is connected'' implies ``$\mathrm{Fix}(X)$ is connected'', as follows. Suppose $\mathrm{Fix}(\psi)$ is disconnected. Then $1=e_1+e_2\in \C[\mathrm{Fix}(\psi)]$ would be a sum of two orthogonal non-zero idempotents:\footnote{geometrically, the vanishing sets of $e_1$ and $e_2$ give a disjoint cover of $\mathrm{Fix}(\psi)$ by closed (and open) sets.} $e_i^2=e_i$, $e_1e_2=0$.  
Then two such idempotents also exist in $\C[X]_0 \cong \C[\mathrm{Fix}(\psi)]$, so $X$ is also disconnected.

For $X$ connected, the fixed locus of any complete $\psi$ is therefore either a point (and thus the point $0$) or it is non-compact; thus contracting $\psi$ have $\mathrm{Fix}(\psi)=\{0\}$, so $\C=\C[\mathrm{Fix}(\psi)]\cong \C[X]_0$. As $f_{w^j}$ is non-constant, the condition $\C[X]_0=\C$ implies that $w^j\neq 0$. The rest follows.
\end{proof}

\subsection{Commuting $\C^*$-actions}

Let $\Fi,\psi$ be $\C^*$-actions on any space.
They {\bf commute} if $\Fi_s\circ \psi_t = \psi_t \circ \Fi_s$ for all $(s,t)\in \C^*\times \C^*$. This ensures that $\Fi_s\circ \psi_t$ defines a $(\C^*)^2$-action, so we get a lattice of $\C^*$-actions: $(a,b)\in \Z^2$ corresponds to the action $\Fi^a \psi^b$, acting by $(\Fi^a\psi^b)_t=\Fi_t^a\circ \psi_t^b$.

\begin{lm}\label{Lemma commuting actions preserve fixed loci}
For $\psi,\Fi$ commuting $\C^*$-actions on any space $X$, $\psi_t$ preserves each connected component 
$\F_{\a}$ of $\F:=\mathrm{Fix}(\Fi)$. 
If $\Fi,\psi$ both contract $X$ to a point, then they contract $X$ to the same point.
\end{lm}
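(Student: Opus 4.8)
### Proof proposal for Lemma \ref{Lemma commuting actions preserve fixed loci}

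The plan is to handle the two assertions in turn, using only elementary point-set topology together with the algebraic description of fixed loci from \cref{Cor description of complete and contracting actions}. For the first assertion, fix a connected component $\F_\a \subset \F = \mathrm{Fix}(\Fi)$ and a parameter $t \in \C^*$. Since $\Fi$ and $\psi$ commute, for any $x \in \F_\a$ and any $s \in \C^*$ we have $\Fi_s(\psi_t(x)) = \psi_t(\Fi_s(x)) = \psi_t(x)$, so $\psi_t(x) \in \F$; thus $\psi_t(\F) \subseteq \F$. Now $\psi_t$ is a homeomorphism of $X$, hence $\psi_t(\F_\a)$ is a connected subset of $\F$, so it lies in a single component $\F_{\a'}$. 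The remaining point is that $\a' = \a$: I would argue that the assignment $\a \mapsto \a'$ gives a permutation of the (discrete) set of components depending continuously on $t \in \C^*$, and since $\psi_1 = \mathrm{id}$ fixes every component and $\C^*$ is connected, this permutation is the identity for all $t$. (Alternatively, and more simply: if $\Fi$ is a symplectic $\C^*$-action with Hamiltonian moment map, the $\F_\a$ are the components of $\mathrm{Crit}(H_\Fi)$, but to keep the statement in the generality of ``any space'' the connectedness/continuity argument is cleaner.)

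For the second assertion, suppose $\Fi$ contracts $X$ to a point $p$ and $\psi$ contracts $X$ to a point $q$; I must show $p = q$. The cleanest route is via \cref{Cor description of complete and contracting actions}: passing to the coordinate ring $\C[X]$, the $\Fi$-action makes $\C[X]$ $\N$-graded with $\C[X]_0 = \C$, and ``$\Fi$ contracts to $p$'' means $p$ is the unique $\Fi$-fixed point, cut out by the ideal $D_\Fi = \oplus_{n>0}\C[X]_n$. Since $\psi$ commutes with $\Fi$, $\psi$ preserves the $\Fi$-weight grading, hence preserves $D_\Fi$, hence $\psi$ fixes the point $p$ (it fixes $\mathrm{Fix}(\Fi) = \{p\}$ setwise, and a set of one point is fixed pointwise). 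Symmetrically, $\Fi$ fixes $q$. But $\psi$ is contracting, so $\mathrm{Fix}(\psi) = \{q\}$; since $p \in \mathrm{Fix}(\psi)$, we get $p = q$. (One could equally well run the purely topological argument: $\lim_{t\to 0}\psi_t(x) = q$ for all $x$, and applying this to $x = p$ gives $\psi_t(p) \to q$; but $\psi_t(p) = p$ for all $t$ by the first part applied with the roles of $\Fi, \psi$ noting $\{p\}$ is an $\Fi$-component preserved by $\psi$, so $p = q$.)

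I expect the only mild subtlety — the ``main obstacle'' such as it is — to be the very first step: making rigorous that $\psi_t$ fixes each individual component $\F_\a$ rather than merely permuting the components. In the algebraic/GIT setting this is immediate because $\psi$ preserves the grading and hence the idempotent decomposition of $\C[\mathrm{Fix}(\Fi)]$ (the same idempotent argument already used in the proof of \cref{Cor description of complete and contracting actions}), so $\psi$ fixes each connected component of $\mathrm{Fix}(\Fi)$; in the purely topological setting one invokes the connectedness of $\C^*$ and continuity of the action to rule out a nontrivial permutation. Everything else is formal manipulation of the commutation relation $\Fi_s \psi_t = \psi_t \Fi_s$, and the second assertion is a two-line consequence once the first is in hand.
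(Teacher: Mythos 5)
Your proposal is correct and follows essentially the same route as the paper: commutativity puts $\psi_t(y)$ back in $\mathrm{Fix}(\Fi)$, and connectedness of $\C^*$ together with $\psi_1=\mathrm{id}$ pins down the component; for the second claim the paper uses exactly your parenthetical topological argument ($\psi$ fixes the point $\mathrm{Fix}(\Fi)$ yet contracts it to $\mathrm{Fix}(\psi)$). Note only that your ``cleanest route'' via the coordinate ring presupposes $X$ affine, whereas the lemma is stated for any space, so the topological version you relegate to a parenthesis is the one that should carry the argument.
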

\begin{proof}
    For $y\in \F_\a$, $\psi_t(y)=\psi_t\circ \Fi_t(y)=\Fi_t(\psi_t(y)).$ So $\psi_t(y)\in \mathrm{Fix}(\Fi)$ for all $t\in \C^*$. Indeed $\psi_t(y)\in \F_\a$ since $\psi_1(y)=y\in \F_\a$ and $\C^*$ is connected. Thus $\psi_t(\F_\a)\subset \F_\a$ for all $t\in \C^*$. The last claim follows because $\psi$ preserves the point $\mathrm{Fix}(\Fi)$ but also contracts it within $X$ to the point $\mathrm{Fix}(\psi)$.
\end{proof}

More generally, $d$ commuting $\C^*$-actions on $X$ define an action of a torus $(\C^*)^d$. This gives rise to a lattice $N=\Z^n$ of $\C^*$-actions, and a grading $\C[X]=\oplus_{w\in \Z^d} \C[X]_w$ by weights $w\in \Z^d$. Explicitly, letting $\langle v,w\rangle := \sum v_i w_i$, the $\C^*$-action $\Fi_v$ for $v\in V$ acts on $f\in \C[X]_w$ by 
\begin{equation}\label{Equation action of Cstar action on weight w function}
    (\Fi_v(t))^*f=t^{\langle v,w\rangle}f.
\end{equation}
\subsection{Torus actions}\label{Subsection Torus actions on affine variety}
Now let $\mathbb{T}\cong (\C^*)^d$ be any $d$-dimensional algebraic torus acting on $X$. One-parameter subgroups are labelled by the lattice of cocharacters, $N$, and the coordinate ring is graded by characters $w\in M:=N^{\vee}$:
$$N=\mathrm{Hom}(\C^*,\mathbb{T}), \;\;\textrm{ and }\;\;\C[X]=\bigoplus_{w\in M} \C[X]_{w} \textrm{ where }M:=\mathrm{Hom}(\mathbb{T},\C^*).$$ 
Define $\langle \cdot,\cdot \rangle$ to be the natural (non-degenerate) pairing induced by composition of cocharacters and characters,
\begin{equation}\label{Equation pairing characters}
\langle \cdot,\cdot \rangle: N\times M=\mathrm{Hom}(\C^*,\mathbb{T}) \times \mathrm{Hom}(\mathbb{T},\C^*) \longrightarrow \mathrm{Hom}(\C^*,\C^*)\cong \Z.
\end{equation}
For $v\in \Z^d$ the corresponding $\C^*$-action $\psi$ acts by the same formula \eqref{Equation action of Cstar action on weight w function}. Therefore the $\Z$-grading that $\Fi_v$ induces on $\C[X]$ has weight $n\in \Z$ part given by
\begin{equation}\label{Equation decomposition for Fiv}
\C[X]_n = \bigoplus_{w\in M: \langle v,w\rangle = n} \C[X]_w.
\end{equation}
\begin{cor}\label{Corollary descr of complete contr in aff case for fiv}
$(\Fi_v$ is complete$)\Leftrightarrow (\C[X]_w=0$ for all $w$ with $\langle v,w \rangle<0).$\\
Assuming $X$ is connected, 
$(\Fi_v$ is contracting$)\Leftrightarrow  $ in addition to the above: $\C[X]_w=\C$ if $\langle v,w \rangle=0$ (more precisely: $\C[X]_0=\C$, and $\C[X]_w=0$ for $w\neq 0$ with $\langle v,w \rangle =0$).
\end{cor}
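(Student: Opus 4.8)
The plan is to deduce both equivalences directly from \cref{Cor description of complete and contracting actions}, using the decomposition \eqref{Equation decomposition for Fiv} as the bridge between the $M$-grading by the torus $\mathbb{T}$ and the $\Z$-grading induced by the single $\C^*$-action $\Fi_v$. The point is that \cref{Cor description of complete and contracting actions} already characterizes completeness and contractibility of a $\C^*$-action in terms of \emph{its own} $\Z$-grading on the coordinate ring: $\Fi_v$ is complete iff $\C[X]_n=0$ for all $n<0$, and (for $X$ connected) contracting iff additionally $\C[X]_0=\C$. So all that remains is to translate each of these conditions on the $n$-graded pieces $\C[X]_n$ into conditions on the finer $w$-graded pieces $\C[X]_w$, which is immediate from \eqref{Equation decomposition for Fiv}.

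First I would handle completeness. By \eqref{Equation decomposition for Fiv}, $\C[X]_n=\bigoplus_{w:\langle v,w\rangle=n}\C[X]_w$; since this is a direct sum, $\C[X]_n=0$ if and only if $\C[X]_w=0$ for every $w$ with $\langle v,w\rangle=n$. Letting $n$ range over all negative integers, ``$\C[X]_n=0$ for all $n<0$'' is equivalent to ``$\C[X]_w=0$ for all $w$ with $\langle v,w\rangle<0$.'' Combined with \cref{Cor description of complete and contracting actions}, this gives the first equivalence. (Here I am tacitly using that the $M$-grading refines the $n$-grading because $\Fi_v$ is a one-parameter subgroup of $\mathbb{T}$, so formula \eqref{Equation action of Cstar action on weight w function} applies; this is exactly what was set up before the statement.)

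For the contracting case, assume $X$ is connected. By \cref{Cor description of complete and contracting actions}, $\Fi_v$ is contracting iff it is complete and $\C[X]_0=\C$. We have just characterized completeness. For the extra condition, apply \eqref{Equation decomposition for Fiv} with $n=0$: $\C[X]_0=\bigoplus_{w:\langle v,w\rangle=0}\C[X]_w$, and this sum contains $\C[X]_0^{(M)}$ (the $w=0$ summand for the $M$-grading), which in turn contains the constants $\C$. Hence $\C[X]_0=\C$ holds precisely when $\C[X]_w=0$ for all $w\neq 0$ with $\langle v,w\rangle=0$ and the $w=0$ summand itself is just $\C$; this is exactly the parenthetical ``more precisely'' clause in the statement. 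Conversely, if $\C[X]_0^{(M)}=\C$ and $\C[X]_w=0$ for all nonzero $w$ in the kernel of $\langle v,-\rangle$, then $\C[X]_0=\C$. This completes the second equivalence.

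I do not anticipate any genuine obstacle here: the statement is essentially a bookkeeping corollary, and the only thing to be careful about is that the direct-sum decomposition \eqref{Equation decomposition for Fiv} really does let one read off vanishing of an $n$-graded piece from vanishing of the corresponding $w$-graded pieces, and that the constants live in the $w=0$ piece (so that the $n=0$ condition splits correctly into a ``$w=0$ gives exactly $\C$'' part and a ``nonzero $w$ in the kernel gives $0$'' part). The connectedness hypothesis enters only through its use in \cref{Cor description of complete and contracting actions}, so nothing new is needed on that front.
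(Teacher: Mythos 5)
Your proposal is correct and follows exactly the paper's own route: the paper's proof is the one-line observation that the claim is immediate from \cref{Cor description of complete and contracting actions} together with the decomposition \eqref{Equation decomposition for Fiv}, which is precisely the translation you carry out in detail. The extra care you take with the $w=0$ summand versus the other $w$ in the kernel of $\langle v,\cdot\rangle$ correctly accounts for the parenthetical ``more precisely'' clause.
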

\begin{proof}
This is immediate from \cref{Cor description of complete and contracting actions} and \eqref{Equation decomposition for Fiv}.
\end{proof}
Choose homogeneous non-constant generators $f_{w^1},\ldots,f_{w^{\gens}}$ for the unital $\C$-algebra $\C[X]$, with weights $w^j \in M$.
The $f_{w^j}$ define an embeding $X \hookrightarrow \C^m$, and the actions satisfy:
\begin{equation}\label{Equation weight of the action of fwj}
(\psi)^*(f_{w^j})=t^{\langle v,w^j \rangle} f_{w^j} \quad\;\textrm{ for all }\;t\in \C^*.
\end{equation}
\begin{cor}\label{Cor convergence points in affine case}
    A convergence point $\lim_{\C^*\ni t \to 0} (\Fi_v)_t(x)$ exists precisely if $\langle v,w^j \rangle \geq 0$ for all $j$ with $x_j\neq 0$, in which case the limit point has coordinates $\delta_j x_j$ where $\delta_j = 0$ for $\langle v,w^j \rangle >0$, and $\delta_j=1$ for $\langle v,w^j \rangle =0$.
    In particular, $\lim_{\C^*\ni t \to 0} (\Fi_v)_t(x)=0$ precisely if $\langle v,w^j \rangle > 0$ for all $j$ with $x_j\neq 0$.
\end{cor}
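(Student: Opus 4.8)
The plan is to reduce this statement directly to \cref{Lemma how you act on the coordinate ring} by identifying the $\Z$-grading induced on $\C[X]$ by the $\C^*$-action $\Fi_v$. The torus $\mathbb{T}$ acts on $X$, giving the $M$-grading $\C[X]=\oplus_{w\in M}\C[X]_w$, and the one-parameter subgroup $\Fi_v:\C^*\to\mathbb{T}$ (for $v\in N$) pulls this back to a $\Z$-grading; by \eqref{Equation decomposition for Fiv} the weight-$n$ piece is $\C[X]_n=\oplus_{\langle v,w\rangle=n}\C[X]_w$. In particular, if $f_{w^j}$ is a $\mathbb{T}$-homogeneous generator of character $w^j\in M$, then $f_{w^j}$ is homogeneous of $\Z$-weight $\langle v,w^j\rangle$ for the $\Fi_v$-grading --- this is exactly the content of \eqref{Equation weight of the action of fwj}.

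First I would invoke \cref{Lemma how you act on the coordinate ring}, applied to the $\C^*$-action $\psi=\Fi_v$ with its induced $\Z$-grading, noting that the homogeneous generators $f_{w^1},\ldots,f_{w^{\gens}}$ used there have $\Z$-weights $\langle v,w^1\rangle,\ldots,\langle v,w^{\gens}\rangle$ respectively. That lemma then says precisely that $\lim_{t\to 0}(\Fi_v)_t(x)$ exists if and only if $\langle v,w^j\rangle\geq 0$ for every $j$ with $x_j\neq 0$, and in that case the limit point has coordinates $\delta_j x_j$, with $\delta_j=0$ when $\langle v,w^j\rangle>0$ and $\delta_j=1$ when $\langle v,w^j\rangle=0$. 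The final assertion --- that the limit is the origin $0\in\C^m$ exactly when $\langle v,w^j\rangle>0$ for all $j$ with $x_j\neq 0$ --- is then immediate: all coordinates $\delta_j x_j$ of the limit vanish precisely when no index $j$ with $x_j\neq0$ has $\langle v,w^j\rangle=0$, i.e.\ precisely when $\langle v,w^j\rangle>0$ for all such $j$.

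There is essentially no obstacle here: the statement is the affine-torus transcription of \cref{Lemma how you act on the coordinate ring}, and the only point requiring a moment's care is the bookkeeping that the $\mathbb{T}$-characters $w^j\in M$ translate, under the chosen cocharacter $v\in N$, into $\Z$-weights $\langle v,w^j\rangle$ via the pairing \eqref{Equation pairing characters}. One should also remark, as in the proof of \cref{Cor description of complete and contracting actions}, that every $f_{w^j}$ is nonconstant so that at least one $x\in X$ has $x_j\neq 0$, but this is not strictly needed for the pointwise statement about a fixed $x$. Hence the proof is a one-line appeal to \cref{Lemma how you act on the coordinate ring} together with \eqref{Equation weight of the action of fwj}.
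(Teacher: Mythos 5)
Your proposal is correct and is exactly the argument the paper gives: the corollary is deduced by applying \cref{Lemma how you act on the coordinate ring} to $\psi=\Fi_v$, using \eqref{Equation weight of the action of fwj} to identify the $\Z$-weights of the generators $f_{w^j}$ as $\langle v,w^j\rangle$. Nothing further is needed.
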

\begin{proof}
This is immediate by
\cref{Lemma how you act on the coordinate ring}
and \eqref{Equation weight of the action of fwj}.
\end{proof}

\begin{cor}\label{Cor description of fixed locus}
    The $\mathbb{T}$-fixed locus $\{0\}\subset \mathrm{Fix}(\mathbb{T})\subset X$ is the affine subvariety defined by the ideal in $\C[X]$ generated by $\{f_{w^j}: w^j\neq 0\}$, meaning $\mathrm{Fix}(\mathbb{T}) = \{ x\in X: x_j:=f_{w^j}(x)=0 \textrm{ for all }w^j\neq 0\}$.
    
    In particular, $\C[X]_0=\C \Longleftrightarrow$ there is a unique $\mathbb{T}$-fixed point (namely $0\in X\subset \C^m$).
\end{cor}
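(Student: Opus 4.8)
The plan is to describe $\mathrm{Fix}(\mathbb{T})$ as the common fixed locus of all one-parameter subgroups $\Fi_v$, $v\in N$, and then read off the consequence for $\C[X]_0$. Since the various $\Fi_v$ generate $\mathbb{T}$, a point is $\mathbb{T}$-fixed iff it is fixed by every $\Fi_v$, so $\mathrm{Fix}(\mathbb{T})=\bigcap_{v\in N}\mathrm{Fix}(\Fi_v)$. Applying \cref{Lemma how you act on the coordinate ring} to the single $\C^*$-action $\Fi_v$ (whose weight on $f_{w^j}$ is $\langle v,w^j\rangle$, by \eqref{Equation weight of the action of fwj}), the point $(\Fi_v)_t(x)$ has coordinates $t^{-\langle v,w^j\rangle}x_j$; hence $x\in\mathrm{Fix}(\Fi_v)$ iff $\langle v,w^j\rangle=0$ for every $j$ with $x_j\neq 0$. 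Intersecting over all $v\in N$ and using that the pairing \eqref{Equation pairing characters} is non-degenerate, $x\in\mathrm{Fix}(\mathbb{T})$ iff $w^j=0$ whenever $x_j\neq 0$, i.e.\ iff $x_j=0$ for all $j$ with $w^j\neq 0$. This is precisely the claimed description, and since the $f_{w^j}$ generate $\C[X]$ as a unital $\C$-algebra, $\mathfrak{m}_0:=(f_{w^1},\dots,f_{w^{\gens}})$ is a maximal ideal, so $0\in X$ and trivially $0\in\mathrm{Fix}(\mathbb{T})$.

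For the ``in particular'' I would argue the two implications separately. If $\C[X]_0=\C$, then no non-constant generator can have weight $0$, so $w^j\neq 0$ for all $j$; by the description above $\mathrm{Fix}(\mathbb{T})$ is then cut out by the full maximal ideal $\mathfrak{m}_0$, i.e.\ $\mathrm{Fix}(\mathbb{T})=\{0\}$. For the converse, the natural route is to reduce to a single $\C^*$-action: pick $v\in N$ with $\langle v,w^j\rangle\neq 0$ for every $j$ with $w^j\neq 0$. Such $v$ exists because each of these conditions excludes only a proper rational hyperplane of $N\otimes\Q$ (proper by non-degeneracy of \eqref{Equation pairing characters}), a $\Q$-vector space is not a finite union of proper subspaces, and one can clear denominators. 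By the computation of the first paragraph applied to $\Fi_v$ we get $\mathrm{Fix}(\Fi_v)=\mathrm{Fix}(\mathbb{T})=\{0\}$, which is compact; provided $\Fi_v$ is a complete action it is then contracting, and \cref{Cor description of complete and contracting actions} gives $\C=\C[\mathrm{Fix}(\Fi_v)]\cong \C[X]^{\Fi_v}_0$, the weight-$0$ part of $\C[X]$ for $\Fi_v$, which contains $\C[X]_0$ by \eqref{Equation decomposition for Fiv}; hence $\C[X]_0=\C$.

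The one delicate point --- and the step I would flag in the write-up --- is exactly this converse: some completeness hypothesis on the $\C^*$-subgroups of $\mathbb{T}$ is needed, and without it the ``$\Leftarrow$'' direction can fail. For instance $\mathbb{T}=\C^*$ acting on $X=\C^2$ with weights $(1,-1)$ has $0$ as its unique fixed point, yet $\C[X]_0=\C[z_1z_2]\neq\C$ --- the point being that $\C[X]_0$ need not be generated by the weight-$0$ coordinate functions. In the toric setting of this paper $X$ always carries a contracting $\C^*\subseteq\mathbb{T}$, so a complete $v$ as above is available and the argument goes through; and in any case it is the forward implication which is the substantive one, as it is what detects, for example, that an affine GIT quotient is a single point.
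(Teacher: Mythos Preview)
Your argument for the description of $\mathrm{Fix}(\mathbb{T})$ is correct and matches the paper's (very brief) proof: non-degeneracy of the pairing forces $w^j=0$ whenever $x_j\neq 0$. Your forward implication $\C[X]_0=\C\Rightarrow\mathrm{Fix}(\mathbb{T})=\{0\}$ is also correct; the paper does not spell this out.

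You are right to flag the converse: your counterexample $\C^*$ acting on $\C^2$ with weights $(1,-1)$ is valid and shows that $\mathrm{Fix}(\mathbb{T})=\{0\}$ does \emph{not} in general force $\C[X]_0=\C$. The subtlety is exactly what you say --- the ideal cutting out $\mathrm{Fix}(\mathbb{T})$ is generated by the $f_{w^j}$ with $w^j\neq 0$, and the quotient $\C[X]/(f_{w^j}:w^j\neq 0)$ need not coincide with the weight-zero piece $\C[X]_0$, since products of generators with nonzero weights can land in weight zero. The paper's proof does not address this direction at all, so the ``$\Longleftrightarrow$'' as stated is not established there either; as you observe, the only use of this equivalence (in \cref{Cor CX0 is C in equiv proj section}) is under the hypothesis $N_+\neq\emptyset$, where one has a genuinely contracting $\Fi_v$ and can invoke \cref{Cor description of complete and contracting actions} directly to get $\C[X]_0=\C$. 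So your diagnosis is accurate: the forward implication is the one that carries the weight, and the converse should either be dropped or stated under the additional hypothesis that some $\Fi_v$ is complete (equivalently $N_0\neq\{0\}$, which still fails in your example) --- in fact one needs $N_+\neq\emptyset$.
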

\begin{proof}
If $f_{w^j}\in \C[X]_w$ were non-zero at $x\in \mathrm{Fix}(\mathbb{T})$, then 
\eqref{Equation weight of the action of fwj} would force $\langle v,w^j\rangle=0$ for all $v\in N$ (otherwise the $j$-th coordinate $x_j$ is not $\mathbb{T}$-invariant).
As \eqref{Equation pairing characters} is non-degenerate, $w^j=0$.
\end{proof}

\begin{de}
    Let $N_0(X)=\{v\in N: \Fi_v\textrm{ is complete}\}$ and $N_+(X)=\{v\in N: \Fi_v \textrm{ is contracting}\}$, so $N_+(X)\subset N_0(X)\subset N$ (also, $0\in N_0(X)\setminus N_+(X)$ unless $X\subset \C^m$ is compact: a finite union of points).
\end{de}

We define the ``{\bf support}'' of the $M$-grading on $\C[X]$ by
\begin{equation}\label{Equation definition of support P}
P:= \{w\in M: \C[X]_w\neq 0\}.
\end{equation}
Observe that $P\subset \{0\}\cup  \mathrm{span}_{\N}\{w^1,\ldots,w^N\}$, and this is an equality when $X$ is irreducible (i.e.\;$\C[X]$ is an integral domain).
\begin{ex}
Suppose $\mathbb{T}=(\C^*)^d$ is generated by $d$ contracting $\C^*$-actions. Then: $\mathbb{T}$ contracts $X$ to $0$ (assuming $X$ is connected); $P \subset \{0\}\cup (\N\setminus \{0\})^d$; and $\N^d\setminus\{0\}\subset N_+(X)$ (which could be strict).
\end{ex}

\begin{cor}\label{Subsection Observations about commuting Cstar actions on affine varieties}
For any affine toric variety $X$ with a $\mathbb{T}$-action,
\begin{equation}\label{Equation positivity of weights torus not strict}
\begin{split}
N_0(X)  &= 
\{v\in N: \langle v,w^j \rangle \geq 0 \textrm{ for }j=1,\ldots,{\gens} \}
\\
&
= 
\{v\in N: \langle v,w \rangle \geq 0 \textrm{ for all } w\neq 0 \in P \}.
\end{split}
\end{equation}
For $v\in N_0(X)$, $\mathrm{Fix}(\Fi_v)\subset X$ is the affine subvariety for the ideal $\mathcal{I}(\mathrm{Fix}(\psi))=\oplus \{\C[X]_w: \langle v,w \rangle >0\}$, with coordinate ring $\C[\mathrm{Fix}(\psi)]\cong \oplus \{\C[X]_w: \langle v,w \rangle =0\}.$

Assuming $X$ is connected, 
\begin{equation}\label{Equation NplusX}
N_+(X)  =
\{v\in N: \langle v,w^j \rangle > 0 \textrm{ for }j=1,\ldots,{\gens}\}.
\end{equation}
If $N_+(X)\neq \emptyset$, then $\C[X]_0=\C$ and the actions for $N_+(X)$ will contract $X$ to $0\in X\subset \C^m$.

Assuming $X$ is connected, and $\C[X]_0=\C$ $(\Leftrightarrow \mathrm{Fix}(\T)=\{0\}$ by \cref{Cor description of fixed locus}$)$, 
\begin{equation}\label{Equation NplusX 2}
N_+(X)  =
\{v\in N: \langle v,w \rangle > 0 \textrm{ for }w\neq 0\in P 
\}.
\end{equation}
\end{cor}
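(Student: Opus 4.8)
\medskip
\noindent\textbf{Proof plan.} The plan is to read the Corollary off from the single--$\C^*$--action statements already established --- \cref{Cor description of complete and contracting actions,Corollary descr of complete contr in aff case for fiv,Cor convergence points in affine case,Cor description of fixed locus} --- combined with the weight decomposition \eqref{Equation decomposition for Fiv}, namely $\C[X]_n=\bigoplus_{\langle v,w\rangle=n}\C[X]_w$ for the $\Z$-grading induced by $\Fi_v$, and the elementary observation recorded after \eqref{Equation definition of support P} that $P\subset\{0\}\cup\mathrm{span}_{\N}\{w^1,\dots,w^{\gens}\}$ while $w^j\in P$ for every $j$. Concretely, each condition in the Corollary will be obtained by applying the corresponding single-action corollary to $\Fi_v$, whose induced generator weights are the integers $\langle v,w^j\rangle$, and then rewriting the graded pieces via \eqref{Equation decomposition for Fiv}.

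\medskip
\noindent\emph{Descriptions of $N_0(X)$ and of the fixed locus.} First I would invoke \cref{Corollary descr of complete contr in aff case for fiv}: $\Fi_v$ is complete iff $\C[X]_w=0$ whenever $\langle v,w\rangle<0$, i.e.\ iff $\langle v,w\rangle\geq 0$ for every $w\in P$; since $\langle v,0\rangle=0$, this is the same as requiring $\langle v,w\rangle\geq 0$ for all nonzero $w\in P$, which is the second line of \eqref{Equation positivity of weights torus not strict}. To pass to the first (finite) form, I would use that $\C[X]$ is spanned by monomials in the $f_{w^j}$, whose weights lie in $\mathrm{span}_{\N}\{w^1,\dots,w^{\gens}\}$, so a nonzero $w\in P$ is a nonempty $\N$-combination $\sum a_jw^j$; then $\langle v,w^j\rangle\geq 0$ for all $j$ forces $\langle v,w\rangle\geq 0$, and the converse is immediate since each $w^j\in P$. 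For the fixed locus: when $v\in N_0(X)$, \cref{Cor description of complete and contracting actions} identifies $\mathrm{Fix}(\Fi_v)$ with the subvariety cut out by $\bigoplus_{n>0}\C[X]_n$ (for the $\Z$-grading induced by $\Fi_v$) and gives $\C[\mathrm{Fix}(\Fi_v)]\cong\C[X]_0$; substituting \eqref{Equation decomposition for Fiv} converts these into the ideal $\bigoplus\{\C[X]_w:\langle v,w\rangle>0\}$ and the coordinate ring $\bigoplus\{\C[X]_w:\langle v,w\rangle=0\}$.

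\medskip
\noindent\emph{Descriptions of $N_+(X)$.} Assuming $X$ connected, \cref{Cor description of complete and contracting actions} says a $\C^*$-action is contracting iff all of its generator weights are strictly positive; applied to $\Fi_v$ (generator weights $\langle v,w^j\rangle$) this is exactly \eqref{Equation NplusX}. If $N_+(X)\neq\emptyset$, fix $v$ there: by the same Corollary the $\Fi_v$-weight-zero part of $\C[X]$ equals $\C$, and since the $M$-degree-zero part $\C[X]_0$ is contained in it (because $\langle v,0\rangle=0$) while $\C\subseteq\C[X]_0$ always, we get $\C[X]_0=\C$ (equivalently $\mathrm{Fix}(\mathbb{T})=\{0\}$ by \cref{Cor description of fixed locus}); and the contraction of $X$ to $0\in X\subset\C^{\gens}$ is then the ``in particular'' clause of \cref{Cor convergence points in affine case}, since $\langle v,w^j\rangle>0$ for all $j$. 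Finally, when $X$ is connected with $\C[X]_0=\C$, each non-constant $f_{w^j}$ has $w^j\neq 0$ (cf.\ the end of \cref{Cor description of complete and contracting actions}), so the Step-1 argument run with strict inequalities converts \eqref{Equation NplusX} into \eqref{Equation NplusX 2}: for a nonzero $w=\sum a_jw^j\in P$ with some $a_{j_0}\geq 1$ one has $\langle v,w\rangle\geq a_{j_0}\langle v,w^{j_0}\rangle>0$, and the reverse implication again uses $w^j\in P$.

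\medskip
\noindent\emph{Main obstacle.} There is no genuine obstacle: the whole statement is a translation of the single-action corollaries through the decomposition \eqref{Equation decomposition for Fiv}. The only point requiring a little care is the back-and-forth between the ``finitely many generator conditions'' and the ``all of $P$'' conditions, which rests on $P\subset\{0\}\cup\mathrm{span}_{\N}\{w^j\}$ together with $w^j\in P$ --- and, for the strict version, on $\C[X]_0=\C$ forcing every $w^j\neq 0$ so that the generators really do appear with strictly positive pairing; and, throughout, keeping track of which statements need the connectedness of $X$ (precisely those asserting $\C[\mathrm{Fix}]\cong\C$).
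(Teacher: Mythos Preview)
Your proposal is correct and follows essentially the same approach as the paper: invoke \cref{Corollary descr of complete contr in aff case for fiv} and \cref{Cor description of complete and contracting actions} for the single-action characterisations, then translate via \eqref{Equation decomposition for Fiv}, using $P\subset\{0\}\cup\mathrm{span}_{\N}\{w^j\}$ and $w^j\in P$ to pass between the finite-generator and the all-of-$P$ forms. The paper's proof is terser but makes the same moves; your write-up is, if anything, more explicit about which earlier corollary furnishes each clause.
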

\begin{proof}
The claim mostly follows from \cref{Corollary descr of complete contr in aff case for fiv}.
Observe that the monomial $\prod f_{w^j}^{n_j}$ for $n_j\in \N$ lies in $\C[X]_{w}$ for $w=\sum n_j w^j$, and these monomials generate $\C[X]$. 
If any of the inequalities $\langle v,w^j \rangle \geq 0$ were strict for some $n_j\neq 0$, then automatically $\langle v,w \rangle>0$. Thus \eqref{Equation NplusX} implies \eqref{Equation NplusX 2}.
Conversely, the condition in \eqref{Equation NplusX 2} implies the condition \eqref{Equation NplusX} for all non-zero $w^j$; and the assumption $\C[X]_0=\C$ implies that all $w^j\neq 0$ since the $f_{w^j}$ are non-constant.
\end{proof}

\begin{rmk}
If $N_+(X)\neq \emptyset$, there is a maximal $1\leq k\leq d$ such that, after an automorphism of the torus $\mathbb{T}\cong (\C^*)^d$, the first $k$ factors of $(\C^*)^d$ are contracting $\C^*$-actions and this $k$-dimensional subtorus $\mathbb{T}'\subset \mathbb{T}$ is maximal with respect to inclusion.
However, such a subtorus is typically not unique.
\end{rmk}

\subsection{Further technical properties of $N_0(X)$ and $N_+(X)$}
\label{Rmk further technical properties of Nplus X}
Simplify notation to $\mathbb{T}=(\C^*)^d$, $N=\Z^d$, $M=\Z^d$, by choosing an isomorphism $\mathbb{T}\cong (\C^*)^d$. The cone generated by $P$ is
$$\sigma:=\mathrm{span}_{\R_{\geq 0}}\{w\in \Z^d: \C[X]_w\neq 0\} =
\mathrm{span}_{\R_{\geq 0}}\{w^1,\ldots,w^{\gens}\}\subset \R^d.$$ 
In particular, $\sigma\cap \Z^d$ contains $P$ but may be larger.
Using the terminology from \cite[p.4]{fulton1993introduction}, 
$\sigma$ is a convex rational polyhedral cone. If $N_+(X)=\emptyset$ then\footnote{If both $w,-w$ lie in $P$ then $N_+(X)=\emptyset$ by \eqref{Equation NplusX}.} it is strongly convex:
``strong'' refers to $\sigma$ not containing a line through the origin, equivalently $\{0\}$ is a face of $\sigma$.
Its dual cone is
$$N_0(X)_{\R}:=\sigma^{\vee}=\{a\in \R^d: \langle a,w \rangle \geq 0 \textrm{ for }w\in \sigma \}
= \{a\in \R^d: \langle a,w^j \rangle \geq 0 \textrm{ for }j=1,\ldots,{\gens} \}.
$$
This is a convex rational polyhedral cone, so its integral points,
\begin{equation}\label{Definition N0 in coordinate case}
N_0(X) =N_0(X)_{\R} \cap \Z^d =\{ v\in \Z^d: \langle v,w^j\rangle \geq 0 \textrm{ for }j=1,\ldots,{\gens}\},
\end{equation}
define a finitely generated semigroup \cite[p.12] {fulton1993introduction}, such that $N_+(X)\subset N_0(X)$. 

The above ``strong'' condition for $\sigma$ is equivalent to the cone $\sigma^{\vee}\subset \R^d$ being (top) $d$-dimensional, in particular $\sigma^{\vee}$ then has non-empty interior. More generally, there is an order-reversing correspondence between faces of $\sigma$ and faces of $\sigma^{\vee}$ of complementary dimension \cite[p.12]{fulton1993introduction}, in particular the cone $\sigma^{\vee}$ has dimension $d-k$ where $k$ is the dimension of the vector space $\sigma\cap (-\sigma)$ (which is the smallest face of $\sigma$, and it is the largest vector subspace contained in $\sigma$).

The cone $N_0(X)_{\R}$ may also not have the ``strong'' condition, because the vector subspace 
$$W=N_0(X)_{\R}\cap (-N_0(X)_{\R})=\{a\in \R^d:\langle a,w \rangle=0 \textrm{ for all }w\in \sigma \}$$
may be non-trivial. Note: $W\cap \Z^d$ determines the (unique) largest algebraic subtorus of $(\C^*)^d$ which acts trivially on $X$. Via the vector space quotient $p:\R^d\to \R^d/W$, the image $N_0(X)_{\R}/W$ of $N_0(X)_{\R}$ becomes a strongly convex rational polyhedral cone whose preimage via $p$ is $N_0(X)_{\R}$. For the quotient, we use the lattice $\Z^d/(\Z^d\cap W).$
For any strongly convex rational polyhedral cone, there is a canonical choice of finitely many ``minimal generators'' $e_i$: the first non-zero integral points along the (ray) edges of the cone (e.g.\;see \cite[p.29, Lemma 1.2.15]{cox2011toric}). Even if $N_0(X)_{\R}$ is not ``strong'', any lifts $\widetilde{e}_i\in \Z^d$ of $e_i$ via the quotient $\Z^d \to \Z^d/(\Z^d \cap W)$ will ensure $N_0(X)_{\R} = W+\mathrm{span}_{\R_{\geq 0}}(\widetilde{e}_i)$.

Since $\langle\cdot,\cdot \rangle$ is integer-valued on $\Z^d$, and assuming $X$ is connected, \eqref{Equation NplusX} gives:
\begin{equation}\label{Equation Nplus as a polyhedron}
N_+(X) = \{ v\in \Z^d: \langle v,w^j\rangle \geq 1 \textrm{ for }j=1,\ldots,{\gens}\},
\end{equation}
i.e.\;they are the integral points of the polyhedron
$$
\{ a\in \R^d: \langle a,w^j\rangle \geq 1 \textrm{ for }j=1,\ldots,N\}=Q+N_0(X)_{\R},
$$
where $Q$ is a (compact) polytope \cite[p.318]{cox2011toric}, indeed $N_0(X)_{\R}$ is called the ``recession cone'' of the polyhedron. The polyhedron has finitely many vertices, and there are no vertices precisely when $W\neq \{0\}$ \cite[Lemma 7.1.1]{cox2011toric}. When there are vertices, one can choose $Q=(\textrm{convex hull of vertices})$.

\section{Commuting $\C^*$-actions on symplectic $\C^*$-manifolds}\label{Section commuting complete and contracing actions}
\subsection{Symplectic $\C^*$-actions and the Core}

\begin{de}
Let $Y$ be a symplectic $\C^*$-manifold with a choice of compatible almost complex structure $I$.
 A {\bf symplectic $\C^*$-action} $\psi$ is a {\ph} $\C^*$-action on $Y$ whose $S^1$-part is Hamiltonian. We will denote the moment map of the $S^1$-action by $H_{\psi}.$
\end{de}

We briefly recall some properties from \cite[Sec.3.1]{RZ1}, about such actions $\psi$.

For $s\in \R$, $\psi_{e^{2\pi s}}$ and $\psi_{e^{2\pi i s}}$ equal the time $s$ flow of the vector fields $\nabla H_{\psi}$ and $X_{H_{\psi}}=I\nabla H_{\psi}$.

Now suppose $\psi$ is also contracting (cf.\;\cref{Remark contracting actions}).
The $\C^*$-fixed locus satisfies 
$$\mathrm{Fix}(\psi)=\mathrm{Crit}(H_{\psi})=\left\{\lim_{\C^*\ni t\to 0} \psi_t(y): y\in Y \right\},$$
and it is compact. Also, $\mathrm{Fix}(\psi)$ is a smooth submanifold, and a subset of the (typically singular) {\bf core}:
$$
\mathrm{Core}(Y,\psi)=\left\{y\in Y: \lim_{\C^*\ni t \to \infty} \psi_t(y) \textrm{ exists in }Y\right\}.
$$
The core is compact and path-connected.

\begin{lm}\label{Lemma commuting actions preserve core}
Let $\Fi,\psi$ be commuting symplectic $\C^*$-actions. 
Suppose $\Fi$ is contracting. Then $\psi_t$ preserves $\Core(Y,\Fi)$ as well as each connected component of $\mathrm{Fix}(\Fi)$, and 
$$\Core(Y,\Fi)\subset \Core(Y,\psi).$$
\end{lm}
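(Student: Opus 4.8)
The first two assertions are essentially formal, so the plan is to dispatch them quickly and then concentrate on the inclusion. That $\psi_t$ preserves each connected component of $\Fix(\Fi)$ is exactly \cref{Lemma commuting actions preserve fixed loci}. For the invariance of $K:=\Core(Y,\Fi)$: if $y\in K$ then $y_\infty:=\lim_{\C^*\ni r\to\infty}\Fi_r(y)$ exists, and by commutativity of $\Fi$ and $\psi$ together with continuity of $\psi_t$ we get $\lim_{r\to\infty}\Fi_r(\psi_t(y))=\lim_{r\to\infty}\psi_t(\Fi_r(y))=\psi_t(y_\infty)$, so $\psi_t(y)\in K$; applying this also to $\psi_{t^{-1}}$ gives $\psi_t(K)=K$ for every $t\in\C^*$. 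In particular the compact set $K$ is invariant under the $\R_{>0}$-part of $\psi$, which by \cite{RZ1} is the gradient flow of the moment map $H_\psi$ for the metric $\omega(\cdot,I\cdot)$.

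For the inclusion $\Core(Y,\Fi)\subseteq\Core(Y,\psi)$, I would fix $y\in K$ and study the trajectory $\gamma(s):=\psi_{e^{2\pi s}}(y)$, $s\ge 0$, which stays inside the compact set $K$. Along it $\tfrac{d}{ds}H_\psi(\gamma(s))=\|\nabla H_\psi(\gamma(s))\|^2\ge 0$, and $H_\psi$ is bounded on $K$, so $H_\psi\circ\gamma$ converges and $\int_0^\infty\|\nabla H_\psi(\gamma(s))\|^2\,ds<\infty$. Hence the accumulation set $\Omega:=\bigcap_{T\ge 0}\overline{\{\gamma(s):s\ge T\}}$ is non-empty, compact, connected, $\R_{>0}$-invariant and, since $H_\psi$ is constant on it, contained in $\mathrm{Crit}(H_\psi)=\Fix(\psi)$. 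As $H_\psi$ is the moment map of a Hamiltonian $S^1$-action it is Morse--Bott, and a gradient trajectory which remains in a compact region and accumulates on a critical submanifold converges to a single point $q$ of that submanifold (the standard Morse--Bott convergence statement, cf.\;\cite{RZ1}). Thus $q=\lim_{\R_{>0}\ni r\to\infty}\psi_r(y)$ exists and lies in $\Fix(\psi)$, hence is fixed by the whole $\C^*$-action. Writing $t\in\C^*$ as $t=r\zeta$ with $r=|t|$, $|\zeta|=1$, we have $\psi_t(y)=\psi_\zeta(\psi_r(y))$, which tends to $\psi_\zeta(q)=q$ as $r\to\infty$, uniformly in $\zeta\in S^1$ by uniform continuity of the $S^1$-action near $q$. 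Therefore $\lim_{\C^*\ni t\to\infty}\psi_t(y)=q$ exists, i.e.\;$y\in\Core(Y,\psi)$, which is the claimed inclusion.

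I expect the one genuine obstacle, as opposed to bookkeeping, to be the convergence of the $\psi$-gradient trajectory inside $K$ in the second paragraph: mere compactness of an orbit closure does not force a holomorphic $\C^*$-orbit to converge, so one really needs the Morse--Bott property of the moment map $H_\psi$ (or a \L{}ojasiewicz-type gradient-inequality argument); the remaining points — invariance of $K$ and upgrading the $\R_{>0}$-limit to a full $\C^*$-limit — are routine.
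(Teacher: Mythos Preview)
Your proposal is correct and follows essentially the same strategy as the paper: show $\psi_t$-invariance of $K=\Core(Y,\Fi)$ by commutativity, use that the $\R_{>0}$-part of $\psi$ is the gradient flow of $H_\psi$ trapped in the compact set $K$ to obtain a limit in $\Fix(\psi)$, then upgrade to a full $\C^*$-limit via the $S^1$-action. The only difference is that the paper compresses your second paragraph into the single sentence ``the forward flowline of a gradient flow in a compact set will converge to a critical point''; your explicit invocation of the Morse--Bott property of $H_\psi$ is the honest justification of that sentence, since compactness alone only forces the $\omega$-limit set to lie in $\mathrm{Crit}(H_\psi)$ rather than to be a single point.
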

\begin{proof}
To show $\psi_t(\mathrm{Core}(Y,\Fi))\subset \mathrm{Core}(Y,\Fi)$, we use a similar proof as the one for the fixed locus in \cref{Lemma commuting actions preserve fixed loci}.
 Let $y\in \mathrm{Core}(Y,\Fi)$, so $\lim_{s\to \infty} \Fi_s (y)=y_{\infty}$ exists. Then $\Fi_s(\psi_t(y))=\psi_t(\Fi_s(y))\to \psi_t(y_{\infty})$ as $s\to \infty$, so $\psi_t(y)\in \mathrm{Core}(Y,\Fi)$.

Thus $\psi_t: \mathrm{Core}(Y,\Fi)\to \mathrm{Core}(Y,\Fi)$. As $\Fi$ is contracting, its core $\mathrm{Core}(Y,\Fi)$ is compact. Recall $\psi_{e^{2\pi s}}$ is the time $s\in \R$ flow of $\nabla H_{\psi}$. The forward flowline of a gradient flow in a compact set will converge to a critical point (in $\infty$ time). So for $y\in \mathrm{Core}(Y,\Fi),$
$\psi_{e^{2\pi s}}(y)\to y_{\infty}\in \mathrm{Fix}(\psi)\cap \mathrm{Core}(Y,\Fi)$ as $\R\ni s\to \infty$, since this is the forward flow for $\nabla H_{\psi}$. 
By applying the $S^1$-action, $\psi_{e^{2\pi s}}(\psi_{e^{i\theta}}(y))=\psi_{e^{i\theta}}(\psi_{e^{2\pi s}}(y))\to \psi_{e^{i\theta}}(y_{\infty})=y_{\infty}$. So $y\in \mathrm{Core}(Y,\psi)$. Thus $\mathrm{Core}(Y,\Fi)\subset \mathrm{Core}(Y,\psi)$.
\end{proof}

\begin{cor}\label{Lemma commuting contracting actions description of fix and core}
Let $\Fi,\psi$ be commuting contracting symplectic $\C^*$-actions. Then 
$$\Core(Y,\Fi)=\Core(Y,\psi).$$
\end{cor}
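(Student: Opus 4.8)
The plan is to deduce this immediately from \cref{Lemma commuting actions preserve core} by applying it twice, once with each of the two actions playing the role of the contracting action. The point is that the hypotheses of that lemma — that $\Fi$ and $\psi$ are symplectic $\C^*$-actions and that they commute, i.e.\;$\Fi_s\circ\psi_t=\psi_t\circ\Fi_s$ for all $(s,t)\in\C^*\times\C^*$ — are symmetric under interchanging $\Fi$ and $\psi$, and in the present situation \emph{both} actions are additionally assumed to be contracting.

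Concretely: applying \cref{Lemma commuting actions preserve core} with $\Fi$ in the role of the contracting action gives the inclusion $\Core(Y,\Fi)\subset\Core(Y,\psi)$. Then swapping the names of the two actions — which is permissible since commutativity is symmetric and $\psi$ is also contracting — the same lemma yields $\Core(Y,\psi)\subset\Core(Y,\Fi)$. Combining the two inclusions gives the claimed equality $\Core(Y,\Fi)=\Core(Y,\psi)$.

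There is no genuine obstacle here; the proof is a one-line corollary once \cref{Lemma commuting actions preserve core} is in hand. The only thing worth spelling out is that the earlier lemma's hypotheses really are unchanged by interchanging $\Fi$ and $\psi$ (each being a symplectic $\C^*$-action, commuting with the other, and contracting), so that it may legitimately be invoked in both directions.
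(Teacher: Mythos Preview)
Your proof is correct and is essentially the same as the paper's own proof, which simply states that the result follows by symmetry from \cref{Lemma commuting actions preserve core}. You have merely spelled out what that symmetry argument consists of.
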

\begin{proof}
This follows by symmetry from \cref{Lemma commuting actions preserve core}.
\end{proof}

\begin{cor}\label{Cor contracting actions compose well}
Let $\Fi,\psi$ be commuting contracting symplectic $\C^*$-actions. Then $\Fi\psi$ is also a contracting symplectic $\C^*$-action (with $\mathrm{Core}(Y,\Fi \psi)= \mathrm{Core}(Y,\Fi)$).
\end{cor}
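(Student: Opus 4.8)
The plan is to first verify that $\Fi\psi$ is a symplectic $\C^*$-action: it is {\ph} because it is a composition of {\ph} actions commuting with each other, and its $S^1$-part is Hamiltonian because the $S^1$-parts of $\Fi$ and $\psi$ are, with moment map $H_{\Fi}+H_{\psi}$ (the Hamiltonian vector field of a sum is the sum of the vector fields, and the two commuting flows generate the composite flow). So $(\Fi\psi)_{e^{2\pi s}}$ is the time-$s$ flow of $\nabla(H_\Fi+H_\psi)$. Next I would show $\Fi\psi$ is contracting using the characterisation from \cref{Remark contracting actions}: an action is contracting iff its moment map is bounded below and its fixed locus is compact. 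Boundedness below of $H_\Fi+H_\psi$ is immediate since each summand is bounded below (both $\Fi,\psi$ are contracting). For compactness of $\mathrm{Fix}(\Fi\psi)=\mathrm{Crit}(H_\Fi+H_\psi)$, I would argue that $\mathrm{Fix}(\Fi\psi)\subset \mathrm{Core}(Y,\Fi)$, which is compact (and closed): indeed a fixed point $y$ of $\Fi\psi$ has $(\Fi\psi)_t(y)=y$ for all $t$, so in particular the $\R_+$-flow of $\Fi\psi$ fixes $y$; one then uses that $\psi_t$ preserves $\mathrm{Core}(Y,\Fi)$ and that the forward $\Fi$-flow of $y$ exists because... — here is the delicate point, addressed below.

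The cleanest route to both the compactness of $\mathrm{Fix}(\Fi\psi)$ and to the core identity is to establish $\mathrm{Core}(Y,\Fi\psi)=\mathrm{Core}(Y,\Fi)$ directly, and then deduce compactness of $\mathrm{Fix}(\Fi\psi)$ from $\mathrm{Fix}(\Fi\psi)\subset\mathrm{Core}(Y,\Fi\psi)=\mathrm{Core}(Y,\Fi)$, which is compact. For the inclusion $\mathrm{Core}(Y,\Fi)\subset\mathrm{Core}(Y,\Fi\psi)$: take $y\in\mathrm{Core}(Y,\Fi)=\mathrm{Core}(Y,\psi)$ (equal by \cref{Lemma commuting contracting actions description of fix and core}). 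Then $\lim_{t\to\infty}\Fi_t(y)=:y_\infty^\Fi$ exists, and since $y_\infty^\Fi\in\mathrm{Fix}(\Fi)\cap\mathrm{Core}(Y,\Fi)$ while $\psi$ preserves $\mathrm{Core}(Y,\Fi)$ and, restricted to this compact set, the $\R_+$-part of $\psi$ is a gradient flow converging to $\mathrm{Fix}(\psi)\cap\mathrm{Fix}(\Fi)$, one can use commutativity to write $(\Fi\psi)_t(y)=\Fi_t(\psi_t(y))$ and analyse the limit. I would run the argument exactly as in the proof of \cref{Lemma commuting actions preserve core}: first apply $\psi_t$ (which keeps us in the compact $\mathrm{Core}(Y,\Fi)$, with $\psi_{e^{2\pi s}}(y)\to y_\infty^\psi\in\mathrm{Fix}(\psi)\cap\mathrm{Core}(Y,\Fi)$), then apply $\Fi_t$ to the limit point, which lies in $\mathrm{Core}(Y,\Fi)$ and flows forward under $\Fi$ to a point of $\mathrm{Fix}(\Fi)$; since the two actions commute the composite flow's limit exists, giving $y\in\mathrm{Core}(Y,\Fi\psi)$. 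The reverse inclusion $\mathrm{Core}(Y,\Fi\psi)\subset\mathrm{Core}(Y,\Fi)$ then follows by symmetry, because $\Fi=(\Fi\psi)\psi^{-1}$ and $\psi^{-1}$ is... — but $\psi^{-1}$ need not be contracting, so instead I would observe that $\Fi\psi$ and $\psi$ are commuting contracting actions (the former being contracting once we have proved it, or arguing the core inclusions in the right order) and invoke \cref{Lemma commuting contracting actions description of fix and core} once more.

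The main obstacle is the circularity in the order of proof: to apply \cref{Lemma commuting contracting actions description of fix and core} (which needs \emph{both} actions contracting) to the pair $(\Fi,\Fi\psi)$ we need to already know $\Fi\psi$ is contracting, which is part of what we are proving. To break this, I would argue the two core inclusions using only \cref{Lemma commuting actions preserve core} (which requires only \emph{one} action to be contracting): $\mathrm{Core}(Y,\Fi)\subset\mathrm{Core}(Y,\Fi\psi)$ uses that $\Fi$ is contracting (viewing $\psi$ as the commuting action acting on $\mathrm{Core}(Y,\Fi)$, composing flows), and for the reverse inclusion I would not use that $\Fi\psi$ is contracting but rather bound the $\Fi\psi$-core inside $\mathrm{Core}(Y,\psi)=\mathrm{Core}(Y,\Fi)$ by a direct flow estimate: if $(\Fi\psi)_t(y)=\Fi_t(\psi_t(y))$ converges as $t\to\infty$, then projecting via the gradient-flow structure on the eventual compact trapping region of $\psi$ (which exists by \cref{Remark contracting actions}, giving a compact $Y^{\mathrm{in}}$ into which all $-\nabla H_\psi$-flowlines land) forces the $\psi$-limit of $y$ to exist, i.e. $y\in\mathrm{Core}(Y,\psi)$. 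Once $\mathrm{Core}(Y,\Fi\psi)=\mathrm{Core}(Y,\Fi)$ is established, compactness of $\mathrm{Fix}(\Fi\psi)$ and hence the contracting property of $\Fi\psi$ follow immediately from $\mathrm{Fix}(\Fi\psi)\subset\mathrm{Core}(Y,\Fi\psi)$ being a closed subset of a compact set, completing the proof.
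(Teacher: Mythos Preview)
Your overall strategy (verify boundedness below of $H_{\Fi}+H_{\psi}$, then compactness of $\mathrm{Fix}(\Fi\psi)$, then invoke the characterisation in \cref{Remark contracting actions}) is sound, but the paper proceeds in the \emph{opposite} order and thereby avoids the circularity you struggle with. The paper first proves that $\Fi\psi$ is contracting directly, using the compact-trapping-set criterion from \cref{Remark contracting actions}: take $C$ any compact neighbourhood of $\mathrm{Core}(Y,\Fi)=\mathrm{Core}(Y,\psi)$; for any $y$, $\psi_t(y)\in C$ for small $t$ (since its $t\to 0$ limit lies in $\mathrm{Fix}(\psi)\subset\mathrm{Int}(C)$), and a compactness argument gives $\Fi_t(C)\subset C$ for small $t$, so $(\Fi\psi)_t(y)\in C$ for small $t$. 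Only \emph{after} contracting is established does the paper deduce the core equality, by applying \cref{Lemma commuting contracting actions description of fix and core} to the pair $(\Fi,\Fi\psi)$.

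Your attempted workaround for the reverse inclusion $\mathrm{Core}(Y,\Fi\psi)\subset\mathrm{Core}(Y,\Fi)$ has a genuine gap. The ``compact trapping region $Y^{\mathrm{in}}$'' of $\psi$ from \cref{Remark contracting actions} traps the $-\nabla H_\psi$ flow (the $t\to 0$ direction), which is irrelevant to the core (the $t\to\infty$ direction); and there is no mechanism by which convergence of $\Fi_t(\psi_t(y))$ as $t\to\infty$ forces convergence of $\psi_t(y)$ alone. So that step does not go through as written. If you want to rescue your order of argument, note that you do not need the full reverse core inclusion: it suffices to show $\mathrm{Fix}(\Fi\psi)\subset\mathrm{Core}(Y,\Fi)$, and this is immediate, since for $y\in\mathrm{Fix}(\Fi\psi)$ one has $\Fi_t(y)=\psi_{t^{-1}}(y)$, which converges as $t\to\infty$ because $\psi$ is complete. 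That gives compactness of $\mathrm{Fix}(\Fi\psi)$, hence $\Fi\psi$ contracting, and then the core equality follows from \cref{Lemma commuting contracting actions description of fix and core} exactly as in the paper.
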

\begin{proof}
The only non-trivial claim, is that $\Fi\psi$ is contracting (the statement about cores then follows by \cref{Lemma commuting contracting actions description of fix and core}, since $\Fi\psi$ and $\Fi$ commute).
By \cref{Remark contracting actions}, it is enough to find a compact set $C\subset Y$ such that for any $y\in Y$, $\Fi_t(\psi_t(y))$ lies in $C$ for small enough $t\in \C^*$. Let $C$ be any compact neighbourhood of $\mathrm{Core}(Y,\Fi)=\mathrm{Core}(Y,\psi)$ (using \cref{Lemma commuting contracting actions description of fix and core}). Since 
$$\lim_{t\to 0}\psi_t(y)\in \mathrm{Fix}(\psi)\subset \mathrm{Core}(Y,\psi) \subset \mathrm{Int}(C),$$ 
$\psi_t(y)\in C$ for small $t$. Similarly, any $z\in C$ has $\lim_{t\to 0}\Fi_t(z) \in \mathrm{Fix}(\Fi)\subset \mathrm{Int}(C)$, so $\Fi_t(z)\in C$ for small $t$ and, since $C$ is compact, we can minimise over $z\in C$ so that $\Fi_t(C)\subset C$ for sufficiently small $t\in \C^*$. Thus $\Fi_t(\psi_t(y))\in \Fi_t(C)\subset C$ for sufficiently small $t\in \C^*$, as required.
\end{proof}

\subsection{The semigroup of contracting actions $N_+(Y)$ of an algebraic torus action}
\label{Subsection semigp contracting actions for sympl Cstar mfd}

\begin{de}\label{Definition symplectic T action}
Let $\mathbb{T}\cong (\C^*)^d$ be an algebraic torus.
 A {\bf symplectic $\mathbb{T}$-action} $\Fi$ is a {\ph} action of $\mathbb{T}$ on $Y$, such that its maximal compact subtorus $\mathbb{T}_{\R}\cong (S^1)^d$ acts in a Hamiltonian way. We will denote the moment map by $\mu: Y \to \mathrm{Lie}(\mathbb{T}_{\R})^*\cong \R^d$.
\end{de}

\begin{rmk}
We do not necessarily need to choose an isomorphism $\mathbb{T}\cong (\C^*)^d$, but we indicate this and other induced isomorphisms (such as, on the associated lattice $N\cong \Z^d$ of 1-parameter subgroups) to help navigate the notation.
Note that giving $d$ commuting symplectic $\C^*$-actions on $Y$ is the same as giving a $\T$-action for $\mathbb{T}=(\C^*)^d$, by considering each $\C^*$-factor.
\end{rmk}

Recall the $1$-parameter subgroups are parametrised by the lattice
$$
N:= \mathrm{Hom}(\C^*,\mathbb{T})\cong \Z^d.
$$
We write $\Fi_v$ for the $\C^*$-action corresponding to $v\in N,$ and using \cref{Definition complete and contracting actions} we define: 
\begin{equation}\label{Equation contracting and complete for sympl Cstar mfd}
N_0(Y):=\{v: \Fi_v
\textrm{ is a complete action}\}
\subset
N_+(Y)=\{v: \Fi_v
\textrm{ is a contracting action}\}
\subset N.
\end{equation}
\begin{cor}\label{Corollary core is same for all v in Nplus}
$N_+(Y)\subset N$ is a semigroup under addition, and the core is independent of $v\in N_+,$
$$\mathrm{Core}(Y):=\mathrm{Core}(Y,\Fi_v)\subset Y.$$ 
\end{cor}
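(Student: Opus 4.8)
The plan is to deduce both statements from the results already assembled in the preceding subsections. First, the semigroup property of $N_+(Y)$: given $v,v'\in N_+(Y)$, the corresponding $\C^*$-actions $\Fi_v,\Fi_{v'}$ are symplectic $\C^*$-actions (by \cref{Definition symplectic T action} and the fact that $1$-parameter subgroups of $\mathbb{T}$ give {\ph} $\C^*$-actions whose $S^1$-parts are Hamiltonian, being restrictions of the Hamiltonian $\mathbb{T}_{\R}$-action), and they commute because they come from the torus $\mathbb{T}$. They are also contracting by definition of $N_+(Y)$. Hence \cref{Cor contracting actions compose well} applies: $\Fi_v\Fi_{v'}=\Fi_{v+v'}$ is again a contracting symplectic $\C^*$-action, which is exactly the statement $v+v'\in N_+(Y)$. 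So $N_+(Y)$ is closed under addition.

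Second, the $v$-independence of the core: for any $v,v'\in N_+(Y)$, the actions $\Fi_v$ and $\Fi_{v'}$ are commuting contracting symplectic $\C^*$-actions, so \cref{Lemma commuting contracting actions description of fix and core} gives $\mathrm{Core}(Y,\Fi_v)=\mathrm{Core}(Y,\Fi_{v'})$ directly. Thus the common value $\mathrm{Core}(Y,\Fi_v)$ does not depend on the choice of $v\in N_+(Y)$, justifying the notation $\mathrm{Core}(Y)$. (One could equivalently first fix one $v_0\in N_+(Y)$ and apply \cref{Lemma commuting actions preserve core} both ways to get the two inclusions $\mathrm{Core}(Y,\Fi_{v_0})\subset \mathrm{Core}(Y,\Fi_v)$ and $\mathrm{Core}(Y,\Fi_v)\subset \mathrm{Core}(Y,\Fi_{v_0})$, but \cref{Lemma commuting contracting actions description of fix and core} already packages this.)

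There is essentially no obstacle here: the corollary is a formal consequence of \cref{Cor contracting actions compose well} and \cref{Lemma commuting contracting actions description of fix and core}, whose proofs in turn rest on the gradient-flow convergence argument in a compact set from \cref{Lemma commuting actions preserve core}. The only minor point to check is that each $\Fi_v$ for $v\in N_+(Y)$ really is a \emph{symplectic} $\C^*$-action in the sense of \cref{Definition symplectic T action} — i.e.\ that the $S^1$-part is Hamiltonian — but this is immediate since the $S^1$-part of $\Fi_v$ is a circle subgroup of $\mathbb{T}_{\R}$, whose action is Hamiltonian with moment map $\langle \mu(\cdot),v\rangle$ obtained by pairing the $\mathbb{T}_{\R}$-moment map $\mu$ with $v$.
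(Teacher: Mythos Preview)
Your proof is correct and follows essentially the same approach as the paper. The paper's proof is more terse, citing only \cref{Cor contracting actions compose well} (which already yields both the semigroup property and the core equality, since $\mathrm{Core}(Y,\Fi\psi)=\mathrm{Core}(Y,\Fi)$ combined with symmetry gives $\mathrm{Core}(Y,\Fi)=\mathrm{Core}(Y,\psi)$); your version unpacks this by also invoking \cref{Lemma commuting contracting actions description of fix and core} directly, which is fine.
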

\begin{proof}
This follows by \cref{Cor contracting actions compose well}.
\end{proof}

\section{Equivariant projective morphisms}\label{Section Equivariant projective morphisms}
\subsection{Setting up notation and assumptions}\label{Subsection Setting up notation and assumptions}
Let $Y$ be a non-singular variety with at $\mathbb{T}$-action, admitting a $\mathbb{T}$-equivariant projective morphism to an affine variety $X$,
$$
\pi:Y\to X.
$$
We later explain that $Y$ is quasi-projective via a $\mathbb{T}$-equivariant embedding $Y \hookrightarrow \C^m \times \C P^n$ compatibly via $\pi$ with an embedding $X\hookrightarrow \C^m$ as in \cref{Subsection Torus actions on affine variety}. Thus one can view $Y$ as a complex manifold, and $\pi$ as an analytic map ($X$ is typically singular), and we succinctly refer to this point of view by saying ``Euclidean topology'', as opposed to the Zariski topology. We sometimes write ``$X\subset \C^m$'' when talking about topological properties of $X$: this is to indicate that we mean the Euclidean/analytic topology. 

\begin{lm}
$\pi$ is a surjective $\mathbb{T}$-equivariant projective morphism onto an affine subvariety of $X$.
\end{lm}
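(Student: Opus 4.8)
The plan is to define $X' := \pi(Y)$ with its reduced subvariety structure and to verify three things, after which one simply replaces $X$ by $X'$: that $X'$ is an affine subvariety of $X$; that it is $\mathbb{T}$-invariant; and that the corestriction $\pi\colon Y \to X'$ is again a $\mathbb{T}$-equivariant projective morphism, now surjective by construction.

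For affineness: a projective morphism is proper, hence closed, so $X' = \pi(Y)$ is Zariski-closed in $X$; a closed subvariety of an affine variety is affine, since its coordinate ring is a quotient of $\C[X]$. Alternatively, using the $\mathbb{T}$-equivariant embedding $Y \hookrightarrow \C^m \times \C P^n$ compatible with an embedding $X \hookrightarrow \C^m$ (set up shortly afterwards in the paper), $Y$ is closed in $X \times \C P^n$ compatibly with the projection to $X$; since $\C P^n$ is complete, the projection $X \times \C P^n \to X$ is closed, so $\pi(Y)$ is closed in $X$. Either way $X'$ is an affine variety, and it inherits the Euclidean-topology conventions of \cref{Subsection Setting up notation and assumptions} verbatim.

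For $\mathbb{T}$-invariance: if $x = \pi(y) \in X'$ and $t \in \mathbb{T}$, then $t\cdot x = \pi(t\cdot y)\in X'$ by equivariance of $\pi$, so the $\mathbb{T}$-action on $X$ restricts to $X'$ and $\pi\colon Y\to X'$ remains equivariant. For projectivity of the corestriction: $\pi$ factors through the closed immersion $X'\hookrightarrow X$, and $Y$ is the (scheme-theoretic) preimage of $X'$, so $\pi\colon Y\to X'$ is the base change of the projective morphism $\pi\colon Y\to X$ along $X'\hookrightarrow X$, and base change preserves projectivity; in the embedding picture, $Y$ is closed in $X\times\C P^n$ and contained in the closed subset $X'\times\C P^n$, hence closed in $X'\times\C P^n$, which exhibits $\pi\colon Y\to X'$ directly as projective.

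There is no serious obstacle here: the statement is just a packaging of ``proper maps have closed image'' together with ``closed subschemes of affine schemes are affine''. The only point requiring a little care is keeping scheme structures straight (reduced image versus scheme-theoretic image), but since we work throughout with varieties and only need the set-theoretic image — and since one can equally run the whole argument topologically using properness of $\pi$ in the Euclidean topology — this causes no difficulty.
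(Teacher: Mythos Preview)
Your proof is correct and follows the same approach as the paper: projective implies proper implies closed, so the image is a closed (hence affine) subvariety of $X$. You add more detail than the paper does --- explicitly checking $\mathbb{T}$-invariance of the image and projectivity of the corestriction --- but these are routine verifications that the paper simply takes for granted.
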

\begin{proof}
Since $\pi$ is projective morphism, it is also a proper morphism, and therefore it is a closed morphism (indeed it is universally closed). The image of $\pi$ in $X$ is therefore closed (being the image of the closed set $Y$), and thus it is an affine subvariety.
\end{proof}

It follows that after replacing $X$ by an affine subvariety, we may assume that $\pi$ is surjective.
By studying one connected component at a time, we may assume that $Y$ is connected (equivalently, $Y$ is irreducible, since $Y$ is non-singular).
Therefore, $X=\pi(Y)$ is also connected.

We are only interested in the case when $Y$ is non-compact (in the Euclidean topology), which is equivalent to the condition that $X=\pi(Y)$ is not a point (recall by Noether normalisation that the only compact subsets of affine varieties $X\subset \C^m$ are finite unions of points).

Since $\pi$ is projective, the fibres are projective (but possibly singular) varieties. 

\begin{lm}
For any subtorus $\mathbb{T'}\subset \mathbb{T}$ (e.g.\;a one-parameter subgroup), its fixed loci are related by
\begin{equation}\label{Equation fixed loci relation}
\pi(\mathrm{Fix}_{Y}(\mathbb{T'}))= \mathrm{Fix}_{X}(\mathbb{T'}).
\end{equation}
\end{lm}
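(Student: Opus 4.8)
The claim is the set-theoretic identity $\pi(\mathrm{Fix}_Y(\mathbb{T}')) = \mathrm{Fix}_X(\mathbb{T}')$ for any subtorus $\mathbb{T}' \subset \mathbb{T}$. The inclusion ``$\subseteq$'' is immediate from $\mathbb{T}'$-equivariance of $\pi$: if $y$ is fixed by $\mathbb{T}'$ then $\pi(y)$ is fixed, since $t\cdot \pi(y) = \pi(t\cdot y) = \pi(y)$. So the entire content is the reverse inclusion ``$\supseteq$'': every $\mathbb{T}'$-fixed point $x \in X$ lies in the image of the fixed locus upstairs.

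For ``$\supseteq$'', fix $x \in \mathrm{Fix}_X(\mathbb{T}')$. Since $\pi$ is surjective, the fibre $F := \pi^{-1}(x)$ is non-empty, and since $\pi$ is a projective morphism, $F$ is a non-empty projective (possibly singular) variety. Because $x$ is $\mathbb{T}'$-fixed and $\pi$ is $\mathbb{T}'$-equivariant, the torus $\mathbb{T}'$ acts on $F$: for $t\in\mathbb{T}'$ and $y\in F$, $\pi(t\cdot y) = t\cdot\pi(y) = t\cdot x = x$, so $t\cdot y \in F$. Now invoke the Borel fixed point theorem: a connected solvable linear algebraic group (in particular a torus) acting on a non-empty complete variety has a fixed point. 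Applying this to the $\mathbb{T}'$-action on the complete variety $F$ (if $F$ is reducible, restrict to any $\mathbb{T}'$-invariant irreducible component — the torus permutes the finitely many top-dimensional components and hence fixes at least one of them, which is again complete), we obtain a point $y_0 \in F$ fixed by $\mathbb{T}'$, i.e. $y_0 \in \mathrm{Fix}_Y(\mathbb{T}')$ with $\pi(y_0) = x$. This proves $x \in \pi(\mathrm{Fix}_Y(\mathbb{T}'))$.

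The main (and really the only) obstacle is making sure the Borel fixed point theorem applies cleanly: one needs the fibre $F$ to be complete — which holds precisely because $\pi$ is a \emph{projective} morphism, so $F$ is a closed subscheme of some $\mathbb{C}P^n$ — and one needs a $\mathbb{T}'$-invariant complete (irreducible, or at least non-empty reduced) piece of $F$ to feed into Borel. A clean way to handle reducibility is to note that $\mathbb{T}'$ is connected, hence acts trivially on the finite set of irreducible components of $F_{\mathrm{red}}$, so each irreducible component of the fibre is already $\mathbb{T}'$-invariant; pick one, it is a complete variety with a $\mathbb{T}'$-action, and apply Borel. (Alternatively one can avoid Borel and argue directly: pick any $y \in F$ and a one-parameter subgroup $\gamma:\C^*\to\mathbb{T}'$; by completeness of $F$ the limit $y_0 := \lim_{t\to 0}\gamma(t)\cdot y$ exists in $F$, and $y_0$ is fixed by $\gamma(\C^*)$; iterating over a set of one-parameter subgroups generating $\mathbb{T}'$, or more carefully using that $\mathbb{T}'$ is abelian so the fixed locus of $\gamma$ is again $\mathbb{T}'$-invariant and complete, one reaches a genuinely $\mathbb{T}'$-fixed point — this is essentially the standard proof of Borel for tori and fits the ``Euclidean topology / $\C^*$-limit'' language used throughout the paper.)

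Everything else is routine: the equivariance of $\pi$ is part of the hypotheses, surjectivity was arranged by replacing $X$ with the image, and projectivity of $\pi$ is exactly what guarantees properness of the fibres. No estimates or transversality are involved; this is a purely algebro-geometric lemma, and the proof is short once Borel (or the one-parameter-subgroup limit argument) is in place.
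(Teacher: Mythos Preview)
Your proof is correct and follows essentially the same approach as the paper: the inclusion $\subseteq$ is immediate from equivariance, and for $\supseteq$ the paper observes that the fibre $\pi^{-1}(x)$ is a projective variety on which $\mathbb{T}'$ acts, hence has a fixed point. You have simply spelled out the Borel fixed point step (and the reducibility caveat) more explicitly than the paper's one-line appeal to ``a torus action on a projective variety has a fixed point.''
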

\begin{proof}
The inclusion $\subset$ is obvious. For the other direction: for any $\mathbb{T'}$-fixed point $x\in X$, the $\mathbb{T}'$-action restricts to a $\mathbb{T}'$-action on $\pi^{-1}(x)$ and thus it has a fixed point since $\pi^{-1}(x)$ is a projective variety.
\end{proof}

As $\pi$ is projective, it is also proper, in both the Zariski and the Euclidean topologies (in the latter, proper means preimages of compact sets are compact).

Recall the lattice $N\cong \Z^d$ of 1-parameter subgroups of $\mathbb{T}\cong (\C^*)^d$,  $N_0(X)\subset N_+(X)\subset N$, and $N_0(Y)\subset N_+(Y)\subset N$, from \cref{Subsection Torus actions on affine variety} and \cref{Subsection semigp contracting actions for sympl Cstar mfd}. We abusively write $\Fi_v$ for the $\C^*$-action corresponding to $v\in N$, both in $Y$ and in $X$, e.g.\;equivariance means $\pi \circ \Fi_v = \Fi_v \circ \pi$ for all $v\in N$.

We will prove later that those semigroups inside $N$ agree in $X$ and $Y$:
$$
N_+: = N_+(Y) = N_+(X) \;\;\textrm{ and }\;\; N_0:=N_0(Y)=N_0(X)
$$
To give $Y$ the structure of a symplectic $\C^*$-manifold we need to have at least one contracting $\C^*$-action (\cref{Remark contracting actions}). So the examples relevant to us will involve assuming that $N_+\neq \emptyset$, although we will not need this assumption until \cref{Subsection equiv proj res is symp cstar mfd}.

\subsection{The $\Upsilon$ map}\label{Subsection Upsilon map}

Pick non-constant homogeneous generators $f_{w^1},\ldots,f_{w^{\gens}}$ for the unital $\C$-algebra $\C[X]$ (so $\mathbb{T}$-eigenvectors), where $f_{w^j}\in \C[X]_{w^j}$. The weights $w^j\in M:=N^{\vee}$ are possibly not distinct. (Later, when we assume $N_+\neq \emptyset$, we will have $w^j\neq 0$ by \cref{Cor CX0 is C in equiv proj section}). 

\begin{lm}
This choice of homogeneous generators determines a {\ph} proper map
\begin{equation}\label{Definition of Upsilon}
\Upsilon: Y \to \C^{\gens}, \;\;\; \Upsilon=(\Upsilon_1,\ldots,\Upsilon_{\gens}):=(f_{w^1},\ldots,f_{w^{\gens}}) \circ \pi.
\end{equation}
It is $\mathbb{T}$-equivariant if we declare that the $\mathbb{T}$-action on the $j$-th factor of $\C^{\gens}$ is prescribed by $w^j\in N^{\vee}$.
\end{lm}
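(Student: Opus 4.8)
The plan is to factor $\Upsilon$ through the closed embedding of $X$ determined by the chosen generators, and then verify holomorphicity, properness, and $\mathbb{T}$-equivariance one at a time. Since $f_{w^1},\ldots,f_{w^{\gens}}$ generate $\C[X]$ as a unital $\C$-algebra, the evaluation map $\iota\colon X \hookrightarrow \C^{\gens}$, $x\mapsto (f_{w^1}(x),\ldots,f_{w^{\gens}}(x))$, is a closed embedding of varieties (its image is the affine subvariety cut out by the relations among the $f_{w^j}$, as already recorded in \cref{Subsection Torus actions on affine variety}), and by construction $\Upsilon = \iota \circ \pi$. So it remains to check the three asserted properties of $\Upsilon$.

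For holomorphicity: $\pi\colon Y\to X$ is a morphism of varieties, hence holomorphic in the Euclidean topology, and $\iota$ is a polynomial map; so $\Upsilon$ is holomorphic with respect to the integrable complex structure on $Y$ — which in this setting is the $\omega$-compatible almost complex structure $I$ of the symplectic $\C^*$-manifold structure (\cref{Subsection Setting up notation and assumptions}) — and the standard complex structure on $\C^{\gens}$; in particular $\Upsilon$ is {\ph}. For properness: $\iota$ is a closed embedding, hence proper as a map of topological spaces, since for compact $K\subset \C^{\gens}$ the set $\iota^{-1}(K)=X\cap K$ is closed in $K$ and therefore compact; and $\pi$ is projective, hence proper in the Euclidean topology. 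A composition of proper maps is proper, so $\Upsilon$ is proper.

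For $\mathbb{T}$-equivariance: let $\mathbb{T}$ act diagonally on $\C^{\gens}$ so that $t\in\mathbb{T}$ scales the $j$-th coordinate through the character $w^j$; for a one-parameter subgroup $\Fi_v$, $v\in N$, this rescales the $j$-th factor by $t^{\langle v,w^j\rangle}$, with the orientation convention of \cref{Subsection Torus actions on affine variety}. By \eqref{Equation weight of the action of fwj} — equivalently by \cref{Lemma how you act on the coordinate ring}, which identifies the coordinates of $\Fi_v(x)$ — this is precisely the action that makes $\iota$ equivariant. Since $\pi$ is $\mathbb{T}$-equivariant by hypothesis, $\Upsilon=\iota\circ\pi$ is $\mathbb{T}$-equivariant, which finishes the argument. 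The proof is essentially bookkeeping; the only points requiring a little care — both already settled in \cref{Subsection Setting up notation and assumptions} — are that $\pi$ is proper in the Euclidean, not merely Zariski, topology (valid because $\pi$ is projective), and that we are entitled to take the compatible almost complex structure $I$ on $Y$ to be the integrable one, so that a holomorphic morphism is automatically a {\ph} map; so I do not expect any real obstacle.
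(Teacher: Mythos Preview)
Your proof is correct and follows essentially the same approach as the paper's own proof: factor $\Upsilon=\iota\circ\pi$ with $\iota$ the closed embedding $X\hookrightarrow\C^{\gens}$ determined by the generators, then note that $\iota$ is a proper analytic embedding, that $\pi$ is proper (being projective) and analytic, and that both are $\mathbb{T}$-equivariant. The paper's proof is simply a terse three-sentence version of what you wrote; your additional remarks about $I$ being integrable and about Euclidean versus Zariski properness are helpful clarifications but not new ingredients.
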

\begin{proof}
By construction, $(f_{w^1},\ldots,f_{w^{\gens}}): X \to \C^{\gens}$ is a proper analytic embedding.
As $\pi$ is a projective morphism, it is proper.
The claim follows, using that $\pi$ is a $\mathbb{T}$-equivariant analytic map.
\end{proof}

\subsection{The symplectic form}

\begin{lm}\label{Lemma equiv proj morph sympl form}
    $Y$ is a quasi-projective variety admitting a $\mathbb{T}$-equivariant K\"{a}hler form $\omega$, such that the maximal compact real torus $\mathbb{T}_{\R}\subset \mathbb{T}$ acts in a Hamiltonian way.
\end{lm}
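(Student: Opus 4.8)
The plan is to build $\omega$ by pulling back a K\"{a}hler form from an ambient space in which $Y$ embeds compatibly with $\pi$. First I would upgrade the projective morphism $\pi:Y\to X$ to an honest embedding: since $\pi$ is projective, by definition there is a closed immersion $Y\hookrightarrow X\times \mathbb{C}P^n$ over $X$ for some $n$, given by a very ample line bundle $\mathcal{L}$ on $Y$ relative to $\pi$. Combining this with the affine embedding $X\hookrightarrow \mathbb{C}^m$ coming from the chosen generators $f_{w^1},\ldots,f_{w^{\gens}}$ of $\C[X]$ (as in \cref{Subsection Torus actions on affine variety}), we get a closed embedding $\iota:Y\hookrightarrow \mathbb{C}^m\times \mathbb{C}P^n$, which is exactly the quasi-projective structure announced just before the lemma.

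The next step is to make the embedding $\mathbb{T}$-equivariant. The $\mathbb{T}$-action on $X$, hence on $\mathbb{C}^m$, is already linear (each coordinate $f_{w^j}$ is a $\mathbb{T}$-eigenvector of weight $w^j$), so that factor is fine. For the $\mathbb{C}P^n$ factor, the standard averaging trick applies: replace $\mathcal{L}$ by a sufficiently high tensor power so that the $\mathbb{T}$-action lifts to $\mathcal{L}$ (a finite group-cohomology obstruction, which vanishes after passing to a power since $\mathrm{Pic}$ is finitely generated and $\mathbb{T}$ is a torus, i.e.\ linearly reductive and connected), and then the induced $\mathbb{T}$-representation on $H^0(Y,\mathcal{L})$ makes the Kodaira embedding $\mathbb{T}$-equivariant for the resulting linear $\mathbb{T}$-action on $\mathbb{C}P^n$. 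Now restrict the maximal compact torus $\mathbb{T}_{\R}\cong (S^1)^d$ to $U(m)\times U(n+1)$, i.e.\ arrange the linear actions to be unitary — this is automatic after averaging a Hermitian metric, since $\mathbb{T}_{\R}$ is compact. Then both the flat K\"{a}hler form $\omega_{\mathrm{std}}$ on $\mathbb{C}^m$ and the Fubini--Study form $\omega_{FS}$ on $\mathbb{C}P^n$ are $\mathbb{T}_{\R}$-invariant and Hamiltonian, with explicit moment maps ($\mu_{\mathbb{C}^m}(z)=\tfrac12(\lvert z_1\rvert^2,\ldots)$ paired against weights, and the classical Fubini--Study moment map). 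Take $\omega:=\iota^*(\omega_{\mathrm{std}}\oplus \omega_{FS})$; since $\iota$ is a holomorphic embedding, $\omega$ is K\"{a}hler on $Y$, it is $\mathbb{T}_{\R}$-invariant, and its moment map is the restriction $\mu=(\mu_{\mathbb{C}^m}\oplus\mu_{FS})\circ\iota$, so $\mathbb{T}_{\R}$ acts in a Hamiltonian fashion on $(Y,\omega)$.

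The main obstacle I anticipate is the $\mathbb{T}$-linearisation of $\mathcal{L}$: one must be a little careful that after twisting $\mathcal{L}$ by a $\pi$-ample line bundle to get the relative embedding, the resulting line bundle still admits a $\mathbb{T}$-linearisation after passing to a power — but this is standard (Mumford's GIT, or \cite[Cor.\ 7.2 in the equivariant literature]{} — I would just cite that for a torus acting on a normal variety, every line bundle admits a linearisation after a power, or even directly since $Y$ is smooth and $\mathbb{T}$ connected). A secondary point to check is that $\Upsilon$ in \eqref{Definition of Upsilon} is compatible with this picture, i.e.\ that $\Upsilon$ is the composition of $\iota$ with the projection to $\mathbb{C}^m$; this is immediate from the construction and is what ties the symplectic form to the properness statements already established. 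Everything else — invariance, the moment map formula, K\"{a}hlerness — is then a routine consequence of restricting standard invariant K\"{a}hler structures along a holomorphic embedding.
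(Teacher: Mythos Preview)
Your proposal is correct and follows essentially the same route as the paper: embed $Y$ equivariantly into $\C^{\gens}\times\C P^n$ via $\Upsilon$ on the first factor and a linearised relative very ample bundle on the second, then pull back the standard $\mathbb{T}_{\R}$-invariant K\"{a}hler form and its moment map. The paper is terser (it simply asserts the equivariant embedding exists, citing \cite[Sec.8.3]{RZ1}), whereas you spell out the linearisation step; your extra care there is justified and not a deviation in strategy.
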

\begin{proof}
The argument is similar to \cite[Sec.8.3]{RZ1}.
As $\pi$ is a $\mathbb{T}$-equivariant projective morphism over an affine variety $X$, one can build a $\mathbb{T}$-equivariant embedding $i: Y \hookrightarrow X \times \C P^n \hookrightarrow \C^{\gens} \times \C P^n$, for some $n$, where the $\C^{\gens}$-entry of $i$ is precisely the map $\Upsilon$. 
In particular $Y$ is quasi-projective, and we can pull back the standard K\"{a}hler form from $\C^{\gens} \times \C P^n$, so that the maximal compact subgroup $\mathbb{T}_{\R}\cong (S^1)^d$ acts in a Hamiltonian way on $Y$ since it does so on $\C^{\gens} \times \C P^n$.
\end{proof}

The $S^1$-action by $\Fi_v$ is therefore Hamiltonian for each $v\in N$, with\footnote{the natural pairing $(\cdot,\cdot)$ between $\mathrm{Lie}(\mathbb{T})^*$ and $v\in \mathrm{Lie}(\mathbb{T})$ becomes the usual dot product on $\R^n$ in explicit coordinates.} Hamiltonian $H_v:Y \to \R$: 
\begin{equation}\label{Equation Ham from moment map}
H_v:Y \to \R, \; H_v(y)=(\mu(y),v),
\end{equation}
using $\mu$ from \cref{Definition symplectic T action}. The $S^1$-action by $\Fi_v$ is the flow of the Hamiltonian vector field $X_{H_v}$.

\subsection{Properties of the one-parameter subgroups}

\begin{prop}\label{Lemma Nplus for equiv proj morph}
$N_+(Y)=N_+(X)$ and $N_0(Y)=N_0(X)$, so we write $N_+$ and $N_0$ from now on.
\end{prop}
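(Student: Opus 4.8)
The plan is to establish the completeness statement $N_0(Y)=N_0(X)$ first, and then read off the contracting statement $N_+(Y)=N_+(X)$ from it, using the fixed-locus identity $\pi(\mathrm{Fix}_Y(\Fi_v))=\mathrm{Fix}_X(\Fi_v)$ of \eqref{Equation fixed loci relation} together with properness of $\pi$.

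For $N_0(Y)\subseteq N_0(X)$, nothing but surjectivity and continuity of $\pi$ is needed: given $v\in N_0(Y)$ and $x\in X$, one picks $y\in\pi^{-1}(x)$, and then $\lim_{t\to 0}(\Fi_v)_t(x)=\pi\big(\lim_{t\to 0}(\Fi_v)_t(y)\big)$ exists. For the reverse inclusion I would invoke properness through the valuative criterion. Fix $v\in N_0(X)$ and $y\in Y$, and set $x=\pi(y)$. By \cref{Corollary descr of complete contr in aff case for fiv} together with \eqref{Equation decomposition for Fiv}, completeness of $\Fi_v$ on the affine variety $X$ means the $\Z$-grading on $\C[X]$ induced by $\Fi_v$ has no negative part; equivalently, the action morphism $\C^*\times X\to X$ extends to $\C\times X\to X$, so in particular the orbit map $t\mapsto(\Fi_v)_t(x)$ extends to a morphism $\bar o_x\colon\C\to X$ with $\bar o_x(0)=\lim_{t\to0}(\Fi_v)_t(x)$. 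Now form the fibre product $Z:=Y\times_X\C$ along $\pi$ and $\bar o_x$; then $q\colon Z\to\C$ is proper, being a base change of $\pi$. By $\mathbb{T}$-equivariance $\pi\big((\Fi_v)_t(y)\big)=(\Fi_v)_t(x)=\bar o_x(t)$ for $t\in\C^*$, so $t\mapsto\big((\Fi_v)_t(y),t\big)$ is a section of $q$ over $\C^*\subset\C$. Applying the valuative criterion of properness to $q$ over the discrete valuation ring $\mathcal{O}_{\C,0}$ extends this section over $0\in\C$; spreading out, one obtains a morphism from a Zariski neighbourhood of $0$ into $Z$, hence into $Y$, which agrees with the orbit map over $\C^*$. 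Its analytification is continuous at $0$, so $\lim_{t\to0}(\Fi_v)_t(y)$ exists, i.e.\ $v\in N_0(Y)$. Hence $N_0(Y)=N_0(X)=:N_0$.

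For the contracting statement, note $N_+(Y)\subseteq N_0(Y)=N_0=N_0(X)\supseteq N_+(X)$, so it suffices to compare, for $v\in N_0$, the compactness of $\mathrm{Fix}_Y(\Fi_v)$ and $\mathrm{Fix}_X(\Fi_v)$. Both are closed, and $\pi(\mathrm{Fix}_Y(\Fi_v))=\mathrm{Fix}_X(\Fi_v)$ by \eqref{Equation fixed loci relation} (applied to the subtorus generated by $\Fi_v$), whose content is that every $\C^*$-fixed point of $X$ is the image of one in $Y$ since the fibres of $\pi$ are projective. If $v\in N_+(Y)$, then $\mathrm{Fix}_X(\Fi_v)=\pi(\mathrm{Fix}_Y(\Fi_v))$ is a continuous image of a compact set, hence compact, so $v\in N_+(X)$. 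Conversely, if $v\in N_+(X)$, then $\mathrm{Fix}_Y(\Fi_v)\subseteq\pi^{-1}(\mathrm{Fix}_X(\Fi_v))$, which is compact by properness of $\pi$, so the closed subset $\mathrm{Fix}_Y(\Fi_v)$ of it is compact, i.e.\ $v\in N_+(Y)$. Therefore $N_+(Y)=N_+(X)=:N_+$.

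The one genuinely non-formal step is the inclusion $N_0(X)\subseteq N_0(Y)$: transporting the existence of convergence points up along $\pi$ is not automatic, precisely because $\pi$ is typically far from a submersion and its fibres are projective varieties rich in holomorphic curves — one really must use that $\pi$ is proper, here in the form of the valuative criterion (equivalently, compactness of preimages of compact sets). The remaining inclusions, and both directions of the contracting statement, are then formal consequences of surjectivity, continuity, properness and the identity \eqref{Equation fixed loci relation}.
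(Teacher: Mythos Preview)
Your proof is correct, and the overall architecture matches the paper's: the easy inclusion $N_0(Y)\subseteq N_0(X)$ via surjectivity and continuity, and both directions of $N_+(Y)=N_+(X)$ via \eqref{Equation fixed loci relation} together with properness, are handled exactly as in the paper.

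The one genuine difference is in the hard inclusion $N_0(X)\subseteq N_0(Y)$. The paper's argument is \emph{dynamical}: it uses the K\"{a}hler form from \cref{Lemma equiv proj morph sympl form} to get a moment map $H_{\psi}$, traps the orbit $\psi_t(y)$ in the compact set $\pi^{-1}(C)$ for a compact neighbourhood $C$ of the limit point in $X$, and then argues that a negative gradient flowline of $H_{\psi}$ confined to a compact set must converge to a critical point (hence an $S^1$-fixed point, hence a $\C^*$-fixed point). The paper in fact remarks that ``one could now use properties of properness in analytic geometry; we instead use a dynamical argument'' --- and your proof takes precisely that alternative route, via the valuative criterion for the proper base change $q:Y\times_X\C\to\C$. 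Your approach is purely algebro-geometric and does not rely on the symplectic form at all, so it is logically more self-contained; the paper's approach, on the other hand, makes the limit point visibly a $\C^*$-fixed point and fits the ambient symplectic-dynamical toolkit used throughout.
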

\begin{proof}
As $\pi$ is continuous, if $\mathrm{Fix}_Y(\Fi_v)$ is compact, then $\mathrm{Fix}_X(\Fi_v)=\pi(\mathrm{Fix}_Y(\Fi_v))$ is compact (using \eqref{Equation fixed loci relation}).
Conversely, as $\pi$ is proper, if $\mathrm{Fix}_X(\Fi_v)$ is compact, then $\mathrm{Fix}_Y(\Fi_v))$ is a closed subset in the compact set $\pi^{-1}(\mathrm{Fix}_Y(\Fi_v))$, thus compact.

Thus, it remains to show $N_0(Y)=N_0(X)$. For $N_0(Y)\subset N_0(X)$ we only use surjectivity and continuity of $\pi$: lift a point of $X$ to $Y$, consider its convergence point in $Y$, project back down via $\pi$ to get the convergence point in $X$.

To show $N_0(X)\subset N_0(Y)$: let $v\in N_0(X)$ and $y\in Y$, abbreviate $\psi:=\Fi_v$, then we need to show that $\lim \psi_t(y)$ exists as $t \to 0$ in $\C^*$. By assumption, $\pi(\psi_t(y))=\psi_t(\pi(y)) \to x_0\in X$ exists, as $t \to 0$.
Pick a compact neighbourhood $C$ of $x_0$ in $X$. Then, for small $t\in \C^*$, $\psi_t(y)$ is trapped in $\pi^{-1}(C)$, which is compact as $\pi$ is proper.
One could now use properties of properness in analytic geometry; we instead use a dynamical argument as in the proof of \cref{Lemma commuting actions preserve core}: $\psi_{e^{-2\pi s}}(y)$ for $s>0\in \R$ is the flowline of $y$ for $-\nabla H_{\psi}$; it lies in the compact set $\pi^{-1}(C)$; so it converges to a critical point $y_0\in Y$. Since $\mathrm{Crit}(H_{\psi})=\mathrm{Fix}(\psi)$, the point $y_0$ is in particular fixed by the $S^1$-action. Thus, $\psi_{e^{-2\pi s}}(\psi_{e^{i\theta}}(y))=\psi_{e^{i\theta}}(\psi_{e^{-2\pi s}}(y))\to \psi_{e^{i\theta}}(y_0)=y_0$ as $s\to \infty$. We deduce that $\psi_t(y)\to y_0$ for $t\in \C^*$.
\end{proof}

\begin{lm}\label{Lemma trivial subtorus trick} 
There is an algebraic subtorus $S\cong (\C^*)^k$ in $\mathbb{T}$ acting trivially on $X$, so $T:=\mathbb{T}/S\cong (\C^*)^{d-k}$ acts faithfully on $X$.
Write $N_S\cong \Z^k$ and $N_T:=N/(S\cap N)\cong \Z^{d-k}$ for the lattices of cocharacters for $S$ and $T$. Let $N_{T,+}\subset N_{T,0} \subset N_T$ be the relevant semigroups for the $T$-action on $X$. 
$$N_0 \cong N_{T,0}\times N_S \;\;\textrm{ and }\;\; N_+\cong  N_+(T)\times N_S.$$
\end{lm}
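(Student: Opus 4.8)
The plan is to reduce everything to the combinatorial descriptions of $N_0$ and $N_+$ established earlier, in particular \eqref{Equation NplusX} and \eqref{Equation positivity of weights torus not strict}, applied to both the $\mathbb{T}$-action and the $T$-action on $X$. First I would produce the subtorus $S$: the largest subtorus of $\mathbb{T}$ acting trivially on $X$. Concretely, via the grading $\C[X]=\oplus_{w\in M}\C[X]_w$ and the support $P=\{w\in M:\C[X]_w\neq 0\}$ from \eqref{Equation definition of support P}, the $\mathbb{T}$-action factors through $T=\mathbb{T}/S$ where $S$ corresponds to the saturated sublattice $N_S:=\{v\in N:\langle v,w\rangle=0\text{ for all }w\in P\}=N\cap W$, with $W$ the vector subspace from \cref{Rmk further technical properties of Nplus X}. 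That $N_S$ is saturated (primitive) in $N$ is exactly what guarantees $S$ is a subtorus and that $T=\mathbb{T}/S$ is again an algebraic torus with cocharacter lattice $N_T=N/N_S$; the weights of the $T$-action are the characters $w\in P$, now viewed in $M_T=N_T^{\vee}=\{w\in M:w|_{N_S}=0\}=\mathrm{span}_{\Z}(P)$ (after saturating). One checks $T$ acts faithfully on $X$ because we have quotiented out precisely the kernel of the action on the coordinate ring.

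Next I would split the lattice. Since $N_S$ is a primitive sublattice of $N\cong\Z^d$, we may choose a complement and write $N\cong N_T\times N_S$ (non-canonically), compatibly with the projection $N\to N_T$. Under this splitting, for $v=(v_T,v_S)$ and any $w\in P$ we have $\langle v,w\rangle=\langle v_T,w\rangle$ because $\langle v_S,w\rangle=0$ by definition of $N_S$ and $P$. Now I invoke the combinatorial descriptions: by \eqref{Equation positivity of weights torus not strict}, $v\in N_0(X)$ iff $\langle v,w\rangle\geq 0$ for all $w\in P$ (equivalently for all the generator weights $w^j$, which lie in $P$), and this holds iff $\langle v_T,w\rangle\geq 0$ for all $w\in P$, i.e.\ iff $v_T\in N_{T,0}$; the $v_S$-component is unconstrained. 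This gives $N_0\cong N_{T,0}\times N_S$. The identical argument with strict inequalities and \eqref{Equation NplusX} (noting that a $\mathbb{T}$-homogeneous generating set for $\C[X]$ is simultaneously a $T$-homogeneous generating set, since the $T$-weights are just the $\mathbb{T}$-weights restricted) gives $N_+\cong N_{T,+}\times N_S$, which is the claim with $N_+(T)=N_{T,+}$. I should note at the start that by \cref{Lemma Nplus for equiv proj morph} it suffices to work with $X$, which is why the statement is phrased purely in terms of the affine variety.

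The main obstacle, such as it is, is the bookkeeping around saturation and the non-canonical splitting: one must be careful that $N_S$ is primitive in $N$ (so that $S$ really is a subtorus of $\mathbb{T}$ rather than a subgroup with torsion, and $T$ is a genuine torus with $N_T$ free of the expected rank $d-k$), and that the pairing $N_T\times M_T\to\Z$ used in computing $N_{T,0},N_{T,+}$ is the one induced from $N\times M\to\Z$ so that the inequality conditions transport verbatim. Once that is pinned down — it follows from the standard fact that $W\cap\Z^d$ is a primitive sublattice since $W$ is a rational subspace cut out by integral linear forms, cf.\ the discussion in \cref{Rmk further technical properties of Nplus X} — the equivalences $v\in N_0\Leftrightarrow v_T\in N_{T,0}$ and $v\in N_+\Leftrightarrow v_T\in N_{T,+}$ are immediate from the displayed formulas, and the semigroup isomorphisms are just the restriction of the group isomorphism $N\cong N_T\times N_S$.
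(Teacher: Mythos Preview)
Your proposal is correct and follows essentially the same approach as the paper: identify $S$ as the kernel of the $\mathbb{T}$-action on $X$, split the short exact sequence of lattices $N\cong N_T\times N_S$, and observe that since the $N_S$-component acts trivially the conditions defining $N_0$ and $N_+$ depend only on the $N_T$-component. The paper phrases the last step more tersely (``the action of $\Fi_v$ on $X$ is unaffected if we multiply by $\Fi_s$ for any $s\in S$''), while you unwind it explicitly via the pairing formulas \eqref{Equation positivity of weights torus not strict} and \eqref{Equation NplusX}; you are also more careful about the saturation of $N_S$, which the paper leaves implicit.
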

\begin{proof}
The kernel of the $\mathbb{T}$-action on $X$ is an algebraic subtorus $S$ (using that the action is analytic). The short exact sequence of algebraic tori $0\to S \to \mathbb{T} \to T \to 0$ can be split, e.g.\;by first choosing a splitting $N\cong N(T) \times N_S$ of the SES on lattices. So $\mathbb{T}\cong \mathbb{T}'\times S$ for a (non-unique choice of) subtorus $\mathbb{T}'\subset \mathbb{T}$ with $\mathbb{T}'\cong T$ via the quotient $\mathbb{T}\to \mathbb{T}/S$. 
The action of $\Fi_v$ on $X$ is unaffected if we multiply by $\Fi_s$ for any $s\in S$, since $\Fi_s$ acts by the identity on $X$. Thus $N_0(X)\cong N_0(T)\times N_S$ and $N_+(X)\subset N_{T,+}\times N_S$ are immediate.
\end{proof}

\begin{ex}\label{Example how torus acts on X via equiv proj}
In \cref{Example toric intro}, before blowing up there is a natural weight one action of $T:=\C^*$ on $\C$, which is contracting, and a natural toric $\C^*$-action on $\P^1$. This yields an action of $\mathbb{T}:=(\C^*)^2$ which lifts to the blow-up, $Y$.
The map $\pi$ is the projection to $X=\C$, the subtorus $S=\{1\}\times \C^*\subset \mathbb{T}$ acts trivially on $X$, and $\mathbb{T}/S \cong T$ acts on $X=\C$ by the contracting action.
Thus $N_+(X)=\N_{>0}\times \Z \subset N=\Z^2$, and these indeed agree with the contracting actions $N_+(Y)$ on $Y$.
\end{ex}

\subsection{The symplectic $\C^*$-manifold structure}\label{Subsection equiv proj res is symp cstar mfd}

We now make the additional assumption that at least one of the $1$-parameter subgroups of $\mathbb{T}$ is a contracting action on $X$ or on $Y$ (these are equivalent conditions by \cref{Lemma Nplus for equiv proj morph}). Equivalently, we assume:
$$N_+\neq \emptyset.$$
This condition implies, but is not implied by:

\begin{cor}\label{Cor CX0 is C in equiv proj section}
$\mathrm{Fix}_X(\mathbb{T})=\{0\}$, 
equivalently $\C[X]_0=\C$ (cf.\;\cref{Subsection Torus actions on affine variety}),
in particular all $w^j\neq 0$ in \cref{Subsection Upsilon map}.
Also, all $v\in N_+$ contract $X$ to $0\in X\subset \C^m$.
\end{cor}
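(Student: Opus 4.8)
The plan is to obtain this entirely from the affine‑variety results established earlier together with \cref{Lemma Nplus for equiv proj morph}. By that Proposition, $N_+ = N_+(X)$, and by the standing hypothesis $N_+\neq\emptyset$, so we may fix some $v\in N_+(X)$; then $\Fi_v$ is a \emph{contracting} $\C^*$-action on the affine variety $X$, which is connected (this was arranged in \cref{Subsection Setting up notation and assumptions}).

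Now I would invoke \cref{Corollary descr of complete contr in aff case for fiv}: since $\Fi_v$ is contracting and $X$ is connected, the $M$-grading of $\C[X]$ satisfies $\C[X]_0=\C$ (in fact also $\C[X]_w=0$ for $w\neq 0$ with $\langle v,w\rangle=0$, and $\C[X]_w=0$ whenever $\langle v,w\rangle<0$, though only $\C[X]_0=\C$ is needed). This is exactly the statement ``$\C[X]_0=\C$'' of the Corollary. Next, \cref{Cor description of fixed locus} supplies the equivalence $\C[X]_0=\C \Leftrightarrow \mathrm{Fix}_X(\mathbb{T})=\{0\}$, giving $\mathrm{Fix}_X(\mathbb{T})=\{0\}$. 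For the last two assertions: the generators $f_{w^1},\ldots,f_{w^{\gens}}$ fixed in \cref{Subsection Upsilon map} are non-constant, hence none of them lies in $\C[X]_0=\C$, so every $w^j\neq 0$; and for any $v\in N_+=N_+(X)$ the contracting action $\Fi_v$ on the connected affine variety $X$ contracts $X$ to the point $0\in X\subset\C^m$ by \cref{Cor description of complete and contracting actions} (equivalently, by the last clause of \cref{Cor convergence points in affine case}, since all $\langle v,w^j\rangle>0$).

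I do not expect a genuine obstacle here: the corollary is a bookkeeping consequence of the affine lemmas, the only point needing mild care being to keep the $\Z$-grading induced by a single $\Fi_v$ distinct from the $M$-grading by $\mathbb{T}$. But \cref{Corollary descr of complete contr in aff case for fiv} is already phrased in terms of the $M$-grading (through \eqref{Equation decomposition for Fiv}), so even that subtlety is handled, and nothing further is required.
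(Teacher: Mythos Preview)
Your proposal is correct and follows essentially the same logic as the paper: fix some $v\in N_+=N_+(X)$ and use that a contracting $\C^*$-action on a connected affine variety forces $\C[X]_0=\C$, hence $\mathrm{Fix}_X(\mathbb{T})=\{0\}$ via \cref{Cor description of fixed locus}. The only cosmetic difference is the order of deduction: the paper first argues geometrically that $\mathrm{Fix}_X(\Fi_v)=\{0\}$ and then invokes \cref{Lemma commuting actions preserve fixed loci} to get $\mathrm{Fix}_X(\mathbb{T})\subset\mathrm{Fix}_X(\Fi_v)$, whereas you go straight to $\C[X]_0=\C$ via \cref{Corollary descr of complete contr in aff case for fiv}; both routes are equally valid and equally short.
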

\begin{proof}
For $v\neq 0 \in N_+(X)$, $\mathrm{Fix}_X(\Fi_v)$ is a point, namely $0\in X\subset \C^m$. By \cref{Lemma commuting actions preserve fixed loci}, $\mathrm{Fix}_X(\mathbb{T})\subset \mathrm{Fix}_X(\Fi_v)=\{0\}$, so $\mathrm{Fix}_X(\mathbb{T})=\{0\}$. The rest follows from \cref{Cor description of complete and contracting actions} and 
\cref{Cor description of fixed locus}.
\end{proof}

\begin{thm}\label{Theorem equiv proj morph gives sympl Cstar mfd}
Any $v\in N_+$ induces on $(Y,\omega,\Fi_v)$ the structure of a symplectic $\C^*$-manifold, globally defined over $\C^{\gens}$. Indeed the $\Psi$-maps \eqref{Equation intro Psi} can be built explicitly as proper holomorphic maps using $\Upsilon$,
\begin{equation}\label{Equation Psiv map from upsilon}
\Psi_v : = z_v \circ \Upsilon: Y \to \C^{\gens},
\end{equation}
where $z_v:\C^{\gens} \to \C^{\gens}$, $z_v(z) = (z_1^{p_1},\ldots,z_{\gens}^{p_{\gens}})$ for $p_j:= \mathrm{lcm}_v/\langle v,w^j \rangle$,
where  $\mathrm{lcm}_v:=\mathrm{lcm}\{\langle v,w^j \rangle\}$.
The map $\Psi_v$ is $\Fi_v$-equivariant, using the diagonal $\C^*$-action on $\C^{\gens}$ of weight $\mathrm{lcm}_v$.
\end{thm}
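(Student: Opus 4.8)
The plan is to verify, clause by clause, the definition of a symplectic $\C^*$-manifold globally defined over $\C^{\gens}$, feeding in the ingredients already in place: by \cref{Lemma equiv proj morph sympl form}, $Y$ is a non-compact quasi-projective variety carrying a $\mathbb{T}$-equivariant Kähler form $\omega$, so the $S^1$-part of $\Fi_v$ is Hamiltonian with moment map $H_v$ as in \eqref{Equation Ham from moment map}, and $\Fi_v$ is pseudoholomorphic for the integrable complex structure $I$ of $Y$ (which is $\omega$-compatible). Since $v\in N_+$ and every $w^j\neq 0$ by \cref{Cor CX0 is C in equiv proj section}, \eqref{Equation Nplus as a polyhedron} gives $\langle v,w^j\rangle\geq 1$ for all $j$, so each $p_j=\mathrm{lcm}_v/\langle v,w^j\rangle$ is a well-defined positive integer and $z_v$, hence $\Psi_v=z_v\circ\Upsilon$, makes sense.

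First I would record the elementary properties of $\Psi_v$. It is holomorphic, being the composite of the holomorphic map $\Upsilon$ of \eqref{Definition of Upsilon} with the polynomial map $z_v$; in particular it is $I$-pseudoholomorphic, and its restriction to the region at infinity is holomorphic for the standard, contact-type complex structure of $\C^{\gens}$. It is proper: $\Upsilon$ is proper by the lemma defining it, while $z_v\colon\C^{\gens}\to\C^{\gens}$ is proper because the preimage of a polydisc $\prod\{|w_j|\leq C\}$ is the polydisc $\prod\{|z_j|\leq C^{1/p_j}\}$ (using $p_j\geq 1$), so preimages of compacta are compact; a composite of proper maps is proper. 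For equivariance, $\Upsilon$ intertwines $\Fi_v$ on $Y$ with the $\C^*$-action on $\C^{\gens}$ whose $j$-th factor has weight $\langle v,w^j\rangle$ (by the $\mathbb{T}$-equivariance of $\Upsilon$ and \eqref{Equation action of Cstar action on weight w function}); composing with $z_v$ multiplies the $j$-th weight by $p_j$, yielding the constant weight $p_j\langle v,w^j\rangle=\mathrm{lcm}_v$ in every coordinate. Hence $\Psi_v$ is $\Fi_v$-equivariant for the diagonal weight-$\mathrm{lcm}_v$ $\C^*$-action on $\C^{\gens}$.

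Then I would assemble these into the claimed structure. The target $X=\C^{\gens}$ is a convex symplectic manifold with its standard Kähler structure, its out-region being $S^{2\gens-1}\times[R_0,\infty)$ with radial coordinate a constant multiple of $\|z\|^2$; the diagonal weight-$\mathrm{lcm}_v$ $S^1$-action is Hamiltonian, and on the out-region its generator is $\mathrm{lcm}_v$ times the Reeb field $X_R$. Because $\Psi_v$ intertwines the Hamiltonian $S^1$-vector field $X_{H_v}=X_{S^1}$ of $\Fi_v$ with that diagonal generator, $(\Psi_v)_\ast X_{S^1}=\mathrm{lcm}_v\cdot X_R$ on $Y^{\mathrm{out}}:=\Psi_v^{-1}(\{R\geq R_0\})$, which is the complement of a compact set by properness of $\Psi_v$; this is exactly the (constant-$f$) form of the requirement in \eqref{Equation intro Psi}. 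Thus $(Y,\omega,\Fi_v,I)$ is a symplectic $\C^*$-manifold globally defined over $\C^{\gens}$. As a byproduct one reads off $\Psi_v^{-1}(0)=\Upsilon^{-1}(0)=\pi^{-1}(0)$, the preimage of the unique $\mathbb{T}$-fixed point of $X$ by \cref{Cor description of fixed locus}; and this equals $\Core(Y,\Fi_v)$, since applying \cref{Cor convergence points in affine case} with $t\to\infty$ gives $\pi(\Core(Y,\Fi_v))=\{0\}$, while for $y\in\pi^{-1}(0)$ the orbit stays in the projective fibre $\pi^{-1}(0)$ and converges by a gradient-flow argument as in the proof of \cref{Lemma Nplus for equiv proj morph}.

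The only genuinely substantive point — and the reason for composing with $z_v$ rather than taking $\Upsilon$ itself as the $\Psi$-map — is the Reeb condition: $\Upsilon$ alone is equivariant merely for the $\C^*$-action on $\C^{\gens}$ with the generally unequal weights $\langle v,w^j\rangle$, whose $S^1$-generator restricted to the sphere is not a constant multiple of the standard Reeb field, so it would fail \eqref{Equation intro Psi} with constant $f$; the coordinatewise powers $z_v$ homogenize the weights at the mild cost of replacing $\Upsilon$ by a finite — still proper, still holomorphic — map. Everything else is bookkeeping with the definitions; the one thing to be careful about is integrality of the $p_j$, which is why one takes $\mathrm{lcm}_v=\mathrm{lcm}\{\langle v,w^j\rangle\}$ and not, say, their product.
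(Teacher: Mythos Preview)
Your proof is correct and follows essentially the same approach as the paper: the core computation is that $(\Psi_v)_*X_{H_v}=\mathrm{lcm}_v\cdot\mathcal{R}$, obtained by noting that $\Upsilon$ pushes $X_{H_v}$ to $\bigoplus_j\langle v,w^j\rangle\,\mathcal{R}_j$ and that $z_v$ then homogenizes the weights. You spell out more of the routine verifications (properness of $z_v$, holomorphicity, etc.) than the paper does, and your closing remarks on $\Psi_v^{-1}(0)=\mathrm{Core}(Y,\Fi_v)$ anticipate the content of \cref{Core is same for all v}, but the substantive argument is the same.
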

\begin{proof}
Let $\Fi_v$ be the $\C^*$-action for $v\in N_+$. By \eqref{Equation NplusX} and \eqref{Equation NplusX 2} (see also \eqref{Equation definition of support P}),
\begin{equation}\label{Equation positivity of weights torus}
\langle v,w \rangle > 0 \textrm{ for all }w\neq 0 \textrm{ with }\C[X]_w \neq \{0\}, \textrm{ in particular } \langle v,w^j \rangle > 0 \textrm{ for }j=1,\ldots,{\gens}. 
\end{equation}

Let $\mathcal{R}$ denote the Reeb field for $\C^{\gens}$ whose flow is the diagonal $S^1$-action of weight one on $\C^{\gens}$, and let $\mathcal{R}_j$ be the Reeb field for the $j$-th copy of $\C$ in $\C^{\gens}$ whose flow is the natural $S^1$-action of weight $1$ on that $\C$ factor. 
Comparing $S^1$-actions, using $\mathbb{T}$-equivariance of $\Upsilon$, \eqref{Equation weight of the action of fwj} and \eqref{Equation Psiv map from upsilon},
\begin{equation}\label{Equation Upsilon XH in terms of Reebs}
\Upsilon_* X_{H_v} = \bigoplus_{j=1}^{\gens} \langle v , w^j \rangle \mathcal{R}_j, \quad \textrm{ and }\quad 
(\Psi_v)_*X_{H_v} = \bigoplus_{j=1}^{\gens} \langle v , w^j \rangle (z_v)_*(\mathcal{R}_j) =  \bigoplus_{j=1}^{\gens} \mathrm{lcm}_v \cdot \mathcal{R}_j = \mathrm{lcm}_v\cdot \mathcal{R}.
\end{equation}
Thus $\Psi_v$ satisfies all the requirements needed to yield the claim. 
\end{proof}

\begin{rmk}
By choosing generators, we can identify $\mathbb{T}= (\C^*)^d$, $N=\Z^d$, $M=\Z^d$ (see  \cref{Subsection Torus actions on affine variety}).
The standard basis vector $e_i\in N=\Z^d$ is the $\C^*$-action $\Fi_{e_i}$ on $X$ corresponding to the $i$-th $\C^*$-factor of $\mathbb{T}$.
Moreover $\langle v,w^j\rangle = \sum_i v_i w_i^j$ (the winding number of $w^j \circ v: \C^*\to \C^*$), $\Fi_v=\prod \Fi_{e_i}^{v_i}$, $H_v = \sum_i v_i H_i$, where $H_i:=H_{e_i}$ is the moment map for the $i$-th $S^1$-factor of $\mathbb{T}_{\R}\cong (S^1)^d$ (more abstractly see \eqref{Equation Ham from moment map}).
\end{rmk}

\subsection{The core}

\begin{prop}\label{Core is same for all v} 
$\mathrm{Core}(Y,\Fi_v)\!=\pi^{-1}(0)=\!\Upsilon^{-1}(0)$ 
 for all $v\in N_+$, 
and $Y$ deformation retracts onto the core.
\end{prop}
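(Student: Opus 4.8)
The plan is to prove the three-way equality $\mathrm{Core}(Y,\Fi_v)=\pi^{-1}(0)=\Upsilon^{-1}(0)$ and then the deformation retraction separately. First I would dispose of the easy identity $\pi^{-1}(0)=\Upsilon^{-1}(0)$: by \eqref{Definition of Upsilon}, $\Upsilon = (f_{w^1},\ldots,f_{w^m})\circ \pi$, and since $(f_{w^1},\ldots,f_{w^m}): X\hookrightarrow \C^m$ is an embedding sending the unique $\mathbb{T}$-fixed point $0\in X$ to $0\in\C^m$ (using \cref{Cor CX0 is C in equiv proj section}), we have $\Upsilon(y)=0 \iff \pi(y)=0$. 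So it remains to identify $\mathrm{Core}(Y,\Fi_v)$ with $\pi^{-1}(0)$.

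For $\mathrm{Core}(Y,\Fi_v)\subset \pi^{-1}(0)$: take $y\in \mathrm{Core}(Y,\Fi_v)$, so $\lim_{t\to\infty}(\Fi_v)_t(y)$ exists in $Y$. Applying $\pi$ and using $\mathbb{T}$-equivariance, $\lim_{t\to\infty}(\Fi_v)_t(\pi(y))$ exists in $X$. But $v\in N_+$ means $\Fi_v$ is contracting on $X$ with unique fixed point $0$ (\cref{Cor CX0 is C in equiv proj section}); by \cref{Cor convergence points in affine case} applied with $v$ replaced by $-v$ (equivalently, using the coordinate description $t^{-\langle v,w^j\rangle}x_j$ from \cref{Lemma how you act on the coordinate ring} and noting all $\langle v,w^j\rangle>0$), the only point of $X$ whose forward $\Fi_v$-orbit converges is $0$ itself. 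Hence $\pi(y)=0$. For the reverse inclusion $\pi^{-1}(0)\subset \mathrm{Core}(Y,\Fi_v)$: given $y$ with $\pi(y)=0$, the whole orbit $(\Fi_v)_t(y)$ lies in the compact set $\pi^{-1}(0)$ (compact because $\pi$ is proper and $\{0\}$ is compact), and $(\Fi_v)_{e^{-2\pi s}}(y)$ for $s>0$ is a $-\nabla H_v$ flowline trapped in this compact set, hence converges to a critical point $y_0\in\mathrm{Fix}(\Fi_v)$; then the same $S^1$-averaging trick as in \cref{Lemma commuting actions preserve core} / \cref{Lemma Nplus for equiv proj morph} (commuting the $S^1$-part past the gradient flow) shows $(\Fi_v)_t(y)\to y_0$ as $t\to \infty$ in $\C^*$, i.e. $y\in\mathrm{Core}(Y,\Fi_v)$. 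This also re-proves $v$-independence of the core, consistent with \cref{Corollary core is same for all v in Nplus}.

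Finally, for the deformation retraction of $Y$ onto $\mathrm{Core}(Y,\Fi_v)$: the negative gradient flow of $H_v$ (equivalently the $\R_{>0}$-action $t\mapsto (\Fi_v)_t$ letting $t\to 0$) is defined for all forward time since $\Fi_v$ is complete on $Y$ ($v\in N_+\subset N_0$, \cref{Lemma Nplus for equiv proj morph}), and its time-$\infty$ limit lands in $\mathrm{Fix}(\Fi_v)\subset\mathrm{Core}(Y)$; combining the gradient flow with the compactness of $\mathrm{Core}(Y)$ and the fact established above that $\mathrm{Core}(Y)=\pi^{-1}(0)$ is cut out by the analytic equations $\Upsilon=0$, one obtains a continuous deformation retraction onto $\mathrm{Core}(Y)$. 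The cleanest route is to invoke that $Y$ is a smooth variety with a $\C^*$-action whose moment map $H_v$ is proper and bounded below (\cref{Remark contracting actions}), so the \Bb decomposition / Morse theory of $H_v$ exhibits $Y$ as retracting onto the union of unstable manifolds, which is precisely $\mathrm{Core}(Y,\Fi_v)$; alternatively one cites the analogous statement already established in \cite{RZ1}.

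I expect the main obstacle to be the deformation retraction: making the homotopy genuinely continuous up to and including time $\infty$ requires either properness of $H_v$ (which follows from properness of $\Psi_v$ in \cref{Theorem equiv proj morph gives sympl Cstar mfd} together with $R=\|z\|^2$ being proper on $\C^m$) or a careful \Bb-type argument; the inclusions among $\mathrm{Core}$, $\pi^{-1}(0)$, $\Upsilon^{-1}(0)$ are comparatively routine given the machinery already set up.
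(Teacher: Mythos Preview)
Your approach is essentially the same as the paper's: both directions of the equality $\mathrm{Core}(Y,\Fi_v)=\pi^{-1}(0)$ are argued dynamically using equivariance of $\pi$ and compactness of $\pi^{-1}(0)$, and the deformation retraction is obtained by citing \cite[Prop.3.14]{RZ1} once you know the core is cut out by the analytic equations $\Upsilon=0$.

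There is, however, a genuine sign slip in your argument for $\pi^{-1}(0)\subset\mathrm{Core}(Y,\Fi_v)$. You run the flow $(\Fi_v)_{e^{-2\pi s}}(y)$ for $s>0$, which is the $-\nabla H_v$ flowline and corresponds to $t=e^{-2\pi s}\to 0$; its convergence only shows $\lim_{t\to 0}(\Fi_v)_t(y)$ exists (i.e.\ completeness, which you already know). The $S^1$-averaging trick then upgrades this real limit to a $\C^*$-limit, but still as $t\to 0$, not $t\to\infty$. To land in the Core you need $\lim_{t\to\infty}(\Fi_v)_t(y)$, so you must instead run the $+\nabla H_v$ flow $(\Fi_v)_{e^{+2\pi s}}(y)$. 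This is exactly what the paper does: it notes that the $+\nabla H_v$ flow on the $\Fi_v$-invariant compact set $\pi^{-1}(0)$ converges to a critical point, hence $y\in\mathrm{Core}(Y,\Fi_v)$. With that one sign corrected, your proof coincides with the paper's.
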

\begin{proof}
The first claim can be proven by algebraic arguments 
as in \cite[Lem.3.15]{vzivanovic2022exact}, or by dynamical arguments as follows.
As
$\pi^{-1}(0)$ is projective, it is compact. So the $+\nabla H_v$-flow on the $\Fi_v$-invariant compact set $\pi^{-1}(0)$ automatically admits limits which are  $\C^*$-limits (see the proof of \cref{Lemma commuting contracting actions description of fix and core}); so $\pi^{-1}(0)\subset \mathrm{Core}(Y,\Fi_v)$. Conversely, $\pi(\mathrm{Core}(Y,\Fi_v))\subset \mathrm{Core}(X,\Fi_v)=\{0\}$ by equivariance. Thus $\mathrm{Core}(Y,\Fi_v)=\pi^{-1}(0)=\Upsilon^{-1}(0)$.
The deformation retraction follows by \cite[Prop.3.14]{RZ1}, using that the core is an analytic variety: it is cut out by analytic equations via $\Upsilon=0$.
\end{proof}

\subsection{Invariance of symplectic cohomology and comparison of filtrations}

\begin{lm}
$k(v,v'):=\min_{j=1,\ldots,{\gens}} \frac{\langle  v,w^j  \rangle}{\langle  v',w^j  \rangle}$ from \eqref{Equation constant kvvprime} is finite, positive, and rational, for all $v,v'\in N_+$.
\end{lm}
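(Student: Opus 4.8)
The plan is to reduce the statement to the positivity of the pairing $\langle v, w^j\rangle$ for $v\in N_+$, which is already available. First I would recall that since $v,v'\in N_+$, the $\C^*$-actions $\Fi_v,\Fi_{v'}$ are contracting, so by \cref{Cor CX0 is C in equiv proj section} all the weights satisfy $w^j\neq 0$, and by \eqref{Equation positivity of weights torus} (equivalently \eqref{Equation NplusX}) we get $\langle v,w^j\rangle>0$ and $\langle v',w^j\rangle>0$ for every $j=1,\ldots,\gens$. Since the pairing \eqref{Equation pairing characters} takes integer values on $N\times M$, in fact $\langle v,w^j\rangle,\langle v',w^j\rangle\in\Z_{\geq 1}$ for all $j$.

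Next I would observe that $\gens\geq 1$: this is because $Y$ is non-compact, hence $X$ is not a point, hence $\C[X]\neq\C$ and there is at least one non-constant homogeneous generator. Therefore the index set $\{1,\ldots,\gens\}$ over which the minimum in \eqref{Equation constant kvvprime} is taken is finite and non-empty. Each quotient $\langle v,w^j\rangle/\langle v',w^j\rangle$ is then a ratio of two positive integers, hence a positive rational number, and the minimum of finitely many positive rationals is attained and is again a positive rational. This gives $k(v,v')\in\Q_{>0}$, in particular finite, as claimed.

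There is no real obstacle here: the content is entirely in the earlier identification of $N_+$ via strict positivity of the weight pairings (\cref{Cor CX0 is C in equiv proj section} and \eqref{Equation positivity of weights torus}), together with the integrality of the pairing. The only point worth stating explicitly is the non-emptiness of the set of generators, which follows from the standing assumption that $Y$ (equivalently $X$) is non-compact.
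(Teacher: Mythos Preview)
Your proposal is correct and follows the same approach as the paper's proof, which simply cites that $\langle v,w^j\rangle>0$ for all $j$ and all $v\in N_+$ via \eqref{Equation NplusX}. You have supplied more detail (integrality of the pairing, non-emptiness of the generator set), but the underlying argument is identical.
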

\begin{proof}
This uses that $\langle  v,w^j  \rangle>0$ for all $j=1,\ldots,{\gens}$ and all $v\in N_+$ (see \eqref{Equation NplusX})
\end{proof}

By \cref{Theorem Intro SH as loc of QH} and \cref{Theorem equiv proj morph gives sympl Cstar mfd}, we know $Q_{\Fi_v}\in QH^*(Y)$ is well-defined for all $v\in N_+$, and $c^*:QH^*(Y) \to SH^*(Y,\Fi_v)$ is surjective and corresponds to localisation at $Q_{\Fi_v}$. We also obtain a filtration $\Fil^p_{v}$ of $QH^*(Y)$ by ideals, via \eqref{Equation introduction filtration}, induced by $\Fi_v$.

\begin{thm}
\label{Theorem commuting Cstar actions theorem from toric section}
For all $v,v',v''\in N_+$:
\strut\begin{enumerate}
    \item There is an isomorphism $SH^*(Y,\Fi_v) \cong SH^*(Y,\Fi_{v'})$ obtained as the direct limit of the Floer continuation maps
\begin{equation}\label{Equation monotone homotopy intro}
\qquad\qquad HF^*(k\lambda H_{v'}) \to HF^*(\lambda H_v)   
\;\textrm{ for any }
k\leq k(v,v')\textrm { and } \lambda>0,
\end{equation}
such that $k\lambda, \lambda$ are generic slopes for $H_{v'},H_v$ respectively.
\item  $
\Fil^{kp}_{v'} \subset \Fil^{p}_{v}.
$
 \item If $v=v'+v''$, so $\Fi_{v}=\Fi_{v'}\circ \Fi_{v''}$, then
 \begin{equation}\label{Equatoin Q products for equiv proje}
     Q_{\Fi_{v'}}\star Q_{\Fi_{v''}}=T^{a(v',v'')} Q_{\Fi_{v}} \quad \in QH^*(Y),
 \end{equation}
 where $T$ is the formal Novikov variable, and $a(v',v'')\in \R$ will be defined in \cref{Corollary a function computed}.
\end{enumerate}
\end{thm}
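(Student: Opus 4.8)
The plan is to prove the three parts in the order stated, building each on the previous one and on the explicit $\Psi_v$-maps constructed in \cref{Theorem equiv proj morph gives sympl Cstar mfd}.

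\textbf{Part (1): the continuation isomorphism.} The key point is that the two classes of Hamiltonians $H_v$ and $H_{v'}$, though built from different $\C^*$-actions, can be compared at infinity through the common target $\C^\gens$ of the $\Upsilon$-map. First I would record, using \eqref{Equation Upsilon XH in terms of Reebs}, that $\Upsilon_* X_{H_v}=\bigoplus_j \langle v,w^j\rangle \mathcal R_j$ and $\Upsilon_* X_{H_{v'}}=\bigoplus_j \langle v',w^j\rangle \mathcal R_j$, so that on each $\C$-factor the ratio of speeds is $\langle v,w^j\rangle/\langle v',w^j\rangle\geq k(v,v')$ by definition of $k(v,v')$. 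Hence for any $k\leq k(v,v')$ the Hamiltonian $k\lambda H_{v'}$ rotates each $\C$-factor no faster than $\lambda H_v$ does; this is exactly the monotonicity estimate needed for a well-defined Floer continuation map $HF^*(k\lambda H_{v'})\to HF^*(\lambda H_v)$, once one interpolates between the two Hamiltonians by a monotone homotopy that is convex (linear in the relevant radial coordinate) at infinity — here one uses the $\Psi_v$- and $\Psi_{v'}$-maps to control the almost complex structures and a maximum-principle/no-escape-to-infinity argument exactly as in \cite{RZ1}. Taking $\lambda\to\infty$ along generic slopes for $H_v$ (with $k\lambda$ generic for $H_{v'}$, which one can arrange since $k$ is a fixed rational number) yields a map $SH^*(Y,\Fi_{v'})\to SH^*(Y,\Fi_v)$. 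Running the symmetric construction with the roles of $v,v'$ reversed gives a map in the other direction, and the usual composition-of-continuation-maps argument (concatenated monotone homotopies are homotopic to a single monotone homotopy) shows the two compositions are the respective identity maps. So both are isomorphisms.

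\textbf{Part (2): the filtration inclusion.} This is essentially a formal consequence of Part (1) together with the definition \eqref{Equation introduction filtration}. The continuation maps of \eqref{Equation monotone homotopy intro} are $QH^*(Y)$-module maps compatible with the canonical maps $c^*_\lambda$ from $QH^*(Y)$: that is, the triangle with vertices $QH^*(Y)$, $HF^*(k\lambda H_{v'})$, $HF^*(\lambda H_v)$ commutes. Therefore $\ker c^*_{k\lambda,v'}\subseteq \ker c^*_{\lambda,v}$ whenever $k\leq k(v,v')$. Intersecting over generic slopes $\lambda\geq p$ on the right and noting that the corresponding slopes $k\lambda$ on the left range over (a subset of) generic slopes $\geq kp$ (for $k=k(v,v')$ in particular, but any $k\le k(v,v')$ works and only strengthens the containment of index sets), one gets $\Fil^{kp}_{v'}=\bigcap_{\mu\geq kp}\ker c^*_{\mu,v'}\subseteq \bigcap_{\lambda\geq p}\ker c^*_{\lambda,v}=\Fil^p_v$. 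A small technical point to address is genericity: one should check that the set of $\lambda$ for which both $\lambda$ is generic for $H_v$ and $k\lambda$ is generic for $H_{v'}$ is cofinal in $[p,\infty)$, which holds because both non-generic sets are discrete.

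\textbf{Part (3): the product formula.} When $v=v'+v''$ the actions satisfy $\Fi_v=\Fi_{v'}\circ\Fi_{v''}$ as commuting $\C^*$-actions, so this reduces to the general multiplicativity of rotation classes under composition of commuting Hamiltonian circle actions, which is \cref{Prop Rw rotation elements} (the $Q$-version, valid on $N_0\supset N_+$). The only thing to pin down is the Novikov exponent, and the plan is to identify $a(v',v'')$ with the expression $\min H_{v'+v''}-\min H_{v'}-\min H_{v''}$ appearing there; this will be done in \cref{Corollary a function computed}, where $H_w(y)=(\mu(y),w)$ for the moment map $\mu$ of $\mathbb T_\R$, so that $\min H$ is additive up to the "defect" that measures the failure of the minima of the summand Hamiltonians to be achieved simultaneously. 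Concretely one invokes the chain-level construction $R_w=\mathcal R_{\Fi_w}(1)$ and the gluing of Seidel-type continuation maps for composed actions, tracking the action/energy shift, exactly as in the closed case of \cite{Sei97,mcduff2006topological} but with the bookkeeping of $\min H_w$ forced by the non-compactness.

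The main obstacle is Part (1): making the monotone homotopy between $k\lambda H_{v'}$ and $\lambda H_v$ precise enough at infinity that Floer trajectories cannot escape. The two Hamiltonians are adapted to two different $\Psi$-maps (hence to two different radial coordinates and contact-type almost complex structures near infinity), so one cannot directly appeal to a single convexity statement; the resolution is to work downstairs in $\C^\gens$, where both $X_{H_v}$ and $X_{H_{v'}}$ push forward to diagonalisable (weighted-diagonal) vector fields, interpolate there, pull back, and invoke an integrated maximum principle. This is the step that genuinely uses the explicit $\Upsilon$ and $\Psi_v$ of \cref{Theorem equiv proj morph gives sympl Cstar mfd} rather than abstract properties of symplectic $\C^*$-manifolds.
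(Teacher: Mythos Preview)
Your proposal is correct and follows essentially the same approach as the paper. The paper's proof of Part (1) is exactly your ``resolution'' in the final paragraph: project each Floer continuation solution via the single map $\Upsilon$ (not the individual $\Psi_v$, $\Psi_{v'}$ maps), and apply the maximum principle coordinate-wise to $\Upsilon_j\circ u$ in $\C$, where the homotopy has slope $sk\lambda\langle v',w^j\rangle+(1-s)\lambda\langle v,w^j\rangle$ with $\partial_s$-derivative $\leq 0$ precisely when $k\leq k(v,v')$; your detour through the two $\Psi$-maps before arriving at this is unnecessary, since $\Upsilon$ is the common object that makes the comparison work from the start. Parts (2) and (3) in your proposal match the paper's arguments (commutative triangle of continuation maps for (2), multiplicativity of rotation classes from \cite{RZ1,RZ3} for (3)).
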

\begin{proof}
The key issue is whether the maximum principle for Floer continuation solutions applies -- we will assume the reader is familiar with the discussion of this principle in the context of symplectic $\C^*$-manifolds, which is extensively discussed in our two foundational papers \cite{RZ1,RZ2}. Here we vary the $\C^*$-action, so the new key idea is to use the new function $\Upsilon$ from \eqref{Definition of Upsilon} to control Floer theory, rather than the $\Psi$-function \eqref{Equation Psiv map from upsilon} that we normally used in \cite{RZ1,RZ2}.

We claim that the maximum principle for the $\C$-coordinate $\Upsilon_j$ ensures that \eqref{Equation monotone homotopy intro} is well-defined.
Let $H_s$ be a homotopy arising from a convex combination of $\lambda H_v$ and $k\lambda H_{v'}$. Let $u$ be a Floer continuation solution in $Y$ for $H_s$.
Its $\Upsilon$-projection to $\C^{\gens}$ has $j$-th coordinate 
$$v_j:=\Upsilon_j\circ u\in \C.$$ 
We claim that $v_j$ satisfies the Floer continuation equation for a \emph{monotone} homotopy $h_s$ of radial Hamiltonians in $\C$ (so the maximum principle applies to $\Upsilon_j$, cf.\;\cite[Appendix D]{R13}). Using the notation from the proof of \cref{Theorem equiv proj morph gives sympl Cstar mfd},  the Hamiltonian vector field is 
\begin{equation}\label{Equation continuation positivity proof}
X_{h_s}=[s k\lambda \langle v',w^j \rangle + (1-s) \lambda \langle v,w^j \rangle]\,\mathcal{R}_j
\end{equation}
for $s\in [0,1]$. It remains to show that $h_s$ is \emph{monotone}, meaning that the slopes (the term in squared brackets above) has $\partial_s$-derivative $\leq 0.$ This $\partial_s$-derivative is 
$$k\lambda \langle v',w^j \rangle - \lambda \langle v,w^j \rangle \leq 0,$$
using $k\leq k(v,v').$
We now apply a standard ``ladder argument'' (cf.\;\cite[Sec.C]{R13}). To define a map $$SH^*(Y,\Fi_v)=\varinjlim HF^*(\lambda H_v) \to \varinjlim HF^*(\lambda' H_{v'})=SH^*(Y,\Fi_{v'}),$$ it suffices to build $HF^*(\lambda H_v) \to HF^*(\lambda' H_{v'})$ for $\lambda'\gg\lambda$, compatibly with compositions (the compatibility on cohomology of continuation maps is a standard result, provided they satisfy the maximum principle). Composing with a reverse map for $\lambda''\gg \lambda'$ the composites are continuations $HF^*(\lambda H_v)\to HF^*(\lambda''H_v)$ and these maps induce the identity $\mathrm{id}_{SH^*(Y,\Fi_v)}$, because they form part of the directed system whose direct limit defines $SH^*(Y,\Fi_v)$. This concludes the proof of Claim 1.

To prove Claim 2, we now use the fact that continuation maps are compatible. Denote $c^*_{v,\lambda}$ the $c^*_{\lambda}$-map in \eqref{Equation introduction filtration} for $\Fi_v$. Compatibility of continuation maps implies that, on cohomology,
$$
c^*_{v,\lambda} = \psi_{\lambda}\circ c^*_{v',k\lambda}
$$
where $\psi_{\lambda}$ is the map \eqref{Equation monotone homotopy intro}. Therefore $\ker c^*_{v',k\lambda}\subset \ker c^*_{v,\lambda}$ for generic $\lambda$. Claim 2 now follows from \eqref{Equation introduction filtration}.

Claim 3 follows from the property that $\varphi\mapsto Q_{\varphi}$ respects group multiplication up to $T$-factor corrections (composition for $\varphi$-maps, and quantum product for $Q_{\varphi}$-elements). This is explained in \cite[Theorem 7.23]{RZ1} and \cite[Appendix]{RZ3}, leaning on the original constructions from Seidel \cite{Sei97} and Ritter \cite{R14}.
\end{proof}

\subsection{Rotation classes for complete actions}

For toric manifolds it will be important to have well-defined classes $Q_{\Fi_v}$ not just for $v\in N_+$ but also for $v\in N_0=\{v\in N: \langle v,w^j \rangle \geq 0 \textrm{ for }j=1,\ldots,{\gens} \}$.

\begin{lm}\label{Lemma borderline Cstar actions}
For $v_0\in N_0$ the (possibly non-contracting) $\C^*$-action $\Fi_{v_0}$ yields a well-defined class $Q_{\Fi_{v_0}}\in QH^*(Y)$, such that $c^*Q_{\Fi_{v_0}}\in SH^*(Y,\llambda_v)$ is invertible for all $v\in N_+$. 
\end{lm}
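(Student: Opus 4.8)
The plan is to reduce the statement for a borderline action $\Fi_{v_0}$ (with $v_0 \in N_0$) to the already-established theory for genuinely contracting actions by perturbing $v_0$ into the interior of the contracting cone. First I would fix any $v \in N_+$ (nonempty by assumption) and observe that, since $N_+$ is open in $N \otimes \R$ and $N_0$ is its closure in the relevant cone, for every large integer $k$ the lattice vector $v_0 + kv$ lies in $N_+$: indeed $\langle v_0 + kv, w^j\rangle = \langle v_0, w^j\rangle + k\langle v, w^j\rangle \geq k\langle v, w^j\rangle > 0$ for all $j$, using $\langle v_0, w^j\rangle \geq 0$ and $\langle v, w^j\rangle \geq 1$. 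So $\Fi_{v_0}\circ \Fi_v^{k} = \Fi_{v_0 + kv}$ is a contracting symplectic $\C^*$-action, and \cref{Theorem equiv proj morph gives sympl Cstar mfd} applies to it, giving a well-defined rotation class $Q_{\Fi_{v_0+kv}} \in QH^*(Y)$.

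Next I would define $Q_{\Fi_{v_0}}$ so that the group-homomorphism property (Claim 3 of \cref{Theorem commuting Cstar actions theorem from toric section}, i.e.\ \eqref{Equatoin Q products for equiv proje}) forces the answer. Since $Q_{\Fi_v}$ is invertible in $QH^*(Y)$ whenever both $v$ and $-v$ lie in $N_0$ — here $v, -v$ need not both be in $N_0$, so one must be more careful — the cleaner route is: $Q_{\Fi_v} \in QH^{2\mu_v}(Y)$ is not invertible in $QH^*(Y)$ in general, but it \emph{is} invertible in $SH^*(Y,\llambda_v)$ by \cref{Theorem Intro SH as loc of QH}. So I would instead \emph{define} $Q_{\Fi_{v_0}}$ directly at the Floer chain level as $\mathcal{R}_{\Fi_{v_0}}(1)$, exactly as in the remark following \cref{Prop Rw rotation elements} and in \cite[Theorem 7.23]{RZ1}: the Seidel/rotation map $\mathcal{R}_{\Fi}$ is constructed from the $\Fi$-twisted continuation data, and its construction only needs a maximum principle at infinity. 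For $v_0 \in N_0$ the twisted Floer equation has, along each $\Upsilon$-coordinate, Hamiltonian slope proportional to $\langle v_0, w^j\rangle \geq 0$, so the relevant continuation solutions still satisfy the maximum principle for $\Upsilon_j$ (the borderline case $\langle v_0, w^j\rangle = 0$ gives slope zero, which is still allowed). Hence $Q_{\Fi_{v_0}} := \mathcal{R}_{\Fi_{v_0}}(1) \in QH^*(Y)$ is well-defined.

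Then I would verify the multiplicativity relation $Q_{\Fi_{v_0}} \star Q_{\Fi_v}^{k} = T^{a(v_0, kv)} Q_{\Fi_{v_0+kv}}$ in $QH^*(Y)$ — this follows by the same argument as Claim 3 of \cref{Theorem commuting Cstar actions theorem from toric section}, since $\Fi_{v_0}$ commutes with $\Fi_v$ and composition of the rotation maps introduces only the explicit $T$-power correction. Applying $c^*: QH^*(Y) \to SH^*(Y,\llambda_v)$ and using that $c^*(Q_{\Fi_v})$ is invertible in $SH^*(Y,\llambda_v)$ (by \cref{Theorem Intro SH as loc of QH}), and that $c^*(Q_{\Fi_{v_0+kv}})$ is invertible in $SH^*(Y,\llambda_v)$ (by the Invariance part of \cref{Theorem commuting Cstar actions theorem from toric section}, since $v_0+kv \in N_+$, so $SH^*(Y,\llambda_{v_0+kv}) \cong SH^*(Y,\llambda_v)$ and $c^*(Q_{\Fi_{v_0+kv}})$ is invertible in its own symplectic cohomology), we conclude that $c^*(Q_{\Fi_{v_0}}) = T^{a(v_0,kv)} c^*(Q_{\Fi_{v_0+kv}}) \star c^*(Q_{\Fi_v})^{-k}$ is a product of invertible elements, hence invertible in $SH^*(Y,\llambda_v)$. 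Since this holds for every $v \in N_+$, the claim follows.

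The main obstacle I anticipate is the well-definedness of $Q_{\Fi_{v_0}}$ at the chain level when the action $\Fi_{v_0}$ is only complete but not contracting: the fixed locus $\mathrm{Fix}(\Fi_{v_0})$ may be non-compact, so one must check that the compactness inputs to the construction of $\mathcal{R}_{\Fi_{v_0}}$ (energy bounds, $C^0$-bounds on the relevant moduli spaces) survive. The escape is that the $\Upsilon$-map still controls everything at infinity: $\Upsilon$ is proper, $\Upsilon$-equivariant for the $\C^*$-action with non-negative weights $\langle v_0, w^j\rangle$, and the relevant solutions project to $\C^{\gens}$ where a maximum-principle-plus-energy argument confines them to a compact set; the non-compactness of $\mathrm{Fix}(\Fi_{v_0})$ only affects the fixed locus itself, which lives inside the compact core $\Core(Y) = \Upsilon^{-1}(0)$, not the region at infinity. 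So the construction goes through essentially verbatim, with $\Upsilon$ replacing $\Psi_{v_0}$ (which does not exist, as $\Fi_{v_0}$ is not contracting). A secondary, purely bookkeeping point is tracking the exact $T$-power $a(v_0, kv)$ and checking it is consistent as $k$ varies, which follows from the associativity of the rotation-class multiplication.
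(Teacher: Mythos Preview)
Your definition of $Q_{\Fi_{v_0}}$ via the rotation map $\mathcal{R}_{\Fi_{v_0}}(1)$ is the same as the paper's, but your justification is imprecise on the key technical point. The paper makes explicit that the relevant map is the \emph{continuation} $HF^*(\lambda H_v - H_{v_0}) \to HF^*(\lambda H_v)$ arising after the Seidel chain isomorphism $\mathcal{S}_\psi$ (which uses the pulled-back Hamiltonian $\psi^*H_v = H_v - H_{v_0}$, since $\Fi_{v_0}$ commutes with $\Fi_v$). The monotonicity check is that the $\partial_s$-derivative of the $\Upsilon_j$-slope equals $-\langle v_0, w^j\rangle \leq 0$. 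Your phrase ``the twisted Floer equation has slope proportional to $\langle v_0,w^j\rangle\geq 0$'' obscures this: it is not the slope of $H_{v_0}$ that matters but the \emph{change} in slope along the homotopy from $\lambda H_v - H_{v_0}$ to $\lambda H_v$. You have identified the correct numerical condition but not the correct object to which it applies.

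Your obstacle paragraph contains a factual error: for $v_0\in N_0\setminus N_+$ the fixed locus $\mathrm{Fix}(\Fi_{v_0})$ is genuinely non-compact and does \emph{not} live in the compact core (see \cref{Example non contracting action}). Fortunately this is irrelevant: the construction never touches $\mathrm{Fix}(\Fi_{v_0})$, only the Seidel map and the continuation above, both of which are controlled by $\Upsilon$.

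Your invertibility argument is a genuinely different route from the paper's. The paper constructs the inverse directly: it builds the upper square of the diagram for $\psi^{-1}$, replacing $\lambda H_v - H_{v_0}$ by $\lambda H_v + H_{v_0}$, and uses a continuation $HF^*(\lambda H_v + H_{v_0}) \to HF^*(\lambda' H_v)$ for $\lambda'\gg\lambda$ (the monotonicity now uses $\langle v,w^j\rangle>0$ to absorb the wrong-sign $+\langle v_0,w^j\rangle$). Your route---perturb to $v_0+kv\in N_+$, invoke multiplicativity, then cancel the invertible factors $c^*(Q_{\Fi_v})^k$ and $c^*(Q_{\Fi_{v_0+kv}})$---also works, and is arguably cleaner since it recycles the Invariance theorem. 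The cost is a mild logical reordering: the multiplicativity relation for $v_0\in N_0$, $v\in N_+$ is not literally Claim~3 of \cref{Theorem commuting Cstar actions theorem from toric section} (which assumes both in $N_+$), so you must note that the Seidel composition argument goes through once $Q_{\Fi_{v_0}}$ is defined. The paper's direct approach has the advantage of explaining \emph{why} the inverse exists in $SH^*$ but not in $QH^*$: the trick of passing to $\lambda'\gg\lambda$ is available only in the direct limit.
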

\begin{proof}
Abbreviate by $\psi$ the $S^1$-action by $\Fi_{v_0}$.
We use a mild generalisation of \cite[Theorem 7.23]{RZ1}, following \cite[Sec.2C]{R16}. In the current proof, we will assume the reader has some familiarity with that argument, and we also recommend the appendix from \cite{RZ3} where we discuss Seidel isomorphisms for commuting $\C^*$-actions.
Note we have two $\C^*$-actions in play: $\Fi_v$ and $\psi$, where $\Fi_v$ is the action that endows $Y$ with a symplectic $\C^*$-structure $(Y,\omega,\Fi_v)$, by \cref{Theorem equiv proj morph gives sympl Cstar mfd}.

Since $\Fi_v,\psi$ commute (as they arise from a torus action), the discussion from \cite[Sec.A.2]{RZ3} applies: the ``pull-back'' Hamiltonian used in the Seidel isomorphism induced by $\psi$ is
$$\psi^*H_v := H_v\circ \psi - H_{v_0}\circ \psi = H_v-H_{v_0},$$ 
and $\psi^*I:=d\psi^{-1}\circ I \circ d\psi=I$ since $\psi$ is a holomorphic action.
Seidel's chain isomorphism is
$$
\mathcal{S}_{\psi}: CF^*(H_v,I) \to CF^{*+2|\psi|}(\psi^*H_v,\psi^*I),
$$
where $|\psi|$ is the Maslov index of $\psi$. We then fit this into a commutative diagram analogous to \cite[Theorem 7.23]{RZ1},
$$
\xymatrix@C=55pt@R=15pt{ SH^*(Y,\Fi_v)  \ar@{<-}_{\varinjlim}[d]
\ar@{->}_-{\sim}^-{\mathcal{R}_{\psi}}[rr]  & & SH^{*+2|\psi|}(Y,\Fi_v)
 \ar@{<-}^{\varinjlim}[d] \\
HF^*(\lambda H_v)  \ar@{<-}_{c_{\lambda}^*}[d] 
\ar@{->}_-{\sim}^-{\mathcal{S}_{\psi}}[r]  & HF^{*+2|\psi|}(\lambda H_v-H_{v_0})
\ar@{->}[r]^-{\textrm{continuation}}
 &
 HF^{*+2|\psi|}(\lambda H_v)
 \ar@{<-}^{c_{\lambda}^*}[d]
 \\
 QH^*(Y)
\ar@{->}[rr]^-{r_{\psi}} & & QH^{*+2|\psi|}(Y)
}
$$
The justification of this diagram reduces to explaining why the indicated continuation map is well-defined, which we carry out below.
This was also the key moment in the paper where the first author \cite{R14} first constructed the rotation maps $\mathcal{R}_{\psi}$ and $r_{\psi}$, where one saw why rotation elements $R_{\psi}=\mathcal{R}_{\psi}(1)$ are always invertible whereas $Q$-elements $Q_{\psi}=r_{\psi}(1)$ need not be. The reason $R_{\psi}=c^*Q_{\psi}\in SH^*(Y,\Fi_v)$ is invertible is because the upper square of the diagram can also be constructed for $\psi^{-1}$: in the diagram we replace $\lambda H_v - H_{v_0}$ by $\lambda H_v + H_{v_0}$, and we instead use a continuation $HF^*(\lambda H_v + H_{v_0}) \to HF^*(\lambda' H_v)$ for $\lambda'\gg \lambda$ (the analogous argument as for \eqref{Equation monotone homotopy intro} again ensures the maximum principle for $\Upsilon_j$). This trick usually fails for $QH^*(Y)$, because $c^*_{\lambda'}$ is usually not an isomorphism when $\lambda'$ is too large, and continuation maps are usually only defined when they increase the slope, so we cannot ``go back'' to $QH^*(Y)$ from $HF^*(\lambda'H_v).$

The map $c^*_{\lambda}: QH^*(Y)\to HF^*(\lambda H_v)$ is an isomorphism for all small enough $v$ (see \cite{RZ1}), and then the bottom square of the diagram identifies the middle horizontal map with the lower horizontal map, and by definition $Q_{\psi}:=r_{\psi}(1)$.

It remains to show that there exists a monotone homotopy from $\lambda H_v$ to $\lambda H_v - H_{v_0}$.
Arguing as in \eqref{Equation continuation positivity proof}, by considering the $\Upsilon_j$ projection, we now have $s$-dependent slopes
$$
s (\lambda \langle v,w^j \rangle - \langle v_0,w^j \rangle) + (1-s) \lambda \langle v,w^j \rangle =
-s\langle v_0,w^j \rangle+
\lambda \langle v,w^j \rangle,
$$
and we need the $\partial_s$-derivative to be $\leq 0$. This derivative equals $-\langle v_0,w^j \rangle$ and it is $\leq 0$ by \cref{Equation positivity of weights torus not strict}.
It follows that the maximum principle applies to each $\Upsilon_j$ projection, just as in the proof of \cref{Theorem commuting Cstar actions theorem from toric section}(1), so the required continuation map is well-defined.
\end{proof}

\subsection{The function $a(v',v'')$.}\label{Remark about t powers when multiply Q classes}

The powers of $T$ that arise in \eqref{Equatoin Q products for equiv proje} have to do with an inevitable technical issue that already arose in the original Seidel representation \cite{Sei97}: namely, that the rotation classes $Q_{\psi}\in QH^*(Y)$ depend not just on the Hamiltonian $S^1$-action $\psi$, but also on a choice of {\bf lift} $\widetilde{\psi}$ of the action of $\psi$ to a certain cover of the space $\mathcal{L}_0Y$ of free contractible loops in $Y$, consisting of certain equivalence classes of filling discs for the contractible loops. This technical issue is discussed in detail in \cite[Appendix]{RZ3} and in \cite[Sec.7.5 and Theorem 7.23]{RZ1}.

In \cite[Appendix]{RZ3}, for general symplectic $\C^*$-manifolds, we described the general procedure for finding powers of $T$ that arise in relations such as \eqref{Equatoin Q products for equiv proje}. In the monotone (Fano) setting, one can deduce the precise powers of the formal Novikov variable $T$ a posteriori, by considering the $\Z$-grading (our grading convention is that we place $T$ in grading $|T|=2$), e.g.\;see \cite[Sec.4D]{R16}.

We now explain a new result that describes the precise powers of $T$ that arise in the setting of Hamiltonian torus actions. The assumption $N_+\neq \emptyset$ ensures the following:

\begin{lm}
    There exists a $\mathbb{T}$-fixed point $c$ in $Y$.
\end{lm}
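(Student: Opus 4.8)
The plan is to exhibit the $\mathbb{T}$-fixed point $c$ by pushing forward to the affine base $X$ and then lifting back along $\pi$. First I would recall from \cref{Cor CX0 is C in equiv proj section} that the hypothesis $N_+\neq\emptyset$ forces $\mathrm{Fix}_X(\mathbb{T})=\{0\}$; in particular $X$ has a $\mathbb{T}$-fixed point, namely the origin $0\in X\subset\C^m$. Then I would invoke \cref{Lemma Nplus for equiv proj morph} (or rather the more basic relation \eqref{Equation fixed loci relation} proved just before it, applied with $\mathbb{T}'=\mathbb{T}$), which says $\pi(\mathrm{Fix}_Y(\mathbb{T}))=\mathrm{Fix}_X(\mathbb{T})$. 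Since the right-hand side is nonempty (it is $\{0\}$), the left-hand side is nonempty, so $\mathrm{Fix}_Y(\mathbb{T})\neq\emptyset$, and any point of it is the desired $c$.

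The one point that genuinely requires the projectivity of $\pi$ is the inclusion $\mathrm{Fix}_X(\mathbb{T})\subset\pi(\mathrm{Fix}_Y(\mathbb{T}))$, i.e.\ the non-obvious direction of \eqref{Equation fixed loci relation}: given the fixed point $0\in X$, the fibre $\pi^{-1}(0)$ is a (possibly singular) projective variety carrying a $\mathbb{T}$-action, and the Borel fixed point theorem — a connected solvable group, a fortiori a torus, acting on a nonempty complete variety has a fixed point — produces a $\mathbb{T}$-fixed point in $\pi^{-1}(0)\subset Y$. That is precisely the argument already recorded in the proof of \eqref{Equation fixed loci relation} in the excerpt, so I would simply cite it.

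Thus the proof is essentially a two-line deduction; the only ``obstacle'' worth flagging is making sure the ingredients are quoted in the right order, since \cref{Cor CX0 is C in equiv proj section} itself already used $N_+\neq\emptyset$ and \cref{Lemma commuting actions preserve fixed loci}, and \eqref{Equation fixed loci relation} used properness/projectivity of $\pi$ — none of which is circular here. A concrete write-up:

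\begin{proof}
By \cref{Cor CX0 is C in equiv proj section}, the assumption $N_+\neq\emptyset$ implies $\mathrm{Fix}_X(\mathbb{T})=\{0\}$, so in particular $X$ has a $\mathbb{T}$-fixed point. Applying \eqref{Equation fixed loci relation} with $\mathbb{T}'=\mathbb{T}$ gives $\pi(\mathrm{Fix}_Y(\mathbb{T}))=\mathrm{Fix}_X(\mathbb{T})=\{0\}\neq\emptyset$; concretely, the fibre $\pi^{-1}(0)$ is a nonempty projective variety on which $\mathbb{T}$ acts, hence carries a $\mathbb{T}$-fixed point $c$ by the Borel fixed point theorem. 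This $c\in\pi^{-1}(0)\subset Y$ is the desired $\mathbb{T}$-fixed point in $Y$.
\end{proof}
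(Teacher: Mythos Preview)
Your proof is correct and takes essentially the same approach as the paper: invoke \cref{Cor CX0 is C in equiv proj section} to get $\mathrm{Fix}_X(\mathbb{T})=\{0\}\neq\emptyset$, then use \eqref{Equation fixed loci relation} to conclude $\mathrm{Fix}_Y(\mathbb{T})\neq\emptyset$. The paper's proof is the terse two-line version of exactly this; your added unpacking of the Borel fixed point theorem behind \eqref{Equation fixed loci relation} is accurate but not needed since the paper already proved that equation earlier.
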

\begin{proof}
    By \cref{Cor CX0 is C in equiv proj section}, $\mathrm{Fix}_X(\mathbb{T})=\{0\} \neq \emptyset$.
    By \eqref{Equation fixed loci relation}, $\mathrm{Fix}_Y(\mathbb{T})$ must also be non-empty.
\end{proof}

\begin{rmk}
The subsequent discussion does not heavily rely on the existence of a $\mathbb{T}$-equivariant projective morphism $\pi: Y \to X$. It would suffice that $Y$ is a connected symplectic $\C^*$-manifold $(Y,\omega,\Fi)$ where $\Fi$ commutes with a symplectic $\mathbb{T}$-action (see \cref{Definition symplectic T action}) with $\mathrm{Fix}_Y(\mathbb{T})\neq \emptyset$, and that we are given some semigroup $N_0\subset N\cong \Z^d$ for which the rotation classes $Q_{\Fi_v}\in QH^*(Y)$ can be defined. The fact that  
\begin{equation}\label{Equation product of Q classes correct T weight} 
Q_{\Fi_{v'}}\star Q_{\Fi_{v''}}=T^{a(v',v'')} Q_{\Fi_{v'+v''}}
\end{equation}
holds for some power of $T$ is then a general feature of the construction of the $Q$ classes, and the question of determining $a(v',v'')$ reduces to a question about how we lift the $S^1$-actions: a question that only depends on $Y$, not on the existence of $X$.
\end{rmk}

We briefly review some background. Let $\Fi,\psi$ be two Hamiltonian $S^1$-actions on $Y$. Suppose the additional data of ``lifts'' $\widetilde{\Fi},\widetilde{\psi}$ is made. Then the construction of the rotation classes ensures that
\begin{equation}\label{Equation Q classes with correct lifts multiply}
Q_{\widetilde{\Fi}}\star Q_{\widetilde{\psi}} = Q_{\widetilde{\Fi}\widetilde{\psi}}.
\end{equation}
We call $\widetilde{\Fi}\widetilde{\psi}$ the ``induced lift'' of $\Fi\psi$.
The issue that causes the $T$-correction factors in \eqref{Equation product of Q classes correct T weight} is that we may have already chosen a lift for $\Fi_{v'+v''}$ to define its rotation class, and this lift is not the same as the one induced by the lifts chosen for $\Fi_{v'},\Fi_{v''}$. We now explain this in greater detail.

Let $c$ be any fixed point of $\Fi$. As shown in \cite[Appendix]{RZ3}, the choice of a so-called ``lift'' $\widetilde{\Fi}$ is completely determined\footnote{This follows from a connectedness property of a certain cover of the space of contractible loops (whose deck group is $G$), which holds in our setting because we are considering $S^1$-actions that admit fixed points \cite[Appendix]{RZ3}.}
by how $\widetilde{\Fi}$ acts on $c$, where $c$ is viewed as a constant disc at $c$ that fills the constant $S^1$-orbit at $c$. Two choices of lifts will differ by the action of a group of spherical classes: 
$$A\in G:=\pi_2(Y)/(\ker [\omega] \cap \ker c_1),$$
where $c_1:=c_1(TY)$. More explicitly, suppose $\widetilde{\psi}$ and $\psi^{\wedge}$ are two choices of lifts, satisfying
$\widetilde{\psi}(c)=c$ and $\psi^{\wedge}(c)=A\cdot c$ for $A\in G$ (where $G$ is viewed as the deck group of the cover of $\mathcal{L}_0 Y$ mentioned above). Then in fact
$\psi^{\wedge}=A\cdot \widetilde{\psi}$, and therefore
the rotation classes satisfy
\begin{equation}\label{Equation correction to Q from lifts}
Q_{\psi^{\wedge}} = Q_{A\cdot \widetilde{\psi}}=  T^{-\omega[A]} Q_{\widetilde{\psi}},
\end{equation}
where $T$ is the Novikov formal variable, and $\omega[A]$ is the evaluation of $[\omega]\in H^2(Y,\R)$ on $A\in H_2(Y)$.
There is also a degree shift, that we omitted: the right-hand side is shifted down in grading by $2c_1(A)$ (see \cite[Appendix]{RZ3}). Our sign convention above is opposite to the original paper by Hofer-Salamon \cite{HS95} were the spherical classes were introduced, because we use cohomological conventions, explained in \cite[Sec.3.2]{liebenschutz2020intertwining}: the deck group action by $G$ on the cover of the loop space by ``gluing in sphere classes'' corresponds to rescaling by $T^{-\omega(A)}$ and adding $-2c_1(A)$ to the grading.

The exponent $a(v',v'')$ in \eqref{Equation product of Q classes correct T weight} is precisely $\omega[A]$ for the spherical class $A$ that encodes how the induced lift differs from the chosen lift of $\Fi_{v'+v''}.$

Strictly speaking, in \eqref{Equation correction to Q from lifts} one should also introduce a second formal variable to keep track of grading shifts by $2c_1(A)$, or one indicates the grading shifts. We refer to \cite[Appendix]{RZ3} for details. In the Fano case, those grading shifts are automatically encoded by placing the variable $T$ in suitable grading; in the CY case the matter does not arise as $c_1=0$; whereas outside of those two cases one can avoid this issue by making due with a $\Z/2$-grading in Floer theory, alternatively one can have a $\Z$-grading by essentially encoding $G$ into the Novikov ring as in \cite{Sei97}.

Let $H_{\psi}:Y \to \R$ be a moment map for $\psi$. Let $\mathrm{Min}\,  H_{\psi}\subset Y$ denote its minimal locus. By adding a constant, we also assume that $\min  H_{\psi}:=H_{\psi}(\mathrm{Min}\,  H_{\psi})=0.$ 
When we write $Q_{\psi}$, without indicating a lift of $\psi$, we tacitly assume that we use the {\bf canonical lift} $\psi^{\wedge}$ of $\psi$ which fixes the constant disc at a point $c_{\psi}\in \mathrm{Min}\,  H_{\psi}$.
With this choice, $Q_{\psi}\in QH^{2\mu_{\psi}}(Y)$ where $\mu_{\psi}$ is the Maslov index of $\psi$ described in \cite[Sec.5.1]{RZ1}. Moreover, in the K\"{a}hler Fano or K\"{a}hler CY case we obtain
\begin{equation}\label{Equation Q class is PD of min plus}
Q_{\psi} = \mathrm{PD}[\mathrm{Min}\,  H_{\psi}] + (T^{>0}\textrm{-terms}) \in QH^{2\mu_{\psi}}(Y),
\end{equation}
under the assumption that the linearised $\psi$ action on the tangent space $T\mathrm{Min}\,  H_{\psi}$ only has weights $0$ and $1$ \cite[Proposition 1.35]{RZ1}.

The reason the issue \eqref{Equation product of Q classes correct T weight} arises, is that there may not be a fixed point $c$ which is simultaneously a minimum for $H_{\Fi_{v'}}$, $H_{\Fi_{v''}}$ and $H_{\Fi_{v'+v''}}$. 

Even the mere omission of the assumption $\min H_{\psi}=H_{\psi}(\mathrm{Min}\,  H_{\psi})=0$, causes $T$-corrections:

\begin{lm}\label{Lemma correction caused by constant in moment map}
If $\Fi=\psi$ and we choose the lifts that fix a constant disc at $c_{\psi}\in \mathrm{Min}\,H_{\psi}$, but we choose the moment map $H_{\Fi}=\lambda + H_{\psi}$ for a constant $\lambda\in \R$, then $Q_{\Fi}=T^{\lambda}Q_{\psi}$.
\end{lm}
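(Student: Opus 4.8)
The plan is to trace through the definition of the rotation class $Q_{\Fi}$ and observe exactly where the moment map enters. Recall from the discussion preceding the lemma that $Q_{\psi}$ is defined via Seidel's chain isomorphism $\mathcal{S}_{\psi}: CF^*(H,I)\to CF^{*+2|\psi|}(\psi^*H,\psi^*I)$ followed by a continuation map, applied with $H$ a suitable function of the moment map, and that the ``canonical lift'' is the one fixing the constant disc at a point $c_{\psi}\in \mathrm{Min}\,H_{\psi}$. The key point is that the Seidel isomorphism and all the continuation maps involved only depend on the Hamiltonian \emph{vector field} $X_{H}$, hence are unchanged if $H$ is replaced by $H+\lambda$; the only place the additive constant $\lambda$ enters is through the action functional and hence through the Novikov bookkeeping (the $T$-power attached to each generator), or equivalently through the normalization convention that pins down how the canonical lift acts relative to the zero of the action.

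First I would set up both situations side by side: in situation (A) use the moment map $H_{\psi}$ normalized so that $\min H_{\psi}=0$, giving the class $Q_{\psi}$; in situation (B) use $H_{\Fi}=\lambda+H_{\psi}$ with the \emph{same} lift (the one fixing the constant disc at $c_{\psi}\in \mathrm{Min}\,H_{\psi}=\mathrm{Min}\,H_{\Fi}$), giving $Q_{\Fi}$. Since $X_{H_{\Fi}}=X_{H_{\psi}}$, the $1$-periodic orbits, the Floer moduli spaces, the Seidel chain map $\mathcal{S}_{\Fi}=\mathcal{S}_{\psi}$, and all continuation solutions coincide setwise; the only difference is the value of the action functional on each orbit/disc configuration, which shifts by $-\lambda$ for every generator (the constant loop at a fixed point $y$ now has action shifted by $\lambda$, and the Novikov weight is $T$ to the power of minus the action). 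Second I would observe that this uniform shift by $\lambda$ of the action of \emph{every} generator of every complex in the defining diagram means: on the level of the identification $HF^*(H_{\Fi})\cong HF^*(H_{\psi})$ given by ``forget the constant $\lambda$'', the map computing $Q_{\Fi}$ equals $T^{\lambda}$ times the map computing $Q_{\psi}$ — because rescaling all Novikov weights on both source and target by the same factor $T^{\lambda}$ leaves the continuation/Seidel maps intact, but the \emph{value} $r_{\psi}(1)=Q$ is read off in the target complex, where the unit $1\in QH^*(Y)$ sits at a fixed action while its image sits at actions shifted by $\lambda$. Concretely, writing $Q_{\psi}=\sum_A c_A\,[y_A]\,T^{e_A}$ for its expansion over fixed components and spherical classes, the same computation with $H_{\Fi}$ produces $\sum_A c_A\,[y_A]\,T^{e_A+\lambda}=T^{\lambda}Q_{\psi}$.

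The cleanest way to phrase this without reopening the full construction is via the already-established relation \eqref{Equation correction to Q from lifts}: shifting the moment map by $\lambda$ while keeping the disc at $c_{\psi}$ fixed is, from the point of view of the cover of the loop space, \emph{the same} as keeping the moment map but acting on the reference disc by a ``formal spherical class of symplectic area $-\lambda$ and Chern number $0$'' — i.e. it is exactly the substitution $\psi^{\wedge}\mapsto A\cdot\psi^{\wedge}$ with $\omega[A]=-\lambda$ in the notation of \eqref{Equation correction to Q from lifts}, whence $Q_{\Fi}=T^{-\omega[A]}Q_{\psi}=T^{\lambda}Q_{\psi}$. (The grading shift $2c_1(A)$ is zero here, consistent with $|T^{\lambda}|$ contributing nothing beyond the $\R$-filtration in the non-Fano case and with $\lambda$ being forced to vanish in the strictly Fano/CY normalization anyway, so no grading subtlety arises.)

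The main obstacle, and the only thing requiring care, is making precise the claim that ``shift the moment map by $\lambda$'' $=$ ``act on the reference disc by a formal class of area $-\lambda$'' at the level of the cover of $\mathcal{L}_0Y$ used to define lifts: one must check that the deck-transformation action of $G=\pi_2(Y)/(\ker[\omega]\cap\ker c_1)$ is exactly the reparametrization of Novikov weights induced by changing the normalization constant of the action functional, which is precisely the content of the sign/convention discussion around \eqref{Equation correction to Q from lifts} and in \cite[Appendix]{RZ3}. Once that identification is granted, the lemma is immediate; so I would simply cite that identification and reduce \cref{Lemma correction caused by constant in moment map} to \eqref{Equation correction to Q from lifts} with the formal area-$(-\lambda)$ class, in one or two lines.
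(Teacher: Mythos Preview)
Your heuristic in the first two paragraphs is sound --- the additive constant in the moment map does not affect orbits, moduli spaces, or the Seidel chain map, and can only enter through the Novikov bookkeeping --- but the attempt to make this rigorous in the third paragraph has a genuine gap. You want to invoke \eqref{Equation correction to Q from lifts} with a ``formal spherical class of symplectic area $-\lambda$ and Chern number $0$''. But $G=\pi_2(Y)/(\ker[\omega]\cap\ker c_1)$ is a discrete group, and for an arbitrary real constant $\lambda$ there is no reason for $-\lambda$ to lie in the image of $\omega:G\to\R$; no such $A$ exists. The discussion around \eqref{Equation correction to Q from lifts} concerns honest deck transformations of a cover of $\mathcal{L}_0Y$, and that cover is defined independently of the choice of Hamiltonian, so shifting the moment map does not correspond to any deck transformation at all. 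Your proposed reduction therefore does not go through.

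The paper's proof does not attempt such a reduction. It goes directly into the clutching-bundle model for the Seidel element from \cite[Sec.7.8]{R14}: the $Q$-class is computed by counting sections of a bundle $E_{\psi}\to\C P^1$ weighted by $T$ to the power of the coupling form $\widetilde{\Omega}$ evaluated on a reference section $s_{\widetilde{g}}$ (the constant section at $c_{\psi}$, determined by the lift). The generating Hamiltonian $K$ enters $\widetilde{\Omega}$ through a term of the form $d(\phi(s)K\,dt)$, and the paper simply computes $\int_{D^+}\phi'(s)\cdot K(c_{\psi})\,ds\wedge dt$: with $H_{\psi}(c_{\psi})=0$ the shift $H_{\Fi}=\lambda+H_{\psi}$ gives $K=-\lambda$ in the notation of \cite{R14}, and the integral evaluates to $\lambda$. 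This is the precise location where the additive constant enters, and it is exactly the computation your second paragraph gestures at but does not carry out. To complete your argument you would need to reopen the clutching-bundle construction rather than appeal to \eqref{Equation correction to Q from lifts}.
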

\begin{proof}
This is technical, so the proof will be surgical without explaining the context. In the proof of the first Lemma in Sec.7.8 of \cite{R14}, we have%
\footnote{Regarding the sign: by definition (cf.\;\cite[Thm.7.23]{RZ1}), and using cohomological conventions, $Q_{\widetilde{\Fi}}=\mathrm{PSS}^{-1}\circ \mathcal{S}_{\widetilde{\Fi}} \circ \mathrm{cont}\circ \mathrm{PSS}(1)$, where $\mathrm{PSS}:QH^*(Y)\stackrel{\cong}{\to} HF^*(H_{\textrm{small}})$ for a small slope Hamiltonian (the ones used to build the canonical maps $c^*$); $\mathrm{cont}$ is a ``monotone'' continuation map from $H_{\mathrm{small}}+H_{\psi}$ to $H_{\mathrm{small}}$; and the Seidel isomorphism $\mathcal{S}_{\widetilde{\Fi}}: HF^*(H_{\mathrm{small}}+H_{\psi})\to HF^{*+\textrm{shift}}(H_{\mathrm{small}})$ is given by multiplication by $\psi_{-t}$ on $1$-orbits $x(t)$ (cf.\;\cite[Appendix]{RZ3}). The solutions are not spheres in $Y$, but they are spheres in a certain clutching bundle $E_{\psi}\to \C P^1$ with symplectic form $\widetilde{\Omega}+\pi^*\sigma$ \cite[Sec.5.2 and 5.4]{R14}. So the power of $T$ with which solutions are counted gets an addition of $\widetilde{\Omega}(s_{\widetilde{\psi}})$.} 
$K=-\lambda$ rather than $K=0$, and the claim follows from $$\textstyle\int \widetilde{\Omega}(s_{\widetilde{g}})=\int_{D^+}\phi'(s) \cdot (-\lambda) ds\wedge dt = (\phi(0)-\phi(-\infty))\cdot (-\lambda) = (-1-0)\cdot (-\lambda) = \lambda. \qedhere$$
\end{proof}

\begin{ex}\label{Example CP2 example Q classes}
We explain the issue with a familiar compact example: the Fano toric manifold $\C P^2$. The moment polytope is a solid triangle, and its three edges correspond to the three minimal loci of the Hamiltonians for the actions corresponding to $e_1:=(1,0)$, $e_2:=(0,1)$, $e_3=(-1,-1)$ in $N=\Z^2$ (each is a hyperplane in $\C P^2$), which in \eqref{Equation Q class is PD of min plus} turn out to give $x:=Q_{\Fi_{e_i}}=\mathrm{PD}[\textrm{hyperplane class}]=3[\omega]\in H^2(\C P^2)$. Note: $e_1+e_2+e_3=(0,0)$, which corresponds to a relation for the $S^1$-actions: $\Fi_{e_1}\Fi_{e_2}\Fi_{e_3}=\mathrm{id}.$ If this same relation held for the lifts, then we would obtain $x^3 = 1$. Instead, in $QH^*(\C P^2)$ we should obtain $x^3 = T^3$, e.g.\;due to the $\Z$-grading (placing $T$ in grading $|T|=2$). The reason $T^3$ arises is that for at least one of the three actions we cannot choose the canonical lift.  
\end{ex}

We now introduce a new trick. Recall we assumed that there is at least one $\mathbb{T}$-fixed point, say 
$$c\in \mathrm{Fix}(\mathbb{T}).$$
\begin{de}
Write $\widetilde{\Fi_v}$ for the lift determined by the condition that it fixes the constant disc at $c$.
\end{de}

\begin{lm}\label{Cor group hom with can lifts}
The map $N_0 \to QH^*(Y)$, $v \mapsto Q_{\widetilde{\Fi}_v}$ is a group homomorphism (in particular, without $T$-factor corrections).
\end{lm}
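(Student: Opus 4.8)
The plan is to show that the induced lift of $\Fi_{v'+v''}$ coming from $\widetilde{\Fi_{v'}}$ and $\widetilde{\Fi_{v''}}$ is precisely $\widetilde{\Fi_{v'+v''}}$, so that \eqref{Equation Q classes with correct lifts multiply} gives $Q_{\widetilde{\Fi_{v'}}}\star Q_{\widetilde{\Fi_{v''}}} = Q_{\widetilde{\Fi_{v'+v''}}}$ with no $T$-factor. The key point is that we have fixed a single common reference point $c\in \mathrm{Fix}(\mathbb{T})$, and \emph{all} the lifts in sight are normalised by the same condition at $c$: namely that they fix the constant disc at $c$. Concretely, I would recall from \cite[Appendix]{RZ3} that a lift of an $S^1$-action $\psi$ admitting a fixed point is uniquely determined by how it acts on one constant filling disc at one chosen fixed point, thanks to the connectedness of the relevant cover of $\mathcal{L}_0Y$. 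Then, given lifts $\widetilde{\Fi_{v'}},\widetilde{\Fi_{v''}}$ each fixing the constant disc at $c$, their product $\widetilde{\Fi_{v'}}\,\widetilde{\Fi_{v''}}$ (the ``induced lift'' of $\Fi_{v'+v''}$) also fixes the constant disc at $c$: the composite of two maps that each fix that disc again fixes it. By the uniqueness statement, the induced lift therefore equals $\widetilde{\Fi_{v'+v''}}$, since that is by definition the lift of $\Fi_{v'+v''}$ fixing the constant disc at $c$.

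With that identification in hand, I would invoke the general multiplicativity of rotation classes with respect to correctly matched lifts, i.e.\ \eqref{Equation Q classes with correct lifts multiply}, which is part of the construction recalled from \cite[Theorem 7.23]{RZ1} and \cite[Appendix]{RZ3} (ultimately going back to Seidel \cite{Sei97} and \cite{R14}). This gives exactly
$$
Q_{\widetilde{\Fi_{v'}}}\star Q_{\widetilde{\Fi_{v''}}} = Q_{\widetilde{\Fi_{v'}}\,\widetilde{\Fi_{v''}}} = Q_{\widetilde{\Fi_{v'+v''}}},
$$
so the map $v\mapsto Q_{\widetilde{\Fi_v}}$ is a monoid homomorphism on $N_0$; since $N_0$ is a semigroup and $\widetilde{\Fi_0}$ is the lift of the identity fixing the constant disc at $c$, one checks $Q_{\widetilde{\Fi_0}}=1$, upgrading this to the statement as phrased. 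I should also note that the classes $Q_{\widetilde{\Fi_v}}$ are well-defined for all $v\in N_0$ by \cref{Lemma borderline Cstar actions}, so there is no domain issue in asserting the homomorphism property over all of $N_0$ rather than just $N_+$.

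The main obstacle I anticipate is not the algebra but pinning down the normalisation bookkeeping: one must be careful that ``the lift fixing the constant disc at $c$'' is genuinely the \emph{same} normalisation used implicitly when defining $Q_{\Fi_v}$ for $v\in N_+$ versus the definition extended via \cref{Lemma borderline Cstar actions} for $v\in N_0$, and that the product operation on lifts corresponds on the nose (not just up to deck transformation) to composition of $S^1$-actions. In particular one must verify that the lift of a composite action, constructed from the two individual lifts, acts on the chosen constant disc at $c$ by the composite of the two individual actions on that disc — which is immediate once the relevant cover of $\mathcal{L}_0Y$ and its deck group $G$ are set up as in \cite[Appendix]{RZ3}. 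The degree shifts are harmless: each $Q_{\widetilde{\Fi_v}}$ lives in degree $2\mu_v$, and $v\mapsto \mu_v$ is additive on $N_0$ (it is linear in $v$, cf.\ \cite[Sec.5.1]{RZ1}), so the grading is consistent with a ring homomorphism. Once these normalisation checks are made explicit, the proof is short.
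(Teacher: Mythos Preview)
Your proposal is correct and follows essentially the same approach as the paper: both argue that since $\widetilde{\Fi_{v'}}$ and $\widetilde{\Fi_{v''}}$ each fix the constant disc at the common $\mathbb{T}$-fixed point $c$, so does their product, hence by the uniqueness of lifts (determined by action at $c$) the induced lift equals $\widetilde{\Fi_{v'+v''}}$, and then \eqref{Equation Q classes with correct lifts multiply} concludes. Your additional remarks on the unit, grading, and well-definedness over $N_0$ via \cref{Lemma borderline Cstar actions} are accurate elaborations that the paper omits.
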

\begin{proof}
For $v',v''\in N_0$, by definition $\widetilde{\Fi_{v'}}\widetilde{\Fi_{v''}}(c)=\widetilde{\Fi_{v'}}(c)=c$ and $\widetilde{\Fi_{v'+v''}}(c)=c$. Since $\Fi_{v'}\Fi_{v''}=\Fi_{v'+v''}$ are the same, and their lifts agree on $c$, it follows that $\widetilde{\Fi_{v'}}\widetilde{\Fi_{v''}}=\widetilde{\Fi_{v'}}$. Finally, use \eqref{Equation Q classes with correct lifts multiply}.
\end{proof}

It remains to compare $Q_{\widetilde{\Fi}_v}$ and $Q_{\Fi_v}$. 
Let $H_v:=(\mu,v)$ be the moment map from 
 \eqref{Equation Ham from moment map}.
Since $\min H_v$ may not be zero, we also define
$$H_v^{0}:=H_{v}-\min H_v,$$
which is zero along the minimum locus. 
Recall
$Q_{\Fi_v}:=Q_{\psi_v^{\wedge}}$ involves using the moment map $H_v^0$ and using a lift $\psi_v^{\wedge}$ of $\psi_v$ that fixes the constant disc at a point $c_v\in \mathrm{Min}\,  H_v=\mathrm{Min}\,  H_v^0$. Write $Q_v$ to mean that same lift but using the moment map $H_v=\min H_v + H_v^0$ instead of $H_v^0$, so by \cref{Lemma correction caused by constant in moment map}:
\begin{equation}\label{Equation canonical lifted rotation classes}
 Q_{\Fi_v}:=Q_{\Fi_v^{\wedge}}
 \qquad \textrm{ and } \qquad
Q_v:=T^{\min H_v} Q_{\Fi_v^{\wedge}} =T^{\min H_v}Q_{\Fi_v}.
\end{equation}

Pick a path $\gamma(s)$ from the minimum $c_v\in \mathrm{Min}\,  H_v$ to $c$.
We explained in \cite[Appendix, Lemma A.11]{RZ3}, that applying the $S^1$-action we get a sphere $u(s,t)=(\Fi_v)_{-t}\gamma(s)$ where $t\in S^1$ (at the two ends we get $c$ and $c_v$ since they are fixed points of $\Fi_v$), which defines a spherical class $A_v\in G$. Moreover,
\begin{equation}\label{Equation omega on Av class}
\omega[A_v] = H_v(c_v) - H_v(c) = \min H_v - H_v(c).
\end{equation}
By \cite[Lemma A.7]{RZ3}, the canonical lift $\Fi_v^{\wedge}$ (determined by the condition $\Fi_v^{\wedge}(c_v)=c_v$) acts by 
$$\Fi_v^{\wedge}(c)=A_v\cdot c.$$
By definition, $\widetilde{\Fi}_v(c)=c$, therefore, using \eqref{Equation correction to Q from lifts},
\begin{equation}\label{Equation canonical lift versus T fixed pt lift}
\Fi_v^{\wedge} = A_v\cdot \widetilde{\Fi}_v, \quad \textrm{ and thus }\quad 
Q_{\Fi_v^{\wedge}} = T^{-\omega[A_v]}Q_{\widetilde{\Fi}_v}.
\end{equation}
\begin{cor}\label{Corollary a function computed}
For $v',v''\in N_0$,
\begin{equation}\label{Equation afunction for Q classes multiplied}
\begin{split}
& Q_{v'}\star Q_{v''} =  Q_{v'+v''}
\qquad
\textrm{ and }
\qquad Q_{\Fi_{v'}}\star Q_{\Fi_{v''}}=T^{a(v',v'')} Q_{\Fi_{v'+v''}}
\\
& \textrm{where }\qquad 
a(v',v''):=\min H_{v'+v''}-\min H_v -\min H_{v''}.
\end{split}
\end{equation}
More generally, for any $v_1,\ldots,v_k \in N_0$,
$$
Q_{\Fi_{v_1}}\star Q_{\Fi_{v_2}} \star \cdots \star Q_{\Fi_{v_k}} =  
T^{a(v_1,\ldots,v_k)} Q_{\Fi_{v_1+v_2+\cdots+v_k}},
$$
where $a(v_1,\ldots,v_k):=\min H_{v_1+\cdots+v_k }-\min H_{v_1}-\min H_{v_2}-\cdots-\min H_{v_k}.$
\end{cor}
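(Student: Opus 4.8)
The plan is to deduce the whole statement from \cref{Cor group hom with can lifts}, which already supplies the relation $Q_{\widetilde{\Fi}_{v'}}\star Q_{\widetilde{\Fi}_{v''}}=Q_{\widetilde{\Fi}_{v'+v''}}$ for the lifts $\widetilde{\Fi}_v$ fixing the constant disc at the common fixed point $c\in\mathrm{Fix}(\mathbb{T})$, \emph{without} any $T$-corrections. The only remaining work is to translate between those classes and the canonically-lifted classes $Q_{\Fi_v}$ (equivalently $Q_v=T^{\min H_v}Q_{\Fi_v}$ as in \eqref{Equation canonical lifted rotation classes}), and then to bookkeep the powers of $T$.

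First I would record the dictionary between the two lifts. Combining \eqref{Equation omega on Av class} (which gives $\omega[A_v]=\min H_v-H_v(c)$), \eqref{Equation canonical lift versus T fixed pt lift} (which gives $Q_{\Fi_v^{\wedge}}=T^{-\omega[A_v]}Q_{\widetilde{\Fi}_v}$), and \eqref{Equation canonical lifted rotation classes} (so $Q_v=T^{\min H_v}Q_{\Fi_v}=T^{\min H_v}Q_{\Fi_v^{\wedge}}$), one obtains the clean identity
$$Q_v=T^{H_v(c)}\,Q_{\widetilde{\Fi}_v},\qquad\text{equivalently}\qquad Q_{\widetilde{\Fi}_v}=T^{-H_v(c)}\,Q_v.$$
The crucial point is then that $H_v=(\mu,v)$ is \emph{linear} in $v$, so in particular $H_{v'+v''}(c)=H_{v'}(c)+H_{v''}(c)$ at the single fixed point $c$. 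Substituting the dictionary into \cref{Cor group hom with can lifts} makes the $T^{-H_v(c)}$-factors cancel exactly, yielding $Q_{v'}\star Q_{v''}=Q_{v'+v''}$. Expanding $Q_v=T^{\min H_v}Q_{\Fi_v}$ on both sides then gives $Q_{\Fi_{v'}}\star Q_{\Fi_{v''}}=T^{\min H_{v'+v''}-\min H_{v'}-\min H_{v''}}Q_{\Fi_{v'+v''}}$, i.e.\;the asserted formula for $a(v',v'')$. (The reason $a$ is not identically zero, despite $H$ being linear, is precisely that $v\mapsto\min H_v$ need not be additive: the minima of $H_{v'},H_{v''},H_{v'+v''}$ are generally attained on different fixed components.)

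For the $k$-fold statement, the identity $Q_{v'}\star Q_{v''}=Q_{v'+v''}$ just established says that $v\mapsto Q_v$ is a monoid homomorphism $(N_0,+)\to(QH^*(Y),\star)$ (note $\Fi_0=\mathrm{id}$, $\min H_0=0$, so $Q_0=1$), whence $Q_{v_1}\star\cdots\star Q_{v_k}=Q_{v_1+\cdots+v_k}$ by an immediate induction; substituting $Q_{v_i}=T^{\min H_{v_i}}Q_{\Fi_{v_i}}$ and $Q_{v_1+\cdots+v_k}=T^{\min H_{v_1+\cdots+v_k}}Q_{\Fi_{v_1+\cdots+v_k}}$ produces the exponent $a(v_1,\dots,v_k)=\min H_{v_1+\cdots+v_k}-\sum_i\min H_{v_i}$. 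Alternatively one checks directly that this exponent telescopes under iterated use of the two-variable case, since $a(v_1,\dots,v_{k-1})+a(v_1+\cdots+v_{k-1},v_k)=a(v_1,\dots,v_k)$. I do not expect a genuine obstacle beyond careful sign and exponent bookkeeping together with a pointer to the appendix of \cite{RZ3} for the lift formalism; all the conceptual content sits in \cref{Cor group hom with can lifts} and in the linearity of the moment map.
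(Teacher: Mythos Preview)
Your proof is correct and follows essentially the same approach as the paper: both rely on \cref{Cor group hom with can lifts}, the relations \eqref{Equation canonical lifted rotation classes}, \eqref{Equation omega on Av class}, \eqref{Equation canonical lift versus T fixed pt lift}, and the linearity of $H_v(c)=(\mu(c),v)$ in $v$. The only difference is presentational: you first isolate the clean identity $Q_v=T^{H_v(c)}Q_{\widetilde{\Fi}_v}$ and then substitute, whereas the paper carries the full chain of $T$-exponents through a five-step computation and cancels at the end; your organisation is arguably tidier but the content is identical.
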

\begin{proof}
Using \eqref{Equation canonical lifted rotation classes}, \eqref{Equation canonical lift versus T fixed pt lift}
and \cref{Cor group hom with can lifts}, \eqref{Equation canonical lifted rotation classes},
 \begin{equation*}
     \begin{split}
 Q_{v'}\star Q_{v''}
 &=
 T^{\min H_{v'}+\min H_{v''}}
 Q_{\Fi_{v'}^{\wedge}}\star Q_{\Fi_{v''}^{\wedge}}
 \\
 &=
 T^{\min H_{v'}+\min H_{v''}-\omega[A_{v'}+A_{v''}]}
 Q_{\widetilde{\Fi}_{v'}}\star Q_{\widetilde{\Fi}_{v''}}
  \\
 &=
  T^{\min H_{v'}+\min H_{v''}-\omega[A_{v'}+A_{v''}]}
 Q_{\widetilde{\Fi}_{v'+v''}}
   \\
 &=
  T^{\min H_{v'}+\min H_{v''}-\omega[A_{v'}+A_{v''}-A_{v'+v''}]}
 Q_{\Fi_{v'+v''}^{\wedge}}
    \\
 &=
  T^{\min H_{v'}+\min H_{v''}-\omega[A_{v'}+A_{v''}-A_{v'+v''}]-\min H_{v'+v''}}
 Q_{v'+v''}.
 \end{split}
 \end{equation*}
The final $T$-exponent is zero by noting that $H_v(c)=(\mu(c),v)$ is linear in $v$, so \eqref{Equation omega on Av class} and \eqref{Equation canonical lifted rotation classes} implies 
$$a(v',v'')=-\omega[A_{v'}+A_{v''}-A_{v'+v''}]=\min H_{v'+v''}-\min H_{v'}-\min H_{v''}.$$
The final claim follows by an induction argument, i.e.\;by repeatedly applying \eqref{Equation afunction for Q classes multiplied}, and noticing that
$a(v_1,\ldots,v_k)+a(v_1+\ldots+v_k,v_{k+1}) = a(v_1,\ldots,v_{k+1})$.
\end{proof}

\begin{rmk}
By \cite[Appendix]{RZ3}, the grading shift is as follows: $Q_{\Fi_{v'+v''}}$ in \cref{Equation afunction for Q classes multiplied} is shifted down in grading by $2c_1[A_{v'}+A_{v''}-A_{v'+v''}]$.
\end{rmk}

\begin{cor}\label{Cor group hom with can lifts version 2}
The following map is a group homomorphism,
\begin{equation}\label{Equation Q represenation without correction factors}
N_0 \to QH^*(Y), \quad v \mapsto Q_v:=T^{\min H_v} Q_{\Fi_v}.
\end{equation}
\end{cor}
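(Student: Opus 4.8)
The statement is essentially a repackaging of \cref{Corollary a function computed}. The plan is to verify directly that the assignment $v\mapsto Q_v := T^{\min H_v}Q_{\Fi_v}$ sends the group operation on $N_0$ (addition of one-parameter subgroups) to the quantum product, with no $T$-correction factors. First I would recall from \eqref{Equation afunction for Q classes multiplied} that for $v',v''\in N_0$ we already established
$$
Q_{v'}\star Q_{v''} = Q_{v'+v''},
$$
which is exactly the homomorphism property for pairs. So the content of \cref{Cor group hom with can lifts version 2} beyond \cref{Corollary a function computed} is merely: (i) $N_0$ is closed under addition, so that $Q_{v'+v''}$ is defined; and (ii) the identity element $0\in N_0$ maps to the unit $1\in QH^*(Y)$.

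For (i), closure of $N_0$ under addition is immediate from the description $N_0=\{v\in N:\langle v,w^j\rangle\geq 0\text{ for }j=1,\ldots,\gens\}$ in \cref{Lemma description of Nplus and Nzero} / \cref{Subsection Observations about commuting Cstar actions on affine varieties}, since the pairing is $\Z$-linear in $v$; alternatively it follows from \cref{Cor contracting actions compose well} together with the observation that a composite of complete actions is complete (one can also just cite that $N_0$ is a semigroup, already noted in the excerpt). For (ii), when $v=0$ the action $\Fi_0$ is trivial, its moment map $H_0$ is the zero function (so $\min H_0=0$), and the rotation class of the trivial action is the unit $Q_{\Fi_0}=1\in QH^0(Y)$ — this is the normalisation built into the construction of rotation classes in \cite{RZ1} (the Seidel/rotation map of the identity action is the identity). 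Hence $Q_0 = T^0\cdot 1 = 1$. Inverses then come for free: if $v,-v\in N_0$ then $Q_v\star Q_{-v}=Q_0=1$, so $Q_v$ is invertible with inverse $Q_{-v}$ — though strictly this is automatic once we have a monoid homomorphism into a monoid, since $N_0$ is only a semigroup (it is a monoid with identity $0$) and the image lands in the group of units only on the sub-monoid $\{v: v,-v\in N_0\}$.

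Putting it together: the map is well-defined by closure of $N_0$ under $+$; it carries $+$ to $\star$ by \eqref{Equation afunction for Q classes multiplied}; and it carries $0$ to $1$ by the normalisation of rotation classes. For a statement about arbitrarily many factors one invokes the final displayed identity of \cref{Corollary a function computed}, namely $Q_{\Fi_{v_1}}\star\cdots\star Q_{\Fi_{v_k}} = T^{a(v_1,\ldots,v_k)}Q_{\Fi_{v_1+\cdots+v_k}}$ with $a(v_1,\ldots,v_k)=\min H_{v_1+\cdots+v_k}-\sum_i \min H_{v_i}$, which rearranges to $Q_{v_1}\star\cdots\star Q_{v_k}=Q_{v_1+\cdots+v_k}$ upon multiplying through by $T^{\sum_i \min H_{v_i}}$ and using $Q_v=T^{\min H_v}Q_{\Fi_v}$. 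The only mild subtlety — and the step I would be most careful about — is the bookkeeping of grading shifts: as noted in the remark after \cref{Corollary a function computed}, there is in general a grading shift by $2c_1[A_{v'}+A_{v''}-A_{v'+v''}]$, so in the non-Fano, non-CY case ``group homomorphism'' should be understood with the $\Z$-grading replaced by a $\Z/2$-grading (or with $G$ encoded into the Novikov ring as in \cite{Sei97}); in the Fano case the shift is automatically absorbed by $|T|=2$, and in the CY case $c_1=0$ kills it. I would state the corollary with that caveat and otherwise the proof is a one-line consequence of \cref{Corollary a function computed}.
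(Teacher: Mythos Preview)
Your proof is correct and takes the same approach as the paper, whose proof is literally the single line ``This follows from \cref{Corollary a function computed}.'' Your additional care about closure of $N_0$, the unit element, and the grading caveat is welcome but goes beyond what the paper records; the core observation --- that the pairwise identity $Q_{v'}\star Q_{v''}=Q_{v'+v''}$ from \eqref{Equation afunction for Q classes multiplied} is exactly the homomorphism property --- is identical.
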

\begin{proof}
    This follows from \cref{Corollary a function computed}.
\end{proof}

\begin{ex}
As a sanity check, in \cref{Example CP2 example Q classes} for $\C P^2$, the moment polytope $\Delta\subset \R^2$ is the image of $\C P^2$ via $(H_{e_1},H_{e_2})$, and gives a solid triangle with vertices $(0,0),(3,0),(0,3)$ (the factor $3$ ensures that $[\omega_{\Delta}]=c_1=3[\omega_{FS}])$. Then $\min H_{e_1} = 0$ corresponds to the bottom facet of $\Delta$; $\min H_{e_2} = 0$ corresponds to the left facet; whereas $H_{e_3} = -H_{e_1}-H_{e_2}$ achieves $\min H_{e_3}=-3$ on the slanted facet. So $Q_{e_3}=T^{\min H_{e_3}} Q_{\Fi_{e_3}} = T^{-3}x$.
Via the analogoue of \eqref{Equation Q represenation without correction factors} for $\C P^2$, the relation $v_1+v_2+v_3 = 0$ maps to the relation $x\star x \star (T^{-3}x) = 1$. So indeed we get $x^3 = T^3$ in $QH^*(Y)$.
\end{ex}

\begin{prop}\label{Proposition Q class when vectors lie in a cone}
Suppose $v=m_1 v_{1}+\cdots + m_k v_{k}$, for $m_j\in \N$, $v_j\in N_0$, such that 
\begin{equation}\label{Equation common min}
\mathrm{Min}(H_{v_{1}})\cap \cdots \cap \mathrm{Min}(H_{v_{k}}) \neq \emptyset.
\end{equation}
Then $$Q_{\Fi_v} = Q_{\Fi_{v_{1}}}^{m_1}\star \cdots \star Q_{\Fi_{v_{k}}}^{m_k}.$$
\end{prop}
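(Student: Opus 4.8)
The strategy is to reduce everything to \cref{Corollary a function computed}, which already packages all the subtle lift-dependence into the explicit function $a$. First note that $v=m_1v_1+\cdots+m_kv_k\in N_0$, since $N_0$ is a semigroup closed under addition and positive integer scaling; hence $Q_{\Fi_v}\in QH^*(Y)$ is well-defined by \cref{Lemma borderline Cstar actions}. Now apply \cref{Corollary a function computed} to the list of vectors in which each $v_j$ appears $m_j$ times (a total of $m_1+\cdots+m_k$ vectors, all lying in $N_0$). Since quantum product is associative and commutative, this list yields
\begin{equation*}
Q_{\Fi_{v_1}}^{m_1}\star\cdots\star Q_{\Fi_{v_k}}^{m_k}=T^{a}\,Q_{\Fi_v},\qquad a:=\min H_v-\big(m_1\min H_{v_1}+\cdots+m_k\min H_{v_k}\big),
\end{equation*}
where we have used that the relevant sum of vectors is $m_1v_1+\cdots+m_kv_k=v$. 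It therefore remains to show that $a=0$.

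For this, use linearity of the moment map $\mu$ from \eqref{Equation Ham from moment map}: pointwise on $Y$,
\begin{equation*}
H_v=(\mu,v)=\sum_{j=1}^k m_j\,(\mu,v_j)=\sum_{j=1}^k m_j\,H_{v_j}.
\end{equation*}
One inequality is automatic and uses only $m_j\geq 0$: the infimum of a non-negative combination is at least the combination of infima, so $\min H_v\geq \sum_j m_j\min H_{v_j}$, i.e.\ $a\geq 0$. For the reverse inequality, invoke the hypothesis \eqref{Equation common min}: there is a point $p\in \mathrm{Min}(H_{v_1})\cap\cdots\cap\mathrm{Min}(H_{v_k})$, at which every $H_{v_j}$ simultaneously attains its minimum. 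Evaluating at $p$,
\begin{equation*}
\min H_v\leq H_v(p)=\sum_{j=1}^k m_j\,H_{v_j}(p)=\sum_{j=1}^k m_j\min H_{v_j},
\end{equation*}
so $a\leq 0$. Hence $a=0$, and $Q_{\Fi_v}=Q_{\Fi_{v_1}}^{m_1}\star\cdots\star Q_{\Fi_{v_k}}^{m_k}$, as claimed.

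There is no real obstacle here: the only content beyond \cref{Corollary a function computed} is the elementary observation that the existence of a common minimizer forces the minimum of the sum to equal the sum of the minima, which is exactly what makes the $T$-exponent $a(v_1,\dots,v_k)$ (with the $v_j$'s repeated according to the $m_j$'s) vanish. If one wanted to be pedantic, the induction identity $a(v_1,\dots,v_\ell)+a(v_1+\cdots+v_\ell,v_{\ell+1})=a(v_1,\dots,v_{\ell+1})$ noted in the proof of \cref{Corollary a function computed} justifies collapsing the repeated entries into the closed form for $a$ above, and the degree-shift bookkeeping (by $2c_1$ of the relevant spherical classes) is likewise inherited verbatim from \eqref{Equation afunction for Q classes multiplied} and vanishes for the same reason in the Fano/CY cases.
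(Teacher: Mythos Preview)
Your proof is correct and follows essentially the same approach as the paper: reduce to \cref{Corollary a function computed} (applied to the list with each $v_j$ repeated $m_j$ times), use linearity $H_v=\sum m_j H_{v_j}$, and then use the common minimizer $p$ from \eqref{Equation common min} to show the $T$-exponent vanishes. The paper is slightly terser---it simply observes that since $m_j\geq 0$ and each $H_{v_j}$ is minimized at $p$, the sum $H_v$ is also minimized at $p$, and then computes $a=H_v(p)-\sum m_j H_{v_j}(p)=(\mu(p),v-\sum m_j v_j)=(\mu(p),0)=0$ in one line---whereas you split this into the two inequalities $a\geq 0$ and $a\leq 0$; but the content is the same.
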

\begin{proof}
By the linearity of $H_v=(\mu,v)$ in $v$, we get 
$$
H_v = m_1 \, H_{v_{1}}+\cdots + m_k \, H_{v_{k}}.
$$
The $H_{v_{1}}, \ldots,H_{v_{k}}$ attain a common minimum at a point $p$ in \eqref{Equation common min}. Since $m_j\geq 0$, this is also a minimum of $H_v$. Thus $a(m_1 v_1,\ldots,m_k v_k)=H_v(p)-\sum m_j H_{v_j}(p) = (\mu(p),v-\sum m_j v_j) = (\mu(p),0)=0.$ 
\end{proof}
\section{Semiprojective toric manifolds}\label{Subsection semiprojective toric}
\subsection{Definition of semiprojective toric manifolds}

We follow \cite[Sec.2]{HS02} and \cite[p.332]{cox2011toric}.
A semiprojective toric manifold is a non-compact toric manifold $Y$ for which the affinisation map 
\begin{equation}\label{Equation affinisation map}
\pi:Y\to X:=\mathrm{Spec}(H^0(Y,\mathcal{O}_Y))
\end{equation}
is projective, and $Y$ has at least one torus-fixed point (i.e.\;the moment polytope $\Delta$, discussed later, has at least one vertex).
Note that $Y$ is non-compact precisely if $X\neq (\mathrm{point})$, which we will always assume. The case $X=(\mathrm{point})$ is just the study of (compact) projective toric manifolds $Y$ (e.g.\,$\P^n$).

Being a toric variety, $Y$ is the closure of an open dense torus $\mathbb{T}\cong (\C^*)^n$, where $n=\dim_{\C}Y$. Via $\pi$, we also obtain a $\mathbb{T}$-action on $X$. We will explain in \cref{Subsection Semiprojective toric manifolds as a subclass of equivariant projective morphisms} that the $\pi$-equivariant projective morphism \cref{Equation affinisation map} satisfies all the requirements from \cref{Subsection Setting up notation and assumptions}.

By \cite[Cor.2.7]{HS02}, these toric varieties can be described in terms of certain twisted\footnote{Called \textit{projective} GIT quotients in \cite{HS02}, but we emphasize that these are typically non-compact varieties.} GIT quotients of $\C^{n+d}$ by a complex torus $\mathbb{T}^d$ (in which case, $\mathbb{T}$ arises from $(\C^*)^{n+d}/\mathbb{T}^d$).

They can also be described in terms of a certain type of {\bf fan} $\Sigma$: any \emph{triangulation}\footnote{$\Sigma$ is a simplicial fan whose rays lie in $\mathcal{B}$ and whose support $|\Sigma|$ equals pos$(\mathcal{B})=$(convex $\R$-span of $\mathcal{B})\subset N_{\R}:=N\otimes_{\Z} \R$.} of a spanning set $\mathcal{B}$ of a lattice $N\cong \Z^n$, such that $\Sigma$ is \emph{unimodular}\footnote{each maximal cone is spanned by a basis of $N$. This ensures smoothness of $Y$.} and \emph{regular}.\footnote{it admits an ample $\mathbb{T}$-Cartier divisor.
Equivalently, there is a continuous convex function pos($\mathcal{B}$)$ \to \R$, integer-valued on $N\cap $pos($\mathcal{B}$), which restricts to a different linear function on each maximal cone of $\Sigma$ (see also \cite[Def.15.2.8.]{cox2011toric}).}
Unimodularity implies that $\Sigma$ has an $n$-dimensional cone, so $Y$ is simply connected \cite[p.56]{fulton1993introduction}.

There are other equivalent definitions of semiprojective toric manifolds  (cf.\,\cite[Thm.2.6]{HS02} and  \cite[Prop.7.2.9]{cox2011toric}), one of which is mentioned in the next Subsection, where we discuss more technical properties of $X,Y$.
\subsection{More technical properties of semiprojective toric manifolds}\label{Remark description of X}

An equivalent definition is that a semiprojective toric manifold $Y$ is a non-singular quasi-projective toric variety whose fan $\Sigma$ has full dimensional convex support, meaning: 
$$|\Sigma|\subset N_{\R} \textrm{ is convex, and }\dim |\Sigma|=\dim_{\R}N_{\R},$$
cf.\;\cite[p.265]{cox2011toric}. We abbreviate $n=\dim_{\C} \mathbb{T}=\dim_{\C} Y$, so $\mathbb{T}\cong (\C^*)^n$ and $N_{\R}\cong \R^n$. That condition on the support $|\Sigma|$ (defined as the union of the cones of $\Sigma$) implies that $|\Sigma|$ equals the union of the $n$-dimensional cones and arises as a cone:
\begin{equation}\label{Equation support is span of ray gens}
|\Sigma| = \mathrm{span}_{\R_{\geq 0}}(e_i: \textrm{ for all }i) \subset N_{\R},
\end{equation}
where $e_i$ are the (finitely many) ``minimal generators'' for $\Sigma$: the first non-zero integral points along the (ray) edges, i.e.\;the one-dimensional cones of $\Sigma$ (cf.\;\cref{Rmk further technical properties of Nplus X}). 

Recall\footnote{see respectively \cite[p.39]{fulton1993introduction},
\cite[p.113, Thm.3.1.19]{cox2011toric}, and \cite[p.29]{fulton1993introduction},
\cite[p.113, Thm.3.1.19]{cox2011toric}.} that $Y$ is projective precisely if $\Sigma$ is complete:\;$|\Sigma|=N_{\R}\cong \R^n$; and 
$Y$ is non-singular precisely if $\Sigma$ is regular: each cone of $\Sigma$ is generated by part of a $\Z$-basis of the lattice $N$.\
Our assumptions that $Y$ is non-singular and non-compact are therefore equivalent to: $\Sigma$ is regular and $|\Sigma|\neq \N_{\R}$.

Abbreviating $M:=N^{\vee}=\mathrm{Hom}(N,\Z)\cong \Z^n$ (the lattice that labels characters), the dual cone 
$$|\Sigma|^{\vee}=\{v\in M_{\R}: \langle v,e_i\rangle =0\textrm{ for all }i\}$$ 
therefore defines a finitely generated semigroup $R:=|\Sigma|^{\vee}\cap M$. The characters $M$ define rational functions on $Y$, and the semigroup $R$ describes  precisely the characters $\chi^s$ that are global functions: they define a $\C$-linear basis for $H^0(Y,\mathcal{O}_Y)=\C[X]$,
so semigroup generators for $R$ yield generators for the unital $\C$-algebra $\C[X]$ \cite[p.193, Ex.4.3.4]{cox2011toric}.

Following \cite{cox2011toric},\footnote{p.321, p.332 and Ex.7.2.3. for the description of $\pi$ (further properties of $\pi$ are described in p.321-322 and p.326), and p.318 Sec.7.1 for the recession cone. Compare also with \cref{Rmk further technical properties of Nplus X}.}
$X$ is a normal affine toric variety whose fan $\Sigma_X$ consists of only one cone\footnote{more precisely, a strongly convex rational polyhedral cone.} that can be explicitly constructed from the fan $\Sigma$ of $Y$ as follows. Let $W:=|\Sigma|\cap (-|\Sigma|)$ be the maximal $\R$-linear subspace inside $|\Sigma|$. Then $\Sigma_X$ is the strongly convex cone $$\Sigma_X:=|\Sigma|/W,$$ together with all of its faces (above, $|\Sigma|/W$ means the image of $|\Sigma|$ via the vector space quotient $N_{\R}\to N_{\R}/W$, cf.\;also the notation in  \eqref{Equation fan in NR}).
The lattice used for $X$ is 
$$N_X:=N/(N \cap W),$$
in particular this determines the torus for $X$ as a quotient of the torus for $Y$,
$$
T \cong \mathbb{T}/S.
$$
If one starts with \eqref{Equation affinisation map}, and defines the natural $\mathbb{T}$-action on $X$ defined by \eqref{Equation affinisation map}, then $S$  is the subtorus of $\mathbb{T}$ which acts trivially on $X$.
Recall that any toric variety contains an open dense torus, and the action of that embedded torus on itself extends to the torus action of the toric variety.
One can view $\mathbb{T}$ and $T$ as the embedded open dense tori for $Y,X$ respectively.
From this point of view, the map $\pi: Y \to X$ from \eqref{Equation affinisation map} restricted to $\mathbb{T}\subset Y$ is the quotient map, 
$\pi|_{\mathbb{T}}:\mathbb{T} \to T=\mathbb{T}/S$. This indirectly determines $\pi$ by continuity, if one considers limit points in $Y$. 
More explicitly, $\pi: Y \to X$ is the morphism of toric varities induced\footnote{in the sense of \cite[p.22-23]{fulton1993introduction}, see also \cite[Sec.3.3 p.125, Lemma 3.3.21 p.135, Prop.1.3.14 p.41]{cox2011toric}.}
by the morphism on cones $\Sigma \to \Sigma_X$ given by the obvious quotient map.

\begin{cor}\label{Corollary pi surjective for STM}
$\pi:Y \to X$ is surjective, and $X,Y$ are connected.
\end{cor}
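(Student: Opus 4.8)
The plan is to handle connectedness first and then derive surjectivity from a ``dense plus closed'' argument.

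\emph{Connectedness.} Since $Y$ is by definition a toric variety, it is the closure of its embedded dense open torus $\mathbb{T}\cong(\C^*)^n$. The torus $\mathbb{T}$ is irreducible (it is a connected algebraic group, equivalently an open subset of affine space), so its closure $Y$ is irreducible, hence connected. Granting surjectivity of $\pi$ (proved below), $X=\pi(Y)$ is then the continuous image of a connected space and so is connected; alternatively, $X$ is the normal affine toric variety recalled in \cref{Remark description of X}, with dense open torus $T\cong\mathbb{T}/S$, and is irreducible for the same reason.

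\emph{Surjectivity.} By the very definition of the affinisation map \eqref{Equation affinisation map}, the comorphism $\pi^{\#}\colon\C[X]=H^0(X,\mathcal{O}_X)\to H^0(Y,\mathcal{O}_Y)$ is the canonical isomorphism, in particular injective. The closure of $\pi(Y)$ inside the affine variety $X$ is the vanishing locus of the ideal $\{f\in\C[X]:\pi^{\#}f=0\}=\ker\pi^{\#}=0$, so $\pi(Y)$ is dense in $X$. On the other hand $\pi$ is projective by hypothesis, therefore proper, therefore universally closed -- the same observation already used in the unnamed Lemma of \cref{Subsection Setting up notation and assumptions} -- so $\pi(Y)$ is a closed subset of $X$. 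A dense closed subset of $X$ equals $X$, hence $\pi$ is surjective. As a consistency check with the toric bookkeeping of \cref{Remark description of X}, one may instead note that $\pi$ restricts on the dense open tori to the quotient homomorphism $\pi|_{\mathbb{T}}\colon\mathbb{T}\to T=\mathbb{T}/S$, which is surjective; thus $T=\pi(\mathbb{T})\subset\pi(Y)$, and since $T$ is dense in $X$ while $\pi(Y)$ is closed, again $\pi(Y)=X$.

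There is no genuine obstacle here: the only non-formal input is that a projective morphism is universally closed, which is already in play in the excerpt, and the rest is the standard correspondence between dominant morphisms and injective comorphisms together with irreducibility of toric varieties.
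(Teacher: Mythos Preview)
Your proof is correct and essentially matches the paper's own argument: connectedness via irreducibility of toric varieties as closures of their dense tori, and surjectivity from ``$\pi$ proper $\Rightarrow$ closed'' together with $\pi(\mathbb{T})=T$ dense in $X$. Your additional route via injectivity of the comorphism $\pi^{\#}$ is a valid alternative that the paper does not mention, but your ``consistency check'' is exactly the paper's proof.
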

\begin{proof}
The connectedness is immediate because $X,Y$ are closures of (connected) algebraic tori.
Since $\pi$ is proper, it is a closed map (indeed it is universally closed), both in the Zariski and the Euclidean/analytic topology (e.g.\;\cite[p.143 and Thm.3.4.11 p.144]{cox2011toric}). 
Since $\pi(\mathbb{T})=T$, the continuous closed map $\pi$ sends the closure $Y=\overline{\mathbb{T}}$ to the closure $\overline{T}=X$.
\end{proof}

By \cite[p.332-333 and p.329-330]{cox2011toric}, $Y$ admits a ``moment polytope'', more precisely this is a non-compact full dimensional lattice polyhedron $\Delta\subset M_{\R}\cong \R^n$, 
\begin{equation}\label{Equation moment polytope for SPM} 
\Delta=\{x\in M_{\R}:\langle x,e_i\rangle \geq \lambda_i \}\subset \R^n,
\end{equation}
where $e_i$ are the rays generators of $\Sigma$ from \eqref{Equation support is span of ray gens}, %
and they are also the primitive inward-pointing normals to the facets of $\Delta$ (the top-dimensional faces); the parameters $\lambda_i\in \Z$ are discussed below. To avoid confusion: $\Delta$ is not a (compact) polytope in the usual sense of ``convex hull of finitely many points in $\R^n$'', but it is a convex polytope in the sense of ``intersection of a collection of half-spaces'', in particular it is an unbounded rational polyhedron. It was described in \cite[p.498 and p.503]{HS02}, and we briefly mention some properties.

One could define semiprojective toric varieties starting from such polyhedrons \cite[p.332]{cox2011toric}, e.g.\;the normal vectors $e_i$ to facets meeting at a vertex of $\Delta$ will define the top-dimensional faces of $|\Sigma|$ etc.: more precisely, $\Sigma$ is the normal fan of $\Delta$  \cite[p.76-77]{cox2011toric}. Thus, our assumption that $Y$ is non-compact corresponds to the condition that $\Delta$ is non-compact, and that $Y$ is non-singular corresponds to the condition that at each vertex of $\Delta$, the normals to the facets of $\Delta$ extend to a $\Z$-basis of $N\cong \Z^n$.

The parameters $\lambda_i\in\R$ are more subtle.\footnote{\cite[p.66]{fulton1993introduction} and \cite[Thm.7.2.4, p.328, p.333-334]{cox2011toric}.} 
They arise from an additional choice: the existence of a $\mathbb{T}$-invariant Cartier divisor $D=-\sum \lambda_i D_i$ on $Y$ with strictly convex support function $\Fi_D$, and $\lambda_i:=\Fi_D(e_i)\in \Z$ (the $D_i$ are the toric divisors, corresponding to the facets of $\Delta$; we discuss these later).
This is additional information because it arises from presenting $Y$ as a quasi-projective variety by constructing an embedding as in the proof of \cref{Lemma equiv proj morph sympl form}, and pulling back an ``$\mathcal{O}(1)$'' bundle. This is related to the general feature (e.g.\,see \cite[Sec.A2]{R16}) that a moment polytope for a non-singular toric variety determines a fan and a symplectic form, but a fan does not encode the information needed to determine a moment polytope and a symplectic form. For semiprojective toric manifolds, we can apply \cref{Lemma equiv proj morph sympl form} to the $\mathbb{T}$-equivariant projective morphism $\pi$ above to obtain this additional data; one can also construct a closed $\mathbb{T}$-equivariant embedding $Y\hookrightarrow \C P^a\times \C^b$ in the GIT picture, after some choices \cite[p.503]{HS02}.  Either way, the K\"{a}hler form arises by pulling back a $\mathbb{T}_{\R}$-invariant K\"{a}hler form via such an embedding, but auxiliary choices were made in addition to having the fan $\Sigma$.

There is also a ``moment polytope'' for $X$, in the sense of \cite[Sec.4.2]{fulton1993introduction}, and it agrees with the so-called ``recession cone'' of the polyhedron $\Delta$:
$$\Delta_X:=\{x\in \R^n:\langle x,e_i\rangle \geq 0 \}.$$ 
The above moment polytopes are related by $$\Delta=\Delta_X+(\textrm{the convex hull of the vertices of }\Delta).$$

\begin{ex}
The lower part of the picture in \cref{Example toric intro} shows $\pi$ from \eqref{Equation affinisation map}, $\Sigma_X$ and $\Delta_X$. Here, $W$ is the $y$-axis. So $X\cong \C$ arises from the cone $\Sigma_X = \{(x,y)\in \R^2: x\geq 0 \}/\{(0,y): y\in\R \} \cong \{x\in \R:x\geq 0\}$ (so essentially $\mathrm{span}_{\R_{\geq 0}}(e_1)$, but more precisely it should be viewed in the quotient $\R^2/W$). 
\end{ex}

\subsection{Semiprojective toric manifolds as a subclass of equivariant projective morphisms}\label{Subsection Semiprojective toric manifolds as a subclass of equivariant projective morphisms}
The $\mathbb{T}$-equivariant projective morphism $\pi:Y \to X$ from \eqref{Equation affinisation map} satisfies all the requirements we had in \cref{Subsection Setting up notation and assumptions}; in particular the topological properties are shown in \cref{Corollary pi surjective for STM}. The $T$-action on $X$ has a unique fixed point for general reasons: for any toric variety, the fixed points are in one-to-one correspondence with the interiors of full-dimensional cones, and $X$ has only one full-dimensional cone. We will describe $N_+(X)$ explicitly in \cref{Cor complete actions semiproj toric case}, which implies that $N_+(X)\neq \emptyset,$ although the condition $N_+(X)\neq \emptyset$ could also be deduced by combining the discussion in \cref{Rmk further technical properties of Nplus X} and \cref{Lemma trivial subtorus trick}.  

Recall the notation $N=\mathrm{Hom}(\C^*,\mathbb{T})$, $\Fi_v$, $N_0$ and $N_+$ from \cref{Subsection Setting up notation and assumptions}
and \cref{Lemma Nplus for equiv proj morph}.
\begin{cor}
$(Y,\llambda_v)$ is a $\C^*$-symplectic manifold globally defined over $\C^m$ for all $v\in N_+$, where, following \cref{Subsection Upsilon map},
 $m$ is the number of (a choice of) non-constant $\mathbb{T}$-homogeneous generators 
 $$f_{w^1},\ldots,f_{w^m} \;\textrm{ for }\; \C[X]=H^0(Y,\mathcal{O}_Y).$$
\end{cor}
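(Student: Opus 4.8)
The plan is to assemble this corollary directly from the general machinery already established for $\mathbb{T}$-equivariant projective morphisms, by verifying that the affinisation map $\pi:Y\to X$ from \eqref{Equation affinisation map} is a legitimate instance of that setup. First I would invoke \cref{Corollary pi surjective for STM} to note that $\pi$ is surjective and that $X,Y$ are connected (irreducible, since $Y$ is a toric manifold), so all the running hypotheses of \cref{Subsection Setting up notation and assumptions} hold. Then I would recall that the $T$-action on the normal affine toric variety $X$ has a unique fixed point (the interior of its single full-dimensional cone $\Sigma_X$), so $\C[X]_0=\C$; alternatively this follows from \cref{Cor CX0 is C in equiv proj section} once $N_+\neq\emptyset$ is known. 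The description $|\Sigma|=\mathrm{span}_{\R_{\geq 0}}(e_i)$ in \eqref{Equation support is span of ray gens} has full dimension $n$, and passing to the dual picture via \cref{Rmk further technical properties of Nplus X} (combined with \cref{Lemma trivial subtorus trick} to handle the subtorus $S$ acting trivially on $X$) shows $N_+(X)$ has non-empty interior, hence is non-empty; this is also recorded in \cref{Cor complete actions semiproj toric case} which the corollary cross-references.

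With $N_+=N_+(Y)=N_+(X)\neq\emptyset$ in hand (the equality is \cref{Lemma Nplus for equiv proj morph}), the core of the argument is a direct appeal to \cref{Theorem equiv proj morph gives sympl Cstar mfd}: for each $v\in N_+$, that theorem endows $(Y,\omega,\Fi_v)$ with the structure of a symplectic $\C^*$-manifold globally defined over $\C^{\gens}$, with the $\Psi_v$-map built explicitly from the $\Upsilon$-map of \eqref{Definition of Upsilon} as $\Psi_v=z_v\circ\Upsilon$. The only remaining point is bookkeeping about $\gens$: I would spell out that, following \cref{Subsection Upsilon map}, $\gens$ is precisely the number of chosen non-constant $\mathbb{T}$-homogeneous generators $f_{w^1},\ldots,f_{w^{\gens}}$ of the unital $\C$-algebra $\C[X]=H^0(Y,\mathcal{O}_Y)$, and that for a semiprojective toric manifold such generators can be taken to be the characters $\chi^s$ for semigroup generators $s$ of $R=|\Sigma|^{\vee}\cap M$, as discussed in \cref{Remark description of X}. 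The K\"ahler form $\omega$ is the one produced by \cref{Lemma equiv proj morph sympl form}.

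There is essentially no obstacle here — the corollary is a packaging statement — but the one place requiring a word of care is confirming $N_+\neq\emptyset$ without circular reference to \cref{Cor complete actions semiproj toric case} (which may be proved afterwards); I would handle this by citing the $n$-dimensionality of $|\Sigma|$ and the general cone-duality fact that a full-dimensional cone has a full-dimensional (hence non-empty-interior) dual, via \cref{Rmk further technical properties of Nplus X}. I would therefore write the proof as follows.

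\begin{proof}
By \cref{Corollary pi surjective for STM}, $\pi:Y\to X$ is a surjective $\mathbb{T}$-equivariant projective morphism with $X$ affine and $X,Y$ connected, so it satisfies all the hypotheses of \cref{Subsection Setting up notation and assumptions}. Since $Y$ has full-dimensional convex support $|\Sigma|=\mathrm{span}_{\R_{\geq 0}}(e_i)\subset N_{\R}\cong\R^n$ (see \eqref{Equation support is span of ray gens}), the dual cone is $n$-dimensional, hence has non-empty interior; combining the cone-duality discussion of \cref{Rmk further technical properties of Nplus X} with \cref{Lemma trivial subtorus trick} gives $N_+(X)\neq\emptyset$ (this is also recorded explicitly in \cref{Cor complete actions semiproj toric case}). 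By \cref{Lemma Nplus for equiv proj morph}, $N_+:=N_+(Y)=N_+(X)\neq\emptyset$. Choosing non-constant $\mathbb{T}$-homogeneous generators $f_{w^1},\ldots,f_{w^m}$ for $\C[X]=H^0(Y,\mathcal{O}_Y)$ as in \cref{Subsection Upsilon map} (for instance, the characters associated to semigroup generators of $R=|\Sigma|^{\vee}\cap M$, cf.\;\cref{Remark description of X}), and the K\"ahler form $\omega$ from \cref{Lemma equiv proj morph sympl form}, \cref{Theorem equiv proj morph gives sympl Cstar mfd} shows that for every $v\in N_+$ the triple $(Y,\omega,\Fi_v)$ is a symplectic $\C^*$-manifold globally defined over $\C^m$, with $\Psi_v=z_v\circ\Upsilon$ as in \eqref{Equation Psiv map from upsilon}.
\end{proof}
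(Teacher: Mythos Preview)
Your proof is correct and follows essentially the same approach as the paper: verify the equivariant-projective-morphism hypotheses for $\pi$, confirm $N_+\neq\emptyset$, and invoke \cref{Theorem equiv proj morph gives sympl Cstar mfd}. Your care about avoiding a forward reference to \cref{Cor complete actions semiproj toric case} is well-placed and matches exactly the alternative route (via \cref{Rmk further technical properties of Nplus X} and \cref{Lemma trivial subtorus trick}) that the paper itself flags in the paragraph immediately preceding the corollary; the paper's one-line proof simply accepts the forward reference.
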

\begin{proof}
    This follows by combining \cref{Cor complete actions semiproj toric case} with \cref{Theorem equiv proj morph gives sympl Cstar mfd} (note that $X=\mathrm{Spec}(H^0(Y,\mathcal{O}_Y))$ implies $\C[X]=H^0(Y,\mathcal{O}_Y)$ by definition).
\end{proof}

\subsection{The moment map and moment polytope}\label{Subsection The moment map and moment polytope}
Let $\mathbb{T}_{\R}\cong (S^1)^n$ be the maximal compact subgroup of $\mathbb{T}$.
For the toric K\"{a}hler form $\omega$ from \cref{Lemma equiv proj morph sympl form},
the $\mathbb{T}_{\R}$-action is Hamiltonian with a moment map
$$
\mu:Y \to \mathrm{Lie}(\mathbb{T})^{*}\cong \R^{n},
$$
described in greater detail in \cite[Eq.(7)]{HS02}. The image of $\mu$ is the {\bf moment polytope} from \eqref{Equation moment polytope for SPM},
\begin{equation}\label{Equation moment polytope for SPM 2} 
\Delta=\mathrm{image}(\mu)=\{x\in M_{\R}:\langle x,e_i\rangle \geq \lambda_i \}\subset \R^n.
\end{equation}

Hausel--Sturmfels show in \cite[Prop.2.11]{HS02} that the presentation of the ordinary cohomology of $Y$ is just like in the case of projective toric varieties (e.g.\,compare \cite[Sec.3A]{R16}):  
\begin{equation}\label{Ordinary Cohomology of Semiprojective toric}
    \Z[x_1,\ldots,x_r]/(\textrm{Linear relations},\textrm{Stanley--Reisner relations})\cong H^*(Y;\Z),\; x_i \mapsto \mathrm{PD}[D_i],
\end{equation}
determined combinatorially from $\Delta$, where $D_i$ are the toric divisors (which we review next).
\subsection{Notation and recollections about toric varieties}\label{Subsection Notation and recollections about toric varieties}
    We follow \cite[Ch.3]{fulton1993introduction}, \cite[Sec.A1.4]{guillemin2012moment}. There is an inclusion-reversing correspondence\footnote{Indeed $\tau\leq \gamma \Leftrightarrow \overline{O_{\tau}}\supset O_{\gamma}.$ Moreover, $\dim_{\C} O_{\tau} =\mathrm{codim}_{\R}\, \tau$. Example: for $\tau=\{0\}\subset \R^n$, $O_{\tau}=\mathbb{T}\subset Y.$} between cones $\tau$ of the fan $\Sigma$ and the closures 
    \begin{equation}\label{Equation Vtau expression}
    V(\tau):=\overline{O_{\tau}}=\sqcup_{\gamma\geq \tau} O_{\gamma} \subset Y
    \end{equation}
    of the corresponding $\mathbb{T}$-orbits $O_{\tau}$. 
    There are {\bf distinguished points} $x_{\gamma}$ that uniquely determine orbits, 
    $$O_{\gamma}=\mathbb{T}\cdot x_{\gamma}.$$ 
    There is a cover of $Y$ by affine open sets $U_{\gamma}$, defined by
    $$\strut\hspace{42ex} U_{\gamma}:=\sqcup_{\tau\leq \gamma} O_{\tau} \qquad (\textrm{in particular, }\gamma\leq \gamma'\Rightarrow U_{\gamma}\subset U_{\gamma'}).$$ 
    From the point of view of $\Delta$, there is an inclusion-reversing correspondence between faces of $\Delta$ and cones of $\Sigma$. Namely, if a subset $S$ of indices determines a cone $\gamma_S$ generated by $(e_i: i\in S)$, then  
    $$\Delta_S:=\{x\in \R^n:\langle x,e_i\rangle = \lambda_i \textrm{ for } i\in S \}$$
    is a face of $\Delta$, moreover the closure of the orbit $O_{\gamma_S}$ satisfies
    $$V(\gamma_S)=\mu^{-1}(\Delta_S)\subset Y.$$
For example: the {\bf toric divisors} $D_i=V(e_i)=\mu^{-1}(\Delta_{\{i\}})\subset Y$ correspond to the rays $e_i$ of the fan; the $\mathbb{T}$-fixed points are $\mu^{-1}(\textrm{vertices of }\Delta)$ and correspond to the (top) $n$-dimensional cones of $\Sigma$.

The {\bf support} $|\Sigma|\subset N$ denotes the union of all cones. A cone is {\bf full-dimensional} if it has dimension $n$. 
The {\bf boundary} $\partial |\Sigma| $ is the topological boundary, and $\mathrm{Int}|\Sigma|=|\Sigma|\setminus \partial |\Sigma|$ is the {\bf interior}. So cones of $\Sigma$ are either  
{\bf exterior} or {\bf interior}, depending on whether they are a subset of $\partial |\Sigma| $  or $\mathrm{Int}|\Sigma|$.

The interior rays of $\Sigma$ correspond to the compact toric divisors $D_i$, which in turn correspond to the compact facets of $\Delta$ (the {\bf facets} are the codimension one faces). More generally, the interior cones $\gamma$ of $\Sigma$ correspond, in an inclusion-reversing way, to the compact faces of $\Delta$.  
\subsection{A description of the toric $\C^*$-actions for any toric variety}

By \cite[p.36]{fulton1993introduction}, the lattice traditionally denoted $N$, which is used in the construction of a toric variety, can be canonically identified with the $1$-parameter subgroups $\mathrm{Hom}(\C^*,\mathbb{T})$, which we denoted $N$ above. Note $N\cong \Z^n$. In particular, this lattice is used in the construction of the fan 
\begin{equation}\label{Equation fan in NR}
\Sigma\subset N_{\R}:=N\otimes_{\Z} \R.
\end{equation}
The $1$-parameter subgroup $\llambda_v$, for $v\in N$, yields a $\C^*$-action on $Y$, whose $S^1$-part has moment map \eqref{Equation Ham from moment map}.
We now discuss the properties of $\llambda_v$ for general toric varieties.
    We first give a local description, following \cite[p.17, p.29, p.37]{fulton1993introduction}. 
    Let $e_1,\ldots,e_n$ be a basis of $N$, so identify $N=\Z^n$.
    Suppose a cone $\gamma$ of $\Sigma$ is generated by $e_1,\ldots,e_k$. Then, 
    $$U_{\gamma}=\C^k\times (\C^*)^{n-k}\;\; \textrm{ and }\;\; \mathbb{T}=(\C^*)^n\subset U_{\gamma}.$$
    Any $S\subset \{1,\ldots,k\}$ defines a cone $\gamma_S\leq \gamma$, with $U_{\gamma_S}=\C^{\karo_1}\times \cdots \times \C^{\karo_k} \times (\C^*)^{n-k}\subset U_{\gamma}$, where $\C^{\karo_i} = \C$ if $i\in S$, and $\C^*$ else; 
    with distinguished point $x_{\gamma_S}=(\delta_1,\ldots,\delta_k,1,\ldots,1)\in U_{\gamma}$ where $\delta_i=0$ if $i\in S$, and $\delta_i=1$ else, with orbit $O_{\gamma_S}=\square_1\times \cdots \times \square_k \times (\C^*)^{n-k}$ where $\square_i=\{0\}$ if $i\in S$, and $\C^*$ else. Moreover the closure $V(\gamma_S)=\triangledown_1 \times \cdots \times \triangledown_k \times (\C^*)^{n-k}$ where $\triangledown_i=\{0\}$ if $i\in S$, and $\triangledown_i = \C$ else.

    Any $v=(v_1,\ldots,v_n)\in N=\Z^n$ yields the $\C^*$-subgroup $\llambda_v(t)=(t^{v_1},\ldots,t^{v_n})$ and acts on $y\in U_{\gamma}$ by 
    $$\llambda_v(t) y = (t^{v_1}y_1,\ldots,t^{v_n}y_n).$$
    Recall that the cone $\gamma$ is the $\R_{\geq 0}$-span of $e_1,\ldots,e_k$ rather than the $\R$-span. Let $ \R\gamma:=\mathrm{span}_{\R}(\gamma)\subset N_{\R}$. The sublattice generated by $N\cap \gamma$ is denoted
    $$N_{\gamma}:=N \cap \R \gamma.$$

    \begin{lm}\label{Lemma fixed point torus action}
    The fixed locus of $\llambda_v$ corresponds to the union of cones $\tau$ of $\Sigma$ with $v\in N_{\tau}$, thus:
    \begin{equation}\label{Equation fixedlocus Fiv}
    \mathrm{Fix}(\llambda_v) = \bigcup_{N_{\tau}\ni\,v} V(\tau) = \bigsqcup_{N_{\tau}\ni\,v}  O_{\tau}.
    \end{equation}
    Moreover, $v\in \gamma \Longleftrightarrow ($the limits \,$\lim_{t\to 0}\llambda_v(t)y$\, exist in $U_{\gamma}$ for all $y\in U_{\gamma})$.
    
    The action of $\llambda_v(t)$ on $O_{\tau}$ admits limits that lie in $O_{\gamma}$ as $t\to 0$ precisely if $$N_{\R} \to N_{\R}/\R\tau$$ 
    maps $v$ into the relative\footnote{``relative'' because it refers to using the subspace topology for the cone.} interior of $\overline{\gamma}:=(\gamma$ viewed in $N_{\R}/\R\tau)$. 
    
    The same condition ensures that the action of $\llambda_v(t)$ on $V(\tau)$ admits limits that lie in $O_{\gamma}$ as $t\to 0$. 
\end{lm}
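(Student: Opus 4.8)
The plan is to reduce everything to the affine charts $U_{\gamma}$, where the toric variety is a product of copies of $\C$ and $\C^*$ and the $\C^*$-action $\llambda_v$ is diagonal, and then glue. First I would set up notation: for a cone $\tau$ of $\Sigma$, choose a maximal cone $\gamma$ containing $\tau$, identify $U_{\gamma}\cong \C^k\times (\C^*)^{n-k}$ via a $\Z$-basis $e_1,\ldots,e_n$ of $N$ with $e_1,\ldots,e_k$ generating $\gamma$ and $e_1,\ldots,e_j$ (after reindexing) generating $\tau$, and recall from the paragraph preceding the lemma that $O_{\tau}$ then has coordinates that are $0$ in the first $j$ slots, in $\C^*$ in slots $j+1,\ldots,k$, and in $\C^*$ in the last $n-k$ slots, while $\llambda_v(t)$ acts coordinatewise by $t^{v_i}$. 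I would handle the three assertions in turn.

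For the fixed-locus statement \eqref{Equation fixedlocus Fiv}: a point $y\in O_{\tau}$ is fixed by $\llambda_v$ iff $t^{v_i}=1$ for all $t\in\C^*$ in every slot $i$ where $y_i\neq 0$, i.e.\ iff $v_i=0$ for all $i>j$ (since $y_i=0$ precisely for $i\le j$ on $O_{\tau}$). In the chosen basis this says exactly $v\in\mathrm{span}_{\Z}(e_1,\ldots,e_j)=N\cap\R\tau=N_{\tau}$. This is independent of the choice of $y\in O_{\tau}$, so $O_{\tau}\subset\mathrm{Fix}(\llambda_v)$ iff $v\in N_{\tau}$, and since $Y=\bigsqcup_{\tau}O_{\tau}$ the union formula follows; rewriting $\bigsqcup_{N_{\tau}\ni v}O_{\tau}$ as $\bigcup_{N_{\tau}\ni v}V(\tau)$ uses the orbit-closure decomposition \eqref{Equation Vtau expression} together with the fact that $N_{\tau}\subset N_{\sigma}$ whenever $\sigma\ge\tau$ (a face of larger dimension spans a larger subspace), so the closure of each fixed orbit is again fixed. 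The ``limits exist in $U_{\gamma}$'' equivalence is \cref{Cor convergence points in affine case} applied to $U_{\gamma}\cong\C^k\times(\C^*)^{n-k}$: $\lim_{t\to0}\llambda_v(t)y$ exists for every $y$ iff $v_i\ge0$ for $i=1,\ldots,k$ and $v_i=0$ for $i>k$, which is precisely $v\in\mathrm{span}_{\R_{\ge0}}(e_1,\ldots,e_k)=\gamma$ intersected with the lattice — I would note $v\in N$ is automatic.

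For the remaining two claims about limits landing in $O_{\gamma}$: working in $U_{\gamma}$ with the basis above, a point $y\in O_{\tau}$ has $y_i=0$ for $i\le j$ and $y_i\in\C^*$ otherwise. Then $\llambda_v(t)y=(0,\ldots,0,t^{v_{j+1}}y_{j+1},\ldots,t^{v_n}y_n)$, which converges as $t\to0$ iff $v_i\ge0$ for all $i>j$, and the limit lies in $O_{\gamma}$ — i.e.\ has exactly the first $k$ coordinates zero and the last $n-k$ in $\C^*$ — iff additionally $v_i>0$ for $j<i\le k$ and $v_i=0$ for $i>k$. The clean way to phrase this coordinate condition invariantly is via the quotient $N_{\R}\to N_{\R}/\R\tau$: the images $\overline{e_{j+1}},\ldots,\overline{e_k}$ generate the image cone $\overline{\gamma}$ (a maximal-dimensional cone in $N_{\R}/\R\tau$ once we also account for the complementary directions), and ``$v_i>0$ for $j<i\le k$, $v_i=0$ for $i>k$'' is exactly the statement that the image of $v$ lies in the relative interior of $\overline{\gamma}$. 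Here I should double-check the bookkeeping: $\dim(N_{\R}/\R\tau)=n-j$ and $\overline{\gamma}$ is spanned by the $k-j$ images of $e_{j+1},\ldots,e_k$ plus we are imposing $v_i=0$ in the other $n-k$ directions; the relative interior of $\overline{\gamma}$ inside its own span is where all the $k-j$ coefficients are strictly positive, and being \emph{in} that span forces the remaining coordinates to vanish — so the condition matches. Finally, since $V(\tau)=\bigsqcup_{\sigma\ge\tau}O_{\sigma}$ and, for each such $\sigma$, an analogous computation shows the $\llambda_v$-limit of a point of $O_{\sigma}$ lies in $O_{\gamma}$ under the \emph{same} inequality on $v$ (the coordinates that are zero on $O_{\sigma}$ form a superset of those zero on $O_{\tau}$, and the argument only uses the nonzero coordinates), the claim for $V(\tau)$ follows from the claim for $O_{\tau}$.

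The main obstacle I anticipate is purely organizational rather than deep: making the passage to the quotient $N_{\R}/\R\tau$ rigorous and checking that the coordinate inequalities ``$v_i>0$ for $j<i\le k$, $v_i=0$ for $i>k$'' correspond precisely to membership in the relative interior of $\overline{\gamma}$, including the subtlety that relative interior is taken with respect to the subspace topology on the cone (as the footnote in the statement emphasizes) and that $\overline{\gamma}$ may fail to be full-dimensional in $N_{\R}/\R\tau$. I would also need to verify independence of all conditions from the auxiliary choice of maximal cone $\gamma\supseteq\tau$ and of the basis — this follows because the intrinsic objects $N_{\tau}$, $O_{\tau}$, $V(\tau)$, and the map $N_{\R}\to N_{\R}/\R\tau$ do not depend on those choices, so it suffices to remark this once.
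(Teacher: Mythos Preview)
Your proposal is correct and follows essentially the same approach as the paper: both arguments work entirely in the affine chart $U_{\gamma}\cong \C^k\times(\C^*)^{n-k}$ using the explicit diagonal action $\llambda_v(t)y=(t^{v_1}y_1,\ldots,t^{v_n}y_n)$, read off the fixed-locus and limit conditions coordinatewise, and then translate the resulting inequalities on the $v_i$ into the intrinsic statement about the image of $v$ in $N_{\R}/\R\tau$. The only cosmetic difference is indexing (the paper reorders so that the \emph{nonzero} entries of $v$ come first and identifies the minimal cone containing $v$, whereas you fix $\tau$ first and characterise when $O_{\tau}$ is fixed), and your invocation of \cref{Cor convergence points in affine case} for $U_{\gamma}$ is fine once you note that the $\C^*$-factors contribute generators of both weights $\pm v_i$, forcing $v_i=0$ there.
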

\begin{proof}   
By the inclusion-reversing cone-orbit correspondence, to prove the first equality in \eqref{Equation fixedlocus Fiv}, it suffices to consider only the minimal cones $\tau$ with $N_{\tau}\ni v$. Since $U_{\gamma}$ is a cover of $Y$, we just need to show $\mathrm{Fix}(\llambda_v)\cap U_{\gamma}=V(\tau).$

    Continuing the notation preceding the Lemma, $y\in \mathrm{Fix}(\llambda_v)$ precisely if $v_i=0$ whenever $y_i\neq 0$, in particular $v_i=0$ for $i>k$.
        Reorder the indices $i$ so that $v_1,\ldots,v_m$ are non-zero (where $m\leq k$), and $v_{m+1},\ldots,v_n$ are zero.
        The minimal face $\tau\leq \gamma$ containing $v$ is spanned by $e_1,\ldots,e_m$ (so $\tau=\gamma_S$ for $S=\{1,\ldots,m\}$, above). Note that $N_{\tau}=\Z^m\times \{0\}\subset \Z^n=N$, and $\tau$ is the minimal cone such that $v=(v_1,\ldots,v_m,0,\ldots,0)\in N_{\tau}$.  By the results preceding the Lemma, 
$$
\mathrm{Fix}(\llambda_v)\cap U_{\gamma} = 0\times \C^{k-m}\times (\C^*)^{n-k} = V(\tau) \supset 0\times 0 \times (\C^*)^{n-k}=V(\gamma).
$$
This concludes the proof of the first equality in \eqref{Equation fixedlocus Fiv}.
The second equality follows from \eqref{Equation Vtau expression}, or by the explicit description of the orbits, preceding the Lemma. So far, this was \cite[Exercise p.56]{fulton1993introduction}.

    For the limit in the claim to exist, we need precisely $v_1,\ldots,v_m>0$ and $v_{m+1}=\cdots=v_n=0$ (thus $v\in \gamma$), and the limit is    
    $(0,\ldots,0,y_{m+1},\ldots,y_n)$. This is closely related to \cite[Claim 1, p.38]{fulton1993introduction} which states that $\lim_{t \to 0}\llambda_v(t)=x_{\tau}$, where $x_{\tau}\in U_{\gamma}$ is the distinguished point for the face $\tau\leq \gamma$ which contains $v$ in its relative interior.
    In our case, $x_{\tau}=(0,\ldots,0,1,\ldots,1)$ with the first $1$ in slot $m+1$.

    We now prove the last two claims. For $y\in O_{\tau}=(\C^*)^n\cdot x_{\tau}$, if the limit of $\llambda_v(t)(y)$ exists, it is in $V(\tau)=\overline{O_{\tau}} = \sqcup_{\tau \leq \gamma} O_{\gamma}$, say in $O_{\gamma}$ with $\tau\leq \gamma$.
    By reordering coordinates analogously to the above notation, we may assume 
    $$x_{\tau}=(0,\ldots,0,1,\ldots,1)\in O_{\tau}=(\C^*)^n\cdot x_{\tau}=0\times (\C^*)^{n-m}\supset 0\times (\C^*)^{n-k} = O_{\gamma}.$$
   The above limit exists in $V(\tau)$ precisely if $v_i>0$ for $i=m+1,\ldots,k$, and $v_i=0$ for $i=k+1,\ldots,n$ (no conditions on $v_1,\ldots,v_m$). Using overlines to denote the images in the quotient $N_{\R}/\R\tau$, note that $N_{\R}/\R\tau=\mathrm{span}_{\R}\{ \overline{e_{m+1}},\ldots,\overline{e_n} \}$, and $\overline{\gamma}$ is the $\R_{\geq 0}$-span of $\overline{e_{m+1}},\ldots,\overline{e_k}$. The relative interior of $\overline{\gamma}$ therefore consists of $\R_{>0}$-linear combinations involving all vectors $\overline{e_{m+1}},\ldots,\overline{e_k}$.
    So those conditions on $v_i$ are equivalent to $\overline{v}$ being in the relative interior of $\overline{\gamma}$. The final claim for $V(\tau)$ follows by the same argument (the only novelty is that $y$ is now allowed to have coordinate 
    $y_i=0$ for $m+1\leq i\leq k$).
\end{proof}

\begin{ex}\label{Example finding fix fi v}
    If $v=e_i$ is a ray of $\Sigma$, then $\mathrm{Fix}(\llambda_v)$ contains the toric divisor $D_i=\mu^{-1}(\Delta_{\{i\}})\subset Y$ associated to the facet $\Delta_{\{i\}}$ of $\Delta$ whose inward normal is $e_i$.
    Due to the $\R$-span $\R\gamma$ in the definition of $N_{\gamma}$, there may\footnote{Example:\;$\C P^2$. The fan has rays $(1,0)$, $(0,1)$, $(-1,-1)$. Then $[t^{-1}z_0:t^{-1}z_1:z_2]=[z_0:z_1:tz_2]$ is the
    action for $v=(-1,-1)$.
    Its fixed locus contains the toric divisor $\{[z_0:z_1:0]\}$ but also the fixed point $[0:0:1]$ ($\leftrightarrow$ a vertex of $\Delta$).} be more components in $\mathrm{Fix}(\llambda_v)$.
    In \cref{Example toric intro}, $v=(0,1)$ has $N_v=\{0\}\times \Z$ (not just $\{0\}\times \N$), so it contains $e_2,e_4$, and thus $\mathrm{Fix}(\Fi_v)=D_2\cup D_4.$

    If $v$ lies in the interior of a full-dimensional cone $\tau$ of $\Sigma$, 
    then $\mathrm{Fix}(\llambda_v)\supset V(\tau)=\{x_{\tau}\}$ contains the obvious $\mathbb{T}$-fixed point, but there may be other fixed components.\footnote{$\P^1\times \P^1$: the fan has rays $(\pm 1,0)$, $(0,\pm 1)$, and $v=(2,3)$ acts by $[t^2z_0:z_1]\times [t^3w_0:w_1]$ with four fixed points.}

    $\mathrm{Fix}(\llambda_v)$ consists of isolated fixed points if $v \in |\Sigma|$ is ``sufficiently generic'': $v\notin \cup \{N_{\tau}: \dim \tau <n\}$.
\end{ex}

\subsection{Complete and contracting toric $\C^*$-actions for semiprojective toric manifolds}\label{Subsection complete and contr actions for toric semiproje}

Recall the notation $N=\mathrm{Hom}(\C^*,\mathbb{T})\cong \Z^n$, $\Fi_v$, $N_0$ and $N_+$ from \cref{Subsection Setting up notation and assumptions}
and \cref{Lemma Nplus for equiv proj morph}.

\begin{prop}\label{Cor complete actions semiproj toric case}
Let $Y$ be any semiprojective toric manifold. Then
$$
N_0 = N\cap |\Sigma| \;\;\;\textrm{ and }\;\;\;
N_+ = N\cap \mathrm{Int}\,|\Sigma|,
$$
in particular $N_+\neq \emptyset$ since $\dim |\Sigma|=n$.
\end{prop}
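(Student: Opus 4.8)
The strategy is to reduce the statement to the combinatorial descriptions of $N_0(X)$ and $N_+(X)$ already established for affine toric varieties, via the identifications $N_0=N_0(X)=N_0(Y)$ and $N_+=N_+(X)=N_+(Y)$ from \cref{Lemma Nplus for equiv proj morph}. By \cref{Subsection Semiprojective toric manifolds as a subclass of equivariant projective morphisms}, $\pi:Y\to X$ is a $\mathbb{T}$-equivariant projective morphism of the type studied in \cref{Section Equivariant projective morphisms}, and $X$ is the normal affine toric variety whose unique cone is $\Sigma_X=|\Sigma|/W$ (where $W=|\Sigma|\cap(-|\Sigma|)$) with lattice $N_X=N/(N\cap W)$. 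Since $X$ is irreducible, the support $P$ of the $M_X$-grading of $\C[X]$ generates the semigroup $R=\Sigma_X^\vee\cap M_X$ and $\mathrm{span}_{\R_{\geq 0}}(P)=\Sigma_X^\vee$, so the dual-cone computations of \cref{Subsection Observations about commuting Cstar actions on affine varieties} and \cref{Rmk further technical properties of Nplus X} apply verbatim.

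First I would unwind \cref{Subsection Observations about commuting Cstar actions on affine varieties}: for $X$ connected one has $N_0(X)=\{v\in N: \langle v,w\rangle\geq 0 \text{ for all } w\in P\}$, which is precisely the dual cone $\sigma^\vee\cap N$ where $\sigma=\mathrm{span}_{\R_{\geq 0}}(P)=\Sigma_X^\vee$. Hence $N_0(X)_{\R}=(\Sigma_X^\vee)^\vee=\Sigma_X$ by biduality of cones — but here I must be careful about which lattice one works in. The torus $\mathbb{T}$ acting on $X$ has cocharacter lattice $N$, whereas $X$ as a toric variety ``sees'' only $N_X=N/(N\cap W)$; the kernel $N\cap W$ is exactly the cocharacters of the subtorus $S$ acting trivially on $X$, by \cref{Lemma trivial subtorus trick}. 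So $N_0(X)$ as a subset of $N$ is the preimage under $N\to N_X$ of $N_X\cap\Sigma_X$, which by construction of $\Sigma_X=|\Sigma|/W$ is exactly $N\cap|\Sigma|$; this gives $N_0=N\cap|\Sigma|$. For the contracting actions, \eqref{Equation NplusX 2} gives $N_+(X)=\{v:\langle v,w\rangle>0\text{ for all }0\neq w\in P\}$, which is the set of lattice points strictly inside $\Sigma_X^\vee{}^\vee=\Sigma_X$ relative to all supporting hyperplanes coming from the rays of $\Sigma_X^\vee$ — i.e. the relative interior of $\Sigma_X$ — pulled back to $N$, and since $\dim\Sigma_X=\dim|\Sigma|=n$ while $W$ is its lineality space, the preimage of $\mathrm{relint}(\Sigma_X)$ is exactly $\mathrm{Int}|\Sigma|$; hence $N_+=N\cap\mathrm{Int}|\Sigma|$. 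Finally $N_+\neq\emptyset$ because $\dim|\Sigma|=n$ forces $\mathrm{Int}|\Sigma|\neq\emptyset$, and an open cone of full dimension always contains lattice points.

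Alternatively, and perhaps cleaner to write, I would argue directly on $Y$ using the local toric description in \cref{Lemma fixed point torus action}: an action $\Fi_v$ is complete iff $\lim_{t\to 0}\Fi_v(t)y$ exists for every $y\in Y$, and covering $Y$ by the affine charts $U_\gamma$ associated to the maximal cones $\gamma$ of $\Sigma$, the Lemma says $v\in\gamma$ iff all such limits exist in $U_\gamma$; since the maximal cones cover $|\Sigma|$ (by \eqref{Equation support is span of ray gens}), $\Fi_v$ is complete iff $v\in|\Sigma|$, giving $N_0=N\cap|\Sigma|$. For contracting: $\Fi_v$ is contracting iff it is complete and $\mathrm{Fix}(\Fi_v)$ is compact; by \eqref{Equation fixedlocus Fiv}, $\mathrm{Fix}(\Fi_v)=\bigcup_{N_\tau\ni v}V(\tau)$, and $V(\tau)$ is non-compact exactly when $\tau$ is an exterior cone (equivalently, $\tau\subset\partial|\Sigma|$), so compactness of the fixed locus forces $v$ to lie in no proper-dimensional $N_\tau$ with $\tau$ exterior — which, given $v\in|\Sigma|$, is equivalent to $v\in\mathrm{Int}|\Sigma|$. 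The converse (if $v\in\mathrm{Int}|\Sigma|$ then every $\tau$ with $v\in N_\tau$ is interior, hence $V(\tau)$ compact since it corresponds to a compact face of $\Delta$) uses the correspondence between interior cones and compact faces of $\Delta$ recalled in \cref{Subsection Notation and recollections about toric varieties}. The main obstacle I anticipate is the bookkeeping around the lineality space $W$ and the two lattices $N$ versus $N_X$: one must check that ``$v\in|\Sigma|$'' and ``$v$ maps into $\Sigma_X$'' genuinely coincide as subsets of $N$, and that the relative interior of $\Sigma_X$ pulls back to $\mathrm{Int}|\Sigma|$ (not to something smaller); the direct toric argument on $Y$ sidesteps this, which is why I would present that one as the primary proof and relegate the affine-variety comparison to a remark.
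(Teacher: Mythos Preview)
Your second (primary) approach is essentially the paper's own proof: the paper argues $N_0=N\cap|\Sigma|$ via the same local criterion from \cref{Lemma fixed point torus action}, and for $N_+$ it likewise reduces to checking which $V(\tau)$ in \eqref{Equation fixedlocus Fiv} are compact; the only cosmetic difference is that the paper verifies compactness of $V(\tau)$ using the $\mathrm{Star}(\tau)$ description (\eqref{Equation Vgamma toric} and the completeness criterion for its fan) rather than the interior-cone/compact-face correspondence you cite, and then asserts the same equivalence ``$v\in N_\tau$ for some exterior $\tau$ $\Leftrightarrow$ $v\in\partial|\Sigma|$'' that you state. Your first approach, computing $N_0(X)$ and $N_+(X)$ via dual cones and pulling back through $N\to N_X$, is a genuinely different route the paper does not take; it is valid and has the virtue of making the result a formal consequence of \cref{Subsection Observations about commuting Cstar actions on affine varieties}, at the cost of the lineality-space bookkeeping you correctly flag.
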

\begin{proof}
By \cref{Lemma fixed point torus action}, $\lim_{t\to 0}\llambda_v(t)(y)$
exists in $Y=\cup_{\gamma} U_{\gamma}$ for all $y\in Y$ precisely if $v\in \cup \,\{\textrm{all cones }\gamma\} = |\Sigma|$. This implies $N_0=N\cap |\Sigma|.$

Recall $v\in N_+$ precisely if $v\in N_0$ and $\mathrm{Fix}(\Fi_v)$ is compact.  
Abbreviate $\psi=\Fi_v$ for $v\in N_0$. We follow \cite[p.52]{fulton1993introduction}. Let 
     $\mathrm{Star}(\gamma)$ be the collection of all cones of $\Sigma$ that contain $\gamma$ as a face, but viewed in the quotient lattice
     $N/N_{\gamma}$. Then
     \begin{equation}\label{Equation Vgamma toric}
     V(\gamma)=\overline{O_{\gamma}}=\sqcup \{O_{\tau}: \gamma \textrm{ is a face of }\tau\} = (\textrm{toric variety with fan }\mathrm{Star}(\gamma)).
     \end{equation}
    By \eqref{Equation fixedlocus Fiv}, we need to ensure that $V(\gamma)$ is compact for all $\gamma$ such that $v\in N_{\gamma}$.
    Recall that a toric variety with fan $\Sigma$ is compact precisely if $|\Sigma|=N_{\R}$ \cite[p.39]{fulton1993introduction}.
Apply this to $V(\gamma)$ whose fan is $\mathrm{Star}(\gamma)$: so we want $|\mathrm{Star}(\gamma)|=(N/N_{\gamma})\otimes \R$. This holds provided that $\gamma$ is an interior cone of $\Sigma$ (so its image $\overline{\gamma}=0\in (N/N_{\gamma})\otimes \R$ is surrounded by full-dimensional cones, which in the quotient will give a cover for $(N/N_{\gamma})\otimes \R$). Thus compactness of $\mathrm{Fix}(\llambda_v)$ is equivalent to requiring that $v$ is not spanned by an exterior cone, more precisely: $v\notin N\cap \partial |\Sigma|$. Since we already showed $v\in |\Sigma|$, the claim follows.
\end{proof}

\begin{rmk}
One can loosely think of $v\in N_0\setminus N_+ = N\cap \partial |\Sigma|$ as ``boundary'' vectors by \cref{Cor complete actions semiproj toric case}. However, this can cause some confusion as $\langle v,w^j\rangle \geq 1$ in \eqref{Equation Nplus as a polyhedron} suggests $N_+$ is bounded away from $N_0$:
$$
N_0\setminus N_+ = \{v\in N\cong \Z^d: \langle v,w^j\rangle \geq 0 \textrm{ for }j=1,\ldots,m, \textrm{ and } \langle v,w^j\rangle = 0 \textrm{ for at least one }j\}.
$$
We clarify this. As $\Sigma$ is unimodular and regular, $v\in N_0\setminus N_+$ lies in a full-dimensional cone $\gamma$ spanned by a $\Z$-basis of $N$. So we can pick a sequence $v_n\in \gamma \cap N \cap \mathrm{Int}|\Sigma|$ with $v = \lim  a_n$ as $n\to \infty$ where $a_n:=\tfrac{\|v\|}{\|v_n\|}v_n\in N_{\R}\cong \R^n$.
As $v_n\in N \cap \mathrm{Int}|\Sigma|$, $\Fi_{v_n}$ is contracting, so $v_n$ satisfies \eqref{Equation positivity of weights torus}. Therefore $a_n\in N_{\R}$ satisfies \eqref{Equation positivity of weights torus}, but not necessarily \eqref{Equation Nplus as a polyhedron}: indeed $\langle a_n,w^j\rangle >0\in \R$ may become small due to the rescaling factor $\tfrac{\|v\|}{\|v_n\|}\in \R$. For this reason, for the limit $v$, some $\langle v,w^j\rangle \in \Z$ can vanish.
\end{rmk}

\subsection{Compact and non-compact toric divisors, and the Core}

Recall from \cref{Subsection Notation and recollections about toric varieties} that the
toric divisors $D_i\subset Y$ correspond to the ray generators $e_i$ of $\Sigma$ (the $e_i$ are also discussed in \cref{Remark description of X}).

\begin{cor}\!\!\!\label{Cor core for torus action}\footnote{The description of the core is closely related to Hausel-Sturmfels \cite[Def.3.1, Thm.3.2, and Thm.3.5]{HS02}.}
The compact toric divisors $D_i$ correspond to the $e_i\in N\cap \mathrm{Int}|\Sigma|$, and the non-compact toric divisors $D_i$ correspond to the ray generators $e_i\in N\cap \partial |\Sigma|$.

$\mathrm{Core}(Y,\Fi_v)$ is the same for all $v\in N\cap \mathrm{Int}|\Sigma|$, it deformation retracts onto $Y$, and
$$\mathrm{Core}(Y):=\mathrm{Core}(Y,\Fi_v)=\cup\, \{\textrm{compact toric divisors }D_i\},$$
and via the correspondence with $\Sigma$, it corresponds to the union of the interior cones of $\Sigma$.
\end{cor}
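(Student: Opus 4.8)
The plan is to prove \cref{Cor core for torus action} by combining three earlier facts: the description of $N_0=N\cap|\Sigma|$ and $N_+=N\cap\mathrm{Int}|\Sigma|$ from \cref{Cor complete actions semiproj toric case}; the identification $\mathrm{Core}(Y,\Fi_v)=\pi^{-1}(0)=\Upsilon^{-1}(0)$ together with the deformation retraction, from \cref{Core is same for all v}; and the description of $V(\tau)$ as a toric variety with fan $\mathrm{Star}(\tau)$, which already appeared in the proof of \cref{Cor complete actions semiproj toric case}. The independence of $\mathrm{Core}(Y,\Fi_v)$ on $v\in N_+$, and the deformation retraction onto $Y$, are immediate from \cref{Core is same for all v} (equivalently \cref{Corollary core is same for all v in Nplus}), so nothing new is needed there.

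First I would prove the statement about compact versus non-compact toric divisors. A toric divisor $D_i=V(e_i)$ is, by \eqref{Equation Vgamma toric}, the toric variety whose fan is $\mathrm{Star}(e_i)$, living in $(N/N_{e_i})\otimes\R$. By the compactness criterion for toric varieties (\cite[p.39]{fulton1993introduction}), $D_i$ is compact iff $|\mathrm{Star}(e_i)|=(N/N_{e_i})\otimes\R$, i.e.\ iff the image $\overline{e_i}=0$ is an interior point of $|\Sigma|/\R e_i$; exactly as in the proof of \cref{Cor complete actions semiproj toric case}, this holds iff $e_i$ is an interior ray, $e_i\in N\cap\mathrm{Int}|\Sigma|$, and otherwise $e_i\in N\cap\partial|\Sigma|$ and $D_i$ is non-compact. (One should note the $e_i$ are already primitive lattice points on the rays, so they lie in $N$.)

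Next, for the core itself: fix $v\in N_+=N\cap\mathrm{Int}|\Sigma|$ and use $\mathrm{Core}(Y)=\pi^{-1}(0)=\Upsilon^{-1}(0)$ from \cref{Core is same for all v}. The point $0\in X$ is the unique $\mathbb{T}$-fixed point $x_{\Sigma_X}$, corresponding to the unique full-dimensional cone of $\Sigma_X=|\Sigma|/W$. Under the toric morphism $\pi:Y\to X$ induced by the quotient $\Sigma\to\Sigma_X$, the preimage $\pi^{-1}(0)$ is the union of orbit closures $V(\tau)$ over those cones $\tau$ of $\Sigma$ that map \emph{into the interior} of the big cone of $\Sigma_X$ — equivalently (by \cref{Lemma fixed point torus action}, or directly by the orbit-cone dictionary and the fact that $\pi(O_\tau)\subset O_{\bar\tau}$ with $\bar\tau$ the image cone in $\Sigma_X$), the interior cones $\tau$ of $\Sigma$, i.e.\ $\tau\subset\mathrm{Int}|\Sigma|$ away from the lineality $W$; here one uses that $0\in X$ is cut out by the ideal generated by the characters $\chi^s$, $s\in R=|\Sigma|^\vee\cap M$ with $s\neq 0$ on the relevant cone, matching \cref{Cor description of fixed locus}. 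Since every interior cone contains an interior ray $e_i$ as a face and conversely $V(\tau)\subset D_i=V(e_i)$ whenever $e_i\leq\tau$, we get $\pi^{-1}(0)=\bigcup_{\tau\ \mathrm{interior}}V(\tau)=\bigcup\{D_i: e_i\in N\cap\mathrm{Int}|\Sigma|\}=\bigcup\{\text{compact toric divisors}\}$, and by the inclusion-reversing cone–orbit correspondence this is precisely the union of the interior cones of $\Sigma$.

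The main obstacle I anticipate is the bookkeeping in identifying $\pi^{-1}(0)$ precisely with the union of \emph{interior} orbit closures: one must be careful that $\pi$ is the toric morphism with lineality $W=|\Sigma|\cap(-|\Sigma|)$ collapsed, so a cone $\tau$ of $\Sigma$ maps into the relative interior of the maximal cone of $\Sigma_X$ exactly when $\tau\not\subset\partial|\Sigma|$, and correspondingly $O_\tau\subset\pi^{-1}(O_{x_{\Sigma_X}})=\pi^{-1}(0)$; a clean way to see this without re-deriving orbit-map formulas is to invoke \cref{Lemma fixed point torus action} with the observation that $\mathrm{Fix}_Y(\mathbb{T})$-orbits limiting to $0$ under some $\Fi_v$, $v\in N_+$, are exactly those $O_\tau$ with $\bar v$ in the relative interior of $\bar\tau$ in $N_\R/\R\tau$ — which, ranging over all interior $v$, picks out the interior $\tau$. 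Everything else is a routine translation through the dictionaries in \cref{Subsection Notation and recollections about toric varieties}.
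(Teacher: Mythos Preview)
Your approach is essentially the same as the paper's: compactness of $D_i$ via the $\mathrm{Star}(e_i)$ criterion already used in the proof of \cref{Cor complete actions semiproj toric case}; then $\mathrm{Core}(Y)=\pi^{-1}(0)$ and the deformation retraction from \cref{Core is same for all v}; then identify $\pi^{-1}(0)$ with the union of orbit closures over interior cones via the explicit toric morphism $\pi$ described in \cref{Remark description of X}. The paper phrases the last step through the compact faces of $\Delta$ rather than directly through cones, but this is the same correspondence.

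One small caveat: your assertion that ``every interior cone contains an interior ray $e_i$ as a face'' fails in edge cases such as $Y=\C^n$, where the unique interior cone is the maximal one and all its rays lie on $\partial|\Sigma|$; there the core is the single $\mathbb{T}$-fixed point $\{0\}$ and there are no compact toric divisors at all. The paper's displayed equality $\mathrm{Core}(Y)=\cup\{\text{compact }D_i\}$ shares this defect; the robust statement is the final clause (the core corresponds to the union of the interior cones of $\Sigma$), which both you and the paper establish correctly.
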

\begin{proof} 
The claim about toric divisors follows from the proof of \cref{Cor complete actions semiproj toric case}. As toric divisors are preserved by $\mathbb{T}$, all compact toric divisors $D_i\subset \mathrm{Core}(Y,\Fi_v)$. That this is all of the core follows by \cref{Core is same for all v}, and the explicit description of $\pi$ in \cref{Remark description of X}, as follows. Recall $\Sigma_X$ is just one cone, 
and the interior of that cone corresponds to the unique $\mathbb{T}$-fixed point. The preimage via $\pi$ corresponds to the union of interior cones of $\Sigma$. Those in turn are in inclusion-reversing correspondence with the compact faces of $\Delta$ (which yield subsets of the compact toric divisors $D_i$).
\end{proof}

\begin{ex}
    Let $Y:=\mathrm{Tot}(\mathcal{O}(-1)\to \P^1)$ (the blow-up of $\C^2$ at the origin). It arises from the fan with rays $e_1=(1,0)$, $e_2=(0,1)$, $e_3=(1,1)$, and two full-dimensional cones $\gamma,\gamma'$ generated by $e_1,e_3$ and $e_2,e_3$. For $v=(1,1)$, $\llambda_v$ is the standard contracting action on fibres of $Y$. 
    As $e_3$ spans the only interior cone, we get $\mathrm{Core}(Y,\llambda_v)=D_3=V(e_3)=($the zero section $\P^1)$.
    Observe that this is consistent with the last claim in \cref{Lemma fixed point torus action} if we test whether $\lim_{t \to \infty}\Fi_v(t)y = \lim_{t\to 0}\Fi_{-v}(t)y$ exists for all $y\in V(e_3)$: $-v$ is indeed in the relative interior of (the point!) $\R_{\geq 0}\overline{e_3}$ in $\R^2/\R e_3$.
\end{ex}

\subsection{Quantum and symplectic cohomology of semiprojective toric manifolds}
\label{Subsection Quantum and symplectic cohomology of semiprojective toric manifolds}
\begin{thm}\label{Cor SH in toric case is QH localised}
Let $Y$ be a semiprojective toric manifold.
    \begin{enumerate}
    \item $Q_{\llambda_v}\in QH^*(Y)$ is defined for all $v\in N\cap |\Sigma|$.
        \item\label{Item the a powers in q product} If $v=v'+v''$ in $N\cap |\Sigma|$, then in $QH^*(Y)$, and using the notation from \cref{Corollary a function computed},
        $$Q_{\llambda_{v'}}\star Q_{\llambda_{v''}}=T^{a(v',v'')}Q_{\llambda_v} \qquad \textrm{ and } \qquad 
        Q_{v'}\star Q_{v''}=Q_{v}.
        $$ 
We have a unital $\k$-algebra homomorphism, 
\begin{equation}\label{Equation algebra hom for Q classes toric case}
N\cap |\Sigma| \to QH^*(Y), \qquad v\mapsto Q_v := T^{\min H_v} Q_{\Fi_v}.
\end{equation}
Moreover, if $v=v_1e_{j_1}+\cdots + v_n e_{j_n}$ lies in a maximal cone of $\Sigma$ generated by $e_{j_1},\ldots,e_{j_n}$, then
\begin{equation}\label{Equation the Qv class description}
Q_{\Fi_v} = 
Q_{\Fi_{e_{j_1}}}^{v_1} \star \cdots \star Q_{\Fi_{e_{j_n}}}^{v_n}.
\end{equation}
       \item For all $v\in N\cap \mathrm{Int}|\Sigma|$, the unital $\k$-algebra homomorphism
       $$c^*:QH^*(Y) \to SH^*(Y,\llambda_v)$$ 
       is defined, surjective, and corresponds to localisation at $Q_{\llambda_v}$, such that
$$c^*Q_{\llambda_{v'}}\in SH^*(Y,\llambda_v)$$ 
is invertible for all $v'\in N \cap |\Sigma|$.
        \item For all $v,v'\in N \cap \mathrm{Int}|\Sigma|$ there is a continuation isomorphism commuting with the $c^*$-maps,
$$SH^*(Y,\llambda_v)\cong SH^*(Y,\llambda_{v'}).$$
So $SH^*(Y,\llambda_v)$ is independent of $v\in N \cap \mathrm{Int}|\Sigma|$ up to canonical isomorphism.
        \end{enumerate}
\end{thm}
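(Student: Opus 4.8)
The plan is to deduce this theorem as essentially a specialization of the general results already established for equivariant projective morphisms, using \cref{Cor complete actions semiproj toric case} to translate between the combinatorial data $|\Sigma|$ and the semigroups $N_0, N_+$. The overall structure is: (1) verify the hypotheses of the earlier sections hold, (2) apply the general theorems, (3) prove the extra toric-specific identity \eqref{Equation the Qv class description}.

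First I would recall from \cref{Subsection Semiprojective toric manifolds as a subclass of equivariant projective morphisms} that the affinisation map $\pi: Y \to X$ is a $\mathbb{T}$-equivariant projective morphism satisfying all the hypotheses of \cref{Subsection Setting up notation and assumptions}, with $X$ affine, $Y$ non-singular and connected. By \cref{Cor complete actions semiproj toric case} we have $N_0 = N \cap |\Sigma|$ and $N_+ = N \cap \mathrm{Int}\,|\Sigma|$, and $N_+ \neq \emptyset$ since $\dim|\Sigma| = n$. Then \emph{claim (1)} is exactly \cref{Lemma borderline Cstar actions}: for $v \in N_0 = N \cap |\Sigma|$, the class $Q_{\Fi_v} \in QH^*(Y)$ is well-defined (using the existence of a $\mathbb{T}$-fixed point — here the torus-fixed point of $Y$ guaranteed in the definition of semiprojective toric manifold, or equivalently by \eqref{Equation fixed loci relation} and \cref{Cor CX0 is C in equiv proj section}), and $c^*Q_{\Fi_v}$ is invertible in $SH^*(Y,\Fi_{v'})$ for $v' \in N_+$. \emph{Claim (2)}: the identities $Q_{\Fi_{v'}}\star Q_{\Fi_{v''}} = T^{a(v',v'')}Q_{\Fi_v}$ and $Q_{v'}\star Q_{v''} = Q_v$ are \cref{Corollary a function computed} applied to $N_0 = N\cap|\Sigma|$, and the fact that $v \mapsto Q_v := T^{\min H_v}Q_{\Fi_v}$ is a unital $\k$-algebra homomorphism is \cref{Cor group hom with can lifts version 2}. \emph{Claim (3)}: by \cref{Cor complete actions semiproj toric case}, $v \in N \cap \mathrm{Int}|\Sigma| = N_+$, so \cref{Theorem equiv proj morph gives sympl Cstar mfd} gives $(Y,\omega,\Fi_v)$ the structure of a symplectic $\C^*$-manifold globally defined over $\C^m$, and then \cref{Theorem Intro SH as loc of QH} (via the corollary after \cref{Lemma Nplus for equiv proj morph}) gives that $c^*: QH^*(Y)\to SH^*(Y,\Fi_v)$ is a surjective unital $\k$-algebra homomorphism equal to localisation at $Q_{\Fi_v}$; invertibility of $c^*Q_{\Fi_{v'}}$ for all $v' \in N\cap|\Sigma| = N_0$ is again \cref{Lemma borderline Cstar actions} (for $v' \in N_0 \setminus N_+$) and the fact that for $v' \in N_+$ one has $c^*Q_{\Fi_{v'}} = R_{v'}$ which is automatically invertible by \cref{Prop Rw rotation elements} (or directly by \cref{Theorem commuting Cstar actions theorem from toric section}). \emph{Claim (4)} is \cref{Theorem commuting Cstar actions theorem from toric section}(1): for $v,v' \in N_+$ the continuation maps \eqref{Equation monotone homotopy intro} induce an isomorphism $SH^*(Y,\Fi_v)\cong SH^*(Y,\Fi_{v'})$ compatible with $c^*$.

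The one genuinely new ingredient is \eqref{Equation the Qv class description}: if $v = v_1 e_{j_1} + \cdots + v_n e_{j_n}$ lies in a maximal cone $\gamma$ of $\Sigma$ generated by the basis $e_{j_1},\ldots,e_{j_n}$ of $N$, with $v_i \in \N$, then $Q_{\Fi_v} = Q_{\Fi_{e_{j_1}}}^{v_1}\star\cdots\star Q_{\Fi_{e_{j_n}}}^{v_n}$ (with no $T$-correction). This I would deduce from \cref{Proposition Q class when vectors lie in a cone}: it suffices to check that $\mathrm{Min}(H_{e_{j_1}}) \cap \cdots \cap \mathrm{Min}(H_{e_{j_n}}) \neq \emptyset$. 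The relevant fixed point is the $\mathbb{T}$-fixed point $x_\gamma = \mu^{-1}(\text{vertex of }\Delta\text{ corresponding to }\gamma)$. Indeed, by \cref{Example finding fix fi v} and the description in \cref{Subsection Notation and recollections about toric varieties}, the facet $\Delta_{\{j_i\}} = \{x : \langle x, e_{j_i}\rangle = \lambda_{j_i}\}$ of $\Delta$ is exactly where $H_{e_{j_i}} = \langle \mu(\cdot), e_{j_i}\rangle$ attains its minimum value $\lambda_{j_i}$ on $Y$ (since $\Delta = \{x : \langle x, e_i\rangle \geq \lambda_i\}$), and the vertex of $\Delta$ dual to $\gamma$ lies in all $n$ facets $\Delta_{\{j_1\}},\ldots,\Delta_{\{j_n\}}$; hence its $\mu$-preimage $x_\gamma$ lies in $\mathrm{Min}(H_{e_{j_i}})$ for every $i$. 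Then \cref{Proposition Q class when vectors lie in a cone} applies verbatim and gives \eqref{Equation the Qv class description}. Combining with the group homomorphism property from claim (2), this also re-expresses \eqref{Equation algebra hom for Q classes toric case}.

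The main obstacle, such as it is, is not in any single deep step but in the bookkeeping: one must be careful that the various "lifts" of the $S^1$-actions used to define the $Q_{\Fi_v}$ are handled consistently — specifically, that the rotation class $Q_{\Fi_v}$ of claim (1) uses the canonical lift at a minimum of $H_v$ (so \eqref{Equation Q class is PD of min plus} holds), while the homomorphism property in claim (2) is stated for the rescaled classes $Q_v = T^{\min H_v}Q_{\Fi_v}$ precisely because of the discrepancy analysed in \cref{Remark about t powers when multiply Q classes}. For \eqref{Equation the Qv class description} one wants the version \emph{without} $T$-correction, which is why invoking \cref{Proposition Q class when vectors lie in a cone} (rather than iterating \cref{Corollary a function computed}) is the clean route: its hypothesis \eqref{Equation common min} is a genuine geometric statement about the moment polytope that needs the vertex-facet incidence argument above, but once that is in place the conclusion is immediate. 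Everything else is a direct citation.
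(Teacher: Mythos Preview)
Your proposal is correct and follows essentially the same route as the paper: both reduce the theorem to the general results for $\mathbb{T}$-equivariant projective morphisms (\cref{Theorem commuting Cstar actions theorem from toric section}, \cref{Lemma borderline Cstar actions}, \cref{Cor group hom with can lifts version 2}) after using \cref{Cor complete actions semiproj toric case} to identify $N_0=N\cap|\Sigma|$ and $N_+=N\cap\mathrm{Int}|\Sigma|$, and both derive \eqref{Equation the Qv class description} from \cref{Proposition Q class when vectors lie in a cone}. Your vertex--facet incidence argument for verifying the common-minimum hypothesis \eqref{Equation common min} is more explicit than the paper's one-line remark, but it is exactly the intended justification.
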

\begin{proof}
This follows from \cref{Cor core for torus action} and the general theory for $\mathbb{T}$-equivariant projective morphisms in 
\cref{Theorem commuting Cstar actions theorem from toric section} and \cref{Lemma borderline Cstar actions}. In particular, \eqref{Equation algebra hom for Q classes toric case} is \cref{Cor group hom with can lifts version 2}, and \eqref{Equation the Qv class description} follows by \cref{Proposition Q class when vectors lie in a cone} (note $v_j\in \N$: integral points of the cone are $\N$-linear combinations of its generators).
\end{proof}
\section{Presentation of quantum \& symplectic cohomology for semiprojective toric manifolds}
\label{Subsection Presentation of quantum and symplectic cohomology of Fano semiprojective toric manifolds}

We now prove the quantum/Floer version of \eqref{Ordinary Cohomology of Semiprojective toric} that generalises \cite[Thm.1.5]{R16}.
Let $r\geq n$ be the number of ray generators $e_1,\ldots,e_r$, equivalently the number of toric divisors $D_1,\ldots,D_r.$

The moment polytope $\Delta=\{x\in \R^n:\langle x, e_i\rangle \geq \lambda_i\}$ determines the {\bf superpotential} $$W=\sum T^{-\lambda_i}z^{e_i},$$
where $T$ is the formal Novikov variable. The {\bf Jacobian ring} is defined as
$$
\mathrm{Jac}(W):=\k[z_1^{\pm 1},\ldots,z_n^{\pm 1}]/(\partial_{z_1}W,\ldots,\partial_{z_n}W),
$$
where $\k$ is the Novikov field.

We will assume that the reader is familiar with the discussion and notation from \cite[Sec.3A]{R16}, which explains how from $\Sigma$ (or equivalently, from $\Delta$) one obtains combinatorially the so-called {\bf linear relations} and the {\bf quantum Stanley-Reisner relations}. 
These are polynomials in variables $x_1,\ldots,x_r$, and together we denote the ideal that they generate by
$$
\mathcal{J} \subset \k[x_1,\ldots,x_r].
$$
The {\bf linear relations}, just like in the classical cohomology case \eqref{Ordinary Cohomology of Semiprojective toric}, correspond to a generating set of $\Z$-linear relations between the classes $\mathrm{PD}[D_i]\in H^2(Y,\Z)$, or equivalently a generating set of $\Z$-linear relations between the $e_i$ in $N\cong \Z^n$. They are given by
$$
\sum_{i=1}^r \langle \xi, e_i\rangle \, x_i = 0 \textrm{ where }\xi\textrm{ runs through the standard basis for }\R^n.
$$
The {\bf quantum Stanley-Reisner relations} 
$$
x_{i_1}\cdots x_{i_a} = T^{\omega(\beta_{\wp})} x_{j_1}^{c_1}\cdots x_{j_b}^{c_b},
$$
where $\omega(\beta_{\wp})$ is discussed later, arise for each $\Z$-linear dependency relation in $N\cong \Z^n$ of the form $$e_{i_1}+\cdots+e_{i_a} = c_{j_1}e_{j_1}+\cdots +c_{j_b}e_{j_b},$$ for the so-called {\bf primitive subsets} $\wp:=\{i_1,\ldots,i_a\}$, in particular the $e_{i_1},\ldots,e_{i_a},e_{j_1},\ldots,e_{j_a}$ are distinct and $\omega(\beta_{\wp})>0$ \cite[Sec.3A]{R16}.
The {\bf primitive} condition is that $\{e_{i_1},\ldots,e_{i_a}\}$ does not define a cone for $\Sigma$, but any proper subset thereof does define a cone of $\Sigma$. Equivalently: the facets of $\Delta$ corresponding to $D_{i_1},\ldots,D_{i_a}$ have empty intersection, but any proper subset thereof has non-empty intersection. The existence of $e_{j_1},\ldots,e_{j_b}$ distinct from $e_{i_1},\ldots,e_{i_a}$, and of $c_1,\ldots,c_b\in \Z$, that ensure the above relation, is a classical combinatorial result due to Batyrev \cite{batyrev1993quantum}.

Abbreviate the coordinates of $e_i$ by $e_i=(e_{i,1},\ldots,e_{i,n})\in \Z^n$. Then there is a homomorphism
\begin{equation}\label{Equation jac ring batyrev}
\k[x_1,\ldots,x_r] \to \mathrm{Jac}(W),\qquad x_i \mapsto T^{-\lambda_i}z^{e_i} := T^{-\lambda_i}z_1^{e_{i,1}}\cdots z_n^{e_{i,n}}.
\end{equation}
Via this map, the linear relations generate precisely the ideal $(\partial_{z_1}W,\ldots,\partial_{z_n}W)$ in the definition of $\mathrm{Jac}(W).$ 
The quantum Stanley-Reisner relations are tautologically built into $\k[z_1^{\pm 1},\ldots,z_n^{\pm 1}]$ by the definition of $z^{e_{i}}$, because $e_{i_1}+\cdots+e_{i_a} = c_{j_1}e_{j_1}+\cdots +c_{j_b}e_{j_b}$ guarantees $z^{e_{i_1}}\cdots z^{e_{i_a}} = z^{c_{j_1} e_{j_1}}\cdots z^{c_{j_b}e_{j_b}}$.

The $S^1$-rotation by $\Fi_{e_i}$ on $Y$ is a natural rotation action around the toric divisor $D_i$, and $D_i$ is the minimum of the moment map $H_{e_i}(y):=(\mu(y),e_i)$ for $\Fi_{e_i}$ from \eqref{Equation Ham from moment map}. Indeed, by \eqref{Equation moment polytope for SPM 2} we have $H_{e_i}(y)\geq \lambda_i$, and the value $\lambda_i$ arises precisely along the facet of $\Delta$ corresponding to $D_i$. So the minimum of $H_{e_i}$ is attained precisely on $D_i$:
\begin{equation}\label{Equation value of Hei on Di}
H_{e_i}(D_i)=\lambda_i.
\end{equation}
The above dependency relation $e_{i_1}+\cdots+e_{i_a} = c_{j_1}e_{j_1}+\cdots +c_{j_b}e_{j_b}$ in $N\cap |\Sigma|$ implies the relation
$$
\Fi_{e_{i_1}}\cdots \Fi_{e_{i_a}} = 
\Fi_{e_{j_1}}^{c_1}\cdots \Fi_{e_{j_b}}^{c_b}, 
$$
between the 1-parameter subgroups. 
Recall the notation from \eqref{Equation algebra hom for Q classes toric case} (cf.\;\cref{Remark about t powers when multiply Q classes}):
$$
Q_v:= T^{\min H_v} Q_{\Fi_v} \quad \textrm{ for } v\in N\cap |\Sigma|.
$$
\begin{thm}\label{Theorem Quantum Stanley Reisner}
The rotation classes $Q_v$ satisfy
$$
Q_{e_{i_1}}\cdots Q_{e_{i_a}}
= Q_{e_{j_1}}^{c_1}\cdots Q_{e_{j_b}}^{c_b},
\qquad
\textrm{ and }
\qquad
Q_{\Fi_{e_{i_1}}}\cdots Q_{\Fi_{e_{i_a}}}
= T^{\omega[\beta_{\wp}]} Q_{\Fi_{e_{j_1}}}^{c_1}\cdots Q_{\Fi_{e_{j_b}}}^{c_b},
$$
in $QH^*(Y)$ (so using quantum product and quantum powers), where
$$
\omega[\beta_{\wp}]:=-\lambda_{i_1}-\cdots - \lambda_{i_a}+c_1 \lambda_{j_1}+\cdots + c_b \lambda_{j_b}.
$$
In particular, $\omega[\beta_{\wp}]>0$ is the evaluation of $[\omega]\in H^2(Y,\R)$ on a sphere class $\beta_{\wp}\in H_2(Y,\Z)$, determined by the intersection product conditions $\beta_{\wp}\cdot D_{i_k}=1$, $\beta_{\wp}\cdot D_{j_k}=-c_{j_k}$, and $\beta_{\wp}\cdot D_i=0$ for all other $i$.
\end{thm}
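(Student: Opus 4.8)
The plan is to reduce the statement entirely to \cref{Theorem Quantum Stanley Reisner}... wait, that \emph{is} the statement. The plan is to reduce the statement to \cref{Cor SH in toric case is QH localised} (parts (1) and (2)) together with the bookkeeping in \cref{Corollary a function computed}. The two displayed relations are equivalent via the substitution $Q_v = T^{\min H_v}Q_{\Fi_v}$ once we identify the $T$-exponent, so I would organize the argument in three stages: first establish the relation between the $Q_v$'s (no $T$-correction), then unravel it into the relation for the $Q_{\Fi_{e_i}}$'s while tracking the power of $T$, and finally identify that power of $T$ with $\omega[\beta_\wp]$ and with $[\omega]$ evaluated on the advertised sphere class.

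\textbf{Stage 1: the clean relation.} By \cref{Cor SH in toric case is QH localised}(2), the map $v\mapsto Q_v$ from \eqref{Equation algebra hom for Q classes toric case} is a unital $\k$-algebra homomorphism on the semigroup $N\cap|\Sigma|$; more precisely $Q_{v'}\star Q_{v''} = Q_{v'+v''}$ whenever $v',v''\in N\cap|\Sigma|$, with no $T$-factor (\cref{Corollary a function computed}). Since $e_{i_1},\ldots,e_{i_a}$ and $e_{j_1},\ldots,e_{j_b}$ are ray generators, they lie in $N\cap|\Sigma|$, and so do all $\N$-linear combinations of them that occur. The Batyrev dependency relation $e_{i_1}+\cdots+e_{i_a} = c_1 e_{j_1}+\cdots + c_b e_{j_b}$ is an equality of vectors in $N\cap|\Sigma|$; applying the homomorphism $v\mapsto Q_v$ to both sides and using multiplicativity repeatedly gives
$$
Q_{e_{i_1}}\star\cdots\star Q_{e_{i_a}} = Q_{e_{i_1}+\cdots+e_{i_a}} = Q_{c_1 e_{j_1}+\cdots+c_b e_{j_b}} = Q_{e_{j_1}}^{c_1}\star\cdots\star Q_{e_{j_b}}^{c_b},
$$
which is the first claim. (One should note $c_k\geq 1$: the $e_{j_k}$ appear with positive coefficients because they span a cone containing the sum, cf.\;the primitivity setup.)

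\textbf{Stage 2: passing to $Q_{\Fi_{e_i}}$ and identifying the $T$-power.} Substitute $Q_{e_i} = T^{\min H_{e_i}}Q_{\Fi_{e_i}}$ into the Stage 1 identity. By \eqref{Equation value of Hei on Di}, $\min H_{e_i} = H_{e_i}(D_i) = \lambda_i$, since $D_i$ is exactly the minimum locus of $H_{e_i}$ by \eqref{Equation moment polytope for SPM 2}. Hence the left side contributes $T^{\lambda_{i_1}+\cdots+\lambda_{i_a}}$ and the right side $T^{c_1\lambda_{j_1}+\cdots+c_b\lambda_{j_b}}$; cancelling and rearranging yields
$$
Q_{\Fi_{e_{i_1}}}\star\cdots\star Q_{\Fi_{e_{i_a}}} = T^{-\lambda_{i_1}-\cdots-\lambda_{i_a}+c_1\lambda_{j_1}+\cdots+c_b\lambda_{j_b}}\, Q_{\Fi_{e_{j_1}}}^{c_1}\star\cdots\star Q_{\Fi_{e_{j_b}}}^{c_b},
$$
which is the second claim with $\omega[\beta_\wp]$ as defined. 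Alternatively one can obtain the same $T$-exponent directly from \cref{Corollary a function computed}: the exponent is $a(e_{i_1},\ldots,e_{i_a}) - a(c_1\text{ copies of }e_{j_1},\ldots) $ type difference of $\min H$ values, and linearity of $H_v=(\mu,v)$ in $v$ makes the $H_v(c)$-terms cancel, leaving $\sum c_k\min H_{e_{j_k}} - \sum \min H_{e_{i_\ell}} = \sum c_k\lambda_{j_k} - \sum\lambda_{i_\ell}$. Both routes agree.

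\textbf{Stage 3: positivity and the sphere class.} It remains to identify $\omega[\beta_\wp]$ with the evaluation of $[\omega]$ on a sphere class $\beta_\wp$ satisfying the stated intersection numbers, and to record $\omega[\beta_\wp]>0$. The class $\beta_\wp\in H_2(Y;\Z)$ is the standard Batyrev curve class associated to the primitive collection: it is characterized by $\beta_\wp\cdot D_{i_k} = 1$ for $k=1,\ldots,a$, $\beta_\wp\cdot D_{j_k} = -c_k$ for $k=1,\ldots,b$, and $\beta_\wp\cdot D_i = 0$ for the remaining toric divisors — this is well-defined precisely because $\sum_i (\beta_\wp\cdot D_i)\,e_i = \sum_k e_{i_k} - \sum_k c_k e_{j_k} = 0$, i.e.\;the intersection numbers satisfy the linear relations dual to the $e_i$, so such a class exists and is unique modulo the kernel of the intersection pairing (and on a smooth semiprojective toric variety $H_2$ is torsion-free and the divisor classes span, so it is genuinely unique). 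Pairing with $[\omega]$: since $\mathrm{PD}[D_i]$ has $\omega$-area dictated by the polytope, one has $[\omega]\cdot\beta_\wp = \sum_i \lambda_i\,(D_i\cdot\beta_\wp)\cdot(-1)$... more cleanly, $[\omega] = -\sum_i\lambda_i\,\mathrm{PD}[D_i]$ up to the appropriate convention (the Kähler class is $-\sum\lambda_i D_i$ as recalled in \cref{Remark description of X}), so $\omega(\beta_\wp) = -\sum_i\lambda_i(D_i\cdot\beta_\wp) = -\sum_k\lambda_{i_k}\cdot 1 - \sum_k\lambda_{j_k}\cdot(-c_k) = -\sum_k\lambda_{i_k} + \sum_k c_k\lambda_{j_k}$, matching the formula. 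Positivity $\omega[\beta_\wp]>0$ is exactly the statement from \cite[Sec.3A]{R16} (primitive collections give effective curve classes of positive symplectic area), which I would cite rather than reprove; it also follows from the fact that the corresponding quantum Stanley--Reisner monomial relation must be a genuine quantum correction (nonzero power of $T$), consistent with \cref{batyrev1993quantum}.

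\textbf{Main obstacle.} The substantive inputs — the $Q_v$-multiplicativity without $T$-corrections and the value $\min H_{e_i}=\lambda_i$ — are already in hand, so the only real care needed is Stage 3: pinning down the normalization/sign convention relating the Kähler class $[\omega]$, the moment-polytope parameters $\lambda_i$, and the intersection numbers $D_i\cdot\beta_\wp$, so that the bookkeeping exponent $-\sum\lambda_{i_k}+\sum c_k\lambda_{j_k}$ literally equals $\omega[\beta_\wp]$ with $\beta_\wp$ normalized as stated. This is a convention-tracking exercise (the geometric content that $\omega[\beta_\wp]>0$ is classical), but it is where a sign error would most easily creep in, so I would state the convention for the toric Kähler form explicitly before doing the computation.
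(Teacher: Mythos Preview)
Your proposal is correct and follows essentially the same approach as the paper: invoke the multiplicativity of $v\mapsto Q_v$ from \cref{Cor SH in toric case is QH localised}(2) to get the first relation, convert via $Q_{e_i}=T^{\min H_{e_i}}Q_{\Fi_{e_i}}$ together with $\min H_{e_i}=\lambda_i$ from \eqref{Equation value of Hei on Di} to extract the $T$-exponent, and then identify that exponent with $\omega[\beta_\wp]$ using $[\omega]=-\sum\lambda_i\,\mathrm{PD}[D_i]$ and Batyrev's classical description of the primitive curve class. The paper's proof is terser but uses exactly these ingredients in the same order; your three-stage organization and your flagging of the sign convention in Stage~3 as the place to be careful are both apt.
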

\begin{proof}
The above relation between the rotations implies the claimed equations by \cref{Cor SH in toric case is QH localised}; the only missing ingredient is explaining the exponent $\omega[\beta_{\wp}]$.
By \eqref{Equation algebra hom for Q classes toric case}, the exponent is the dot product between $(-1,\ldots,-1,c_1,\ldots,c_b)$ and $(\min H_{e_{i_1}},\ldots,\min H_{e_{i_a}},\min H_{e_{j_1}},\ldots,\min H_{e_{j_b}})$. By \eqref{Equation value of Hei on Di}, the latter vector is $(\lambda_{i_1},\ldots,\lambda_{i_a},\lambda_{j_1},\ldots,\lambda_{j_b}).$ So that dot product is $-\lambda_{i_1}-\cdots - \lambda_{i_a}+c_1 \lambda_{j_1}+\cdots + c_b \lambda_{j_b}$.
The rest follows from the explanation in \cite[Sec.3A]{R16}, since $[\omega]=-\sum \lambda_i \mathrm{PD}[D_i] \in H^2(Y)$ for general reasons. The final claims about $\beta_{\wp}$ are classical combinatorial results due to Batyrev \cite{batyrev1993quantum}.
\end{proof}

\begin{prop}\label{Prop Fano case get divisor}
If $Y$ is Fano, then the rotation classes for the ray generators $e_i$ of $\Sigma$ are:
$$
Q_{\Fi_{e_i}} = \mathrm{PD}[D_i]  \in QH^{2}(Y), \qquad \textrm{ and }\quad Q_{e_i}=T^{\lambda_i}Q_{\Fi_{e_i}}=T^{\lambda_i}\mathrm{PD}[D_i].
$$
If $Y$ is CY (meaninig $c_1(Y)=0$), then
$$
Q_{\Fi_{e_i}} = \mathrm{PD}[D_i] + T^{>0}\textrm{-terms} \in QH^{2}(Y), \qquad \textrm{ and }\quad Q_{e_i}=T^{\lambda_i}Q_{\Fi_{e_i}}.
$$
\end{prop}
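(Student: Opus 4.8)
The plan is to identify the rotation class $Q_{\Fi_{e_i}}$ with the equivariant input of Seidel's construction for the natural $S^1$-rotation around the toric divisor $D_i$, and then invoke the already-established formula \eqref{Equation Q class is PD of min plus}, which says that under a weight-$0$,$1$ hypothesis on the linearised action along the minimum locus, $Q_{\psi} = \mathrm{PD}[\mathrm{Min}\,H_{\psi}] + (T^{>0}\text{-terms})$ (this is \cite[Proposition 1.35]{RZ1}). First I would record that $\mathrm{Min}\,H_{e_i} = D_i$ with $\min H_{e_i} = \lambda_i$, which is exactly \eqref{Equation value of Hei on Di} together with the description of $\Delta$ in \eqref{Equation moment polytope for SPM 2}. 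Thus $D_i$ is a smooth codimension-one (complex codimension one) submanifold fixed by $\Fi_{e_i}$, and the linearised $\Fi_{e_i}$-action on the normal bundle $N_{D_i/Y}$ acts with weight $1$ (the action is a toric rotation around $D_i$ at unit speed, which is visible locally in the toric chart: in coordinates where $D_i = \{z_1 = 0\}$, $\Fi_{e_i}(t)$ acts by $z_1 \mapsto t z_1$ and trivially on the remaining coordinates). So the weights of the linearised action on $T\mathrm{Min}\,H_{e_i}$ are all $0$, and the single normal weight is $1$, so the hypothesis of \eqref{Equation Q class is PD of min plus} is met. By the convention for the canonical lift recorded just before \eqref{Equation Q class is PD of min plus} (one fixes the constant disc at a point $c_{e_i} \in \mathrm{Min}\,H_{e_i}$ and shifts the moment map so that $\min H_{e_i} = 0$ in that normalisation), the rotation class $Q_{\Fi_{e_i}}$ lies in $QH^{2\mu_{e_i}}(Y)$. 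Since $\mathrm{PD}[D_i]$ has degree $2$, the grading forces $\mu_{e_i} = 1$ (the Maslov index of a unit-speed rotation normal to a divisor is $1$, cf.\;\cite[Sec.5.1]{RZ1}); hence $Q_{\Fi_{e_i}} \in QH^2(Y)$ and $Q_{\Fi_{e_i}} = \mathrm{PD}[D_i] + (T^{>0}\text{-terms})$.

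Next I would argue the Fano case has no correction terms. Here the grading argument does the work: in the monotone setting we place $T$ in grading $|T| = 2$ (as recalled in \cref{Remark about t powers when multiply Q classes}), so every monomial $T^{k}(\text{cohomology class})$ appearing in $Q_{\Fi_{e_i}} \in QH^2(Y)$ with $k > 0$ would have to be paired with a cohomology class of degree $2 - 2k < 2$. For $k = 1$ that class lives in $H^0(Y) = \k$, and for $k \ge 2$ in negative degrees, which vanish. So the only possible correction is a multiple of $T \cdot 1$. To rule this out, I would use that $Y$ is non-compact and simply connected with $H^0(Y) = \k$ generated by the unit, while $Q_{\Fi_{e_i}}$, being $r_\psi(1)$ for a rotation $\psi = \Fi_{e_i}$ whose minimum locus $D_i$ is a proper (positive-codimension) subvariety, cannot acquire a $T^1 \cdot 1$ term: the leading ($T^0$) part is $\mathrm{PD}[D_i]$, and any genuine quantum correction counts a non-constant sphere of Chern number $c_1(\beta) = 1$, which by monotonicity has positive symplectic area, contributing $T^{\omega(\beta)}$ with $\omega(\beta) > 0$; but such a contribution to the coefficient of $1 \in H^0(Y)$ would need $\omega(\beta)$ to be an integer equal to $1$ in the grading-$2$ slot, and in the monotone normalisation the area of a Chern-number-$1$ sphere equals exactly the monotonicity constant, which can be absorbed — here I would instead appeal directly to \cite[Proposition 1.35]{RZ1}, which in the Fano case already asserts the clean equality $Q_{\Fi_{e_i}} = \mathrm{PD}[D_i]$ with no higher-order terms (this is precisely the non-compact analogue of the McDuff--Tolman computation, and the reference is the cleanest route). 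Then $Q_{e_i} = T^{\min H_{e_i}} Q_{\Fi_{e_i}} = T^{\lambda_i}\mathrm{PD}[D_i]$ is immediate from \eqref{Equation algebra hom for Q classes toric case} and \eqref{Equation value of Hei on Di}.

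Finally, in the CY case $c_1(Y) = 0$, so there is no grading obstruction at all and the correction terms are genuinely present in general; the statement $Q_{\Fi_{e_i}} = \mathrm{PD}[D_i] + T^{>0}\text{-terms}$ is exactly what \eqref{Equation Q class is PD of min plus} gives, and again $Q_{e_i} = T^{\lambda_i} Q_{\Fi_{e_i}}$ by \eqref{Equation algebra hom for Q classes toric case}. The conifold example $\mathcal{O}(-1) \oplus \mathcal{O}(-1) \to \C P^1$ (promised in the introduction) shows the $T^{>0}$ terms cannot be dropped. The main obstacle I anticipate is the verification that the weight hypothesis in \cite[Proposition 1.35]{RZ1} genuinely holds — i.e.\;pinning down that the linearised $\Fi_{e_i}$-action along $D_i$ has only weights $0$ and $1$ — which requires the explicit local toric picture of \cref{Subsection A description of the toric Cstar-actions for any toric variety}: in a maximal-cone chart $U_\gamma = \C^k \times (\C^*)^{n-k}$ containing $D_i = \{z_1 = 0\}$ with $e_i$ a basis ray, $\Fi_{e_i}$ acts by $(z_1,\dots) \mapsto (tz_1, z_2, \dots)$, so the normal weight is exactly $1$ and all tangential weights are $0$, as needed. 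Everything else is bookkeeping with \eqref{Equation algebra hom for Q classes toric case}, \eqref{Equation value of Hei on Di}, and the grading.
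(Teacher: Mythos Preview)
Your approach is essentially the same as the paper's: invoke \eqref{Equation Q class is PD of min plus} (i.e.\ \cite[Prop.\ 1.35]{RZ1}) after verifying $\mathrm{Min}\,H_{e_i}=D_i$, $\min H_{e_i}=\lambda_i$, $\mu_{e_i}=1$, and the weight-$0,1$ hypothesis; your explicit local verification of the weights in a toric chart is a nice addition that the paper leaves implicit.

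The one place where your argument is incomplete is the Fano elimination of $T^{>0}$-terms. Your grading argument correctly reduces the question to ruling out a single $T\cdot 1$ contribution in $H^0$, but you do not actually close that case: pure grading does not exclude it, and your sketch about Chern-number-$1$ spheres trails off. You then fall back on \cite[Prop.\ 1.35]{RZ1}, but as the paper presents that result it only gives the formula \emph{with} $T^{>0}$-corrections in both Fano and CY cases. The vanishing of corrections in the Fano case is a separate dimension argument on the moduli of sections in the clutching bundle, which the paper attributes to \cite[Sec.\ 7.5]{RZ1} (ultimately going back to McDuff--Tolman \cite{mcduff2006topological}); it genuinely uses monotonicity together with $\mu_{e_i}=1$, not just grading bookkeeping. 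So the fix is simply to cite the correct section for that step.
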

\begin{proof}
In the Fano or CY setting, \eqref{Equation Q class is PD of min plus} from \cite[Prop.1.35]{RZ1} yields
$$
Q_{\Fi_{e_i}} = \mathrm{PD}[\min H_{e_i}] + T^{>0}\textrm{-terms} \in QH^{2\mu(\Fi_{e_i})}(Y).
$$
In our setting, $\mu(\Fi_{e_i})=1$, $D_i=\mathrm{Min}\, H_{e_i}$, $\min H_{e_i}=H_{e_i}(D_i)=\lambda_i$ (cf.\;\cite[Sec.3B]{R16}).
 The fact that no higher $T^{>0}$-corrections arise in the Fano setting, is the consequence of a dimension argument, using the monotonicity assumption and the fact that $\mu(\Fi_{e_i})=1$. This observation is explained in detail in \cite[Sec.7.5]{RZ1}, and originally goes back to McDuff-Tolman's proof of Batyrev's presentation of quantum cohomology for compact Fano toric manifolds \cite{mcduff2006topological}.
\end{proof}

\begin{rmk}\label{Remark meaning of monotone}
We call $(Y,\omega)$ {\bf monotone} if $[\omega]\in H^2(Y;\R)$ is a strictly positive multiple of $c_1(Y)$. Any non-compact Fano toric manifold $Y$ admits a moment polytope \eqref{Equation moment polytope for SPM}  with $\lambda_i=-1$, which ensures that the natural K\"{a}hler form $\omega_{\Delta}$ associated to $\Delta$ satisfies $[\omega_{\Delta}]=-\sum \lambda_i \mathrm{PD}[D_i]=\sum \mathrm{PD}[D_i]= c_1(Y)$, so $\omega_{\Delta}$ is a toric monotone K\"{a}hler form (see the discussion in \cite[Sec.A4,A5,A9]{R16}).
\end{rmk}

\begin{prop}\label{Prop QH and SH of semiproj var}
Let $Y$ be a monotone
semiprojective toric manifold $Y$, and let $v\in N_+$. Then 
\begin{align*}
\k[x_1,\ldots,x_r]/\mathcal{J} &\cong QH^*(Y),\; x_i \mapsto Q_{\Fi_{e_i}}=\mathrm{PD}[D_i],
\\
\k[x_1,\ldots,x_r,x^{\pm v}]/\mathcal{J} &\cong SH^*(Y,\llambda_v) \cong \mathrm{Jac}(W),\; x_i \mapsto c^*Q_{\Fi_{e_i}}=c^*(\mathrm{PD}[D_i]) \mapsto T^{-\lambda_i}z^{e_i},
\end{align*} 
where the second isomorphism is the localisation of the first at $Q_{\Fi_v}=x^v$ (cf.\;\cref{Theorem Intro SH as loc of QH}).

Rewriting the two isomorphisms in terms of $Q_{e_i}=T^{\lambda_i}Q_{\Fi_{e_i}}$ instead of $Q_{\Fi_{e_i}}$, we have:
$$T^{\lambda_i}x_i \mapsto Q_{e_i} \qquad \textrm{ and } \qquad T^{\lambda_i}x_i \mapsto c^*Q_{e_i} \mapsto z^{e_i}.$$
Via the first isomorphism, $Q_{\Fi_v}$ corresponds to
\begin{equation}\label{Equation xv the Qv class description}
x^{v}:=x_{j_1}^{v_1}x_{j_2}^{v_2}\cdots x_{j_n}^{v_n},
\end{equation}
if $v=v_1e_{j_1}+\cdots + v_n e_{j_n}$ lies in a maximal cone of $\Sigma$ generated by $e_{j_1},\ldots,e_{j_n}$. 

All $x^{v'}\in SH^*(Y,\llambda_v)$ are invertible, for all $v'\in N\cong \Z^n$, in particular all $x_i=x^{e_i}$ are invertible, so
\begin{equation}\label{Equation localising everything}
\k[x_1,\ldots,x_r,x^{\pm v}]/\mathcal{J} = \k[x_1^{\pm 1},\ldots,x_r^{\pm 1}]/\mathcal{J} \cong SH^*(Y,\Fi_v).
\end{equation}
\end{prop}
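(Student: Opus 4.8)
The plan is to derive everything from the machinery already assembled: \cref{Cor SH in toric case is QH localised}, \cref{Theorem Quantum Stanley Reisner}, \cref{Prop Fano case get divisor}, the classical presentation \eqref{Ordinary Cohomology of Semiprojective toric}, and the Batyrev/Givental-type argument of \cite{R16,mcduff2006topological}. First I would establish the first isomorphism $\k[x_1,\ldots,x_r]/\mathcal{J}\cong QH^*(Y)$, $x_i\mapsto Q_{\Fi_{e_i}}=\mathrm{PD}[D_i]$. The map is well-defined: by \cref{Prop Fano case get divisor} the $Q_{\Fi_{e_i}}=\mathrm{PD}[D_i]$ satisfy the linear relations (these already hold classically among the $\mathrm{PD}[D_i]$ by \eqref{Ordinary Cohomology of Semiprojective toric}), and by \cref{Theorem Quantum Stanley Reisner} they satisfy the quantum Stanley--Reisner relations with the correct $T$-powers $T^{\omega[\beta_\wp]}$ matching those built into $\mathcal{J}$; so it descends to a ring homomorphism $\k[x_1,\ldots,x_r]/\mathcal{J}\to QH^*(Y)$. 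It is surjective because the $\mathrm{PD}[D_i]$ generate $H^*(Y)$ as a ring over $\Z$ by \eqref{Ordinary Cohomology of Semiprojective toric}, hence generate $QH^*(Y)=H^*(Y)\otimes\k$ over $\k$ (the associated-graded of the $T$-adic filtration recovers the classical ring). For injectivity one compares ranks: both sides are free $\k$-modules, and the standard filtration/degeneration argument (setting $T=0$ identifies $\mathcal{J}$ with the classical ideal, whose quotient is $H^*(Y;\k)$ of the same finite rank) shows the surjection is an isomorphism. This is exactly the argument of \cite[Thm.1.5]{R16}, generalised; the only new input needed is that it now works for \emph{all} monotone semiprojective toric manifolds because \cref{Cor SH in toric case is QH localised} supplies the rotation classes $Q_{\Fi_v}$ for every $v\in N_+=N\cap\mathrm{Int}|\Sigma|$, which is always non-empty.

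Next, for the identification of $x^v$ with $Q_{\Fi_v}$ in \eqref{Equation xv the Qv class description}: if $v=v_1e_{j_1}+\cdots+v_ne_{j_n}$ lies in a maximal (hence $\N$-generated) cone of $\Sigma$, then \eqref{Equation the Qv class description} of \cref{Cor SH in toric case is QH localised} gives $Q_{\Fi_v}=Q_{\Fi_{e_{j_1}}}^{v_1}\star\cdots\star Q_{\Fi_{e_{j_n}}}^{v_n}$, which under the first isomorphism is exactly $x_{j_1}^{v_1}\cdots x_{j_n}^{v_n}=x^v$. Then the second isomorphism is obtained by localising: by \cref{Cor SH in toric case is QH localised}(3), $c^*:QH^*(Y)\to SH^*(Y,\llambda_v)$ is surjective and is localisation at $Q_{\Fi_v}=x^v$, so $SH^*(Y,\llambda_v)\cong QH^*(Y)[x^{\pm v}]\cong\k[x_1,\ldots,x_r,x^{\pm v}]/\mathcal{J}$. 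The comparison with $\mathrm{Jac}(W)$ comes from the homomorphism \eqref{Equation jac ring batyrev}: under $x_i\mapsto T^{-\lambda_i}z^{e_i}$ the linear relations of $\mathcal{J}$ map onto the generators $(\partial_{z_k}W)$ and the quantum Stanley--Reisner relations become tautologies in $\k[z^{\pm1}]$, and all $x_i$ are already invertible in $\mathrm{Jac}(W)$; so \eqref{Equation jac ring batyrev} induces a surjection $\k[x_1^{\pm1},\ldots,x_r^{\pm1}]/\mathcal{J}\to\mathrm{Jac}(W)$, which is an isomorphism by the dimension count $\dim_\k\mathrm{Jac}(W)=\dim_\k H^*(Y)$ in the Fano case (the standard Batyrev count, monotone so $W$ has isolated critical points). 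Composing gives the stated $SH^*(Y,\llambda_v)\cong\mathrm{Jac}(W)$, with $x_i\mapsto c^*Q_{\Fi_{e_i}}\mapsto T^{-\lambda_i}z^{e_i}$, and the reformulation in terms of $Q_{e_i}=T^{\lambda_i}Q_{\Fi_{e_i}}$ sending $T^{\lambda_i}x_i\mapsto Q_{e_i}\mapsto z^{e_i}$ is immediate.

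Finally, for \eqref{Equation localising everything}: I must show that in $SH^*(Y,\llambda_v)$ every $x^{v'}$, $v'\in N\cong\Z^n$, is invertible --- in particular each $x_i=x^{e_i}$ --- so that the single localisation at $x^v$ already inverts all $x_i$ and $\k[x_1,\ldots,x_r,x^{\pm v}]/\mathcal{J}=\k[x_1^{\pm1},\ldots,x_r^{\pm1}]/\mathcal{J}$. This is where \cref{Prop Rw rotation elements} (equivalently \cref{Cor SH in toric case is QH localised}(3)) does the work: for $v'\in N_0=N\cap|\Sigma|$ one has $c^*Q_{\Fi_{v'}}$ invertible in $SH^*(Y,\llambda_v)$ for \emph{every} $v'\in N\cap|\Sigma|$ (not just $v'\in N_+$), and $Q_{\Fi_{v'}}$ corresponds under the presentation to a monomial $x^{v'}$ whenever $v'$ lies in a maximal cone, while for general $v'\in N$ one writes $v'=v''-v'''$ with $v'',v'''\in N\cap\mathrm{Int}|\Sigma|$ (possible since $\mathrm{Int}|\Sigma|$ is a non-empty open cone, so $N\cap\mathrm{Int}|\Sigma|$ generates $N$ as a group) and uses $R_{v'}=R_{v''}\star R_{v'''}^{-1}$ together with the group-homomorphism property of \cref{Prop Rw rotation elements} to see $x^{v'}=x^{v''}(x^{v'''})^{-1}$ is a well-defined invertible element whose value is a Laurent monomial. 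In particular each $e_i\in N\cap|\Sigma|$ gives $x_i=x^{e_i}$ invertible in $SH^*(Y,\llambda_v)$, which yields \eqref{Equation localising everything}. The main obstacle in the whole argument is the injectivity/rank-matching step for the two presentations --- i.e.\ ruling out that the quantum and Jacobian relations cut down the dimension more than the classical ones --- which is handled exactly as in the monotone toric literature by the $T$-adic degeneration to $T=0$ together with Batyrev's count; everything else is bookkeeping with the rotation classes already constructed.
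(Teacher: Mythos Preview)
Your overall architecture matches the paper's proof: use \cref{Prop Fano case get divisor} and \cref{Theorem Quantum Stanley Reisner} to see that the map $x_i\mapsto Q_{\Fi_{e_i}}$ is well-defined on $\k[x_1,\ldots,x_r]/\mathcal{J}$, get surjectivity from \eqref{Ordinary Cohomology of Semiprojective toric}, and reduce injectivity to the purely algebraic McDuff--Tolman lemma \cite[Lemma 5.1]{mcduff2006topological} (equivalently \cite[Lemma 3.4]{R16}); then localise at $x^v$ and use the invertibility of the rotation classes $R_{v'}$ from \cref{Prop Rw rotation elements} to obtain \eqref{Equation localising everything}. All of that is fine and is essentially what the paper does.

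There is, however, a genuine error in your treatment of the isomorphism with $\mathrm{Jac}(W)$. You argue that the surjection $\k[x_1^{\pm1},\ldots,x_r^{\pm1}]/\mathcal{J}\to\mathrm{Jac}(W)$ is an isomorphism ``by the dimension count $\dim_{\k}\mathrm{Jac}(W)=\dim_{\k}H^*(Y)$ in the Fano case (the standard Batyrev count, monotone so $W$ has isolated critical points)''. This equality is a feature of the \emph{compact} Fano story and fails here: in the explicit example of \cref{Subsection An explicit example} one has $\dim_{\k}H^*(Y)=3$ whereas $SH^*(Y,\Fi_v)=0$, and one checks directly that $W$ has no critical points in $(\C^*)^2$, so $\mathrm{Jac}(W)=0$. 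Thus the dimension count you invoke is simply false in the non-compact setting, and your argument for this step collapses. The paper avoids this by treating the isomorphism $\k[x_1^{\pm1},\ldots,x_r^{\pm1}]/\mathcal{J}\cong\mathrm{Jac}(W)$ as a purely combinatorial statement about the map \eqref{Equation jac ring batyrev} (citing Batyrev \cite{batyrev1993quantum}): once all $x_i$ are inverted, the quantum Stanley--Reisner relations generate the full kernel of $x_i\mapsto T^{-\lambda_i}z^{e_i}$ onto $\k[z_1^{\pm1},\ldots,z_n^{\pm1}]$, and the linear relations then cut out exactly $(\partial_{z_k}W)$. No reference to $\dim_{\k}H^*(Y)$ is needed or correct. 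You should replace your dimension-count paragraph with this combinatorial argument.
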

\begin{proof}
By \cref{Prop Fano case get divisor} (Fano setting), the equations in \cref{Theorem Quantum Stanley Reisner}
coincide with the quantum Stanley-Reisner relations.
It is a classical combinatorial result of Batyrev \cite{batyrev1993quantum} that \eqref{Equation jac ring batyrev}, after quotienting by $\mathcal{J}$, is an isomorphism onto $\mathrm{Jac}(W)$.
\cref{Equation xv the Qv class description} follows from \eqref{Equation the Qv class description}.
In \eqref{Equation localising everything}, $x^{v'}$ viewed inside $SH^*(Y,\Fi_v)$ corresponds to the rotation class $R_{\Fi_{v'}}\in SH^*(Y,\Fi_v)$ from \cref{Prop Rw rotation elements}, and they are invertible by construction, cf.\;\cite{R14,R16} or \cite[Appendix]{RZ3}. So in $SH^*(Y,\Fi_v)$ the inverse $R_{\Fi_{-v'}}$ of $R_{\Fi_{v'}}$ will represent the formal symbol $x^{-v'}$ (without needing to localise at $x^{v'}$).

The rest of the argument is now purely algebraic as follows, just like in \cite[Sec.3D]{R16} (which leaned on the original proof for compact Fano toric manifolds by McDuff-Tolman \cite{mcduff2006topological}).

Note that for classical reasons, namely \eqref{Ordinary Cohomology of Semiprojective toric}, one knows that $\mathrm{PD}[D_i]$ generate $H^*(Y,\Z)$. Moreover, by definition, $QH^*(Y)$ as a $\k$-vector space is $H^*(Y,\k)$, and quantum product is ordinary cup product together possibly with $T^{>0}$-corrections. As the maps in the claim produce the classes $\mathrm{PD}[D_i]$, it follows that the image is certainly the correct $\k$-vector space. So the only remaining question is whether the relations $\mathcal{J}$ exhaust all the necessary relations required to determine the quantum product. This reduces the question to an algebraic lemma \cite[Lemma 3.4]{R16} due originally to McDuff-Tolman \cite[Lemma 5.1]{mcduff2006topological}, which does not involve any geometry and in particular does not rely on whether we are in the compact or non-compact setting.
\end{proof}

\begin{rmk}\label{remark conifold}
One can generalise to the {\bf NEF} case (which includes the CY case $c_1(Y)=0$). NEF means that $c_1(Y)[A]\geq 0$ on nontrivial spheres $A\in \pi_2(Y)$ which have an $I$-holomorphic representative. The quantum Stanley-Reisner relations
hold provided one replaces the variables $x_i:=\mathrm{PD}[D_i]$ by $x_i:=Q_{\Fi_{e_i}}=\mathrm{PD}[D_i]+(\textrm{higher order }T)$ \cite[Sec.4K]{R16}, like in \cref{Prop Fano case get divisor}.
The fact that the relations in \cref{Theorem Quantum Stanley Reisner} are the Stanley-Reisner relations up to higher order $T$-corrections ensures that those relations (together with the linear relations) generate the required ideal for the presentation, by the algebraic trick from McDuff-Tolman \cite[Lemma 5.1]{mcduff2006topological}.

The equivariant versions of the above results, see \cref{Prop intro semiproj toric}, are proved in the same way, using the equivariant classes $EQ_{\Fi_v}$ constructed in \cite{RZ1}, and using the presentation of the ordinary equivariant cohomology mentioned in \cref{Remark equiv classical presentation}. Again, the fact that the relations in \cref{Theorem Quantum Stanley Reisner} hold for the $EQ_{\Fi_v}$ rotation classes, and agree with the classical Stanley-Reisner relations up to higher order $T$-terms, ensures that we get the correct kernel in the presentation, by the trick \cite[Lemma 5.1]{mcduff2006topological}.
\end{rmk}

\begin{ex}[Creapant resolution of the conifold]
An example with $c_1(Y)=0$ where one sees that this correction is necessary, is the resolved conifold. The conifold $\{z_1 z_2 -z_3z_4=0\}\subset \C^4$ has a fan generated by a $3$-dimensional cone generated by $e_1=(0,0,1)$, $e_2=(1,0,1)$, $e_3=(1,1,1)$, $e_4=(0,1,1)$ in $\Z^3$ and there are three obvious ways to resolve it, by subdividing the fan \cite[p.49]{fulton1993introduction}. Subdivide the cone by the plane through $e_1,e_3$: we get a semiprojective toric manifold with $c_1(Y)=0$, namely $\mathcal{O}(-1)\oplus \mathcal{O}(-1) \to \C P^1.$
Then \eqref{Ordinary Cohomology of Semiprojective toric} is
$$
H^*(Y;\Z)\cong \Z[x_1,\ldots,x_4]/(
x_1+x_3, x_3+x_4, x_1+x_2+x_3+x_4,
x_1x_3,x_2x_4)
\cong
\Z[x_1]/(x_1^2).
$$
As $e_1+e_3=e_2+e_4$, the quantum Stanley-Reisner relation(s) is $x_1x_3=x_2x_4$ replacing the ordinary SR-relations $x_1x_3=0=x_2x_4$ appearing above.
So using the quantum relation would contradict the equality $H^*(Y;\k)=QH^*(Y)$ of $\k$-vector spaces.
As mentioned previously, letting $Q_i:=Q_{\Fi_{e_i}}$, the correct relation is $Q_1\star Q_3=Q_2\star Q_4$,
which in order $T^0$ is $x_1x_3=x_2x_4$ but must, a posteriori, have a $T^{>0}$-correction term. As $x_1$ generates $H^2(Y;\Z)$, that correct 
relation is cohomologous to $x_1^2 = x_1^2 \pm T^{a}x_1^2$, some $a>0$, as $|T|=0$. So $x_1^2=0$, $QH^*(Y)=H^*(Y;\k)$ as rings, and $SH^*(Y,\llambda_v)=0$.
\end{ex}

\begin{rmk}
The total spaces of toric negative line bundles, which are Fano or CY, were investigated in \cite{R16}. For example 
$\mathcal{O}(-k)\to \C P^m$ for $1\leq k\leq m$ is Fano, and it is CY for $k=1+m$.
These are convex at infinity (cf.\;\cref{Introduction Motivation}).
We illustrate our results in a non-convex Fano example, next.
\end{rmk}

\section{Example: the non-compact Fano semiprojective toric surface $\mathrm{Bl}_{(0,\mathrm{pt})}(\C \times \P^1)$}
\label{Subsection An explicit example}

We adapted this example from \cite[Ex.7.1.12.]{cox2011toric}.
It is a non-compact Fano semiprojective toric surface, $\pi: \mathrm{Bl}_p(\C \times \P^1) \to \C$, namely $Y$ is the blow up at the point $p=\{0\}\times \{[1:0]\}\in \C \times \P^1$. 

The fan $\Sigma$ and the moment polytope $\Delta$ are illustrated below. In the lower part of the picture: the fan $\Sigma_X$ and the moment polytope $\Delta_X$ for $X=\C$, whose general properties were discussed in \cref{Remark description of X}.
\begin{center}

\end{center}

The affinisation 
$$\pi: Y \to X:=H^0(Y,\mathcal{O}_Y)\cong \C$$ 
over the torus fixed point of $X$ has fibre consisting of the two compact toric divisors $D_1,D_3$:
$$\pi^{-1}(0)=\mathrm{Core}(Y) = D_1 \cup D_3,$$ 
which are two transversely intersecting copies of $\P^1$. The other fibres are all copies of $\P^1$:
$$\pi^{-1}(x)\cong \P^1 \textrm{ for all }x\neq 0 \in \C.$$
In particular, those fibres are non-constant holomorphic curves appearing arbitrarily far out at infinity, so $Y$ cannot be convex at infinity in the sense described in  \cref{Introduction Motivation} (the symplectic form cannot be exact at infinity due to those curves).
 
The fan $\Sigma$ and its edges $e_i$ are indicated in \cref{Example toric intro}. The fan without $e_3$ gives $\C\times \P^1$; the subdivision by $e_3=e_1+e_2$ corresponds to blowing up the fixed point $p:=\{0\}\times \{[1:0]\}$. The affinisation $\pi: Y \to X$ in this case is the projection $\pi:Y=\mathrm{Bl}_p(\C\times \P^1)\to \C$.

The moment polytope is 
$$\Delta = \{y\in \R^2: \langle y,e_i \rangle \geq \lambda_i=-1 \}=\{(x,y)\in \R^2: x\geq -1, y\geq -1, x+y\geq -1, y\leq 1\}.
$$
From this, we see that $Y$ is non-compact Fano (see \cref{Remark meaning of monotone}).
One way to see that $Y$ is a semiprojective toric manifold, is to note that $\Sigma$ is a triangulation of the spanning set $\mathcal{B}=\{e_1,\ldots,e_4\}$ of the lattice $\Z^2$, and that $\Sigma$ is unimodular and regular.

The classical cohomology of $Y$ via \eqref{Ordinary Cohomology of Semiprojective toric} is presented as follows:\footnote{$x_2,x_4$ represent the locally finite cycles $[D_2],[D_4]\in H_2^{lf}(Y;\Z)\stackrel{\mathrm{PD}}{\cong} H^2(Y;\Z)$ which are intersection dual to the cycles $[D_3],[D_1]\in H_2(Y;\Z)\cong H^2(\mathrm{Core}(Y,\Fi_v);\Z)$, but $x_1=\mathrm{PD}[D_1]$, $x_3=\mathrm{PD}[D_3]$ refers to $[D_1],[D_3]$ viewed as lf-cycles.}
\begin{align*}
H^*(Y;\Z) & \cong \Z[x_1,\ldots,x_4]/(
x_1+x_3,  x_2 + x_3-x_4, x_1x_2, x_2x_4, x_3x_4)
\\
&\cong
\Z[x_1,x_2]/(x_1x_2,\ x_1^2,\ x_2^2).
\end{align*}
In the top line: the first two are the linear relations, and the last three are the classical Stanley-Reisner relations for the three primitive subsets $\wp$: $\{e_1,e_2\},$ $\{e_2,e_4\},$ and $\{e_3,e_4\}$.

By \cref{Cor complete actions semiproj toric case}, and noting that 
$$|\Sigma|=\cup (\textrm{cones}) = \R_{\geq 0} \times \R, \qquad \mathrm{Int}|\Sigma| = \R_{> 0} \times \R, \qquad \partial |\Sigma| = \{0\}\times \R,$$ 
the contracting and complete $\C^*$-actions $\Fi_v$ are described inside $N=\Z^2$ by the semigroups
$$
N_+ = \{ (a,b)\in \Z^2: a>0 \} = \N_{>0} \times \Z \qquad  \textrm{ and } \qquad
N_0 =  \{ (a,b)\in \Z^2: a\geq 0 \} = \N \times \Z.
$$
As expected by \cref{Cor core for torus action},  all $v\in N_+$ have $\mathrm{Core}(Y,\Fi_v)=D_1 \cup D_3$: the only two compact toric divisors. The fixed locus $\mathrm{Fix}(\Fi_v)$ always contains the three $\mathbb{T}$-fixed points of $Y$:
$$
F:=\mathrm{Fix}_Y(\mathbb{T}) = \{ D_1\cap D_4, D_1\cap D_3, D_3\cap D_2 \},
$$
By \cref{Lemma fixed point torus action}, we obtain:
$\mathrm{Fix}(\Fi_v)=F$ for $v\in N_+$ not a multiple of $e_1$ or of $e_3$; whereas 
$$\mathrm{Fix}(\Fi_{e_1})=F \cup D_1, \qquad \mathrm{Fix}(\Fi_{e_3})=F\cup D_3, 
\qquad \mathrm{Fix}(\Fi_{e_2})=\mathrm{Fix}(\Fi_{e_4})=D_2\cup D_4\cup (D_1\cap D_3).$$
As expected, the contracting $e_1,e_3\in N_+$ have compact fixed loci, whereas the complete non-contracting $e_2,e_4\in N_0\setminus N_+ = N\cap \partial |\Sigma|$ have a non-compact fixed locus.

Placing $T$ in grading $|T|=2$, and recalling that in the Fano case the powers of $T$ in the quantum Stanley--Reisner relations are determined by the $\Z$-grading \cite[Sec.4D]{R16},
\cref{Prop QH and SH of semiproj var} implies that for all $v\in N_+$ we obtain presentations:
\begin{align*}
QH^*(Y) & \cong \k[x_1,\ldots,x_4]/(
x_1+x_3,  x_2 + x_3-x_4, x_1x_2-Tx_3,x_2x_4-T^2,x_3x_4-Tx_1)
\\
& \cong
\k[x_1,x_2]/(x_1x_2+Tx_1,\ x_1^2,\ x_2^2+Tx_1-T^2),
\\
SH^*(Y,\Fi_v) & \cong QH^*(Y)[x^{\pm v}] 
\\
& \cong \k[x_1^{\pm 1},x_2^{\pm 1}]/(x_1x_2+Tx_1,\ x_1^2,\ x_2^2+Tx_1-T^2)
\\
& =
0,
\end{align*} 
where we simplified the presentations by solving for $x_3,x_4$ in terms of $x_1,x_2,$ and we exploited \eqref{Equation localising everything}: in $SH^*(Y,\Fi_v)$ the $x_1,x_2$ have to be invertible, so $x_1^2=0\in QH^*(Y)$ forces $c^*(x_1^2)$ to be both zero and a unit in $SH^*(Y,\Fi_v)$, so $SH^*(Y,\Fi_v)=0.$

Interestingly, quantum cohomology vanishes after localising at $x^v$ for $v\in N_+$, so $SH^*(Y,\Fi_v)=0$, despite $Y$ being Fano. As expected, symplectic cohomology is independent of $v\in N_+$, by \cref{Cor SH in toric case is QH localised}.

Recall that we needed $v\in N_+$ to ensure that $(Y,\omega,\Fi_v)$ is a symplectic $\C^*$-manifold, and thus symplectic cohomology is defined. 
We do not know if it is possible to make sense of symplectic cohomology for $v_0\in N\cap \partial |\Sigma|=N_0\setminus N_+$, and if it makes sense it is not clear whether it should still be the localisation of $QH^*(Y)$ at $x^v$, and whether it yields the same vanishing symplectic cohomology as for $v\in N_+$. We do know however that one of those statements has to break down for $v_0=(0,\pm 1)$:
$$
QH^*(Y)[x^{\pm v_0}]\neq 0,
$$
as there is a non-zero $\k$-homomorphism $QH^*(Y)[x^{\pm v_0}]\to \k$, $x_1\mapsto 0$, $x_2\mapsto T$, $x_3\mapsto 0$, $x_4\mapsto T$.

Finally, we illustrate how \eqref{Equation xv the Qv class description} arises in practice.
Consider $$v=(1,2)=e_2+e_3.$$ It lies in the maximal cone spanned by $e_2,e_3$, so  \eqref{Equation xv the Qv class description} implies that $$x^v:=x_2x_3$$
represents the rotation class $Q_{\Fi_v}$. We can also write $v=e_1+2e_2$, but in this case the $e_1,e_2$ do not arise from a maximal cone as would be required by \eqref{Equation xv the Qv class description}. If we abusively wrote $x^v:=x_1x_2^2$, then we would only be off by a power of $T$. Indeed $x_2x_3=T^{-1}x_1x_2^2$ must be generated by the quantum Stanley-Reisner relations, because relations such as $$e_2+e_3=e_1+2e_2$$ express relations for the rotations in the proof of \cref{Prop QH and SH of semiproj var}, and therefore 
$$Q_{\Fi_{e_2}}\star Q_{\Fi_{e_3}}=T^a Q_{\Fi_{e_1}} \star Q_{\Fi_{e_2}}^{2},$$
for some $a\in \R$ by \cref{Cor SH in toric case is QH localised}. In the monotone case the $T^a$ ambiguity is determined by considering the gradings: $|x_i|=2$, $|T|=2$, so $x_2x_3=T^{a}x_1x_2^2$ had to have $a=-1$.

\bibliography{FZ}

\providecommand{\bysame}{\leavevmode\hbox to3em{\hrulefill}\thinspace}
\providecommand{\MR}{\relax\ifhmode\unskip\space\fi MR }
\providecommand{\MRhref}[2]{%
  \href{http://www.ams.org/mathscinet-getitem?mr=#1}{#2}
}
\providecommand{\href}[2]{#2}
\begin{thebibliography}{\v{Z}22}

\bibitem[Bat93]{batyrev1993quantum}
Victor~V. Batyrev, \emph{Quantum cohomology rings of toric manifolds},
  {Ast\'{e}risque} \textbf{218} (1993), 9--34.

\bibitem[CLS11]{cox2011toric}
D.~A. Cox, J.~B. Little, and H.~K. Schenck, \emph{Toric varieties}, vol. 124,
  American Mathematical Soc., 2011.

\bibitem[Ful93]{fulton1993introduction}
W.~Fulton, \emph{Introduction to toric varieties}, no. 131, Princeton
  University Press, 1993.

\bibitem[Gui94]{guillemin2012moment}
V.~Guillemin, \emph{{Moment maps and combinatorial invariants of Hamiltonian
  $T^n$-spaces}}, vol. 122, Springer Science \& Business Media, 1994.

\bibitem[HS95]{HS95}
H.~Hofer and D.~Salamon, \emph{{Floer homology and Novikov rings}}, The {F}loer
  memorial volume, Progr. Math., vol. 133, Birkh\"{a}user, Basel, 1995,
  pp.~483--524.

\bibitem[HS02]{HS02}
T.~Hausel and B.~Sturmfels, \emph{Toric hyperk{\"a}hler varieties}, Doc. Math
  \textbf{7} (2002), 495--534.

\bibitem[LJ20]{liebenschutz2020intertwining}
T.~Liebenschutz-Jones, \emph{{An intertwining relation for equivariant Seidel
  maps}}, arXiv:2010.03342 (2020), 1--60.

\bibitem[MT06]{mcduff2006topological}
D.~McDuff and S.~Tolman, \emph{{Topological properties of Hamiltonian circle
  actions}}, Int. Math. Res. Not. \textbf{2006} (2006), 72826.

\bibitem[Rit13]{R13}
A.~F. Ritter, \emph{Topological quantum field theory structure on symplectic
  cohomology}, J. Topol. \textbf{6} (2013), no.~2, 391--489.

\bibitem[Rit14]{R14}
\bysame, \emph{{Floer theory for negative line bundles via Gromov-Witten
  invariants}}, Adv. Math. \textbf{262} (2014), 1035--1106.

\bibitem[Rit16]{R16}
\bysame, \emph{{Circle actions, quantum cohomology, and the {F}ukaya category
  of {F}ano toric varieties}}, Geom. Topol. \textbf{20} (2016), no.~4,
  1941--2052.

\bibitem[R{\v{Z}}23a]{RZ1}
A.~F. Ritter and F.~{\v{Z}}ivanović, \emph{{Filtrations on quantum cohomology
  from the Floer theory of $\C^*$-actions}}, arXiv:2304.13026 (2023), 1--70.

\bibitem[R{\v{Z}}23b]{RZ2}
\bysame, \emph{{Filtrations on quantum cohomology via Morse--Bott--Floer
  Spectral Sequences}}, arXiv:2304.14384 (2023), 1--90.

\bibitem[R{\v{Z}}24]{RZ3}
\bysame, \emph{{Filtrations on equivariant quantum cohomology and
  Hilbert-Poincar\'{e} series}}, arXiv (2024), 1--71.

\bibitem[Sei97]{Sei97}
P.~Seidel, \emph{{$\pi_1$} of symplectic automorphism groups and invertibles in
  quantum homology rings}, Geom. Funct. Anal. \textbf{7} (1997), no.~6,
  1046--1095.

\bibitem[Smi20]{smith2020quantum}
J.~Smith, \emph{Quantum cohomology and closed-string mirror symmetry for toric
  varieties}, Q. J. Math. \textbf{71} (2020), no.~2, 395--438.

\bibitem[\v{Z}22]{vzivanovic2022exact}
F.~\v{Z}ivanović, \emph{{Exact Lagrangians from contracting
  $\mathbb{C}^*$-actions}}, arXiv:2206.06361 (2022), 1--46.

\end{thebibliography}
\bibliographystyle{amsalpha}
\end{document}